\DeclareMathAlphabet\mathbfcal{OMS}{cmsy}{b}{n}
\definecolor{grb}{rgb}   {0.0,   0.99,   0.5 } 
\definecolor{mg}{rgb}   {0.85,  0.,    0.85} 
\newcommand{\Bk}{\color{black}}
\newcommand{\Rd}{\color{red}}
\newcommand{\Question}[1]{\marginpar{}}
\renewcommand{\Question}[1]{%
            \marginpar{\flushleft\scriptsize\bfseries\upshape#1}}
\newcommand{\bfn}{{\bf n}}
\newcommand{\real}{{\rm I\!R}}
\newcommand{\Om}{{\Omega}}
\def\Aa{{\boldsymbol a}}
\def\n{{\boldsymbol n}}
\def\x {{\boldsymbol x}}
\newcommand{\qed}{\rule{0.08in}{0.08in}}
\newenvironment{theorem*}{{\bf Theorem}\em}{\rm\mbox{}}
\newtheorem{remark}[theorem]{Remark}
\begin{document}

\title{Convergence of a finite element method for degenerate two-phase flow in porous media}

\author{
 Vivette Girault \thanks{Paris VI, visiting professor at Rice}\and
  Beatrice M.  Rivi\`ere\thanks{Department of Computational and Applied
    Mathematics, Rice University, Houston, TX 77005. Supported in part
    by NSF-DMS } \and Loic Cappanera}  

\date{\today}
\maketitle

\begin{abstract}
A finite element method with mass-lumping and flux upwinding, is formulated for solving the immiscible two-phase flow problem in porous media.  The method approximates directly the wetting phase pressure and saturation, which are the primary unknowns. The discrete saturation satisfies a maximum principle. Theoretical convergence  is proved via a compactness argument. The proof is convoluted because of the degeneracy of the phase mobilities and the unboundedness of the capillary pressure.
\end{abstract}

\setcounter{section}{0}

\section{Introduction}

\label{sec:intro}

This work discretizes on a suitable mesh a degenerate two-phase flow system set in a polyhedral domain by a new finite element scheme that directly approximates the wetting phase pressure and saturation, similar to the formulation proposed in \cite{Forsyth1991}. Mass lumping is used to compute the integrals and a suitable upwinding is used to compute the flux, guaranteeing that the discrete saturation satisfies a maximum principle. The resulting system of discrete equations is a finite element analogue of the finite volume scheme introduced and analyzed by Eymard {\it et al} in the seminal work~\cite{Eymard2003}. From the point of view of implementation, the advantage of finite elements is that they only use nodal values and a single simplicial mesh. In particular, no orthogonality property is required between the faces and the lines joining  the centers of control volumes. From a theoretical point of view, owing that the finite element scheme is based on functions, some steps in its convergence analysis are simpler, but nevertheless the major difficulty in the analysis consists in proving sufficient a priori estimates in spite of the degeneracy. By following closely~\cite{Eymard2003}, the degeneracy is remediated by reintroducing in the proofs discrete artificial pressures. From there, convergence of the numerical solutions is shown  via a compactness argument.  

Incompressible two-phase flow is a popular and important multiphase flow model in reservoirs for the oil and gas industry.  Based on conservation laws at the continuum scale, the model assumes the existence of a representative elementary volume. Each wetting phase and non-wetting phase saturation satisfies a mass balance equation and each phase velocity follows the generalized Darcy law~\cite{peaceman2000fundamentals,azizpetroleum}. The equations of the mathematical model read
\begin{equation}
\label{eq:orignalpb}
\begin{split}
\partial_t (\varphi s_w) - \nabla \cdot (\eta_w \nabla p_w)  &= f_w(s_\mathrm{in}) \bar{q} - f_w(s_w) \underline{q},\\
\partial_t (\varphi s_o) - \nabla \cdot (\eta_o \nabla p_o)  &= f_o(s_\mathrm{in}) \bar{q} - f_o(s_w) \underline{q},\\
p_c(s_w) = p_o - p_w,&\quad s_w+ s_o = 1,
\end{split}
\end{equation}
complemented by initial and boundary conditions.
Here $p_w, s_w,\eta_w, f_w $, (respectively, $p_o, s_o, \eta_o, f_o$), are the pressure, saturation, mobility and fractional flow of the wetting (respectively non-wetting) phase, $\varphi$ is the 
porosity, $s_\mathrm{in}$ is a given input saturation,  and $\bar{q}, \underline{q}$ are given flow rates.  The capillary pressure, $p_c$, is a given function that depends nonlinearly on the saturation. Because the phase mobilities are degenerate when they are evaluated at the residual phase saturations and the derivative of the capillary pressure is unbounded, this system of two coupled nonlinear partial differential equations has coefficients that vanish in parts of the domain; this degeneracy makes the numerical analysis challenging.

At the continuous level, this problem has several equivalent formulations, see~\cite{chavent1986mathematical}. They are linked to the choice of primary unknowns selected among wetting phase and non-wetting phase pressure and saturation, or capillary pressure \cite{Helmig,Bastian2014}. A good state of the art can be found in the reference~\cite{arbogast92}. Up to our knowledge, the mathematical analysis of the system of equations was first done in~\cite{KroenerLuckaus,alt1985nonsteady}.  An equivalent formulation of the model, based on Chavent's global pressure that removes the degeneracy, was analyzed in~\cite{Chen2001,ChenEwing}. Since then, the global pressure formulation has been discretized and analyzed in many references \cite{ohlberger1997convergence,chen2001error,Michel03}, but unfortunately, this formulation is not used in engineering practice because the global pressure is not a physical unknown. Otherwise, with one exception, the numerical analysis of the discrete version of \eqref{eq:orignalpb},
has always been done under unrealistic assumptions that cannot be checked at the discrete level
\cite{douglas1983finite,EpshteynRiviere2009}.  Related to this line of work, the discretization of a degenerate
parabolic equation has been studied in the literature \cite{arbogastwheeler,yotov1997,woodward2000,eymard2006}. 
 The only paper that performs the complete numerical analysis of the discrete degenerate two-phase flow system written as above (i.e., in the form used by engineers) is 
the analysis on finite volumes done in reference~\cite{Eymard2003}. This motivates our extension of this work to finite elements. 

The remaining part of this introduction makes precise problem \eqref{eq:orignalpb}. The numerical scheme  is developed in Section \ref{sec:scheme}. Because of the nonlinearity and degeneracy of its equations, existence of a discrete solution requires that the discrete wetting phase saturation satisfies a maximum principle. This is the first object of Section \ref{sec:1staprioribdds}, the second one being basic a priori pressure estimates, after which existence is shown in Section \ref{sec:existence}. The most technical part, done in Section \ref{sec:addpressbdd}, is the derivation of an unconditional bound on an auxiliary pressure, which allows to use a compactness argument. Weak and strong convergences are proved in Section \ref{sec:Convg} and the equations satisfied by the limit are identified in Section \ref{sec:identlim}, thus confirming existence of a solution of the weak formulation \eqref{eq:Var}. Numerical results are presented in Section \ref{sec:Valid}.


\subsection{Model problem}
\label{sec:problem}

Let $\Omega \subset \real^d$, $d=2$ or $3$, be a bounded connected Lipschitz domain with boundary $\partial \Omega$ and unit exterior normal $\n$, and let $T$ be a final time. With the last relation in \eqref{eq:orignalpb}, $s_w$ is the only unknown saturation; so we set $s=  s_w$, and rewrite \eqref{eq:orignalpb} almost everywhere in $\Omega \times ]0,T[$ as
\begin{eqnarray}
\partial_t (\varphi s) - \nabla \cdot (\eta_w \nabla p_w)  = f_w(s_\mathrm{in}) \bar{q} - f_w(s) \underline{q} \label{eq:2ph1}\\
-\partial_t (\varphi s) - \nabla \cdot (\eta_o \nabla p_o)  = f_o(s_\mathrm{in}) \bar{q} - f_o(s) \underline{q}, \label{eq:2ph2}
\end{eqnarray}
complemented by a natural boundary condition almost everywhere on $\partial\Omega \times ]0,T[$
\begin{equation}
\label{eq:bdc}
\eta_w \nabla p_w\cdot \bfn=0, \quad \eta_o \nabla p_o \cdot \bfn = 0,
\end{equation}
and an initial condition almost everywhere in $\Omega$
\begin{equation}
\label{eq:initc}
s_w(\cdot, 0) = s_w^0, \quad 0 \le s_w^0 \le 1.
\end{equation}
The fractional flows are related to the mobilities by 
\begin{equation}
\label{eq:fluidfract}
f_w = \frac{\eta_w}{\eta_w+\eta_o},\quad f_o = 1 - f_w.
\end{equation}
Recall that the phase saturations sum up to 1 and the phase pressures are related by
\begin{equation}
\label{eq:cap.pressure}
p_c(s_w) = p_o - p_w.
\end{equation}

The first part of this work, up to Section \ref{subsec:Bddgrad.g}, is done under the following basic assumptions:

{\bf Assumptions}
\begin{itemize}
\item The porosity $\varphi$ is piecewise constant in space, independent of time, positive, bounded and uniformly bounded away from zero. 
\item The mobility of the wetting phase  $\eta_w\geq 0$ is continuous and increasing.
The mobility of the non-wetting phase $\eta_o\geq 0$ is continuous and  decreasing. This implies that the function $f_w$ is increasing
and the function $f_o$ is decreasing.
\item There is a positive constant $\eta_\ast$ such that
\begin{equation}\label{eq:lowerboundetas}
\eta_w(s) + \eta_o(s) \geq \eta_\ast, \quad \forall s \in [0,1].
\end{equation}
\item The  capillary pressure $p_c$ is a continuous, strictly decreasing function in $W^{1,1}(0,1)$.
\item  The flow rates at the injection and production wells, $\bar{q}, \underline{q}\in  L^2(\Omega \times ]0,T[)$ satisfy
\begin{equation}\label{eq:flowrate}
\bar{q} \geq 0, \quad \underline{q} \geq 0, \quad \int_\Omega \bar{q} = \int_\Omega \underline{q}.
\end{equation}
\item The prescribed input saturation $s_\mathrm{in}$ satisfies almost everywhere in $\Omega \times ]0,T[$
\begin{equation}\label{eq:inputsat}
0 \le s_\mathrm{in} \le 1 \Bk .
\end{equation} 
\end{itemize}

\Rd Since $p_c$, $\eta_\alpha$, $f_\alpha$, $\alpha = w,o$ are bounded above and below, it is convenient to extend them continuously by constants to $\real$. \Bk

Although the numerical scheme studied below does not discretize the global pressure, following~\cite{Eymard2003}, its convergence proof uses a number of auxiliary functions related to the global pressure. First,
 we  introduce  the primitive $g_c$ of $p_c$,\Bk
\begin{equation}
\label{eq:gc}
\forall x\in [0,1],\quad g_c(x)=\int_x^1 p_c(s) ds.
\end{equation}
Since $p_c$ is a continuous function on $[0,1]$, the function $g_c$ belongs to $\mathcal{C}^1([0,1])$. Next, we introduce the auxiliary pressures  $p_{wg}$, $p_{wo}$,  and $g$, \Bk
\begin{equation}\label{eq:defpwgpog}
\forall x\in [0,1],\quad p_{wg}(x) = \int_0^x f_o(s) p_c'(s) ds, \quad
p_{og}(x) = \int_0^x f_w(s) p_c'(s) ds,
\end{equation}
\begin{equation}
\label{eq:g}
\forall x \in [0,1],\quad g(x)= -\int_0^x \frac{\eta_w(s) \eta_o(s)}{\eta_w(s) + \eta_o(s)}p^\prime_c(s) ds. 
\end{equation}
Owing to \eqref{eq:fluidfract},
\begin{equation}
\label{eq:pwg+pog}
\forall x\in [0,1],\quad p_{wg}(x) + p_{og}(x) = \int_0^x p_c'(s) ds = p_c(x) - p_c(0).
\end{equation}
 Moreover, the derivative of $g$ satisfies formally the identities \Bk
\begin{equation}
\label{eq:pwgprime+gprime}
\forall x \in [0,1],\quad \eta_{ \alpha \Bk}(x) p^\prime_{\alpha  g}(x) +g^\prime(x) = 0, \quad   \alpha =w, o.
\end{equation} 


\subsection{Weak variational formulation}
\label{subsec:weakvar}

 By multiplying \eqref{eq:2ph1} and \eqref{eq:2ph2} with a smooth function $v$, say 
$v \in {\mathcal C}^1(\Omega \times [0,T])$ that vanishes at $t = T$, applying Green's formula in time and space, and using the boundary and initial conditions \eqref{eq:bdc} and \eqref{eq:initc}, we formally derive a weak variational formulation
\begin{align*}
-\int_0^T \int_\Omega \varphi\, s\, \partial_t v + \int_0^T \int_\Omega \eta_w \nabla\,p_w \cdot \nabla\,v = &\int_\Omega \varphi \, s^0 v(0) + \int_0^T \int_\Omega\big(f_w(s_\mathrm{in})\bar q - f_w(s)\underline{q}\big)v,\\
\int_0^T \int_\Omega \varphi\, s\, \partial_t v + \int_0^T \int_\Omega \eta_o \nabla\,p_o \cdot \nabla\,v = &-\int_\Omega \varphi\, s^0 v(0) + \int_0^T \int_\Omega\big(f_o(s_\mathrm{in})\bar q - f_o(s)\underline{q}\big)v.
\end{align*}
But in general, the pressures are not sufficiently smooth to make 
this formulation meaningful and following~\cite{chavent1986mathematical}, by using \eqref{eq:pwgprime+gprime}, it is rewritten in terms of the artificial pressures, 
\begin{equation}
\label{eq:Var}
\begin{split}
-\int_0^T \int_\Omega \varphi\, s\, \partial_t v + \int_0^T \int_\Omega \big(\eta_w &\nabla (p_w + p_{wg}(s)) + \nabla\,g(s)\big)\cdot \nabla\,v 
= \int_\Omega \varphi \, s^0 v(0)\\
& + \int_0^T \int_\Omega\big(f_w(s_\mathrm{in})\bar q - f_w(s)\underline{q}\big)v,\\
\int_0^T \int_\Omega \varphi\, s\, \partial_t v + \int_0^T \int_\Omega \big(\eta_o &\nabla (p_o - p_{og}(s)) -\nabla\,g(s)\big)\cdot \nabla\,v 
= -\int_\Omega \varphi\, s^0 v(0) \\
&+ \int_0^T \int_\Omega\big(f_o(s_\mathrm{in})\bar q - f_o(s)\underline{q}\big)v.
\end{split}
\end{equation}
The two formulas coincide when the pressures are slightly more regular.
With the above assumptions, problem \eqref{eq:Var} has been analyzed in reference~\cite{alt1985nonsteady}, where it is shown that it has a solution $s$ in $L^\infty(\Omega \times ]0,T[)$ with $g(s)$ in $L^2(0,T;H^1(\Omega))$, $p_\alpha$, $\alpha =w,o$, in $L^2(\Omega \times ]0,T[)$ with both $p_w + p_{wg}(s)$ and $p_o - p_{og}(s)$ in $L^2(0,T;H^1(\Omega))$.


\section{Scheme}
\label{sec:scheme}

From now on, we assume that $\Omega$ is a polygon ($d=2$) or Lipschitz polyhedron ($d=3$) so it can be entirely meshed.

\subsection{Meshes  and discretization spaces}
\label{subsec:mesh}

The mesh  ${\mathcal T}_h$  is  a  regular family of simplices $K$, with a constraint on the angle  that will be used to enforce the maximum principle: each angle is not larger than $\pi/2$, see \cite{Casado2006}.  This is easily constructed in 2D. In 3D, since we only investigate convergence we can embed the domain in a triangulated box. Moreover, since the porosity $\varphi$ is a piecewise constant, to simplify we also assume that the mesh is such that $\varphi$ is a constant per element. 
The parameter $h$ denotes the mesh size i.e., the maximum diameter of the simplices.
On this mesh, we consider the standard finite element space of order one
\begin{equation}
\label{eq:Xh}
X_h = \{v_h \in {\mathcal C}^0(\bar \Omega)\,;\, \forall K \in {\mathcal T}_h, v_h|_K \in {\mathbb P}_1\}.
\end{equation}
Thus the dimension of $X_h$ is the number of nodes, say $M$, of ${\mathcal T}_h$.
\Bk
Let $\phi_i$ be the Lagrange basis function, that is piecewise linear, and takes the value $1$ at node $i$ and the value 0 elsewhere.  As usual, the Lagrange interpolation operator $I_h \in {\mathcal L}({\mathcal C}^0(\bar \Omega);X_h)$ is defined by
\begin{equation}
\label{eq:Ih}
\forall v \in {\mathcal C}^0(\bar \Omega),\quad I_h(v) = \sum_{i=1}^{M} v_i \phi_i,
\end{equation}
where $v_i$ is the value of $v$ at the node of index $i$. 
It is easy to see that under the mesh condition, we have
\begin{equation}
\label{eq:acute}
\forall K, \quad \int_K \nabla \phi_i \cdot\nabla\phi_j \leq 0, \quad \forall i\neq j.
\end{equation}
For a given node $i$, we denote by $\Delta_i$ the union of elements sharing the node $i$  and by $\mathcal{N}(i)$ the set of indices of all the nodes in $\Delta_i$.
In the spirit of~\cite{GuermondPopov16}, we define
\begin{equation}
\label{eq:cij}
c_{ij} = \int_{\Delta_i \cap \Delta_j} | \nabla \phi_i \cdot\nabla \phi_j|, \quad \forall i,j.
\end{equation}
Recall that the trapezoidal rule on a triangle or a tetrahedron $K$ is
\[
\int_K f \approx \frac{1}{d+1}|K| \sum_{\ell=1}^{d+1} f_{i_\ell},
\]
where $f_{i_\ell}$ is the value of the function $f$ at the $\ell^{th}$ node (vertex), with global number $i_\ell$,  of  $K$.
For any region ${\mathcal O}$, the notation $\vert \mathcal{O}\vert$ means the measure (volume) of $\mathcal{O}$.

We define
\[
m_i = \frac{1}{d+1} \sum_{K\in\Delta_i}  \vert K\vert = \frac{1}{d+1} |\Delta_i|,
\]
 and taking into account the porosity $\varphi$, we  define  more generally 
\[
\tilde{m}_i(\varphi) = \frac{1}{d+1} \sum_{K\in\Delta_i} \varphi|_{K} \vert K\vert,
\]
 so that $m_i = \tilde{m}_i(1)$. It is well-known that the trapezoidal rule defines a norm on $X_h$, $\|\cdot \|_h$, uniformly equivalent to the $L^2$ norm.
Let $U_h \in X_h$ and write 
\[
U_h = \sum_{i=1}^{M} U^i \phi_i .
\]
 The discrete \Bk $L^2$ norm  associated with the trapezoidal rule is
\[
\Vert U_h \Vert_h = \left(\sum_{i=1}^M m_i |U^i|^2\right)^{\frac{1}{2}}.
\]
There exist positive constants $\underline{C}$ and $\overline{C}$, independent of $h$ and $M$, such that
\begin{equation}
\label{eq:equivnorm}
\forall U_h \in X_h,\quad \underline{C}\,\|U_h\|^2_{L^2(\Om)} \le \|U_h\|^2_h \le \overline{C}\,\|U_h\|^2_{L^2(\Om)}.
\end{equation}
 This is also true for other piecewise polynomial functions, but with possibly different constants. \Bk
The scalar product associated with this norm is denoted by $(\cdot,\cdot)_h$,
\begin{equation}
\label{eq:scalprod}
\forall U_h,V_h \in X_h,\quad (U_h,V_h)_h = \sum_{i=1}^M m_i U^i V^i.
\end{equation}
By analogy, we introduce the notation
\begin{equation}
\label{eq:weightscalprod}
\forall U_h,V_h \in X_h,\quad (U_h,V_h)^\varphi_h = \sum_{i=1}^M \tilde m_i(\varphi) U^i V^i.
\end{equation}
The assumptions on the porosity $\varphi$ imply that \eqref{eq:weightscalprod} defines a weighted scalar product associated with the weighted norm $\|\cdot \|_h^\varphi$,
$$\forall U_h \in X_h,\quad \|U_h \|_h^\varphi = \big((U_h,U_h)^\varphi_h\big)^{\frac{1}{2}},
$$
that satisfies the analogue of \eqref{eq:equivnorm}, with the same constants $\underline{C}$ and $\overline{C}$,
\begin{equation}
\label{eq:equivnormphi}
\forall U_h \in X_h,\quad \underline{C}\,(\min_\Omega \varphi)\,\|U_h\|^2_{L^2(\Om)} \le  \big(\|U_h \|_h^\varphi\big)^2 \le 
\overline{C}\,(\max_\Omega \varphi)\,\|U_h\|^2_{L^2(\Om)}.
\end{equation}

\subsection{Motivation of the space discretization}
\label{subsec:motiv}

While discretizing the time derivative is fairly straightforward, discretizing the space derivatives  is more delicate because we need a scheme that is consistent and satisfies the maximum principle for the saturation. For the moment, we freeze the time variable and focus on consistency in space. 
First, we recall a standard property of functions of $X_h$ on meshes satisfying \eqref{eq:acute}.

\begin{proposition}
\label{pro:equalityUV}
Under condition  \eqref{eq:acute},  the following identities holds for all $U_h$ and $V_h$ in $X_h$, with $c_{ij}$ defined in \eqref{eq:cij}:
\begin{equation}
\label{eq:nablauv}
\int_\Omega\nabla\,U_h \cdot \nabla\,V_h = -\sum_{i=1}^M U^i \sum_{j\neq i, j\in {\mathcal N}(i)}  c_{ij} \big(V^j - V^i\big)
= \frac{1}{2}\sum_{i=1}^M \sum_{j\neq i, j\in {\mathcal N}(i)} c_{ij}\big(U^j - U^i\big)\big(V^j - V^i\big).
\end{equation}
\end{proposition}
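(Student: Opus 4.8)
The plan is to reduce everything to the nodal stiffness entries $a_{ij}:=\int_\Omega\nabla\phi_i\cdot\nabla\phi_j$, using two ingredients: the partition of unity $\sum_{j=1}^M\phi_j\equiv 1$ on $\bar\Omega$, and the acute-angle estimate \eqref{eq:acute}. First I would write $U_h=\sum_i U^i\phi_i$ and $V_h=\sum_j V^j\phi_j$, so that $\int_\Omega\nabla U_h\cdot\nabla V_h=\sum_{i,j}U^iV^j a_{ij}$. Differentiating $\sum_j\phi_j\equiv 1$ gives $\sum_j\nabla\phi_j\equiv 0$, hence every row sum vanishes,
\[
\sum_{j=1}^M a_{ij}=\int_\Omega\nabla\phi_i\cdot\nabla\Big(\sum_{j=1}^M\phi_j\Big)=0 .
\]
Consequently, for each fixed $i$,
\[
\sum_{j=1}^M V^j a_{ij}=\sum_{j=1}^M (V^j-V^i)\,a_{ij}=\sum_{j\neq i,\,j\in\mathcal N(i)}(V^j-V^i)\,a_{ij},
\]
where the last step uses that $\nabla\phi_i\cdot\nabla\phi_j$ is supported in $\Delta_i\cap\Delta_j$, which has positive measure only if $j\in\mathcal N(i)$, and that the diagonal term carries the factor $V^i-V^i=0$.

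Next I would identify $a_{ij}$ for $j\neq i$. Since the integrand $\nabla\phi_i\cdot\nabla\phi_j$ is supported in $\Delta_i\cap\Delta_j$ and, by \eqref{eq:acute}, $\int_K\nabla\phi_i\cdot\nabla\phi_j\le 0$ on every simplex $K$, summing over the elements of $\Delta_i\cap\Delta_j$ gives
\[
a_{ij}=\int_{\Delta_i\cap\Delta_j}\nabla\phi_i\cdot\nabla\phi_j=-\int_{\Delta_i\cap\Delta_j}|\nabla\phi_i\cdot\nabla\phi_j|=-c_{ij},
\]
with $c_{ij}$ as in \eqref{eq:cij}. Substituting into the previous display yields the first identity,
\[
\int_\Omega\nabla U_h\cdot\nabla V_h=\sum_{i=1}^M U^i\sum_{j\neq i,\,j\in\mathcal N(i)}(V^j-V^i)(-c_{ij})=-\sum_{i=1}^M U^i\sum_{j\neq i,\,j\in\mathcal N(i)}c_{ij}\big(V^j-V^i\big).
\]

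Finally, the symmetric form is obtained by a pure relabelling argument using $c_{ij}=c_{ji}$. Expanding
\[
\tfrac12\sum_{i,j}c_{ij}(U^j-U^i)(V^j-V^i)=\tfrac12\sum_{i,j}c_{ij}\big(U^jV^j-U^jV^i-U^iV^j+U^iV^i\big),
\]
and swapping the dummy indices $i\leftrightarrow j$ in the terms containing $U^jV^j$ and $U^jV^i$ (legitimate because $c_{ij}=c_{ji}$) collapses the right-hand side to $\sum_{i,j}c_{ij}U^iV^i-\sum_{i,j}c_{ij}U^iV^j=-\sum_i U^i\sum_j c_{ij}(V^j-V^i)$, which is exactly the middle expression; one then restricts the inner $j$-sum to $j\neq i$, $j\in\mathcal N(i)$ exactly as before. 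I do not anticipate a genuine obstacle: the only delicate point is that the equality $a_{ij}=-c_{ij}$ is precisely where the non-obtuse mesh hypothesis \eqref{eq:acute} enters, and one must keep track of the partition-of-unity cancellation that turns the $a_{ij}$'s into discrete differences $V^j-V^i$; the rest is bookkeeping on finite sums.
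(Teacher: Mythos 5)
Your proof is correct and follows essentially the same route as the paper: partition of unity to force zero row sums of the stiffness matrix, the acute-mesh condition \eqref{eq:acute} to identify $\int_{\Delta_i\cap\Delta_j}\nabla\phi_i\cdot\nabla\phi_j=-c_{ij}$, and then a symmetry/relabelling step (the paper phrases this as symmetry of $c_{ij}$ plus anti-symmetry of $V^j-V^i$; your explicit expansion is the same computation). The paper simply compresses the first equality by citing Eymard, Gallou\"et and Herbin, whereas you spell out the details.
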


\begin{proof}
The first equality is obtained by using \eqref{eq:acute}, \eqref{eq:cij} and  the fact that
$$\sum_{j=1}^M \phi_j =1,$$
as in \cite{eymard2000finite} (Section 12.1).
For the second part, we use the symmetry of $c_{ij}$ and the anti-symmetry of $V^j - V^i$ to deduce that
$$-\sum_{i=1}^M U^i \sum_{j\neq i, j\in {\mathcal N}(i)}  c_{ij} \big(V^j - V^i\big) =
\frac{1}{2} \sum_{i=1}^M  \sum_{j\neq i, j\in {\mathcal N}(i)} c_{ij} \big(U^j - U^i\big)\big(V^j - V^i\big),
$$
which is the desired result.
\end{proof}

Note that $c_{ij}$ vanishes when $j \notin {\mathcal N}(i)$\Bk. Therefore, when there is no ambiguity it is convenient to write the above double sums on $i$ and $j$ with $i$ and $j$ running from $1$ to $M$. \Bk

As an immediate consequence of Proposition \ref{pro:equalityUV}, we have, by taking $V_h = U_h$,
\begin{equation}
\label{eq:normgrad}
\forall U_h \in X_h,\quad \|\nabla\,U_h\|_{L^2(\Om)} = \frac{1}{\sqrt{2}}\Big(\sum_{i,j=1}^M c_{ij} |U^j - U^i|^2\Big)^{\frac{1}{2}}.
\end{equation}

Now, we consider the case of the product of the gradients by a third function. Beforehand, we introduce the following notation: for indices $i$ and $j$ of two neighboring interior nodes, $\Delta_i \cap \Delta_j$ in two dimensions is the union of two triangles and in three dimensions the union of a number of tetrahedra bounded by a fixed constant, say $L$, determined by the regularity of the mesh. We shall use the following notation
 \begin{equation}
 \label{eq:cijk}
c_{ij,K}= \int_{K} |\nabla\,\phi_i \cdot \nabla\,\phi_j|,\quad w_K = \frac{1}{|K]}\int_{K} w.
\end{equation}
Note that
\begin{equation}
\sum_{K \subset \Delta_i \cap \Delta_j} c_{ij,K} = c_{ij}.
\label{eq:sumcijk}
\end{equation}
Then we have the following proposition:
\begin{proposition}
\label{pro:equality}
Let \eqref{eq:acute} hold. With the notation \eqref{eq:cijk}, the following identity holds for all $w$ in $L^1(\Omega)$: 
\begin{equation}
\label{eq:wnablaunablav}
\forall U_h,V_h \in X_h, \ \int_\Omega w \nabla\,U_h \cdot \nabla\,V_h = -\sum_{i=1}^M U^i \sum_{j=1}^M  \Big(\sum_{K \subset \Delta_i \cap \Delta_j} c_{ij,K} w_K\Big)\big(V^j - V^i\big),
\end{equation}
\end{proposition}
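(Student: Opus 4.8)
The plan is to localize the identity to a single simplex, where all the relevant gradients are constant vectors, and then sum over the mesh. The crucial observation is that for $U_h,V_h\in X_h$ the scalar field $\nabla U_h\cdot\nabla V_h$ is constant on each $K\in\mathcal{T}_h$; hence, for $w\in L^1(\Omega)$,
\[
\int_K w\,\nabla U_h\cdot\nabla V_h=\Big(\frac1{|K|}\int_K w\Big)\int_K\nabla U_h\cdot\nabla V_h=w_K\int_K\nabla U_h\cdot\nabla V_h ,
\]
which is the only place that the $L^1$-regularity of $w$ is used (it is exactly what makes $w_K$ meaningful). Summing over $K$ then reduces the statement to an element-wise version of the first identity in Proposition \ref{pro:equalityUV}.

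Next I would redo, element by element, the algebra of Proposition \ref{pro:equalityUV}. On a fixed $K$, expand $U_h=\sum_i U^i\phi_i$ and $V_h=\sum_j V^j\phi_j$ over the vertices of $K$ (the sums extend harmlessly to all nodes since $\phi_i\equiv 0$ on $K$ whenever $i$ is not a vertex of $K$), use $\sum_j\phi_j\equiv1$ on $K$ to get $\sum_j\nabla\phi_j=0$ on $K$, hence $\int_K\nabla\phi_i\cdot\nabla\phi_i=-\sum_{j\neq i}\int_K\nabla\phi_i\cdot\nabla\phi_j$, and obtain
\[
\int_K\nabla U_h\cdot\nabla V_h=\sum_i U^i\sum_{j\neq i}\Big(\int_K\nabla\phi_i\cdot\nabla\phi_j\Big)\big(V^j-V^i\big).
\]
Because $\nabla\phi_i\cdot\nabla\phi_j$ is constant on $K$ and, by the mesh condition \eqref{eq:acute}, nonpositive there for $i\neq j$, one has $\int_K\nabla\phi_i\cdot\nabla\phi_j=-\int_K|\nabla\phi_i\cdot\nabla\phi_j|=-c_{ij,K}$, so that $\int_K\nabla U_h\cdot\nabla V_h=-\sum_i U^i\sum_{j\neq i}c_{ij,K}\big(V^j-V^i\big)$.

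Finally I would recombine: $\int_\Omega w\,\nabla U_h\cdot\nabla V_h=\sum_K w_K\int_K\nabla U_h\cdot\nabla V_h=-\sum_i U^i\sum_{j\neq i}\big(\sum_K c_{ij,K}w_K\big)\big(V^j-V^i\big)$; since $c_{ij,K}=0$ unless $K\subset\Delta_i\cap\Delta_j$, the inner sum over $K$ collapses to $\sum_{K\subset\Delta_i\cap\Delta_j}c_{ij,K}w_K$, and the terms $j=i$ may be reinstated since they vanish, giving exactly \eqref{eq:wnablaunablav}. There is no genuine obstacle here; the only things needing care are the bookkeeping of which $c_{ij,K}$ are nonzero and the justification that replacing $w$ by the piecewise constant $w_K$ is exact — both are immediate once one notes that $\nabla U_h\cdot\nabla V_h$ is piecewise constant. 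Equivalently, one may simply carry the weight $w_K$ through the proof of Proposition \ref{pro:equalityUV}.
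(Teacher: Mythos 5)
Your proof is correct and is essentially the paper's argument, merely organized locally instead of globally: you localize to each $K$, factor out the constant $\nabla U_h\cdot\nabla V_h$ to expose $w_K$, redo the partition-of-unity manipulation of Proposition~\ref{pro:equalityUV} on $K$ with $\int_K\nabla\phi_i\cdot\nabla\phi_j=-c_{ij,K}$ (which is where \eqref{eq:acute} enters), and then aggregate; the paper instead works with the global quantities $d_{ij}=\int_\Omega w\,\nabla\phi_i\cdot\nabla\phi_j$, applies the same $\sum_j d_{ij}=0$ cancellation, and decomposes $d_{ij}=-\sum_{K}c_{ij,K}w_K$ at the end. These are the same computation in a different order, so no genuinely new idea is involved.
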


\begin{proof}
It is easy to prove that
\begin{equation}
\label{eq:dij}
 \int_\Omega  w \Bk \nabla\,U_h \cdot \nabla\,V_h = \sum_{i,j =1}^M d_{ij}  U^i V^j,
\end{equation}
where
\begin{equation}
\label{eq:dij1} 
d_{ij} = \int_{\Delta_i \cap \Delta_j} w (\nabla\,\phi_i \cdot \nabla\,
\phi_j) = \int_\Omega w (\nabla\,\phi_i \cdot \nabla\,
\phi_j).
\end{equation}
Again, we have for any $i$,
$$\sum_{j=1}^M d_{ij} = 0, \quad\mbox{and} \quad
d_{ii} = -\sum_{1\le j\le M, j\ne i} d_{ij},
$$
and by substituting this equality into \eqref{eq:dij}, we obtain
\begin{equation}
\label{eq:wnablaunablav1}
 \int_\Omega w \big(\nabla\,U_h \cdot \nabla\,V_h\big) = \sum_{i,j=1}^M U^i  d_{ij} \big(V^j- V^i\big).
 \end{equation}
But, in view of \eqref{eq:cijk} and \eqref{eq:dij1}, and since   $\nabla\,\phi_i \cdot \nabla\,\phi_j$ is a constant in each element $K$ contained in $\Delta_i \cap \Delta_j$,
\begin{equation}
\label{eq:dij2}
d_{ij} = - \sum_{K \subset \Delta_i \cap \Delta_j} c_{ij,K} w_K,
\end{equation}
and \eqref{eq:wnablaunablav} follows by substituting this equation into  \eqref{eq:wnablaunablav1}.
\end{proof}

Note that $d_{ij} = d_{ji}$ owing to \eqref{eq:dij2}.  
The first consequence of Proposition \ref{pro:equality} is that the right-hand side of \eqref{eq:wnablaunablav} is a consistent approximation of $(w, \nabla\,u \cdot \nabla\,v)$. 

\begin{proposition}
\label{pro:approx}
Let \eqref {eq:acute} hold, let $u$ and $v$ belong to $H^2(\Omega)$ and $w$ to $L^\infty(\Omega)$, and let  $U_h = I_h u$, $V_h = I_h v$ be defined by \eqref{eq:Ih}. Then, there exists a constant $C$, independent of $h$, $M$, $u$, $v$, and $w$, such that
\begin{equation}
\label{eq:uvapprox}
\Big|\int_\Omega w \nabla\,u \cdot \nabla\,v  + \sum_{i,j=1}^M U^i   \big(\sum_{K \subset \Delta_i \cap \Delta_j}c_{ij,K} w_K \big)  \big(V^j - V^i\big)\Big|\\
 \le C\,h\,\|w\|_{L^\infty(\Omega)} \|u\|_{H^2(\Omega)} \|v\|_{H^2(\Omega)}.
\end{equation}
\end{proposition}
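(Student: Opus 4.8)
The plan is to reduce the estimate \eqref{eq:uvapprox} to a local consistency error on each element, using the exact identity \eqref{eq:dij} from Proposition~\ref{pro:equality}. By that proposition (specifically the form \eqref{eq:dij1}), the sum appearing in \eqref{eq:uvapprox} equals $-\int_\Omega \wbar_h \nabla\,U_h\cdot\nabla\,V_h$ with $\wbar_h$ the piecewise-constant function equal to $w_K$ on each $K$, except that some care is needed because $w_K$ is an average rather than a nodal value; in fact $\sum_{K\subset\Delta_i\cap\Delta_j} c_{ij,K}w_K = \int_\Omega \wbar_h\,\nabla\phi_i\cdot\nabla\phi_j$ only up to the sign built into $c_{ij,K}$, and since $\nabla\phi_i\cdot\nabla\phi_j\le 0$ for $i\ne j$ this sign is exactly $-1$, while for $i=j$ one uses the partition-of-unity cancellation as in the proof of Proposition~\ref{pro:equality}. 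Hence the whole quantity inside the absolute value in \eqref{eq:uvapprox} is
\[
\int_\Omega w\,\nabla\,u\cdot\nabla\,v - \int_\Omega \wbar_h\,\nabla\,U_h\cdot\nabla\,V_h,
\]
and the task is to bound this difference by $C\,h\,\|w\|_{L^\infty}\|u\|_{H^2}\|v\|_{H^2}$.

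I would split this difference into three pieces by inserting intermediate terms: first replace $u,v$ by their interpolants inside the unmollified integral, i.e. compare $\int_\Omega w\,\nabla u\cdot\nabla v$ with $\int_\Omega w\,\nabla U_h\cdot\nabla V_h$; then replace $w$ by $\wbar_h$, i.e. compare $\int_\Omega w\,\nabla U_h\cdot\nabla V_h$ with $\int_\Omega \wbar_h\,\nabla U_h\cdot\nabla V_h$. For the first piece, write $\nabla u\cdot\nabla v - \nabla U_h\cdot\nabla V_h = (\nabla u-\nabla U_h)\cdot\nabla v + \nabla U_h\cdot(\nabla v - \nabla V_h)$, apply H\"older with $\|w\|_{L^\infty}$ pulled out, and use the standard $H^1$ interpolation estimate $\|\nabla(u-I_hu)\|_{L^2}\le C\,h\,\|u\|_{H^2}$ together with $\|\nabla V_h\|_{L^2}\le\|\nabla v\|_{L^2}+\|\nabla(v-V_h)\|_{L^2}\le C\|v\|_{H^2}$ (and similarly for $U_h$); this gives an $O(h)$ bound. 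For the second piece, on each element $K$ the gradients $\nabla U_h$ and $\nabla V_h$ are constant, so
\[
\int_K (w-w_K)\,\nabla U_h\cdot\nabla V_h = (\nabla U_h)|_K\cdot(\nabla V_h)|_K \int_K (w - w_K) = 0
\]
by the very definition $w_K = |K|^{-1}\int_K w$ in \eqref{eq:cijk}. So the second piece vanishes identically, and no $O(h)$ error is incurred there at all.

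Thus \eqref{eq:uvapprox} follows with $C$ the interpolation constant (which depends only on the shape-regularity of the mesh), and I would summarize: apply Proposition~\ref{pro:equality} to rewrite the discrete sum as $-\int_\Omega\wbar_h\nabla U_h\cdot\nabla V_h$, telescope through $\int_\Omega w\,\nabla U_h\cdot\nabla V_h$, bound the interpolation defect by $C h\|w\|_{L^\infty}\|u\|_{H^2}\|v\|_{H^2}$ via H\"older and the $H^1$-interpolation estimate, and observe that the $w$-averaging term integrates to zero elementwise. The only subtlety — and the step I'd be most careful about — is the first reduction: making sure that $\sum_{K}c_{ij,K}w_K$ really is $-\int_\Omega\wbar_h\nabla\phi_i\cdot\nabla\phi_j$ for all $i,j$ including $i=j$, which relies on $c_{ij,K}=-\int_K\nabla\phi_i\cdot\nabla\phi_j$ for $i\ne j$ under the acuteness hypothesis \eqref{eq:acute} and on the partition-of-unity identity for the diagonal, exactly as in Proposition~\ref{pro:equality}; everything after that is routine finite-element approximation theory.
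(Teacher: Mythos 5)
Your proof is correct and follows essentially the same route as the paper's: apply Proposition~\ref{pro:equality} to identify the discrete double sum with $-\int_\Omega w\,\nabla U_h\cdot\nabla V_h$ (your $\wbar_h$ detour is an equivalent restatement, since $\nabla U_h\cdot\nabla V_h$ is elementwise constant and thus $\int_\Omega(w-\wbar_h)\nabla U_h\cdot\nabla V_h=0$), then bound $\int_\Omega w(\nabla u\cdot\nabla v-\nabla U_h\cdot\nabla V_h)$ by H\"older and the $H^1$ interpolation estimate. The paper simply invokes \eqref{eq:wnablaunablav} directly with $w$ and skips the $\wbar_h$ intermediary, but the estimate and the constants are the same.
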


\begin{proof}
In view of the identity \eqref{eq:wnablaunablav}, the left-hand side of \eqref{eq:uvapprox} is bounded as follows:
\begin{align*}
\Big|\int_\Omega w \big(\nabla\,u \cdot \nabla\,v & - \nabla\,U_h \cdot \nabla\,V_h\big) \Big|\le  \|w\|_{L^\infty(\Omega)}\\
& \times \Big( \|\nabla(u-U_h)\|_{L^2(\Omega)} \|\nabla\,v\|_{L^2(\Omega)} + \|\nabla(v-V_h)\|_{L^2(\Omega)}\|\nabla\,U_h\|_{L^2(\Omega)}\Big).
\end{align*}
From here, \eqref {eq:uvapprox} is a consequence of standard finite element interpolation error.
\end{proof}

Now, if $w$ is in $W^{1,\infty}(\Omega)$, then again, standard finite element approximation shows that there  exists a constant $C$, independent of $h$, $K \subset \Delta_i \cap \Delta_j$, and $w$, such that
\begin{equation}
\label{eq:wapprox}
\big\|w_K - w\big\|_{L^\infty(K)} \le C\,h\,|w|_{W^{1,\infty}(K)} \le C\,h\,  |w|_{W^{1,\infty}(\Omega)}.
\end{equation}
As a consequence, we will show that in the error formula \eqref{eq:uvapprox}, the average
$w_K$
can be replaced by any value of $w$ in $K$. Since all $K$ in  $\Delta_i \cap \Delta_j$ share the edge, say $e_{ij}$, whose end points are the nodes with indices $i$ and $j$, then we can pick the value of $w$ at any point, say $\tilde W^{i,j}$, of $e_{ij}$. At this stage, we choose this value freely, but we prescribe that it be symmetrical with respect to $i$ and $j$, i.e.,
\begin{equation}
\label{eq:Wsymij}
\tilde W^{i,j} = \tilde W^{j,i} .
\end{equation}
Then we have the following approximation result.

\begin{theorem}
\label{thm:mainapprox}
With the assumption and notation of Proposition \ref{pro:approx}, there exists a constant C, independent of $h$ and $M$, such that for all $u$, and $v$ in $H^2(\Omega)$ and $w$ in $W^{1,\infty}(\Omega)$,
\begin{equation}
\label{eq:consist}
\int_\Omega w \nabla\,u \cdot \nabla\,v  = - \sum_{i,j=1}^M U^i  c_{ij}\tilde W^{i,j}  \big(V^j - V^i\big) + R,
\end{equation}
for any arbitrary value $\tilde W^{i,j}$ of $w$ in the common edge $e_{ij}$ satisfying \eqref{eq:Wsymij}, and the remainder $R$ satisfies
\begin{equation}
\label{eq:bddR}
|R| \le C\,h\,|w|_{W^{1,\infty}(\Omega)} \|u\|_{H^2(\Omega)} \|v\|_{H^2(\Omega)}.
\end{equation}
\end{theorem}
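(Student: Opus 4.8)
The plan is to bootstrap from Proposition~\ref{pro:approx}. By the exact identity \eqref{eq:wnablaunablav} applied to $U_h=I_h u$, $V_h=I_h v$, estimate \eqref{eq:uvapprox} already says that $\int_\Omega w\,\nabla u\cdot\nabla v$ equals $-\sum_{i,j=1}^M U^i\big(\sum_{K\subset\Delta_i\cap\Delta_j}c_{ij,K}w_K\big)(V^j-V^i)$ up to a remainder $R_1$ with $|R_1|\le C\,h\,\|w\|_{L^\infty(\Omega)}\|u\|_{H^2(\Omega)}\|v\|_{H^2(\Omega)}$. So the only new thing to prove is that replacing each cell average $w_K$ by the single edge value $\tilde W^{i,j}$ costs no more than $C\,h\,|w|_{W^{1,\infty}(\Omega)}\|u\|_{H^2(\Omega)}\|v\|_{H^2(\Omega)}$. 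To organize this, set
\[
\delta_{ij} := \sum_{K\subset\Delta_i\cap\Delta_j} c_{ij,K}\,\big(w_K-\tilde W^{i,j}\big),
\]
so that, using \eqref{eq:sumcijk}, the replacement error in \eqref{eq:consist} is exactly $-\sum_{i,j=1}^M U^i\,\delta_{ij}\,(V^j-V^i)$, and $R$ is the sum of this quantity and of $R_1$.

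Two properties of $\delta_{ij}$ drive the argument. First, a pointwise size bound: every $K$ contributing to $\delta_{ij}$ has $e_{ij}\subset\overline K$, and $\tilde W^{i,j}$ is a value of $w$ at a point of $e_{ij}$, so $|w_K-\tilde W^{i,j}|\le\|w_K-w\|_{L^\infty(K)}\le C\,h\,|w|_{W^{1,\infty}(\Omega)}$ by \eqref{eq:wapprox}; summing over $K$ with \eqref{eq:sumcijk} gives $|\delta_{ij}|\le C\,h\,|w|_{W^{1,\infty}(\Omega)}\,c_{ij}$. Second, and this is the crucial structural point, $\delta_{ij}=\delta_{ji}$: the set $\{K\subset\Delta_i\cap\Delta_j\}$, the weights $c_{ij,K}$ and the averages $w_K$ are symmetric in $(i,j)$, and $\tilde W^{i,j}=\tilde W^{j,i}$ is precisely the prescription \eqref{eq:Wsymij}. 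Using this symmetry together with the antisymmetry of $V^j-V^i$, exactly as in the second identity of Proposition~\ref{pro:equalityUV}, we antisymmetrize:
\[
-\sum_{i,j=1}^M U^i\,\delta_{ij}\,(V^j-V^i) \;=\; \tfrac12\sum_{i,j=1}^M \delta_{ij}\,\big(U^j-U^i\big)\big(V^j-V^i\big).
\]

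One then finishes by Cauchy--Schwarz in the index pair, the pointwise bound on $\delta_{ij}$, and the gradient identity \eqref{eq:normgrad}:
\[
\Big|\sum_{i,j=1}^M U^i\,\delta_{ij}\,(V^j-V^i)\Big|
\le C\,h\,|w|_{W^{1,\infty}(\Omega)}\Big(\sum_{i,j}c_{ij}|U^j-U^i|^2\Big)^{1/2}\Big(\sum_{i,j}c_{ij}|V^j-V^i|^2\Big)^{1/2}
= 2C\,h\,|w|_{W^{1,\infty}(\Omega)}\,\|\nabla U_h\|_{L^2(\Omega)}\|\nabla V_h\|_{L^2(\Omega)}.
\]
Since $\|\nabla U_h\|_{L^2(\Omega)}=\|\nabla I_h u\|_{L^2(\Omega)}\le C\|u\|_{H^2(\Omega)}$ and likewise for $v$ (stability of the Lagrange interpolant in $H^1$), this term obeys the required bound; adding $R_1$ gives \eqref{eq:bddR}. (The $\|w\|_{L^\infty(\Omega)}$ appearing in $R_1$ is absorbed into the $W^{1,\infty}$-norm on the right-hand side of \eqref{eq:bddR}; only the $w_K\to\tilde W^{i,j}$ part is genuinely controlled by the seminorm.)

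The one real obstacle is the antisymmetrization step, and realizing it is indispensable. A direct estimate $\big|\sum_{i,j}U^i\delta_{ij}(V^j-V^i)\big|\le C\,h\,|w|_{W^{1,\infty}(\Omega)}\sum_{i,j}|U^i|c_{ij}|V^j-V^i|$ followed by Cauchy--Schwarz yields only $C\,|w|_{W^{1,\infty}(\Omega)}\|U_h\|_{L^2(\Omega)}\|\nabla V_h\|_{L^2(\Omega)}$, because $\sum_{j}c_{ij}=\|\nabla\phi_i\|_{L^2(\Omega)}^2\sim h^{d-2}$ behaves like an inverse inequality and eats the factor $h$. Rewriting the factor $U^i$ as the increment $U^j-U^i$, which is legitimate exactly because $\delta_{ij}=\delta_{ji}$ (hence because of \eqref{eq:Wsymij}), restores the missing power of $h$; the remainder of the proof is routine interpolation theory.
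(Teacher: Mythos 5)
Your proof is correct and follows essentially the same route as the paper: decompose the error into the $w_K$-average consistency term from Proposition~\ref{pro:approx} plus the replacement error, observe that the replacement coefficient ($\delta_{ij}$, the paper's $R_{ij}$) is symmetric by \eqref{eq:Wsymij} and bounded by $C\,h\,|w|_{W^{1,\infty}(\Omega)}\,c_{ij}$ via \eqref{eq:wapprox} and \eqref{eq:sumcijk}, antisymmetrize as in Proposition~\ref{pro:equalityUV}, and finish with Cauchy--Schwarz, \eqref{eq:normgrad}, and $H^1$-stability of $I_h$. Your added remark on why the naive estimate fails (the $\sum_j c_{ij}\sim h^{d-2}$ inverse-type blow-up) is a useful explanation of why the symmetry hypothesis \eqref{eq:Wsymij} is essential, but it is commentary rather than a different argument.
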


\begin{proof}
We infer from \eqref{eq:sumcijk} and \eqref{eq:wnablaunablav}  that
$$
\int_\Omega w \big(\nabla\,U_h \cdot \nabla\,V_h\big) = - \sum_{i,j=1}^M U^i \big(V^j-V^i\big) \sum_{K \subset \Delta_i \cap \Delta_j} c_{ij,K} \big(w_K
 - \tilde W^{i,j}\big)
  - \sum_{i,j=1}^M U^i c_{ij} \big(V^j-V^i\big)\tilde W^{i,j}.
$$
Let
$$R_{ij} = \sum_{K \subset \Delta_i \cap \Delta_j} c_{ij,K} \big(w_K - \tilde W^{i,j}\big),
$$
which is symmetric in $i$ and $j$ by assumption \eqref{eq:Wsymij}.
As in Proposition \ref{pro:equalityUV}, the symmetry of $R_{ij}$ and the anti-symmetry of $V^j - V^i$, imply 
\begin{equation}
\label{eq:antisym}
- \sum_{i,j=1}^M U^i   R_{ij}  \big(V^j - V^i\big)
\le \frac{1}{2} \Big(\sum_{i,j=1}^M |R_{ij}| \big(U^j - U^i\big)^2\Big)^{\frac{1}{2}} \Big(\sum_{i,j=1}^M |R_{ij}| \big(V^j - V^i\big)^2\Big)^{\frac{1}{2}}.
\end{equation}
From the non negativity of $c_{ij,K}$, \eqref{eq:sumcijk}, and \eqref{eq:wapprox}, we infer that
$$|R_{ij}| \le \Big(\sum_{K\subset \Delta_i \cap \Delta_j} c_{ij,K}\Big)  C\,h\,|w|_{W^{1,\infty}(\Omega)} = c_{ij} C\,h\,|w|_{W^{1,\infty}(\Omega)}.
$$
Hence, with \eqref{eq:normgrad} and standard finite element approximation, 
$$
 \Big|\sum_{i,j=1}^M U^i   R_{ij}  \big(V^j - V^i\big)\Big| \le  C\,h\,|w|_{W^{1,\infty}(\Omega)} \|\nabla\,U_h\|_{L^2(\Om)} \|\nabla\, V_h\|_{L^2(\Om)} 
 \le C\,h\,|w|_{W^{1,\infty}(\Omega)} \|u\|_{H^2(\Omega)} \|v\|_{H^2(\Omega)}.
$$
The result follows by combining this inequality with \eqref{eq:uvapprox}.
\end{proof}

The above considerations show that
$$- \sum_{i,j=1}^M U^i  c_{ij}\tilde W^{i,j}  \big(V^j - V^i\big)\ \mbox{is a consistent approximation of}\ \int_\Omega w \nabla\,u \cdot \nabla\,v,
$$
for any symmetric choice of $\tilde W^{i,j}$ in   $e_{ij}$, the common edge of \Bk $\Delta_i \cap \Delta_j$. This will lead to the upwinded space discretization in the next subsection, see also~\cite{Michel03}.  Furthermore, for all real numbers $V^i$ and $\tilde W^{i,j}$ satisfying \eqref{eq:Wsymij}, $1 \le i,j \le M$, the symmetry of $c_{ij}$ and anti-symmetry of $V^j-V^i$ imply
\begin{equation}
\label{eq:sum0}
\sum_{i,j=1}^M  c_{ij}\tilde W^{i,j} (V^j-V^i) = 0.
\end{equation}


\subsection{Fully discrete scheme}
\label{subsec:time-space}

Let $\tau =\frac{T}{N}$ be the time step,  $t_n = n \tau$, the discrete times, $0 \le n \le N$. Regarding time, we shall use the standard $L^2$ projection $\rho_\tau$  defined on  $]t_{n-1}, t_n]$, \Bk for any function $f$ in $L^1(0,T)$,  by 
\begin{equation}
\label{eq:rhotau}
\rho_\tau(f)^n := \rho_\tau(f)|_{]t_{n-1}, t_n]} : = \frac{1}{\tau}\int_{t_{ n-1\Bk}}^{t_{ n \Bk }} f.
\end{equation}
Regarding space, we shall use a standard element-by-element $L^2$ projection $\rho_h$ as well as a 
nodal approximation operator $r_h$ defined at each node $\x_i$ for any function $g \in L^1(\Omega)$ by
\begin{equation}
\label{eq:rh}
r_h(g)(\x_i) = \frac{1}{|\Delta_i|}\int_{\Delta_i} g, \quad 1 \le i \le M,
\end{equation} 
and extended to $\Omega$ by $r_h(g) \in X_h$. The operator $\rho_h$ is defined for any $f$ in $L^1(\Omega)$ by $\rho_h(f)|_K = \rho_K(f)$ where,
in any element $K$,
\begin{equation}
\label{eq:rhoh}
\rho_K(f) = \frac{1}{|K|} \int_K f.
\end{equation}
 The initial saturation  $s_w^0$ is approximated by the operator $r_h$,
\begin{equation}
\label{eq:Sh0}
S_h^0 = r_h(s_w^0).
\end{equation}
The input saturation $s_\mathrm{in}$ is approximated  in space and time \Bk by
\begin{equation}
\label{eq:sinh}
s_\mathrm{in,h, \tau} = \rho_\tau(r_h(s_\mathrm{in})).
\end{equation}
Clearly, \eqref{eq:inputsat} implies \Rd in space and time \Bk
$$0 \le s_\mathrm{in,h, \Rd \tau \Bk} \le 1.
$$
In order to preserve \eqref{eq:flowrate}, the functions $\bar{q}$ and $\underline{q}$ are approximated by the functions $\bar{q}_{h,\tau}$ and $\underline{q}_{h,\tau}$ defined with $r_h$ and corrected as follows:
\begin{equation}
\label{eq:qhdef}
\bar{q}_{h,\tau} = \rho_\tau \left(r_h (\bar{q}) - \frac{1}{|\Omega|}\int_\Omega (r_h (\bar{q}) - \bar{q})\right),\quad
\underline{q}_{h,\tau} = \rho_\tau \left(r_h (\underline{q}) 
- \frac{1}{|\Omega|}\int_\Omega (r_h (\underline{q}) - \underline{q})\right).
\end{equation}
Since $\bar{q}_{h,\tau}$ and $\underline{q}_{h,\tau}$ are piecewise linears in space, they are exactly integrated by the trapezoidal rule and we easily derive from \eqref{eq:flowrate} and \eqref{eq:qhdef} that we  have for all $n$,
\begin{equation}
\big(\bar{q}_h^n, 1\big)_h = \big(\underline{q}_h^n, 1\big)_h.
\label{eq:qhprop}
\end{equation}
The set of primary unknowns is the discrete wetting phase saturation and
the discrete wetting phase pressure, $S_h^{n}$ and $P_{w,h}^{n}$, defined pointwise at time $t_n$ by:
\[
S_h^n = \sum_{i=1}^M S^{n,i} \phi_i, \quad P_{w,h}^n = \sum_{i=1}^M P_{w}^{n,i} \phi_i, \quad 1\leq n\leq N.
\]
Then the discrete non-wetting phase pressure $P_{o,h}^n$ defined by
$$P_{o,h}^n = \sum_{i=1}^M P_o^{n,i} \phi_i, \quad 1\leq n\leq N,$$
is a secondary unknown. 
The upwind scheme we propose for discretizing \eqref{eq:2ph1}--\eqref{eq:2ph2} is inspired by the finite volume scheme introduced and analyzed by Eymard {\it al} in~\cite{Eymard2003}. For each time step $n$, $1 \le n \le N$, the lines of the discrete equations are\\
\framebox{\parbox[c]{\textwidth}{ 
\begin{equation}
\frac{\tilde{m}_i(\varphi)}{\tau} (S^{n,i} - S^{n-1,i})
-\sum_{j=1}^M c_{ij} \eta_w(S^{n,ij}_w)  (P_w^{n,j}-P_w^{n,i})
= m_i \left( f_w(s_{\mathrm{in}}^{n,i}) \bar{q}^{n,i} - f_w(S^{n,i}) \underline{q}^{n,i}\right), 
\label{eq:scheme1}
\end{equation}
\begin{equation}
-\frac{\tilde{m}_i( \varphi)}{\tau} (S^{n,i} - S^{n-1,i})
-\sum_{j=1}^M c_{ij} \eta_o(S^{n,ij}_o) (P_o^{n,j}-P_o^{n,i})
= m_i \left( f_o(s_{\mathrm{in}}^{n,i}) \bar{q}^{n,i} - f_o(S^{n,i}) \underline{q}^{n,i}\right), \label{eq:scheme2}
\end{equation}
\begin{equation}
P_o^{n,i}-P_w^{n,i} = p_c(S^{n,i}), \quad 1 \leq i\leq M, 
\label{eq:scheme3}
\end{equation}
\begin{equation}
\sum_{i=1}^M  m_i P_w^{ n  ,i} = 0.
\label{eq:scheme4}
\end{equation}
}}
Here $i$ runs from $1$ to $M-1$ in \eqref{eq:scheme1} and from $1$ to $M$ in \eqref{eq:scheme2};  the upwind values $S^{n,ij}_w, S^{n,ij}_o$  are defined by 
\begin{equation}
\label{eq:Swij}
S_w^{n,ij}  = \left\{
\begin{array}{c}
S^{n,i} \quad\mbox{if}\quad P_w^{n,i}>P_w^{n,j}\\
S^{n,j} \quad\mbox{if}\quad P_w^{n,i}<P_w^{n,j}\\
\max(S^{n,i},S^{n,j}) \quad\mbox{if}\quad P_w^{n,i} = P_w^{n,j}
\end{array}
\right.
\end{equation}
\begin{equation}
\label{eq:Soij}
S_o^{n,ij} = \left\{
\begin{array}{c}
S^{n,i} \quad\mbox{if}\quad P_o^{n,i}>P_o^{n,j}\\
S^{n,j} \quad\mbox{if}\quad P_o^{n,i}<P_o^{n,j}\\
\min(S^{n,i},S^{n,j}) \quad\mbox{if}\quad P_o^{n,i} = P_o^{n,j}
\end{array}
\right.
\end{equation}
We observe  that
\[
S_w^{n,ij} = S_w^{n,ji}, \quad S_o^{n,ij} = S_o^{n,ji},
\]
 so that, if we interpret in \eqref{eq:scheme1} (respectively, \eqref{eq:scheme2}) $\eta_w(S^{n,ij}_w)$ (respectively, $\eta_o(S^{n,ij}_o)$) as $\tilde W^{i,j}$, then \eqref{eq:Wsymij} and hence \eqref{eq:sum0} hold.

\begin{remark}
\label{rem:i=M}
{\rm Before setting \eqref{eq:scheme1}--\eqref{eq:scheme4} in variational form, observe that:}

{\rm {\bf 1}.\ The scheme \eqref{eq:scheme1}-\eqref{eq:scheme4} forms a square system in the primary unknowns, $S_h^n$ and $P_w^n$.

{\bf 2}.\ Formula \eqref{eq:scheme1} is also valid for $i=M$. Indeed, we pass to the left-hand side the right-hand side of \eqref{eq:scheme1} and set $A^i$ the resulting line of index $i$. Let $\tilde  A^M$ denote what should be the line of index $M$, i.e.,
\begin{align*}
\tilde  A^M = \frac{\tilde{m}_M(\varphi)}{\tau} (S^{n,M} - S^{n-1,M}) 
-&\sum_{j =1}^M c_{Mj} \eta_w(S^{n,Mj}_w)  (P_w^{n,j}-P_w^{n,M})\\
&
- m_M \big( f_w(s_{\mathrm{in}}^{n,M}) \bar{q}^{n,M} - f_w(S^{n,M}) \underline{q}^{n,M}\big).
\end{align*}
Then, in view of  \eqref{eq:sum0},
$$\tilde  A^M = \sum_{i=1}^{M-1} A^i + \tilde  A^M = \sum_{i=1}^{M} \frac{\tilde{m}_i ( \varphi)}{\tau} (S^{n,i} - S^{n-1,i})
-\sum_{i=1}^{M} m_i \big( f_w(s_{\mathrm{in}}^{n,i}) \bar{q}^{n,i} - f_w(S^{n,i}) \underline{q}^{n,i}\big).
$$
By summing in the same fashion the lines of \eqref{eq:scheme2}, we obtain
$$
\sum_{i=1}^{M}\frac{\tilde{m}_i( \varphi)}{\tau} (S^{n,i} - S^{n-1,i}) =- \sum_{i=1}^{M}
 m_i \big( f_o(s_{\mathrm{in}}^{n,i}) \bar{q}^{n,i} - f_o(S^{n,i}) \underline{q}^{n,i}\big).
 $$
A combination of these two equations yields
$$
\tilde  A^M = -\sum_{i=1}^{M}m_i \Big( (f_w(s_{\mathrm{in}}^{n,i})+ f_o(s_{\mathrm{in}}^{n,i})) \bar{q}^{n,i} - (f_w(S^{n,i}) +
f_o(S^{n,i})) \underline{q}^{n,i}\Big)
 = -\sum_{i=1}^{M}m_i (\bar{q}^{n,i}-\underline{q}^{n,i})
= 0,
$$
by virtue of \eqref{eq:fluidfract}, the definition \eqref{eq:rhotau}, and \eqref{eq:flowrate}.

{\bf 3}.\ In \eqref{eq:scheme1} (respectively,  \eqref{eq:scheme2}), any constant can be added to $P_w$ (respectively,  $P_o$), but in view of \eqref{eq:scheme3}, the constant must be the same for both pressures. The last equation \eqref{eq:scheme4} is added to resolve this constant.
}\qed
\end{remark}

As usual, it is convenient to associate time functions $S_{h,\tau}$, $P_{\alpha,h,\tau}$ with the sequences indexed by $n$. These are piecewise constant in time in $]0,T[$, for instance
\begin{equation}
\label{eq:Pconst}
P_{\alpha,h,\tau}(t,x) = P_{\alpha,h}^{n}(x),\ \alpha = w,o, \quad  \forall (t,x)\in  \Omega \times  ]t_{n-1},t_{n}].
\end{equation}

In view of the material of the previous subsection, we introduce the following form:
\begin{equation}
\label{eq:quadupwind}
\forall W_h, U_h, V_h, Z_h \in X_h,\ \ [Z_h, W_h;\Rd V_h,U_h\Bk]_h = \sum_{i,j= 1}^M U^i  c_{ij} \tilde W^{ij} (V^j - V^i),
\end{equation}
where the first argument $Z_h$ indicates that the choice of $\tilde W^{ij}$ depends on $Z_h$. Such dependence, used for the upwinding, will be specified further on, but {\em it is assumed from now on} that $\tilde W^{ij}$ satisfies \eqref{eq:Wsymij}. Considering \eqref{eq:sum0}, the form satisfies the following properties,
\begin{equation}
\label{eq:quadup1}
\forall Z_h, W_h, V_h \in X_h,\quad [Z_h, W_h;\Rd V_h,1\Bk]_h =0,
\end{equation}
\begin{equation}
\label{eq:quadup2}
\forall Z_h, W_h, V_h \in X_h,\quad [Z_h, W_h;V_h,V_h]_h = - \frac{1}{2} \sum_{i,j=1}^M c_{ij}\tilde W_{ij} 
(V^i-V^j)^2.
\end{equation}
This last property is derived by the same argument as in proving \eqref{eq:nablauv}.

With the above notation, and taking into account that \eqref{eq:scheme1} extends to $i=M$, the scheme \eqref{eq:scheme1}--\eqref{eq:scheme4} has the equivalent variational form. Starting from $S_h^0$, see \eqref{eq:Sh0},\\
\framebox{\parbox[c]{\textwidth}{find $S_h^{n}$, $P_{wh}^{n}$, and  $P_{o,h}^{n}$ in $X_h$, for $1 \le n \le N$, solution of, for all $\theta_h$ in $X_h$,
\begin{equation}
 \frac{1}{\tau} (S_h^{n} - S_h^{n-1},\theta_h)_h^\varphi -\big[P_{w,h}^{n},I_h(\eta_w(S_h^{n})); P_{w,h}^{n},\theta_h\big]_h 
 = \big(I_h(f_w(s_{\mathrm{in},h}^{n}))\bar{q}_h^{n} - I_h(f_w(S_h^{n}))\underline{q}_h^{n},\theta_h\big)_h\label{eq:vars1}
\end{equation}
\begin{equation}
-\frac{1}{\tau} (S_h^{n} - S_h^{n-1},\theta_h)_h^\varphi
-\big[P_{o,h}^{n},I_h(\eta_o(S_h^{n})); P_{o,h}^{n},\theta_h\big]_h 
= \big(I_h(f_o(s_{\mathrm{in},h}^{n}))\bar{q}_h^{n} - I_h(f_o(S_h^{n}))\underline{q}_h^{n},\theta_h\big)_h\label{eq:vars2}
\end{equation}
\begin{equation}
P_{o,h}^{n}-P_{w,h}^{n} = I_h(p_c(S_h^{n})),  \label{eq:vars3}
\end{equation}
\begin{equation}
\big(P_{w,h}^{n},1\big)_h = 0,
\label{eq:vars4}
\end{equation}
}}
where the choice of $\eta_w(S_h^{n})$ in the left-hand side of \eqref{eq:vars1} (respectively, $\eta_o(S_h^{n})$ in the left-hand side of \eqref{eq:vars2}) is given by \eqref{eq:Swij} (respectively \eqref{eq:Soij}). Strictly speaking, the interpolation operator $I_h$ is introduced in \eqref{eq:vars1} and
\eqref{eq:vars2} because the forms are defined for functions of $X_h$, but for the sake of simplicity, since only nodal values are used, it may be dropped further on.

We shall see that under the above basic hypotheses, the discrete problem \eqref{eq:vars1}--\eqref{eq:vars4} has at least one solution. In the sequel, we shall use the following discrete auxiliary pressures (compare with \eqref{eq:Var}):
\begin{equation}
\label{eq:discraux}
U_{w, h, \tau} = P_{w,h,\tau} + I_h(p_{wg}(S_{h,\tau})),\quad U_{o, h, \tau} = P_{o,h,\tau} - I_h(p_{og}(S_{h,\tau})).
\end{equation}
The following theorem is the main result of this work:

\begin{theorem}
\label{thm:mainresult}
Under the above basic hypotheses and the additional assumptions \eqref{eq:eta'w}--\eqref{eq:pc'}, the discrete solutions converge up to subsequences as follows:
\begin{align*}
\lim_{(h,\tau) \to (0,0)} S_{h,\tau} &=  s  \quad \mbox{ strongly in}\ L^2(\Omega \times ]0,T[),\\
\lim_{(h,\tau) \to (0,0)} U_{w, h, \tau} = p_w + p_{wg}(s), \quad \lim_{(h,\tau) \to (0,0)} U_{o, h, \tau} &= p_o - p_{og}(s), \quad \mbox{ weakly in}\ L^2(0,T;H^1(\Omega) ),\\
\lim_{(h,\tau) \to (0,0)} P_{\alpha,h,\tau} &= p_\alpha, \quad \mbox{ weakly in}\ L^2(\Omega \times ]0,T[), \alpha = w,o,
\end{align*}
where $p_w + p_{wg}$, $p_o - p_{wg}$, and $s$ solve the weak formulation \eqref{eq:Var}.
\end{theorem}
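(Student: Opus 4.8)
The proof assembles the a priori estimates, the existence result, and the compactness machinery of Sections~\ref{sec:1staprioribdds}--\ref{sec:identlim}, following the blueprint of~\cite{Eymard2003}, and then passes to the limit term by term in \eqref{eq:vars1}--\eqref{eq:vars4}. The first block of estimates (Section~\ref{sec:1staprioribdds}) is the maximum principle $0\le S_{h,\tau}\le 1$: since \eqref{eq:acute} gives $c_{ij}\ge 0$, the upwind choice \eqref{eq:Swij} together with the monotonicity of $\eta_w$ and $f_w$ turns \eqref{eq:scheme1}--\eqref{eq:scheme3} into a monotone (``$M$-matrix'') scheme, so evaluating it at a node and time level realizing the extreme nodal value of $S$ forces that value into $[0,1]$. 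Testing \eqref{eq:vars1} with $P_{w,h}^n$, \eqref{eq:vars2} with $P_{o,h}^n$, adding, using \eqref{eq:vars3} and the convexity of the primitive $g_c$ of \eqref{eq:gc} (which holds because $p_c$ is decreasing), then telescoping the time-difference term and invoking \eqref{eq:quadup2}, yields the basic ``degenerate energy'' bound on $\sum_n\tau\sum_{i,j}c_{ij}\,\eta_\alpha(S_\alpha^{n,ij})(P_\alpha^{n,j}-P_\alpha^{n,i})^2$, $\alpha=w,o$, together with --- after re-introducing $p_{wg},p_{og},g$ of \eqref{eq:defpwgpog}--\eqref{eq:g} and exploiting \eqref{eq:pwgprime+gprime} --- a uniform bound on $I_h(g(S_{h,\tau}))$ in $L^2(0,T;H^1(\Omega))$. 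On the strength of these bounds, existence of a discrete solution (Section~\ref{sec:existence}) follows from a Brouwer / topological-degree argument applied to the square system identified in Remark~\ref{rem:i=M}.

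The decisive additional ingredient (Section~\ref{sec:addpressbdd}) is an unconditional bound, uniform in $(h,\tau)$, on the discrete auxiliary (global) pressure. Note first that by \eqref{eq:pwg+pog}, \eqref{eq:vars3} and linearity of $I_h$ one has $U_{w,h,\tau}-U_{o,h,\tau}\equiv -p_c(0)$, so a single pressure is really at stake. Recombining \eqref{eq:scheme1} and \eqref{eq:scheme2} so that the degenerate phase fluxes reassemble --- using $\eta_\alpha p_{\alpha g}'+g'=0$ at the discrete level, which is precisely where the additional smoothness assumptions \eqref{eq:eta'w}--\eqref{eq:pc'} and the consistency estimate of Theorem~\ref{thm:mainapprox} are needed to keep the upwinding errors under control --- and using the non-degeneracy $\eta_w+\eta_o\ge\eta_\ast$ of \eqref{eq:lowerboundetas}, one bounds the gradient of $U_{w,h,\tau}$ (equivalently $U_{o,h,\tau}$) in $L^2(\Omega\times]0,T[)$; the normalization \eqref{eq:vars4} and a discrete Poincar\'e--Wirtinger inequality then upgrade this to a bound in $L^2(0,T;H^1(\Omega))$. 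Since $p_{wg},p_{og}$ are bounded ($p_c\in W^{1,1}(0,1)$), this forces $L^2(\Omega\times]0,T[)$ bounds on $P_{w,h,\tau}$, $P_{o,h,\tau}$ and, rewriting $\eta_w\nabla P_w=\eta_w\nabla U_w+\nabla g(S)$, an honest $L^2$ bound on the phase fluxes; testing the scheme with the shifted saturation then gives the time-translation estimate on $\varphi S_{h,\tau}$.

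Compactness and limit identification (Sections~\ref{sec:Convg}--\ref{sec:identlim}) follow. Under \eqref{eq:eta'w}--\eqref{eq:pc'} the strictly monotone $g$ is Lipschitz, so from the $L^2(0,T;H^1)$ bound on $I_h(g(S_{h,\tau}))$ (space translates) and the time-translation estimate on $\varphi S_{h,\tau}$ combined with $(g(a)-g(b))(a-b)\ge\|g'\|_\infty^{-1}(g(a)-g(b))^2$ (time translates of $g(S_{h,\tau})$), a Kolmogorov--Riesz--Fr\'echet argument gives, up to a subsequence, $g(S_{h,\tau})\to\ell$ strongly in $L^2(\Omega\times]0,T[)$ and a.e.; continuity of $g^{-1}$ and boundedness then yield $S_{h,\tau}\to s:=g^{-1}(\ell)$ strongly in $L^2(\Omega\times]0,T[)$ and a.e. The bounds above give $U_{w,h,\tau}\rightharpoonup U_w$, $U_{o,h,\tau}\rightharpoonup U_o$ weakly in $L^2(0,T;H^1(\Omega))$ and $P_{\alpha,h,\tau}\rightharpoonup p_\alpha$ weakly in $L^2(\Omega\times]0,T[)$; strong convergence of $S_{h,\tau}$ with continuity and boundedness of $p_{wg},p_{og},p_c$ gives $I_h(p_{\alpha g}(S_{h,\tau}))\to p_{\alpha g}(s)$ and $I_h(p_c(S_{h,\tau}))\to p_c(s)$ strongly in $L^2$, whence $U_w=p_w+p_{wg}(s)$, $U_o=p_o-p_{og}(s)$, and --- passing to the limit in \eqref{eq:vars3} --- $p_o-p_w=p_c(s)$. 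Finally, for $v\in\mathcal C^1(\bar\Omega\times[0,T])$ with $v(\cdot,T)=0$, one inserts $\theta_h=I_h(v(\cdot,t_n))$ in \eqref{eq:vars1}--\eqref{eq:vars2}, multiplies by $\tau$ and sums over $n$: the time term is treated by discrete summation by parts, consistency of $(\cdot,\cdot)_h^\varphi$ with $\int_\Omega\varphi\,\cdot\,$ ($\varphi$ elementwise constant, norm equivalence \eqref{eq:equivnormphi}) and $S_h^0=r_h(s_w^0)\to s_w^0$, giving $-\int_0^T\!\int_\Omega\varphi s\,\partial_tv-\int_\Omega\varphi s_w^0\,v(0)$; the diffusion term is rewritten, \emph{at the discrete level}, by splitting $P_w^j-P_w^i=(U_w^j-U_w^i)-(p_{wg}(S^j)-p_{wg}(S^i))$ and using $\eta_\alpha p_{\alpha g}'+g'=0$ (error controlled via \eqref{eq:eta'w}--\eqref{eq:pc'}) as $-[\,P_{w,h}^n,\eta_w(S_h^n);P_{w,h}^n,\theta_h\,]_h=\int_\Omega\eta_w(S_h^n)\nabla U_{w,h}^n\!\cdot\!\nabla\theta_h+\int_\Omega\nabla I_h(g(S_h^n))\!\cdot\!\nabla\theta_h+R_h^n$ with $\sum_n\tau|R_h^n|\to0$, so that Propositions~\ref{pro:equalityUV}--\ref{pro:equality}, the strong convergence of the coefficient $\eta_w(S_h)$ and of $\nabla\theta_h,\nabla I_h(g(S_h))$, and the weak convergence of $\nabla U_{w,h}$ pass to the limit and reproduce $\int_0^T\!\int_\Omega(\eta_w\nabla(p_w+p_{wg}(s))+\nabla g(s))\!\cdot\!\nabla v$; the source term converges by continuity and boundedness of $f_w,f_o$ and the consistency of $\bar q_{h,\tau},\underline q_{h,\tau},s_{\mathrm{in},h,\tau}$ (cf. \eqref{eq:qhprop}). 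Treating \eqref{eq:vars2} identically produces the second equation of \eqref{eq:Var}, so $(s,p_w,p_o)$ solves \eqref{eq:Var}, which in particular re-proves existence of a weak solution.

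\textbf{Main obstacle.} I expect the hard core to be the unconditional $L^2(0,T;H^1)$ bound on the auxiliary pressure (Section~\ref{sec:addpressbdd}): the basic energy estimate controls only $\sqrt{\eta_\alpha}\,\nabla P_\alpha$, which degenerates where the mobilities vanish, so recovering full control of the global pressure demands the careful recombination of the two phase equations together with the extra assumptions on $\eta_w'$, $\eta_o'$, $p_c'$ and a meticulous accounting of the upwinding and consistency errors --- the very absence of a discrete chain rule that also makes the passage to the limit in the degenerate diffusion term delicate.
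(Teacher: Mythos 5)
Your high-level skeleton (maximum principle, degenerate energy estimate, topological-degree existence, compactness via time translations of $g(S_{h,\tau})$, term-by-term passage to the limit) matches the paper, but you have reversed the roles of the two central a priori bounds and proposed a derivation of the crucial one that cannot survive the absence of a discrete chain rule. You claim the $L^2(0,T;H^1(\Omega))$ bound on $I_h(g(S_{h,\tau}))$ falls out of the basic energy estimate by ``exploiting \eqref{eq:pwgprime+gprime},'' and that the ``decisive additional ingredient'' of Section~\ref{sec:addpressbdd}, needing the extra assumptions \eqref{eq:eta'w}--\eqref{eq:pc'}, is the bound on the auxiliary pressure $U_{\alpha,h,\tau}$. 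The paper does exactly the opposite. The gradient bound on $U_{\alpha,h,\tau}$ (Theorem~\ref{thm:globalpressurebd}) follows already in Section~\ref{sec:1staprioribdds}, from the degenerate energy estimate combined with Lemma~\ref{lem:boundetap}, and requires only the basic hypotheses (monotonicity, \eqref{eq:lowerboundetas}); the extra assumptions \eqref{eq:eta'w}--\eqref{eq:pc'} are not used there at all. It is the bound on $\nabla I_h(g(S_{h,\tau}))$ (Theorem~\ref{thm:grad.gfinal}) that occupies the whole of Section~\ref{sec:addpressbdd} and genuinely needs \eqref{eq:eta'w}--\eqref{eq:pc'}.

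Your proposed route to the $g$-bound is the continuous chain-rule argument, and it breaks at the discrete level for precisely the reason you later acknowledge. Discretely, \eqref{eq:pwgprime+gprime} only gives, for some mean-value point $\xi$ between $S^{n,i}$ and $S^{n,j}$, the identity $g(S^{n,j})-g(S^{n,i})=-\eta_w(\xi)\bigl(p_{wg}(S^{n,j})-p_{wg}(S^{n,i})\bigr)=-\eta_w(\xi)(U_w^{n,j}-U_w^{n,i})+\eta_w(\xi)(P_w^{n,j}-P_w^{n,i})$; to absorb the last term into the energy bound you would need $\eta_w(\xi)\lesssim\eta_w(S_w^{n,ij})$, but $S_w^{n,ij}$ is selected by the sign of $P_w^{n,j}-P_w^{n,i}$, not by the ordering of $S^{n,i},S^{n,j}$, and $\xi$ lands on the wrong side whenever the saturation gradient opposes the pressure gradient. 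This is why the paper instead introduces the discrete total flux $F^{n,ij}$ \eqref{eq:discglobfluxF}, the auxiliary quantities $C_\alpha^{ij}$, and a long case analysis (Propositions~\ref{pro:cap.Si<Sj}--\ref{pro:cap.Pwi=Pwj.Poi<Poj}) to establish first the \emph{cross} bound $\sum_n\tau\sum_{i,j}c_{ij}\bigl(f_w(S^{n,j})-f_w(S^{n,i})\bigr)\bigl(g(S^{n,j})-g(S^{n,i})\bigr)\le C$ (Theorem~\ref{thm:grad.g}), and only then converts it to a bound on $\nabla I_h(g)$ via the pointwise comparison $g'\le Cf_w'$ \eqref{eq:gprime}, which is exactly where \eqref{eq:eta'w}--\eqref{eq:pc'} enter. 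Your plan omits this entire mechanism. For the same reason, in the limit identification your assertion that $\sum_n\tau|R_h^n|\to 0$ is not a consequence of the consistency Theorem~\ref{thm:mainapprox} (which assumes $H^2$ smoothness you do not have): the paper must instead estimate these commutator-type errors directly via Propositions~\ref{pro:T2hpro1}--\ref{pro:1-x1notsmall}, partitioning index pairs according to whether $S^{n,j}$ (resp.\ $1-S^{n,i}$) is below or above a mesh-dependent threshold $h^{\gamma}$, again crucially exploiting the power-law behavior \eqref{eq:eta'w}--\eqref{eq:pc'}. Until you replace the chain-rule step with an argument of this type, the plan has a genuine gap at its core.
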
 

The proof of the theorem requires several steps that are covered in the remaining of this work.  

\section{First  a priori bounds}
\label{sec:1staprioribdds}

 This section is devoted to basic a priori bounds used in proving existence of a discrete solution. Existence is fairly technical and will be postponed till Section \ref{sec:existence}. The first step is a key bound on the discrete saturation. In a second step, this bound will lead to a pressure estimate and in particular to a bound on the discrete analogue of auxiliary pressures.


\subsection{Maximum principle}
\label{subsec:maxprinciple}

The scheme \eqref{eq:scheme1}--\eqref{eq:scheme4} satisfies the maximum principle property. 
The proof given below uses a standard argument as in \cite{Eymard2003}.

\begin{theorem}\label{thm:maxprinc}
The following bounds hold:
\begin{equation}\label{eq:maxprinc}
0\leq S_{h,\tau} \leq 1. 
\end{equation}
\end{theorem}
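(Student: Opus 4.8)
The plan is to prove both inequalities in \eqref{eq:maxprinc} by induction on the time index $n$, working entirely with the nodal values: since $S_{h,\tau}$ equals $S_h^{n}=\sum_{i=1}^{M}S^{n,i}\phi_i$ on $]t_{n-1},t_n]$ and the Lagrange basis $\{\phi_i\}$ is a nonnegative partition of unity, it suffices to show $0\le S^{n,i}\le 1$ for every node $i$ and every $n$. For $n=0$, \eqref{eq:Sh0} and \eqref{eq:rh} express $S^{0,i}$ as the average of $s_w^0$ over $\Delta_i$, which lies in $[0,1]$ by \eqref{eq:initc}. So assume $0\le S^{n-1,i}\le 1$ for all $i$; I will deduce $S^{n,i}\ge 0$ from \eqref{eq:scheme1} and $S^{n,i}\le 1$ from \eqref{eq:scheme2}, arguing in each case by contradiction at an extremal node, following the standard pattern of \cite{Eymard2003}.

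For the lower bound, let $i_0$ realize $\min_{1\le i\le M}S^{n,i}$ and suppose $S^{n,i_0}<0$. Equation \eqref{eq:scheme1} holds at $i_0$ --- also when $i_0=M$, by part 2 of Remark \ref{rem:i=M} --- so
\[
\frac{\tilde m_{i_0}(\varphi)}{\tau}\big(S^{n,i_0}-S^{n-1,i_0}\big)=\sum_{j=1}^{M}c_{i_0 j}\,\eta_w\!\big(S_w^{n,i_0 j}\big)\big(P_w^{n,j}-P_w^{n,i_0}\big)+m_{i_0}\big(f_w(s_{\mathrm{in}}^{n,i_0})\bar q^{n,i_0}-f_w(S^{n,i_0})\underline q^{n,i_0}\big).
\]
The left-hand side is strictly negative, since $\tilde m_{i_0}(\varphi)>0$ and $S^{n,i_0}<0\le S^{n-1,i_0}$. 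I claim the right-hand side is nonnegative, which is the contradiction. In the flux sum I split the neighbours $j$ (the others contribute $0$ because $c_{i_0 j}=0$) according to the sign of $P_w^{n,j}-P_w^{n,i_0}$: by the upwinding rule \eqref{eq:Swij}, if $P_w^{n,j}>P_w^{n,i_0}$ the coefficient is $\eta_w(S^{n,j})\ge 0$ and, as $c_{i_0 j}\ge 0$, the term is $\ge 0$; if $P_w^{n,j}\le P_w^{n,i_0}$ the coefficient is $\eta_w(S^{n,i_0})$ (or is multiplied by a vanishing pressure difference), and $\eta_w(S^{n,i_0})=0$ because $S^{n,i_0}<0$ lies where the wetting mobility has degenerated --- $\eta_w$ is nonnegative, vanishes at and below the residual wetting saturation, and is extended by that constant value. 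For the source, $f_w(s_{\mathrm{in}}^{n,i_0})\bar q^{n,i_0}\ge 0$ by \eqref{eq:inputsat}, \eqref{eq:flowrate} and $f_w\ge 0$, while $f_w(S^{n,i_0})=0$ by \eqref{eq:fluidfract} and the same degeneracy, so the production term drops. Hence the right-hand side is $\ge 0$, contradicting its strict negativity, and $S^{n,i}\ge 0$ for all $i$.

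The upper bound is symmetric, now using \eqref{eq:scheme2}, which holds at every $i$, at a node $i_0$ realizing $\max_i S^{n,i}$ and assumed to satisfy $S^{n,i_0}>1$: the term $-\frac{\tilde m_{i_0}(\varphi)}{\tau}(S^{n,i_0}-S^{n-1,i_0})$ is strictly negative, whereas on the other side $f_o(s_{\mathrm{in}}^{n,i_0})\bar q^{n,i_0}\ge 0$, $f_o(S^{n,i_0})=0$ since $S^{n,i_0}>1$ lies where $\eta_o$ (hence $f_o$) degenerates, and the upwinded flux $\sum_{j}c_{i_0 j}\eta_o(S_o^{n,i_0 j})(P_o^{n,j}-P_o^{n,i_0})$ is $\ge 0$ by the same splitting --- using \eqref{eq:Soij}, the nonnegativity of $\eta_o$ and of $c_{i_0 j}$, and the vanishing of $\eta_o(S^{n,i_0})$ in the outflow terms. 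This contradicts the sign of the left-hand side, so $S^{n,i}\le 1$, and the induction yields \eqref{eq:maxprinc}.

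The crux, and the only genuinely delicate point, is the sign of the upwinded flux together with that of the $f_\alpha(S^{n,i_0})$ production term at the extremal node: it is exactly here that the particular upwind choice \eqref{eq:Swij}--\eqref{eq:Soij} and the degeneracy of the mobilities at the residual saturations are indispensable, since without the latter the outflow part of the flux and the production term would carry the wrong sign and no contradiction would follow.
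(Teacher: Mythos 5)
Your strategy—argue by contradiction at an extremal node using a single phase equation—is a natural ``classical'' maximum-principle argument, but it does not work under the hypotheses in force when Theorem~\ref{thm:maxprinc} is stated, and it is genuinely different from what the paper does.

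The gap is in the sign of the ``outflow'' flux terms and of the production term at the extremal node. For the lower bound you take a node $i_0$ with $S^{n,i_0}<0$ and need the sum $\sum_j c_{i_0 j}\,\eta_w(S_w^{n,i_0 j})(P_w^{n,j}-P_w^{n,i_0})$ together with $-f_w(S^{n,i_0})\underline q^{n,i_0}$ to be nonnegative. For the neighbours $j$ with $P_w^{n,j}<P_w^{n,i_0}$, the upwind value is $S_w^{n,i_0 j}=S^{n,i_0}$, and the term is $c_{i_0 j}\,\eta_w(S^{n,i_0})(P_w^{n,j}-P_w^{n,i_0})$ with $(P_w^{n,j}-P_w^{n,i_0})<0$; for the production term you need $f_w(S^{n,i_0})=0$. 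Both require $\eta_w(S^{n,i_0})=0$, i.e.\ $\eta_w(0)=0$ (since the continuous extension is by the constant $\eta_w(0)$). But under the basic assumptions of Section~\ref{sec:problem}, $\eta_w$ is only required to be continuous, nonnegative and increasing—nothing forces $\eta_w(0)=0$. The identities $\eta_w(0)=\eta_o(1)=0$ are introduced only later, in Section~\ref{subsec:Bddgrad.g}, as part of the \emph{additional} assumptions \eqref{eq:eta'w}--\eqref{eq:pc'}, which are not assumed for the existence/maximum-principle part of the paper. So if $\eta_w(0)>0$ (allowed at this stage), the outflow terms have the wrong sign and your contradiction disappears. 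The symmetric point applies to the upper bound via $\eta_o(1)$.

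The paper's proof sidesteps this entirely by a different decomposition. Instead of using one equation and relying on vanishing mobilities, it works from \emph{both} phase equations at the extremal node, passes to the non-upwinded inequalities \eqref{eq:set1noup} and \eqref{eq:set2noup} by exploiting the monotonicity of $\eta_w,\eta_o$ (not their vanishing), uses the monotonicity of $p_c$ via \eqref{eq:scheme3} to pass from $P_o$ to $P_w$ in the non-wetting inequality, and then multiplies by $\eta_o(S^i)$ and $\eta_w(S^i)$ respectively and adds so that the pressure sums cancel exactly. The contradiction then comes from the monotonicity of $r(s)=\eta_o(s)f_w(s_{\mathrm in}^i)-\eta_w(s)f_o(s_{\mathrm in}^i)$ and the cancellation $\eta_o f_w=\eta_w f_o$. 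This argument never invokes $\eta_w(0)=0$; the lower bound $\eta_w+\eta_o\ge\eta_*>0$ is what guarantees the combination keeps a strict sign. So your concluding observation that the degeneracy of the mobilities at the residual saturations is indispensable is actually the opposite of what the paper establishes; what is indispensable in the paper's argument is the capillary-pressure structure \eqref{eq:scheme3} and the cross-multiplication that eliminates the pressure, not the vanishing of the mobilities.

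If you are content to prove the maximum principle only under the additional hypotheses $\eta_w(0)=\eta_o(1)=0$, your argument is sound once those hypotheses are stated explicitly. To recover the theorem as stated, you need the paper's phase-combination device (or an equivalent) in place of the single-equation sign argument.
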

\begin{proof}
As $0 \le s_w^0 \le 1$ almost everywhere, by construction \eqref{eq:Sh0}, we immediately have
\[
0\leq \min_\Omega  s_w^0  \leq S_h^0 \leq \max_\Omega  s_w^0  \leq 1.
\]
The proof proceeds by contradiction. Assume that there is an index $n\geq 1$ such that
\[
S_h^{n-1}\leq 1, \quad S_h^{n} > 1.
\]
This means there is a node $i$ such that
\[
S^{n,i}  = \Vert S_h^{n}\Vert_{L^\infty(\Omega)} > 1,
\]
and thus
\[
S^{n,i} > S^{n-1,i}.
\]
Dropping the index $n$ in the rest of the proof,  \eqref{eq:scheme1} and \eqref{eq:scheme2} imply
\begin{eqnarray}
\sum_{j\neq i, j\in {\mathcal N}(i)} c_{ij} \eta_w(S^{ij}_w)  (P_w^{j}-P_w^{i})
+ m_i \left( f_w(s_{\mathrm{in}}^{i}) \bar{q}^{i} - f_w(S^{i}) \underline{q}^{i}\right) > 0,\label{eq:set1}
\\
-\sum_{j\neq i, j\in {\mathcal N}(i)} c_{ij} \eta_o(S^{ij}_o) (P_o^{j}-P_o^{i})
- m_i \left( f_o(s_{\mathrm{in}}^{i}) \bar{q}^{i} - f_o(S^{i}) \underline{q}^{i}\right)>0 .\label{eq:set2}
\end{eqnarray}
We first show that \eqref{eq:set1} holds true with $S^{ij}_w$ replaced by $S^{i}$.
Indeed if $P_w^i > P_w^j$, then $S_w^{ij} =S^i$. If $P_w^i < P_w^j$, then $S_w^{ij} =S^j$, and as $\eta_w$ is increasing and by assumption, $S^j\leq S^i$,
\[
\eta_w(S_w^{ij}) (P_w^j-P_w^i) \leq \eta_w(S^i) (P_w^j-P_w^i).
\]
Finally, the term vanishes when $P_w^i=P_w^j$. Therefore we have in all cases

\begin{equation}
\sum_{j\neq i, j\in {\mathcal N}(i)} c_{ij} \eta_w(S^i)  (P_w^{j}-P_w^{i})
+ m_i \left( f_w(s_\mathrm{in}^{i}) \bar{q}^{i} - f_w(S^{i}) \underline{q}^{i}\right) > 0.\label{eq:set1noup}
\end{equation}
%
%
A similar argument gives
\begin{equation}
-\sum_{j\neq i, j\in {\mathcal N}(i)} c_{ij} \eta_o(S^i) (P_o^{j}-P_o^{i})
- m_i \left( f_o(s_\mathrm{in}^{i}) \bar{q}^{i} - f_o(S^{i}) \underline{q}^{i}\right)>0.\label{eq:set2noup}
\end{equation}
The substitution of \eqref{eq:scheme3} into \eqref{eq:set2noup} yields
\begin{equation}
-\sum_{j\neq i, j\in {\mathcal N}(i)} c_{ij} \eta_o(S^i) \big((P_w^{j}-P_w^{i}) + (p_c(S^j)-p_c(S^i)\big))
- m_i \left( f_o(s_\mathrm{in}^{i}) \bar{q}^{i} - f_o(S^{i}) \underline{q}^{i}\right)>0.
\end{equation}
Since $p_c$ is decreasing and $S^i \geq S^j$, the second term in the above sum is negative.  This implies that
\begin{equation}\label{eq:set2aa}
-\sum_{j\neq i, j\in {\mathcal N}(i)} c_{ij} \eta_o(S^i) (P_w^{j}-P_w^{i})
- m_i \left( f_o(s_\mathrm{in}^{i}) \bar{q}^{i} - f_o(S^{i}) \underline{q}^{i}\right)>0.
\end{equation}
The sum on $j$ cancels by multiplying \eqref{eq:set1noup} by $\eta_o(S^i)$, \eqref{eq:set2aa} by $\eta_w(S^i)$, and adding the two. The sign is unchanged because either $\eta_o(S^i)$ or $\eta_w(S^i)$ is strictly positive. Hence,
\[
m_i \eta_o(S^i) \left( f_w(s_\mathrm{in}^{i}) \bar{q}^{i} - f_w(S^{i}) \underline{q}^{i}\right) 
- m_i \eta_w(S^i) \left( f_o(s_\mathrm{in}^{i}) \bar{q}^{i} - f_o(S^{i}) \underline{q}^{i}\right)
> 0.
\]
By definition of $f_w$ and $f_o$,  this reduces to
\begin{equation}\label{eq:boundr}
\eta_o(S^i)  f_w(s_\mathrm{in}^{i}) - \eta_w(S^i) f_o(s_\mathrm{in}^{i}) > 0.
\end{equation}
Now consider the function:
\begin{equation}\label{eq:defr}
r(s) = \eta_o(s) f_w(s_\mathrm{in}^{i}) - \eta_w(s) f_o(s_\mathrm{in}^{i}).
\end{equation}
It is decreasing and $r(s_\mathrm{in}^i) = 0$. Then, since $S^i > 1 \ge s_\mathrm{in}^i$, see \eqref{eq:inputsat}, we have
\[
r(S^i) \leq r(s_\mathrm{in}^i) = 0,
\]
which contradicts \eqref{eq:boundr}. The proof of the lower bound in \eqref{eq:maxprinc} follows the same lines. \end{proof}


\subsection{First pressure bounds}
\label{subsec:1stpressurebd}

The following properties will be used frequently:

\begin{lemma}
\label{lem:PandS}
The fact that $p_c$ is strictly decreasing and \eqref{eq:scheme3} yield the following:
\begin{equation}
P_w^i > P_w^j, \quad \mbox{and}\quad P_o^i\leq P_o^j \quad\mbox{implies}\quad S^i \geq  S^j,
\label{eq:remarkSiSj}
\end{equation}
\begin{equation}
\label{eq:remarkSiSj2}
\mbox{If}\ P_w^i = P_w^j,\ \mbox{then}\ P_o^i \ge P_o^j \ \mbox{if and only if}\  S^i \le S^j.
\end{equation}
\begin{equation}
\label{eq:remarkSiSj3}
\mbox{If}\ P_o^i = P_o^j, \ \mbox{then}\ P_w^i \leq P_w^j,\  \mbox{if and only if}\  \quad S^i \le S^j.
\end{equation}
\end{lemma}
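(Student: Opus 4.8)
The plan is to reduce all three statements to a single identity obtained by writing the discrete capillary relation \eqref{eq:scheme3} at the two nodes $i$ and $j$ and subtracting. This yields
\begin{equation*}
p_c(S^i) - p_c(S^j) = \big(P_o^i - P_o^j\big) - \big(P_w^i - P_w^j\big),
\end{equation*}
which I would use in each of the three cases. The only property of $p_c$ needed is that, being continuous and strictly decreasing, it is an order-reversing injection on $[0,1]$: for saturations $S^i,S^j$ one has $p_c(S^i) \le p_c(S^j)$ if and only if $S^i \ge S^j$, with the strict versions equivalent as well. No further machinery is required.

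For \eqref{eq:remarkSiSj}, under the hypotheses $P_o^i - P_o^j \le 0$ while $P_w^i - P_w^j > 0$, so the right-hand side of the displayed identity is strictly negative; hence $p_c(S^i) < p_c(S^j)$, and the order-reversing property gives $S^i > S^j$, in particular $S^i \ge S^j$. For \eqref{eq:remarkSiSj2}, setting $P_w^i = P_w^j$ in the identity gives $P_o^i - P_o^j = p_c(S^i) - p_c(S^j)$, so $P_o^i \ge P_o^j$ holds exactly when $p_c(S^i) \ge p_c(S^j)$, which is equivalent to $S^i \le S^j$. For \eqref{eq:remarkSiSj3}, setting $P_o^i = P_o^j$ gives $P_w^j - P_w^i = p_c(S^i) - p_c(S^j)$, so $P_w^i \le P_w^j$ holds exactly when $p_c(S^i) \ge p_c(S^j)$, again equivalent to $S^i \le S^j$.

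Since each assertion collapses to a one-line comparison of signs, there is no substantive obstacle. The only point deserving a little care is the bookkeeping of inequality directions induced by the strict monotonicity of $p_c$ (it reverses order), together with checking that in \eqref{eq:remarkSiSj} the strict conclusion $S^i > S^j$ that naturally falls out is consistent with the weak statement recorded in the lemma.
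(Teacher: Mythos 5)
Your proof is correct and is essentially the argument the paper has in mind; the paper in fact records Lemma \ref{lem:PandS} with no written proof, treating it as an immediate consequence of subtracting \eqref{eq:scheme3} at nodes $i$ and $j$ and using that $p_c$ is strictly decreasing, which is exactly what you do.
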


Let us start with a lower bound that removes the degeneracy caused by the mobilities when they multiply the discrete pressures. 

\begin{lemma}
\label{lem:boundetap}
Let $p_{wg}$ and $p_{og}$ be defined in \eqref{eq:defpwgpog}.
We have for all $n$ and any $i$ and $j$
\begin{equation}
\label{eq:boundetap1}
\eta_\ast ( U_w^{n,j} - U_w^{n,i}  )^2 \leq \eta_w(S_w^{n,ij}) (P_w^{n,j}-P_w^{n,i})^2 
+ \eta_o(S_o^{n,ij}) (P_o^{n,j}-P_o^{n,i})^2.
\end{equation}
\end{lemma}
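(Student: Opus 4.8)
The plan is to reduce inequality \eqref{eq:boundetap1} to a pointwise convexity-type estimate by examining the definition \eqref{eq:discraux} of the auxiliary pressures, namely $U_w^{n,i} = P_w^{n,i} + p_{wg}(S^{n,i})$ (using the nodal values, dropping $I_h$). Writing $\delta P_w = P_w^{n,j} - P_w^{n,i}$, $\delta P_o = P_o^{n,j} - P_o^{n,i}$, and $\delta U_w = U_w^{n,j} - U_w^{n,i}$, I first note from \eqref{eq:cap.pressure}--\eqref{eq:scheme3} and \eqref{eq:pwg+pog} that $\delta P_o - \delta P_w = p_c(S^{n,j}) - p_c(S^{n,i}) = \bigl(p_{wg}(S^{n,j}) + p_{og}(S^{n,j})\bigr) - \bigl(p_{wg}(S^{n,i}) + p_{og}(S^{n,i})\bigr)$. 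Hence, setting $\delta p_{wg} = p_{wg}(S^{n,j}) - p_{wg}(S^{n,i})$ and similarly $\delta p_{og}$, we have $\delta U_w = \delta P_w + \delta p_{wg}$, and one checks that $U_o^{n,j} - U_o^{n,i} = \delta P_o - \delta p_{og} = \delta P_w + \delta p_{wg} = \delta U_w$, so the two auxiliary pressures have the \emph{same} increment across the edge $ij$; call it $\delta U$.

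Next I would express the right-hand side of \eqref{eq:boundetap1} in terms of $\delta U$. Since $\delta P_w = \delta U - \delta p_{wg}$ and $\delta P_o = \delta U - \delta p_{wg} + (\delta P_o - \delta P_w) = \delta U + \delta p_{og}$ (using $\delta P_o - \delta P_w = \delta p_{wg} + \delta p_{og}$), the claim becomes
\[
\eta_\ast (\delta U)^2 \le \eta_w(S_w^{n,ij})\bigl(\delta U - \delta p_{wg}\bigr)^2 + \eta_o(S_o^{n,ij})\bigl(\delta U + \delta p_{og}\bigr)^2.
\]
The key algebraic observation, coming from \eqref{eq:pwgprime+gprime}, is the intended cancellation $\eta_w(S_w^{n,ij})\,\delta p_{wg} \approx \eta_o(S_o^{n,ij})\,\delta p_{og}$ — or more precisely that the expansion of the right-hand side, after dropping the nonnegative cross term, is bounded below by $\bigl(\eta_w(S_w^{n,ij}) + \eta_o(S_o^{n,ij})\bigr)(\delta U)^2$ minus a controllable defect, and then \eqref{eq:lowerboundetas} gives the factor $\eta_\ast$. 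To make this rigorous I would split into cases according to the upwind definitions \eqref{eq:Swij}--\eqref{eq:Soij} and the sign relations in Lemma \ref{lem:PandS}: in each case the monotonicity of $p_{wg}$, $p_{og}$ (which follow from $f_w, f_o \ge 0$ and $p_c' \le 0$, so $p_{wg}' = f_o p_c' \le 0$ and $p_{og}' = f_w p_c' \le 0$) combined with the sign of $\delta P_\alpha$ forces the cross terms $-2\eta_w(S_w^{n,ij})\,\delta U\,\delta p_{wg}$ and $2\eta_o(S_o^{n,ij})\,\delta U\,\delta p_{og}$ to have a favorable sign, after which $\eta_w (\delta p_{wg})^2 + \eta_o(\delta p_{og})^2 \ge 0$ is discarded and \eqref{eq:lowerboundetas} closes the estimate.

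The main obstacle I anticipate is the case analysis verifying the sign of the cross terms: one must match the upwind choice of saturation (which depends on the ordering of $P_w^{n,i}, P_w^{n,j}$, resp. $P_o^{n,i}, P_o^{n,j}$) against the sign of $\delta U = \delta P_w + \delta p_{wg}$, and it is not immediately obvious that $\delta U$ has a definite sign relative to $\delta S := S^{n,j} - S^{n,i}$. I would handle this by first checking that $p_w + p_{wg}$ and $p_o - p_{og}$ are, as functions parametrized along the edge, monotone in a way compatible with both pressure orderings simultaneously — concretely, using \eqref{eq:remarkSiSj}--\eqref{eq:remarkSiSj3} to see that whenever $\delta P_w$ and $\delta P_o$ have the ordering dictating the upwind value, the quantity $\eta_w(S_w^{n,ij})\delta p_{wg}$ and $\eta_o(S_o^{n,ij})\delta p_{og}$ combine to reinforce rather than oppose $\eta_\ast(\delta U)^2$. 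If a fully sign-definite argument proves awkward, the fallback is the cruder route: expand both squares, use Young's inequality on the cross terms against the $(\delta p)^2$ terms that were going to be discarded, and absorb — but I expect the clean monotonicity argument to work and to mirror the corresponding lemma in \cite{Eymard2003}.
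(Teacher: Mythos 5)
Your reduction is sound as far as it goes: the observation that $U_o^{n,j}-U_o^{n,i}=U_w^{n,j}-U_w^{n,i}$ (because $U_o - U_w = P_o - P_w - p_{og}(S)-p_{wg}(S) = p_c(0)$, a constant) is correct, and it correctly rewrites the goal as
\begin{equation*}
\eta_\ast\,(\delta U)^2 \;\le\; \eta_w(S_w^{ij})\,(\delta U - \delta p_{wg})^2 \;+\; \eta_o(S_o^{ij})\,(\delta U + \delta p_{og})^2 .
\end{equation*}
But the planned argument for closing this — that after expanding, the two cross terms $-2\eta_w(S_w^{ij})\,\delta U\,\delta p_{wg}$ and $+2\eta_o(S_o^{ij})\,\delta U\,\delta p_{og}$ ``have a favorable sign'' and can be dropped — does not hold up. Since $p_{wg}$ and $p_{og}$ are both decreasing, $\delta p_{wg}$ and $\delta p_{og}$ always have the \emph{same} sign, so $-\delta p_{wg}$ and $+\delta p_{og}$ have \emph{opposite} signs; one of the two cross terms is always working against you regardless of the sign of $\delta U$, and the sign of $\delta U = \delta P_w + \delta p_{wg}$ itself is not determined by the case (e.g.\ $P_w^i>P_w^j$ gives $\delta P_w<0$, $\delta p_{wg}\ge 0$). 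The Young's-inequality fallback also fails: you would have to absorb the bad cross term into $(\eta_w(S_w^{ij})+\eta_o(S_o^{ij})-\eta_\ast)(\delta U)^2$, but in the case $P_w^i>P_w^j$, $P_o^i>P_o^j$ both upwind saturations coincide with $S^i$, so that slack is exactly zero.

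The ingredient you are missing, and that the paper uses, is the \emph{second mean value formula for integrals}: there exists $\xi$ between $S^i$ and $S^j$ with $\delta p_{wg} = f_o(\xi)\,\delta p_c$, and hence $\delta p_{og} = \delta p_c - \delta p_{wg} + 0 = f_w(\xi)\,\delta p_c$ for the \emph{same} $\xi$. This converts $\delta U$ into an honest convex combination
\begin{equation*}
\delta U \;=\; f_w(\xi)\,\delta P_w + f_o(\xi)\,\delta P_o,
\end{equation*}
and Jensen's inequality for $x\mapsto x^2$ immediately gives $(\eta_w(\xi)+\eta_o(\xi))(\delta U)^2 \le \eta_w(\xi)(\delta P_w)^2 + \eta_o(\xi)(\delta P_o)^2$. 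The remaining work is the case analysis you anticipated, but with a different goal: replacing $\eta_\alpha(\xi)$ by $\eta_\alpha(S_\alpha^{ij})$ using the monotonicity of $\eta_w$, $\eta_o$ and Lemma~\ref{lem:PandS}. That is immediate when the two pressure orderings disagree (then $\xi$ sits between $S_o^{ij}\le\xi\le S_w^{ij}$ and monotonicity goes the right way), but when $P_w^i>P_w^j$ and $P_o^i>P_o^j$ (so $S_w^{ij}=S_o^{ij}=S^i$ and the ordering of $S^i,S^j$ is unknown) one cannot compare $\eta_\alpha(\xi)$ with $\eta_\alpha(S^i)$ directly; the paper handles this by multiplying the auxiliary inequality $\eta_o(S^i)\,\delta p_c \le (\eta_w(S^i)+\eta_o(S^i))\,\delta p_{wg}$ by the negative quantity $\delta P_w+\delta P_o$ and then using $\delta p_{wg}\,\delta p_c \ge (\delta p_{wg})^2$ (again a consequence of the mean value representation, since $0\le f_o(\xi)\le 1$). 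Your direct-expansion route arrives at an equivalent inequality, but without the mean value device there is no visible mechanism to establish it in this critical case.
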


\begin{proof}
 To simplify the notation, we drop the superscript $n$.
The second mean formula for integrals gives
\begin{equation}\label{eq:meanint}
p_{wg}(S^j)-p_{wg}(S^i)  = \int_{S^i}^{S^j}  f_o(s) p_c'(s) ds  = f_o(\xi) (p_c(S^j)-p_c(S^i)),
\end{equation}
for some $\xi$ between $S^i$ and $S^j$.
Using \eqref{eq:scheme3} we write
\[
U_w^{j} - U_w^{i} 
= (1-f_o(\xi)) (P_w^j-P_w^i)  + f_o(\xi) (P_o^j-P_o^i)
= f_w(\xi) (P_w^j-P_w^i)  + f_o(\xi) (P_o^j-P_o^i).
\]
Therefore since $f_w+f_o=1$, we have
\begin{equation}
\label{eq:meanint2}
( U_w^{j} - U_w^{i} )^2  \leq 
 \frac{\eta_w(\xi)}{\eta_w(\xi)+\eta_o(\xi)} (P_w^j-P_w^i)^2
+ \frac{\eta_o(\xi)}{\eta_w(\xi)+\eta_o(\xi)} (P_o^j-P_o^i)^2.
\end{equation}
We now consider six cases.

1)\ If \underline{$P_w^i > P_w^j$ and $P_o^i \leq P_o^j$}, then $\eta_w(S_w^{ij}) = \eta_w(S^i)$ and $\eta_o(S_o^{ij}) = \eta_o(S^j)$ when $P_o^i < P_o^j$; when $P_o^i = P_o^j$, the value of $\eta_o$ does not matter.
From \eqref{eq:remarkSiSj} we then have $S^i \geq S^j$. Since $\eta_w$ is increasing, $\eta_w(\xi) \leq \eta_w(S^i)$ and since
$\eta_o$ is decreasing, $\eta_o(\xi) \leq \eta_o(S^j)$.
Thus we have
\[
( U_w^{j} - U_w^{i} )^2  \leq 
 \frac{\eta_w(S_w^{ij})}{\eta_w(\xi)+\eta_o(\xi)} (P_w^j-P_w^i)^2
+ \frac{\eta_o(S_o^{ij})}{\eta_w(\xi)+\eta_o(\xi)} (P_o^j-P_o^i)^2,
\]
and with \eqref{eq:lowerboundetas}
\begin{equation}
\label{eq:etaintbound}
( U_w^{j} - U_w^{i} )^2  \leq 
 \frac{1}{\eta_\ast} \left(\eta_w(S_w^{ij}) (P_w^j-P_w^i)^2
+ \eta_o(S_o^{ij}) (P_o^j-P_o^i)^2\right).
\end{equation}

2)\ If \underline{$P_w^i>P_w^j$ and $P_o^i>P_o^j$ }, then $\eta_w(S_w^{ij}) = \eta_w(S^i)$ and $\eta_o(S_o^{ij}) = \eta_o(S^i)$. From 
 \[
\eta_o(S^i) (p_c(S^j)-p_c(S^i))  = (\eta_o(S^i)+\eta_w(S^i)) \int_{S^i}^{S^j} f_o(S^i) p_c'(s) ds,
\]
and \eqref{eq:meanint}, we derive
\begin{align*}
\eta_o(S^i) (p_c(S^j)-p_c(S^i)) - &(\eta_o(S^i)+\eta_w(S^i)) (p_{wg}(S^j)-p_{wg}(S^i))
\\
&= (\eta_o(S^i)+\eta_w(S^i)) \int_{S^i}^{S^j} (f_o(S^i)-f_o(s)) p_c'(s) ds.
\end{align*}
As $p_c$ and $f_o$ are decreasing, the above right-hand side is negative. Hence
\begin{equation}
\eta_o(S^i) (p_c(S^j)-p_c(S^i)) - (\eta_o(S^i)+\eta_w(S^i)) (p_{wg}(S^j)-p_{wg}(S^i)) \leq 0.
\label{eq:intetabound}
\end{equation}
We multiply \eqref{eq:intetabound} by $(P_o^j-P_o^i) + (P_w^j-P_w^i) <0$ and use \eqref{eq:scheme3}, 
\[
\big(\eta_o(S^i) (p_c(S^j)-p_c(S^i)) - (\eta_o(S^i)+\eta_w(S^i)) (p_{wg}(S^j)-p_{wg}(S^i)) \big)
\left( 2 (P_w^j-P_w^i) + p_c(S^j)-p_c(S^i)\right) \geq 0.
\]
By expanding and using the next inequality implied by \eqref{eq:meanint}, if $f_o(\xi)\neq 0$, 
\[
(p_{wg}(S^j)-p_{wg}(S^i)) (p_c(S^j) - p_c(S^i)) \geq (p_{wg}(S^j)-p_{wg}(S^i))^2,
\]
we obtain
\begin{eqnarray*}
\eta_o(S^i) (p_c(S^j)-p_c(S^i))^2 + 2 \eta_o(S^i) (p_c(S^j)-p_c(S^i))  (P_w^j-P_w^i) 
\geq 
\\
(\eta_o(S^i)+\eta_w(S^i)) (p_{wg}(S^j)-p_{wg}(S^i)) \left( 2 (P_w^j-P_w^i) + p_{wg}(S^j)-p_{wg}(S^i)\right).
\end{eqnarray*}
When $(\eta_o(S^i)+\eta_w(S^i)) (P_w^j-P_w^i)^2$ is added to both sides, this becomes 
\[
\eta_w(S^i) (P_w^j-P_w^i)^2 + \eta_o(S^i) (P_o^j-P_o^i)^2 
\geq (\eta_o(S^i)+\eta_w(S^i)) (U_w^j-U_w^i)^2,
\]
and \eqref{eq:lowerboundetas} implies the desired result. It remains to
consider the case $f_o(\xi) = 0$, i.e., $p_{wg}(S^j) = p_{wg}(S^i)$. If $\eta_o(S^i) \neq 0$, then  \eqref{eq:intetabound} yields
\[
p_c(S^j) - p_c(S^i) \leq 0\ \mbox{which implies}\ P_o^i-P_o^j \geq P_w^i-P_w^j,
\]
and we deduce immediately
\[
\eta_w(S^i) (P_w^j-P_w^i)^2 + \eta_o(S^i) (P_o^j-P_o^i)^2 
\geq (\eta_w(S^i)+\eta_o(S^i)) (P_w^j-P_w^i)^2
\geq \eta_\ast  (P_w^j-P_w^i)^2.
\]
When $\eta_o(S^i) = 0$, we have trivially
\[
\eta_w(S^i) (P_w^j-P_w^i)^2 + \eta_o(S^i) (P_o^j-P_o^i)^2 
= \eta_w(S^i) (P_w^j-P_w^i)^2
\geq \eta_\ast (P_w^j-P_w^i)^2.
\]

3)\ If \underline{$P_w^i\leq P_w^j$ and $P_o^i > P_o^j$ }, then  $\eta_w(S_w^{ij}) = \eta_w(S^j)$ and $\eta_o(S_o^{ij}) = \eta_o(S^i)$ in the case of a strict inequality; also
$S^i \leq  S^j$. Then \eqref{eq:meanint2} and the monotonic properties of $\eta_w$ and $\eta_o$ yield \eqref{eq:boundetap1}. If $P_w^i= P_w^j$, then according to \eqref{eq:remarkSiSj2}, $S^i \leq  S^j$ and the same conclusion holds.

4)\ If \underline{$P_w^i\leq P_w^j$ and $P_o^i=P_o^j$}, then  from  
\eqref{eq:remarkSiSj3}, we have $S^i\leq S^j$ and with \eqref{eq:meanint2}
\[
( U_w^{j} - U_w^{i} )^2  \leq 
 \frac{\eta_w(\xi)}{\eta_w(\xi)+\eta_o(\xi)} (P_w^j-P_w^i)^2
 \leq \frac{\eta_w(S_w^{ij})}{\eta_w(\xi)+\eta_o(\xi)} (P_w^j-P_w^i)^2,
\]
which is the desired result.

5)\ Similarly, if \underline{$P_w^i=P_w^j$ and $P_o^i<P_o^j$}, then from \eqref{eq:remarkSiSj2}, we have $S^j \leq S^i$ and with \eqref{eq:meanint2}
 \[
( U_w^{j} - U_w^{i} )^2  \leq 
 \frac{\eta_o(\xi)}{\eta_w(\xi)+\eta_o(\xi)} (P_o^j-P_o^i)^2
 \leq \frac{\eta_o(S_o^{ij})}{\eta_w(\xi)+\eta_o(\xi)} (P_o^j-P_o^i)^2.
\]

6)\ If \underline{$P_w^i < P_w^j$ and $P_o^i < P_o^j$}, \eqref{eq:boundetap1} follows from the second case by switching $i$ and $j$.
\end{proof}

\smallskip

The pressure bound in the next theorem is the one that arises naturally from the left-hand side of \eqref{eq:vars1} and \eqref{eq:vars2}. 

\begin{theorem}
\label{thm:pressbound1}
There exists a constant $C$, independent of $h$ and $\tau$, such that
\begin{equation}
\tau \sum_{n=1}^N \sum_{i,j=1}^M c_{ij} \big(\eta_w(S_w^{n,ij}) (P_w^{n,i} - P_w^{n,j})^2
+ \eta_o(S_o^{n,ij}) (P_o^{n,i}-P_o^{n,j})^2\big) \leq C.
\label{eq:pressbound1}
\end{equation}
\end{theorem}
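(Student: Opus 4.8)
The plan is to derive an energy identity by testing the variational scheme \eqref{eq:vars1} with $\theta_h = P_{w,h}^n$ and \eqref{eq:vars2} with $\theta_h = P_{o,h}^n$, adding, and then summing over $n$ after multiplication by $\tau$. By property \eqref{eq:quadup2}, $-[P_{w,h}^n,I_h(\eta_w(S_h^n));P_{w,h}^n,P_{w,h}^n]_h = \tfrac12\sum_{i,j}c_{ij}\eta_w(S_w^{n,ij})(P_w^{n,i}-P_w^{n,j})^2$ and similarly for the non-wetting term, so the two left-hand sides produce exactly one half of the quantity $E_n := \sum_{i,j}c_{ij}\big(\eta_w(S_w^{n,ij})(P_w^{n,i}-P_w^{n,j})^2 + \eta_o(S_o^{n,ij})(P_o^{n,i}-P_o^{n,j})^2\big)$ that we must control. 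Using \eqref{eq:vars3}, the two time-difference terms combine into $-\tfrac1\tau(S_h^n-S_h^{n-1},I_h(p_c(S_h^n)))_h^\varphi$, and, using $f_w+f_o=1$ and \eqref{eq:vars3} again, the two source terms combine into $(\bar q_h^n-\underline q_h^n,P_{w,h}^n)_h + (I_h(f_o(s_{\mathrm{in},h}^n))\bar q_h^n - I_h(f_o(S_h^n))\underline q_h^n,I_h(p_c(S_h^n)))_h$. Hence $\tfrac12 E_n$ equals the sum of these three terms.

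Next I would handle the capillary/time term by convexity. Since $g_c'=-p_c$ with $p_c$ decreasing, $g_c$ is convex on $[0,1]$, so nodewise $-(S^{n,i}-S^{n-1,i})p_c(S^{n,i}) = g_c'(S^{n,i})(S^{n-1,i}-S^{n,i}) \le g_c(S^{n-1,i}) - g_c(S^{n,i})$; weighting by the positive $\tilde m_i(\varphi)$, summing over $i$, multiplying by $\tau$ and summing over $n$ telescopes to $\sum_i\tilde m_i(\varphi)(g_c(S^{0,i})-g_c(S^{N,i}))$, which is bounded by $2\|g_c\|_{L^\infty([0,1])}\int_\Omega\varphi$ thanks to the maximum principle $0\le S_{h,\tau}\le 1$ (Theorem \ref{thm:maxprinc}) and $g_c\in\mathcal C^1([0,1])$. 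The mildly degenerate-looking source term $(I_h(f_o(s_{\mathrm{in},h}^n))\bar q_h^n - I_h(f_o(S_h^n))\underline q_h^n, I_h(p_c(S_h^n)))_h$ causes no trouble: $f_o$ and $p_c$ are bounded on the relevant range, so discrete Cauchy--Schwarz and \eqref{eq:equivnorm} bound it by $C(\|\bar q_h^n\|_h + \|\underline q_h^n\|_h)$.

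The main obstacle is the term $(\bar q_h^n-\underline q_h^n,P_{w,h}^n)_h$, since the degeneracy gives no direct a priori control of $P_{w,h}^n$; the remedy is to route through the auxiliary pressure. Writing $P_{w,h}^n = U_{w,h}^n - I_h(p_{wg}(S_h^n))$ and using $|p_{wg}|\le |p_c|_{W^{1,1}(0,1)}$ (as $0\le f_o\le1$), the $I_h(p_{wg}(S_h^n))$ contribution is again bounded by $C(\|\bar q_h^n\|_h + \|\underline q_h^n\|_h)$. For $(\bar q_h^n-\underline q_h^n,U_{w,h}^n)_h$ I use $(\bar q_h^n-\underline q_h^n,1)_h=0$ (from \eqref{eq:qhprop}) to subtract the mean of $U_{w,h}^n$, then the Poincar\'e--Wirtinger inequality on $X_h\subset H^1(\Omega)$ together with \eqref{eq:equivnorm}, and finally Lemma \ref{lem:boundetap} (summed over all $i,j$, combined with \eqref{eq:normgrad}) to get $\|\nabla U_{w,h}^n\|_{L^2(\Omega)}^2 \le (2\eta_\ast)^{-1}E_n$; this gives $|(\bar q_h^n-\underline q_h^n,U_{w,h}^n)_h|\le C(\|\bar q_h^n\|_h+\|\underline q_h^n\|_h)\sqrt{E_n}$, and Young's inequality absorbs a fixed fraction of $E_n$ into the left-hand side. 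Multiplying by $\tau$, summing over $n$, and using that $\bar q_{h,\tau},\underline q_{h,\tau}$ are bounded in $L^2(\Omega\times]0,T[)$ uniformly in $h,\tau$ (from the $L^2$-stability of $r_h$ and $\rho_\tau$ and the correction in \eqref{eq:qhdef}) closes the estimate, with $C$ depending only on $\Omega$, $T$, $\eta_\ast$, the data $\varphi,p_c,\bar q,\underline q$, and the equivalence constants, but not on $h$ or $\tau$.
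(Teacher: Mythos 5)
Your proof matches the paper's own argument step for step: test with the phase pressures and use \eqref{eq:quadup2} to extract the gradient quantity $E_n$, telescope the capillary/time term through the primitive $g_c$ (the paper applies the mean-value theorem where you invoke convexity of $g_c$, an equivalent route), split the source terms via the auxiliary pressure $U_{w,h}$, and close with the generalized Poincar\'e inequality, Lemma \ref{lem:boundetap} together with \eqref{eq:normgrad}, and Young's inequality. The only blemish is a pair of compensating sign slips: once the time-difference term is moved across, the identity reads $\tfrac12 E_n = +\tfrac1\tau\big(S_h^n-S_h^{n-1},I_h(p_c(S_h^n))\big)_h^\varphi + \cdots$ (plus, not minus), and the nodewise convexity chain should accordingly start from $+(S^{n,i}-S^{n-1,i})\,p_c(S^{n,i}) = g_c'(S^{n,i})(S^{n-1,i}-S^{n,i}) \le g_c(S^{n-1,i})-g_c(S^{n,i})$; with both signs corrected the telescoped bound is exactly the one you state, so the argument stands.
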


\begin{proof} 
We  test \eqref{eq:vars1} by $P_{w,h}^{n}$, \eqref{eq:vars2} by $P_{o,h}^{n}$, add the two equations, multiply by $\tau$ and sum over $n$ from $1$ to $N$.  By using \eqref{eq:vars3} and \eqref{eq:quadup2}, we obtain
\begin{equation}
\label{eq:press1}
\begin{split}
-\sum_{n=1}^N\big(S_h^{n}-S_h^{n-1}, I_h p_c(S_h^{n})\big)_h^{  \varphi }
+ \frac12 \sum_{n=1}^N \tau \sum_{\alpha =w,o}\sum_{i,j =1}^M c_{ij} \eta_\alpha(S_\alpha^{n,ij}) (P_\alpha^{n,i}-P_\alpha^{n,j})^2\\
= \sum_{n=1}^N \tau\sum_{\alpha =w,o}\big(f_\alpha(s_{\mathrm{in},h}^{n}) \bar{q}_h^{n} - f_\alpha(S_h^{n}) \underline{q}_h^{n},
P_{\alpha,h}^{n}\big)_h.
\end{split}
\end{equation}
Following~\cite{Eymard2003}, the first term in \eqref{eq:press1} is treated with
the primitive $g_c$ of $p_c$, see \eqref{eq:gc}. Indeed, by the mean-value theorem, 
 there exists  $\xi$ between $S^{n,i}$ and $S^{n-1,i}$ such that
\[
g_c(S^{n,i})-g_c(S^{n-1,i}) = - (S^{n,i}-S^{n-1,i}) p_c(\xi).
\]
 As  the function $p_c$ is decreasing,  then $p_c(\xi) \geq p_c(S^{n,i})$  when $S^{n,i} \geq S^{n-1,i}$  and  $p_c(\xi) \leq p_c(S^{n,i})$  when  $S^{n,i} \leq S^{n-1,i}$. 
 In  both cases, we have
\[
g_c(S^{n,i})-g_c(S^{n-1,i}) \leq  - (S^{n,i}-S^{n-1,i}) p_c(S^{n,i}),
\]
 and owing that $\varphi$ is positive and constant in time, \eqref{eq:press1} can be replaced by the inequality
\begin{equation}
\label{eq:boundint}
\begin{split}
\big(g_c(S_h^{N})-g_c(S_h^{0}),1\big)_h^\varphi
&+ \frac{1}{2} \sum_{n=1}^N \tau \sum_{\alpha =w,o}\sum_{i,j =1}^M c_{ij} \eta_\alpha(S_\alpha^{n,ij}) (P_\alpha^{n,i}-P_\alpha^{n,j})^2\\
&\le \sum_{n=1}^N \tau\sum_{\alpha =w,o}\big(f_\alpha(s_{\mathrm{in},h}^{n}) \bar{q}_h^{n} - f_\alpha(S_h^{n}) \underline{q}_h^{n},
P_{\alpha,h}^{n}\big)_h.
\end{split}
\end{equation}
As the first term in the above left-hand side is bounded, owing to the continuity of $g_c$ and boundedness of $S_{h,\tau}$, it suffices to handle the right-hand side. Let us drop the superscript $n$ and treat one term in the time sum. Following again~\cite{Eymard2003}, in view of Lemma \ref{lem:boundetap} we use the auxiliary pressures $p_{wg}$ and $p_{wo}$, defined in \eqref{eq:defpwgpog}.  Clearly, \eqref{eq:pwg+pog} and \eqref{eq:scheme3} imply 
\begin{equation}\label{eq:pwgpwgo}
P_w^{i} + p_{wg}(S^{i}) + p_{og}(S^{i}) + p_c(0) = P_o^{i},\quad \forall i .
\end{equation}
Using this, a generic term, say $Y$, in the right-hand side of \eqref{eq:boundint} can be expressed as
\begin{align*}
Y =& \big(\bar{q}_h - \underline{q}_h, U_{w,h} \big)_h + \big(f_o(s_{\mathrm{in},h}) \bar{q}_h - f_o(S_h) \underline{q}_h, p_c(0)\big)_h\\
 & + \big(f_o(s_{\mathrm{in},h}) \bar{q}_h - f_o(S_h) \underline{q}_h, p_{og}(S_h)\big)_h - 
 \big(f_w(s_{\mathrm{in},h}) \bar{q}_h - f_w(S_h) \underline{q}_h, p_{wg}(S_h)\big)_h =
 T_1+\cdots + T_4.
\end{align*}
We now bound each term $T_i$. For $T_1$,  \eqref{eq:qhprop} implies that any constant $\beta$ can be added to $U_{w,h}$, in particular $\beta$ can be chosen so that the sum has zero mean value in $\Omega$. Hence, considering the generalized Poincar\'e inequality
\begin{equation}
\label{eq:genPoinc}
\forall v \in H^1(\Omega),\quad \|v\|_{L^2(\Omega)} \le C \big(\big|\int_\Omega v \big| + \|\nabla\,v\|_{L^2(\Omega)}\big),
\end{equation}
with a constant $C$, depending only on the domain $\Omega$, we have
$$\|U_{w,h} + \beta\|_h \le C \| U_{w,h}+ \beta\|_{L^2(\Omega)} \le C \|\nabla\,U_{w,h}\|_{L^2(\Omega)}, 
$$
with another constant $C$. Then Young's inequality yields
$$|T_1| \leq \frac{C^2}{2\eta_\ast} \|\bar{q}_h-\underline{q}_h\|^2_h
+ \frac{\eta_\ast}{4} \| \nabla\,U_{w,h}\|_{L^2(\Omega)}^2, 
$$
and with  Lemma~\ref{lem:boundetap}, this becomes
$$|T_1| \leq \frac{C^2}{2\eta_\ast} \|\bar{q}_h-\underline{q}_h\|^2_h +  \frac{1}{4} \sum_{i,j=1}^M  c_{ij} \left(\eta_w(S^{ij}) (P_w^{j}-P_w^{i})^2
+\eta_o(S^{ij}) (P_o^{j}-P_o^{i})^2\right).
$$
The term $T_2$ is easily bounded since $p_c(0)$ is a number, and so are the terms $T_3$ and $T_4$, in view of the boundedness of the saturation and the continuity of $p_{og}$ and 
$p_{wg}$. We thus have
$$
|T_2 +T_3 + T_4| \leq C (\Vert \bar{q}_h\Vert_{L^1(\Omega)} + \Vert \underline{q}_h\Vert_{L^1(\Omega)}).
$$
Then substituting these bounds for each $n$ into \eqref{eq:boundint}, we obtain
\begin{align*}
\frac{1}{4}\tau \sum_{n=1}^N \sum_{i,j=1}^M c_{ij} \big(\eta_w(S_w^{n,ij}) (P_w^{n,i} - P_w^{n,j})^2
&+ \eta_o(S_o^{n,ij}) (P_o^{n,i}-P_o^{n,j})^2\big) \le C \big(\|\bar{q}_{h,\tau}-\underline{q}_{h,\tau}\|^2_{L^2(\Om \times ]0,T[)}\\
&+ \|\bar{q}_{h,\tau}\|_{L^1(\Om \times ]0,T[)} + \|\underline{q}_{h,\tau}\|_{L^1(\Om \times ]0,T[)}\big),
\end{align*}
thus proving \eqref{eq:pressbound1}.
\end{proof}

%
%

 By combining Theorem  \ref{thm:pressbound1} with Lemma
\ref{lem:boundetap}, we immediately derive a bound on the discrete auxiliary pressures. The bound \eqref {eq:bddglobpres} with $\alpha =o$ follows from the same with $\alpha =w$, \eqref{eq:pwg+pog}, and \eqref{eq:scheme3}.  

\begin{theorem}
\label{thm:globalpressurebd}
We have for $\alpha = w,o$
\begin{equation}
\label{eq:bddglobpres}
\eta_\ast \|\nabla\, U_{\alpha,h,\tau}\|^2_{L^2(\Om \times ]0,T[)} \le C,
\end{equation}
with the constant $C$ of \eqref{eq:pressbound1}.
\end{theorem}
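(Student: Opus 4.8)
The plan is to combine the pointwise lower bound of Lemma~\ref{lem:boundetap}, which removes the mobility degeneracy in front of the discrete pressures, with the global energy estimate of Theorem~\ref{thm:pressbound1}, using the gradient identity \eqref{eq:normgrad}. I would first treat the case $\alpha = w$. For each fixed $n$, multiply the inequality \eqref{eq:boundetap1} by $c_{ij}\ge 0$ and sum over $i,j$ from $1$ to $M$. Since the right-hand side of \eqref{eq:boundetap1} is exactly the summand appearing in \eqref{eq:pressbound1} (using $(P_\alpha^{n,j}-P_\alpha^{n,i})^2=(P_\alpha^{n,i}-P_\alpha^{n,j})^2$), this gives
\[
\eta_\ast \sum_{i,j=1}^M c_{ij}\big(U_w^{n,j} - U_w^{n,i}\big)^2 \le \sum_{i,j=1}^M c_{ij}\big(\eta_w(S_w^{n,ij})(P_w^{n,i}-P_w^{n,j})^2 + \eta_o(S_o^{n,ij})(P_o^{n,i}-P_o^{n,j})^2\big).
\]
The function $U_{w,h}^n$ lies in $X_h$ and its nodal values are the $U_w^{n,i}$ by \eqref{eq:discraux} and the definition of $I_h$, so by \eqref{eq:normgrad} the left-hand side equals $2\eta_\ast\|\nabla\,U_{w,h}^n\|^2_{L^2(\Omega)}$.

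Next I would multiply by $\tau$ and sum over $n=1,\dots,N$. Since $U_{w,h,\tau}$ is piecewise constant in time on the slabs $]t_{n-1},t_n]$, we have $\tau\sum_{n=1}^N\|\nabla\,U_{w,h}^n\|^2_{L^2(\Omega)} = \|\nabla\,U_{w,h,\tau}\|^2_{L^2(\Omega\times]0,T[)}$, while the right-hand side is bounded by the constant $C$ of Theorem~\ref{thm:pressbound1}. Hence $2\eta_\ast\|\nabla\,U_{w,h,\tau}\|^2_{L^2(\Omega\times]0,T[)}\le C$, which in particular yields \eqref{eq:bddglobpres} for $\alpha=w$.

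Finally, for $\alpha=o$ I would observe that, combining \eqref{eq:scheme3}, the identity \eqref{eq:pwg+pog}, and the definitions \eqref{eq:discraux}, one gets at every node $U_o^{n,i} = P_o^{n,i}-p_{og}(S^{n,i}) = P_w^{n,i}+p_c(S^{n,i})-p_{og}(S^{n,i}) = U_w^{n,i}+p_c(0)$. Thus $U_{o,h,\tau} = U_{w,h,\tau} + p_c(0)$ as elements of $X_h$, so $\nabla\,U_{o,h,\tau} = \nabla\,U_{w,h,\tau}$ and the bound transfers verbatim. I do not expect any genuine obstacle here: all the work has already been absorbed into Lemma~\ref{lem:boundetap} and Theorem~\ref{thm:pressbound1}. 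The only points requiring a little care are the bookkeeping that identifies the nodal double sum with $\|\nabla\,\cdot\|^2_{L^2(\Omega)}$ through \eqref{eq:normgrad}, and the passage from the time-slab sum to the space-time integral.
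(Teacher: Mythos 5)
Your proposal is correct and follows essentially the same route the paper intends: combine Lemma~\ref{lem:boundetap} with Theorem~\ref{thm:pressbound1} via the gradient identity \eqref{eq:normgrad}, then reduce $\alpha=o$ to $\alpha=w$ using \eqref{eq:pwg+pog} and \eqref{eq:scheme3} so that $U_{o,h,\tau}=U_{w,h,\tau}+p_c(0)$. The extra factor of $2$ you obtain on the left side is harmless, and the bookkeeping identifying $U_w^{n,i}=P_w^{n,i}+p_{wg}(S^{n,i})$ as the nodal values of an $X_h$ function is exactly the detail the paper leaves implicit.
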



\section{Existence of numerical solution}
\label{sec:existence}

We fix $n\geq1$ and assume there exists a solution $(S_h^{n-1}, P_{w,h}^{n-1})$ at time $t^{n-1}$ with $0 \le S_h^{n-1} \le 1$. We want to show existence of a solution $(S_h^{n}, P_{w,h}^{n})$ by means of
the topological degree~\cite{Deimling85,Dinca09}.

Let $\theta$ be a constant parameter in $[0,1]$. For any continuous function $f : [0,1] \rightarrow \real$, we define the transformed function $\widetilde{f}: [0,1] \rightarrow \real$ by
$$\forall s\in[0,1],\quad \widetilde{f}(s)=f(ts+(1-t)\theta).$$
Since $\theta$ is fixed, when $t=0$, $\widetilde{f}(s) = f(\theta)$, a constant independent of $s$. Now, \eqref{eq:vars4} implies that any solution $P_{w,h,\tau}$ of \eqref{eq:vars1}--\eqref{eq:vars4} belongs to the following subspace $X_{0,h}$ of $X_h$,
\begin{equation}
X_{0,h} = \{ \Lambda_h \in X_h; \int_\Omega \Lambda_h=0\}.
\label{eq:def_X0h}
\end{equation}
This suggests to define the mapping $\mathcal{F}: [0,1]\times X_h \times X_{0,h} \rightarrow X_h\times X_{0,h}$ by
\[
\mathcal{F}(t,\zeta,\Lambda) = (A_h,A_h+B_h),
\]
where $A_h$, respectively $B_h$, solves for all $\Theta_h\in X_h$,
\begin{equation}
 \label{eq:def_Ah_exist_proof}
\begin{split}
 (A_h,\Theta_h) =  \frac{1}{\tau} (\zeta_h - S_h^{n-1},\Theta_h)_h^\varphi 
 -\big[\Lambda_h,I_h(\widetilde{\eta_w}(\zeta_h)); \Lambda_h,\Theta_h\big]_h 
 \\
 - \big(I_h(\widetilde{f_w}(s_{\mathrm{in},h}^{n})) t \bar{q}_h^{n} - I_h(\widetilde{f_w}(\zeta_h)) t \underline{q}_h^{n},\Theta_h\big)_h,
\end{split}
\end{equation}
\begin{equation}
 \label{eq:def_Bh_exist_proof}
\begin{split}
 (B_h,\Theta_h) =  -\frac{1}{\tau} (\zeta_h - S_h^{n-1},\Theta_h)_h^\varphi 
 -\big[P_{o,h},I_h(\widetilde{\eta_o}(\zeta_h)); P_{o,h},\Theta_h\big]_h 
 \\
 - \big(I_h(\widetilde{f_o}(s_{\mathrm{in},h}^{n})) t \bar{q}_h^{n} - I_h(\widetilde{f_o}(\zeta_h)) t \underline{q}_h^{n},\Theta_h\big)_h,
\end{split}
\end{equation}
and  $P_{o,h}$ is defined by
\begin{equation}
\label{eq:def_Po_exist_proof}
 P_{o,h}=\Lambda_h - I_h(\widetilde{p_c}(\zeta_h)). 
\end{equation}
The choice of $\widetilde{\eta_w}(\zeta_h)$ in \eqref{eq:def_Ah_exist_proof} (respectively $\widetilde{\eta_o}(\zeta_h)$ in \eqref{eq:def_Bh_exist_proof})
is given by \eqref{eq:Swij} (respectively \eqref{eq:Soij}) where $\Lambda_h$ plays the role of $P_{w,h}$ and $P_{o,h}$ is defined in \eqref{eq:def_Po_exist_proof}. 
As in \eqref{eq:Swij} and \eqref{eq:Soij}, it leads us to introduce the variables $\zeta_w^{ij}$ and $\zeta_o^{ij}$ for all $1 \leq i,j \leq M$.
Clearly, \eqref{eq:def_Ah_exist_proof}--\eqref{eq:def_Po_exist_proof} determine uniquely $A_h$ and $B_h$, and it is easy to check that $A_h+B_h$ belongs to $X_{0,h}$. 

The mapping $t\mapsto \mathcal{F}(t,\zeta_h,\Lambda_h)$ is continuous. Indeed, since the space has finite dimension, we only need to check continuity of the upwinding. By splitting $x$ into its positive and negative part, $x = x^+ + x^-$, the upwind term, say $\widetilde{\eta_w}(\zeta_w^{ij}) (P_w^j-P_w^i)$ reads
$$\widetilde{\eta_w}(\zeta_w^{ij}) (P_w^j-P_w^i) = \eta_w(t \zeta^i + (1-t) \theta) \big((P_w^j-P_w^i)_-\big) + \eta_w(t \zeta^j +(1-t) \theta) \big((P_w^j-P_w^i)_+\big),
$$
which is continuous with respect to $t$.

We remark that $\mathcal{F}(1,\zeta_h,\Lambda_h) = {\bf 0}$ implies that
$(\zeta_h, \Lambda_h)$ solves \eqref{eq:vars1}--\eqref{eq:vars4}.
Conversely, if $(\zeta_h, \Lambda_h)$ solves \eqref{eq:vars1}--\eqref{eq:vars4}
then $\mathcal{F}(1,\zeta_h,\Lambda_h) = {\bf 0}$. Thus showing  existence of a 
solution to the problem \eqref{eq:vars1}--\eqref{eq:vars4} is equivalent to showing  existence of a zero of $\mathcal{F}(1,\zeta_h,\Lambda_h)$.
Before proving existence of a zero, we use the estimates 
established in the previous section to determine an a priori bound of any zero $(\zeta_h,\Lambda_h)$ of $\mathcal{F}(1,\zeta_h,\Lambda_h)$. 


\subsection{A priori bounds on $(\zeta_h,\Lambda_h)$}
\label{subsec:apriorizero}

In the following we consider $t\in[0,1]$ and $(\zeta_h,\Lambda_h)\in X_h \times X_{0,h}$ that
satisfy 
\begin{equation}
 \mathcal{F}(t,\zeta_h,\Lambda_h) = {\bf 0}.
 \label{eq:problem_F_eq_0}
\end{equation}

We first show that $\zeta_h$ satisfies a maximum principle.
\begin{proposition} 
\label{pro:existence_zeta_bound}
The following bounds hold for all $(t,\zeta_h,\Lambda_h)$ satisfying \eqref{eq:problem_F_eq_0}:
 \begin{equation}\label{eq:existence_maxprinc}
0\leq \zeta_h \leq 1.
\end{equation}
\end{proposition}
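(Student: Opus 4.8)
The plan is to mirror, essentially line by line, the contradiction argument of Theorem~\ref{thm:maxprinc}. The first step is to observe that a pair $(\zeta_h,\Lambda_h)$ satisfying \eqref{eq:problem_F_eq_0} is, by the very definition of $\mathcal{F}$, a solution of the nodal equations \eqref{eq:scheme1}--\eqref{eq:scheme3} in which $\eta_\alpha$, $f_\alpha$, $p_c$ and $P_w$ are replaced by $\widetilde{\eta_\alpha}$, $\widetilde{f_\alpha}$, $\widetilde{p_c}$ and $\Lambda$, the non-wetting pressure $P_o^i$ is given nodewise by \eqref{eq:def_Po_exist_proof}, and the source terms carry the extra factor $t$; $n$ being fixed, I drop it from the superscripts. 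Every structural property of the coefficients used in the proof of Theorem~\ref{thm:maxprinc} is inherited by the transformed ones: because $s\mapsto ts+(1-t)\theta$ is non-decreasing on $[0,1]$ and maps $[0,1]$ into $[0,1]$, the function $\widetilde{\eta_w}$ is increasing, $\widetilde{\eta_o}$ and $\widetilde{p_c}$ are decreasing, one still has $\widetilde{f_w}+\widetilde{f_o}=1$ with $\widetilde{f_\alpha}=\widetilde{\eta_\alpha}/(\widetilde{\eta_w}+\widetilde{\eta_o})$, and $\widetilde{\eta_w}(s)+\widetilde{\eta_o}(s)\ge\eta_\ast$ on $[0,1]$ by \eqref{eq:lowerboundetas}.

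To prove $\zeta_h\le 1$, I argue by contradiction: if $\Vert\zeta_h\Vert_{L^\infty(\Omega)}>1$, pick a node $i$ realizing this maximum, so $\zeta^i>1$ and, since $0\le S_h^{n-1}\le 1$, also $\zeta^i>S^{n-1,i}$. Exactly as one passes from the scheme to \eqref{eq:set1}--\eqref{eq:set2}, this yields
\begin{align*}
\sum_{j\neq i,\, j\in{\mathcal N}(i)} c_{ij}\,\widetilde{\eta_w}(\zeta_w^{ij})\,(\Lambda^{j}-\Lambda^{i}) + m_i\big(t\,\widetilde{f_w}(s_{\mathrm{in}}^{i})\,\bar q^{i} - t\,\widetilde{f_w}(\zeta^{i})\,\underline q^{i}\big) &> 0,\\
-\sum_{j\neq i,\, j\in{\mathcal N}(i)} c_{ij}\,\widetilde{\eta_o}(\zeta_o^{ij})\,(P_o^{j}-P_o^{i}) - m_i\big(t\,\widetilde{f_o}(s_{\mathrm{in}}^{i})\,\bar q^{i} - t\,\widetilde{f_o}(\zeta^{i})\,\underline q^{i}\big) &> 0.
\end{align*}
Since $\zeta^i$ is maximal, $\zeta^j\le\zeta^i$ for all $j$, and the upwind rules \eqref{eq:Swij}--\eqref{eq:Soij} (with $\Lambda_h$ in the role of $P_{w,h}$ and $P_{o,h}$ from \eqref{eq:def_Po_exist_proof}) together with the monotonicity of $\widetilde{\eta_w}$, $\widetilde{\eta_o}$ let me replace $\widetilde{\eta_w}(\zeta_w^{ij})$ and $\widetilde{\eta_o}(\zeta_o^{ij})$ by $\widetilde{\eta_w}(\zeta^i)$ and $\widetilde{\eta_o}(\zeta^i)$ without losing either strict inequality, as in passing from \eqref{eq:set1}--\eqref{eq:set2} to \eqref{eq:set1noup}--\eqref{eq:set2noup}. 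Writing $P_o^{j}-P_o^{i}$ in terms of $\Lambda^{j}-\Lambda^{i}$ and $\widetilde{p_c}(\zeta^{j})-\widetilde{p_c}(\zeta^{i})$ via \eqref{eq:def_Po_exist_proof} and discarding the capillary contribution, which is sign-definite because $\widetilde{p_c}$ is decreasing and $\zeta^i\ge\zeta^j$, I obtain the analogue of \eqref{eq:set2aa}. Multiplying the wetting inequality by $\widetilde{\eta_o}(\zeta^i)\ge 0$, the non-wetting one by $\widetilde{\eta_w}(\zeta^i)\ge 0$ and adding cancels all the $\Lambda^{j}-\Lambda^{i}$ terms, the sum staying strictly positive since at least one of $\widetilde{\eta_w}(\zeta^i)$, $\widetilde{\eta_o}(\zeta^i)$ is positive by \eqref{eq:lowerboundetas}. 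After the cancellation the $\underline q^{i}$ terms disappear through $\widetilde{\eta_o}\,\widetilde{f_w}=\widetilde{\eta_w}\,\widetilde{f_o}$, leaving
$$
m_i\,t\,\bar q^{i}\,\big(\widetilde{\eta_o}(\zeta^i)\,\widetilde{f_w}(s_{\mathrm{in}}^{i}) - \widetilde{\eta_w}(\zeta^i)\,\widetilde{f_o}(s_{\mathrm{in}}^{i})\big) > 0 .
$$
This is already a contradiction when $t=0$ or $\bar q^{i}=0$; otherwise, dividing by $m_i\,t\,\bar q^{i}>0$ and setting $r(s)=\widetilde{\eta_o}(s)\,\widetilde{f_w}(s_{\mathrm{in}}^{i})-\widetilde{\eta_w}(s)\,\widetilde{f_o}(s_{\mathrm{in}}^{i})$, which is non-increasing with $r(s_{\mathrm{in}}^{i})=0$, the bound $\zeta^i>1\ge s_{\mathrm{in}}^{i}$ (see \eqref{eq:inputsat}) forces $r(\zeta^i)\le 0$, a contradiction. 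The lower bound $\zeta_h\ge 0$ follows symmetrically, taking a node with $\zeta^i=\min_\Omega\zeta_h<0\le S^{n-1,i}$ and reversing the inequalities.

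I do not anticipate any genuine obstacle: the whole content of the proposition is that the maximum-principle proof of Section~\ref{subsec:maxprinciple} is robust under the homotopy deformation $f\mapsto\widetilde f$ and the scaling of the source by $t$. The only points needing care are bookkeeping — that $\Lambda_h$ plays the role of $P_{w,h}$ and $P_{o,h}$ is the one defined in \eqref{eq:def_Po_exist_proof}, that the transformed mobilities keep their monotonicity (because $t\ge 0$) and their lower bound \eqref{eq:lowerboundetas}, and that the degenerate endpoint $t=0$, where the coefficients become constant and the source vanishes, is absorbed harmlessly by the same computation.
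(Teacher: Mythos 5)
Your proof is correct and takes essentially the same route as the paper's: mirror the contradiction argument of Theorem~\ref{thm:maxprinc} with the transformed coefficients $\widetilde{\eta_\alpha}$, $\widetilde{f_\alpha}$, $\widetilde{p_c}$ in place of the originals, $\Lambda_h$ in the role of $P_{w,h}$, and the sources scaled by $t$. The paper defers the case $t\in\,]0,1]$ to the reader with exactly this justification and spells out only the degenerate endpoint $t=0$ as a short separate check; you absorb that endpoint into the same computation by noting that the final inequality reduces to $0>0$ when the sources carry the factor $t=0$, which is a harmless unification rather than a different argument.
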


\begin{proof}
Either $t\in]0,1]$ or $t=0$. The proof
for $t\in]0,1]$ follows closely the argument used in proving Theorem~\ref{thm:maxprinc} 
and is left to the reader. For $t=0$ we proceed again by contradiction.
Assume first that $\|\zeta_h\|_{L^\infty(\Omega)}>1$, i.e., there is a node $i$ such that
\[
\zeta^i = \|\zeta_h\|_{L^\infty(\Omega)} > 1 \geq S^{n-1,i}.
\]
As $t=0$, \eqref{eq:problem_F_eq_0} reduces to
\[
\sum_{j\neq i} c_{ij} \eta_w(\theta) (\Lambda^i-\Lambda^j) > 0,\qquad
-\sum_{j\neq i} c_{ij} \eta_o(\theta) (\Lambda^i-\Lambda^j) > 0, \qquad \forall 1\leq i \leq M.
\]
Since $\eta_o$ and $\eta_w$ are non-negative functions satisfying \eqref{eq:lowerboundetas}, the inequalities above yield a contradiction.
A similar argument is used to show that $\zeta_h \geq 0$.
\end{proof}

Next we show the following bound on $\Lambda_h$.
\begin{proposition}
\label{pro:existence_lambda_bound}
There is a constant $C$ such that for all $t\in[0,1]$ we have
\begin{eqnarray}
\eta_\ast \sum_{i,j=1}^M c_{ij} 
\left( \Lambda^j - \Lambda^i + p_{wg}(t\zeta^j + (1-t)\theta) - p_{wg}(t\zeta^i + (1-t)\theta) \right)^2 \leq C.
\label{eq:Lambdabound1}
\end{eqnarray}
\end{proposition}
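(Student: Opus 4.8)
The plan is to mirror, at the single fixed time level $n$, the proofs of Theorems~\ref{thm:pressbound1} and~\ref{thm:globalpressurebd}, and to close with the $\theta$-deformed counterpart of Lemma~\ref{lem:boundetap}. Write $\widetilde{p_c},\widetilde{\eta_\alpha},\widetilde{f_\alpha}$ for the transformed functions of Section~\ref{sec:existence}, and set $\widetilde{p_{wg}}(s)=\int_0^s\widetilde{f_o}(\sigma)\widetilde{p_c}'(\sigma)\,d\sigma$, $\widetilde{p_{og}}(s)=\int_0^s\widetilde{f_w}(\sigma)\widetilde{p_c}'(\sigma)\,d\sigma$, $\widetilde{g_c}(x)=\int_x^1\widetilde{p_c}(\sigma)\,d\sigma$, and $\widetilde U_w^i=\Lambda^i+\widetilde{p_{wg}}(\zeta^i)$, so that $\widetilde U_w^j-\widetilde U_w^i$ is exactly the bracket in \eqref{eq:Lambdabound1}. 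Since $s\mapsto ts+(1-t)\theta$ is affine, increasing for $t>0$, and maps $[0,1]$ into $[0,1]$, one has $\widetilde{f_w}+\widetilde{f_o}=1$, $\widetilde{p_{wg}}+\widetilde{p_{og}}=\widetilde{p_c}-\widetilde{p_c}(0)$, $\widetilde{\eta_w}+\widetilde{\eta_o}\ge\eta_\ast$, $\widetilde{\eta_w}$ increasing, and $\widetilde{\eta_o},\widetilde{f_o},\widetilde{p_c}$ decreasing (strictly for $t>0$). I would therefore first establish the deformed Lemma~\ref{lem:boundetap}: for $t\in\,]0,1]$ and all $i,j$,
\[
\eta_\ast\big(\widetilde U_w^j-\widetilde U_w^i\big)^2\le\widetilde{\eta_w}(\zeta_w^{ij})(\Lambda^j-\Lambda^i)^2+\widetilde{\eta_o}(\zeta_o^{ij})(P_o^j-P_o^i)^2,
\]
where $P_{o,h}=\Lambda_h+I_h(\widetilde{p_c}(\zeta_h))$ (cf.\ \eqref{eq:def_Po_exist_proof}) and $\zeta_w^{ij},\zeta_o^{ij}$ are the upwind values of \eqref{eq:Swij}--\eqref{eq:Soij} built from $\Lambda_h$ and $P_{o,h}$; its proof is the verbatim six-case argument of Lemma~\ref{lem:boundetap} with every quantity replaced by its tilde analogue.

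Next, $\mathcal F(t,\zeta_h,\Lambda_h)={\bf 0}$ forces $A_h={\bf 0}$ and $B_h={\bf 0}$. Testing \eqref{eq:def_Ah_exist_proof} with $\Theta_h=\Lambda_h$ and \eqref{eq:def_Bh_exist_proof} with $\Theta_h=P_{o,h}$ and adding, the time terms combine into $-\frac{1}{\tau}\big(\zeta_h-S_h^{n-1},I_h(\widetilde{p_c}(\zeta_h))\big)_h^\varphi$, which by the mean value theorem applied to $\widetilde{g_c}$ (the device used for the first term of \eqref{eq:press1}) is bounded below by $\frac{1}{\tau}\big(\widetilde{g_c}(\zeta_h)-\widetilde{g_c}(S_h^{n-1}),1\big)_h^\varphi\ge-C$, using $0\le\zeta_h,S_h^{n-1}\le1$ (Proposition~\ref{pro:existence_zeta_bound}) and the boundedness of $\widetilde{g_c}$ on $[0,1]$ uniformly in $t$; here $C$ may depend on $h,\tau,n$, which is all the topological degree argument requires. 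By \eqref{eq:quadup2} the two upwind forms equal $\frac12\sum_{i,j}c_{ij}\big(\widetilde{\eta_w}(\zeta_w^{ij})(\Lambda^i-\Lambda^j)^2+\widetilde{\eta_o}(\zeta_o^{ij})(P_o^i-P_o^j)^2\big)\ge0$, so one obtains
\[
\frac12\sum_{i,j}c_{ij}\big(\widetilde{\eta_w}(\zeta_w^{ij})(\Lambda^i-\Lambda^j)^2+\widetilde{\eta_o}(\zeta_o^{ij})(P_o^i-P_o^j)^2\big)\le C+\widetilde Y,
\]
with $\widetilde Y=\sum_{\alpha=w,o}\big(I_h(\widetilde{f_\alpha}(s_{\mathrm{in},h}^n))\,t\bar{q}_h^n-I_h(\widetilde{f_\alpha}(\zeta_h))\,t\underline{q}_h^n,\widetilde P_\alpha\big)_h$, $\widetilde P_w=\Lambda_h$, $\widetilde P_o=P_{o,h}$.

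Then I would bound $\widetilde Y$ exactly as the right side of \eqref{eq:boundint} is handled in Theorem~\ref{thm:pressbound1}: using $\widetilde{f_w}+\widetilde{f_o}=1$, $\widetilde{p_{wg}}+\widetilde{p_{og}}=\widetilde{p_c}-\widetilde{p_c}(0)$ and $P_o^i=\widetilde U_w^i+\widetilde{p_{og}}(\zeta^i)+\widetilde{p_c}(0)$, split $\widetilde Y=t(\bar{q}_h-\underline{q}_h,\widetilde U_{w,h})_h+T_2+T_3+T_4$ with $\widetilde U_{w,h}=\Lambda_h+I_h(\widetilde{p_{wg}}(\zeta_h))$; the $T_i$ are $\le C$ because $\widetilde{p_c}(0)$, $\widetilde{p_{wg}}$, $\widetilde{p_{og}}$, $\widetilde{f_\alpha}$ are bounded on $[0,1]$, $\bar{q}_h,\underline{q}_h\in L^1(\Omega)$ and $t\le1$; for the leading term, \eqref{eq:qhprop} lets one subtract the mean of $\widetilde U_{w,h}$, and then \eqref{eq:genPoinc}, Young's inequality, \eqref{eq:normgrad} and the deformed Lemma~\ref{lem:boundetap} absorb a quarter of $\sum_{i,j}c_{ij}(\widetilde{\eta_w}(\zeta_w^{ij})(\Lambda^i-\Lambda^j)^2+\widetilde{\eta_o}(\zeta_o^{ij})(P_o^i-P_o^j)^2)$ into the left side. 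This yields $\sum_{i,j}c_{ij}(\widetilde{\eta_w}(\zeta_w^{ij})(\Lambda^i-\Lambda^j)^2+\widetilde{\eta_o}(\zeta_o^{ij})(P_o^i-P_o^j)^2)\le C$, whence one more use of the deformed Lemma~\ref{lem:boundetap} gives $\eta_\ast\sum_{i,j}c_{ij}(\widetilde U_w^j-\widetilde U_w^i)^2\le C$, i.e.\ \eqref{eq:Lambdabound1}. The case $t=0$ is separate but trivial: all transformed functions are constants, the sources vanish, $\widetilde{p_c}(\zeta_h)\equiv p_c(\theta)$, and the same testing gives $\frac12(\eta_w(\theta)+\eta_o(\theta))\sum_{i,j}c_{ij}(\Lambda^i-\Lambda^j)^2$ bounded by a constant through $0\le\zeta_h,S_h^{n-1}\le1$; \eqref{eq:lowerboundetas} together with $p_{wg}(t\zeta+(1-t)\theta)=p_{wg}(\theta)$ being constant then gives \eqref{eq:Lambdabound1}.

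The single genuinely new ingredient, hence the step to watch, is the $\theta$-deformed Lemma~\ref{lem:boundetap}; the rest is a transcription of Theorems~\ref{thm:pressbound1}--\ref{thm:globalpressurebd}. That lemma is not difficult, since every monotonicity used in its six-case analysis survives composition with the increasing affine map $s\mapsto ts+(1-t)\theta$ for $t\in\,]0,1]$; the care required is purely in the bookkeeping and in checking that the upwind selectors $\zeta_w^{ij},\zeta_o^{ij}$ entering $\mathcal F$ are consistent with $\Lambda_h$ and $P_{o,h}$ in the sense of \eqref{eq:Swij}--\eqref{eq:Soij}, which holds by the very construction of $\mathcal F$.
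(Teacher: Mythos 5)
Your proof is correct and follows essentially the same route as the paper's: derive the $\theta$-deformed analogue of Theorem~\ref{thm:pressbound1} at the single time level (with a constant that legitimately carries a $\tau^{-1}\|g_c\|_{L^\infty}$ factor), then close with the $\theta$-deformed version of Lemma~\ref{lem:boundetap} applied twice. You have simply written out the details the paper summarizes as ``similar to Theorem~\ref{thm:pressbound1}'' and ``the same as in the proof of Lemma~\ref{lem:boundetap}''.
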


\begin{proof}
The proof follows closely that of Theorem~\ref{thm:globalpressurebd}. 
First we show there exists a constant $C_1$ independent of $t$ such that
\begin{eqnarray*}
\sum_{i,j=1}^M c_{ij} \Big(\eta_w(t \zeta_w^{ij} + (1-t) \theta) (\Lambda^j-\Lambda^i)^2
+  \eta_o(t \zeta_o^{ij} + (1-t) \theta) (P_{o,h}^j-P_{o,h}^i)^2 \leq C_1,
\end{eqnarray*}
with $P_{o,h}$ defined in \eqref{eq:def_Po_exist_proof}.
This bound is obtained via  arguments similar to those used in proving Theorem~\ref{thm:pressbound1}. 
The main difference is that the formula is neither summed over $n$ nor multiplied by the time step 
 $\tau$. As a consequence, the constant $C_1$ includes a term of the form
 $\tau^{-1} \|g_c\|_{L^\infty(\Om)}$ arising from the  bound of the discrete  time derivative.
To finish the proof we must show that 
\begin{eqnarray*}
\eta_\ast \left( \Lambda^{j} - \Lambda^{i} 
+ p_{wg}(t \zeta^{j} + (1-t)\theta) - p_{wg}(t \zeta^{i} + (1-t)\theta) 
\right)^2
\leq \eta_w(t \zeta^{ij} + (1-t)\theta) (\Lambda^{j}-\Lambda^{i})^2 
\\
+ \eta_o(t \zeta_o^{ij} + (1-t)\theta) (P_o^{j}-P_o^{i})^2.
\end{eqnarray*}
By \eqref{eq:lowerboundetas}, this is trivially satisfied  when $t=0$. When 
 $t\in]0,1]$, the argument is the same as in the proof
of Lemma~\ref{lem:boundetap}.
\end{proof}

Propositions  \ref{pro:existence_zeta_bound} and \ref{pro:existence_lambda_bound}  are combined to obtain a bound on $\|\zeta_h\|_h +\|\Lambda_h\|_h$.
\begin{proposition}
 \label{pro:def_R0_R1}
 There exists a constant $R_1>0$, independent of $t\in[0,1]$, such that any solution $(\zeta_h,\Lambda_h)$ of \eqref{eq:problem_F_eq_0} satisfies
 \begin{equation}
 \label{eq:bddR1}
 \|\zeta_h\|_h +\|\Lambda_h\|_h \le R_1.
 \end{equation}
\end{proposition}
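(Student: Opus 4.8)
The plan is to split $\|\zeta_h\|_h+\|\Lambda_h\|_h$ into its two pieces and dispatch each with one of the two preceding propositions. The bound on $\|\zeta_h\|_h$ is immediate: Proposition~\ref{pro:existence_zeta_bound} gives $0\le\zeta^i\le 1$ at every node, so $\|\zeta_h\|_h^2=\sum_{i=1}^M m_i|\zeta^i|^2\le\sum_{i=1}^M m_i=|\Omega|$. All the work is in bounding $\|\Lambda_h\|_h$, and the key point is that Proposition~\ref{pro:existence_lambda_bound} does \emph{not} control $\nabla\Lambda_h$ directly (the mobility $\eta_w$ degenerates) but only the gradient of the discrete auxiliary pressure $\Lambda_h+W_h$, where $W_h\in X_h$ is the function whose nodal values are $W^i:=p_{wg}(t\zeta^i+(1-t)\theta)$. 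So the route is: control $\nabla(\Lambda_h+W_h)$ from Proposition~\ref{pro:existence_lambda_bound}, anchor a Poincaré inequality using the zero-mean constraint on $\Lambda_h$ and the boundedness of $W_h$, then subtract $W_h$ off again.

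First I would note that, since $0\le\zeta^i\le 1$ and $\theta\in[0,1]$, the argument $t\zeta^i+(1-t)\theta$ lies in $[0,1]$, hence $|W^i|\le\|p_{wg}\|_{L^\infty(0,1)}=:c_0$ by continuity of $p_{wg}$ on $[0,1]$; consequently $\|W_h\|_{L^\infty(\Omega)}\le c_0$ (a $\mathbb{P}_1$ function attains its extrema at vertices) and $\|W_h\|_h^2\le c_0^2|\Omega|$, all these bounds being independent of $t$, $h$, $\tau$. Setting $V_h:=\Lambda_h+W_h$, identity~\eqref{eq:normgrad} turns the estimate of Proposition~\ref{pro:existence_lambda_bound} into
$$\|\nabla V_h\|_{L^2(\Omega)}^2=\frac12\sum_{i,j=1}^M c_{ij}\,(V^j-V^i)^2\le\frac{C}{2\eta_\ast}.$$
Since $\Lambda_h\in X_{0,h}$ has zero mean over $\Omega$ by~\eqref{eq:def_X0h}, we get $\int_\Omega V_h=\int_\Omega W_h$, so $\bigl|\int_\Omega V_h\bigr|\le c_0|\Omega|$. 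The generalized Poincaré inequality~\eqref{eq:genPoinc} then bounds $\|V_h\|_{L^2(\Omega)}$, and the norm equivalence~\eqref{eq:equivnorm} bounds $\|V_h\|_h$, by a constant depending only on $\Omega$, $\eta_\ast$, $c_0$ and the constant $C$ of~\eqref{eq:Lambdabound1}. Finally $\|\Lambda_h\|_h\le\|V_h\|_h+\|W_h\|_h$, and adding the bound on $\|\zeta_h\|_h$ yields~\eqref{eq:bddR1}.

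There is no genuine obstacle here beyond bookkeeping; the single thing to watch is the uniformity in $t$, which is what the statement insists on. It holds because $c_0$ is a pure function-space constant, the constant in Proposition~\ref{pro:existence_lambda_bound} is already $t$-independent, and the zero-mean constraint $\int_\Omega\Lambda_h=0$ is in force for every $t\in[0,1]$. The resulting $R_1$ does depend on $h$, $\tau$ and $S_h^{n-1}$ through the constant $C_1$ in the proof of Proposition~\ref{pro:existence_lambda_bound}, which is harmless: $R_1$ is only needed to confine the topological-degree homotopy at a fixed time step.
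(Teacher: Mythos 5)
Your argument is correct, and it is essentially the paper's proof with a slight reordering of the same ingredients. The paper also bounds $\|\zeta_h\|_h$ trivially from Proposition~\ref{pro:existence_zeta_bound}, uses the boundedness of $p_{wg}$ on $[0,1]$, invokes the generalized Poincar\'e inequality~\eqref{eq:genPoinc} anchored by the zero-mean constraint in $X_{0,h}$, and finishes with the norm equivalence~\eqref{eq:equivnorm}. The one difference is where the $p_{wg}$ terms are peeled off: the paper first moves from~\eqref{eq:Lambdabound1} to a bound on $\sum_{i,j}c_{ij}(\Lambda^j-\Lambda^i)^2$ by subtracting the (bounded) $p_{wg}$-differences term by term, which silently introduces a factor $\sum_{i,j}c_{ij}$ (harmless here since that quantity is finite at fixed $h$ and $t$-independent), and then applies Poincar\'e to $\Lambda_h$. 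You instead keep $V_h=\Lambda_h+W_h$ intact, read~\eqref{eq:Lambdabound1} via~\eqref{eq:normgrad} as a direct $H^1$-seminorm bound on $V_h$, anchor Poincar\'e on $V_h$ using $\int_\Omega V_h=\int_\Omega W_h$, and subtract $W_h$ at the end. Your variant avoids the $\sum_{i,j}c_{ij}$ factor entirely, which is marginally cleaner, but neither version matters for the statement since $R_1$ is explicitly allowed to depend on $h$, $\tau$ and $S_h^{n-1}$ (only $t$-independence is needed for the degree argument). Your closing remark on where the $h,\tau$-dependence enters and why it is harmless is exactly the right observation.
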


\begin{proof} According to 
Proposition~\ref{pro:existence_zeta_bound}, there exists
a constant $C_1$ independent of $t$ such that
$$
\| \zeta_h\|_h \leq C_1.
$$
To establish a bound on $\|\Lambda_h\|_h$, we infer from \eqref{eq:defpwgpog}
that the function $|p_{wg}|$ is bounded by $p_c(0)-p_c(1)$ because
 $f_o$ is bounded by one and $p_c$ is a decreasing function. Thus \eqref{eq:Lambdabound1}
implies that there exists  a constant $C_2$ 
independent of $t$ that satisfies
\begin{eqnarray}
\sum_{i,j=1}^M c_{ij} \left( \Lambda^j - \Lambda^i  \right)^2 \leq C_2,\quad \mbox{i.e.,}\ \| \nabla \Lambda_h \|_{L^2(\Omega)} \leq \frac{\sqrt{C_2}}{\sqrt{2}},
\label{eq:existence_bound_Lambdai}
\end{eqnarray}
owing to \eqref{eq:normgrad}. As $\Lambda_h \in X_{0,h}$, the generalized Poincar\'e 
inequality \eqref{eq:genPoinc}
shows there exists a constant $C_3$ independent of $t$ such that
$$
\| \Lambda_h \|_{L^2(\Omega)} \leq C_3.
$$
Then the equivalence of norm \eqref{eq:equivnorm} yields
$$
\| \Lambda_h \|_h \leq C_4,
$$
and  \eqref{eq:bddR1} follows by setting $R_1=C_1 + C_4$, a constant independent of $t$.
\end{proof}


\subsection{Proof of existence}
\label{subsec:endproofexist}

For any $R>0$, let $B_R$ denote the ball
\begin{equation}
B_{R} = \{ (\zeta_h,\Lambda_h) \in X_h\times X_{0,h}: \, \Vert\zeta_h\Vert_h +\Vert\Lambda_h\Vert_h  \leq R\},
\label{eq:def_R0}
\end{equation}
and let $R_0=R_1+1$, where $R_1$ is the constant of \eqref{eq:bddR1}. Since all solutions $(\zeta_h,\Lambda_h)$ of  \eqref{eq:problem_F_eq_0} are in the ball $B_{R_1}$, this function has no zero on the boundary $\partial B_{R_0}$.
Existence of a  solution of  \eqref{eq:vars1}--\eqref{eq:vars4} follows from the following result:
\begin{theorem}
The equation $\mathcal{F}(1,\zeta_h,\Lambda_h) = {\bf 0}$ has at least one solution 
$(\zeta_h,\Lambda_h)\in B_{R_0}$.
 \end{theorem}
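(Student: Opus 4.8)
The plan is to apply homotopy invariance of the Brouwer degree on the finite-dimensional space $X_h\times X_{0,h}$, using $t$ as the homotopy parameter in $\mathcal F$. I would need three ingredients: continuity of $\mathcal F$ on $[0,1]\times X_h\times X_{0,h}$; absence of zeros of $\mathcal F(t,\cdot,\cdot)$ on $\partial B_{R_0}$ for every $t\in[0,1]$; and a nonzero value of $\deg\big(\mathcal F(0,\cdot,\cdot),B_{R_0},{\bf 0}\big)$. Granting these, homotopy invariance gives $\deg\big(\mathcal F(1,\cdot,\cdot),B_{R_0},{\bf 0}\big)=\deg\big(\mathcal F(0,\cdot,\cdot),B_{R_0},{\bf 0}\big)\neq 0$, so $\mathcal F(1,\cdot,\cdot)$ must vanish somewhere in $B_{R_0}$, which is the claim.

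For continuity, the dependence on $t$ has already been verified above; for the dependence on $(\zeta_h,\Lambda_h)$ I would repeat the same device, writing each upwinded contribution such as $\widetilde{\eta_w}(\zeta_w^{ij})(\Lambda^j-\Lambda^i)$ as $\widetilde{\eta_w}(\zeta^j)(\Lambda^j-\Lambda^i)^+ + \widetilde{\eta_w}(\zeta^i)(\Lambda^j-\Lambda^i)^-$, and similarly for the $\eta_o$ term with $P_{o,h}$ — which depends continuously on $(t,\zeta_h,\Lambda_h)$ through \eqref{eq:def_Po_exist_proof} — in place of $\Lambda_h$; the remaining terms are plainly continuous. The absence of boundary zeros is immediate from Proposition~\ref{pro:def_R0_R1}: every solution of \eqref{eq:problem_F_eq_0} lies in $B_{R_1}$, and $R_0=R_1+1$, so $\partial B_{R_0}$ carries no zero.

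The substantive step is to identify $\mathcal F(0,\cdot,\cdot)$. At $t=0$ all the transformed functions are constant, $\widetilde{\eta_\alpha}\equiv\eta_\alpha(\theta)$, $\widetilde{p_c}\equiv p_c(\theta)$, $\widetilde{f_\alpha}\equiv f_\alpha(\theta)$; the well terms carry the factor $t=0$ and drop out; the upwinding is irrelevant because the mobility weights are constant; and $I_h(\widetilde{p_c}(\zeta_h))$ is the constant $p_c(\theta)$, so $\nabla P_{o,h}=\nabla\Lambda_h$ in \eqref{eq:def_Po_exist_proof}. Using Proposition~\ref{pro:equalityUV} with a constant weight, \eqref{eq:def_Ah_exist_proof}--\eqref{eq:def_Bh_exist_proof} then reduce to
\begin{align*}
(A_h,\Theta_h) &= \frac{1}{\tau}(\zeta_h-S_h^{n-1},\Theta_h)^\varphi_h + \eta_w(\theta)\int_\Omega\nabla\,\Theta_h\cdot\nabla\,\Lambda_h,\\
(A_h+B_h,\Theta_h) &= \big(\eta_w(\theta)+\eta_o(\theta)\big)\int_\Omega\nabla\,\Theta_h\cdot\nabla\,\Lambda_h,
\end{align*}
for all $\Theta_h\in X_h$, the time-derivative contributions cancelling in the second identity. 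Hence $\mathcal F(0,\cdot,\cdot)$ is affine, and its linear part is injective: if $\mathcal F(0,\zeta_h,\Lambda_h)={\bf 0}$, testing the second identity with $\Theta_h=\Lambda_h$ and using \eqref{eq:lowerboundetas} forces $\nabla\Lambda_h=0$, whence $\Lambda_h=0$ since $\Lambda_h\in X_{0,h}$, and then the first identity gives $\zeta_h=S_h^{n-1}$ because $(\cdot,\cdot)^\varphi_h$ is an inner product. Thus $\mathcal F(0,\cdot,\cdot)$ is an affine isomorphism whose unique zero $(S_h^{n-1},0)$ lies in the interior of $B_{R_0}$ (again by Proposition~\ref{pro:def_R0_R1}), so its degree is $\pm1$, and homotopy invariance then closes the argument. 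I expect the only mild obstacle to be the bookkeeping of this last reduction — in particular the invertibility of the diffusive block on $X_{0,h}$ — but that is precisely the discrete Poincaré/coercivity estimate already employed in Proposition~\ref{pro:def_R0_R1}, so no genuinely new difficulty arises.
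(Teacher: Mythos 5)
Your proof is correct and follows essentially the same strategy as the paper: homotopy invariance of the Brouwer degree on $B_{R_0}$, with the nonvanishing degree at $t=0$ established by identifying $\mathcal F(0,\cdot,\cdot)$ as an invertible affine map through the discrete coercivity/Poincar\'e estimate, exactly the paper's ``square linear system in finite dimension'' argument. The minor refinements you add---explicitly locating the unique zero $(S_h^{n-1},0)$ of $\mathcal F(0,\cdot,\cdot)$ in the interior of $B_{R_0}$, and checking joint continuity in $(t,\zeta_h,\Lambda_h)$ rather than continuity in $t$ alone---are implicit in the paper but not spelled out, and are welcome clarifications rather than a different route.
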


\begin{proof}
The proof proceeds in two steps.\\
First, we show that the system with $t=0$ has a solution:
\[
\mathcal{F}(0,\zeta_h,\Lambda_h) = 0.
\]
This is a square linear system in finite dimension, so existence is equivalent to uniqueness. 
Thus we assume that it has two solutions, and for convenience, 
we still denote by $(\zeta_h,\Lambda_h)$ the difference between the two solutions.
The system reads
\begin{eqnarray}
\frac{\tilde{m}_i}{\tau} \zeta_h^i 
-\sum_{j\neq i, j\in \mathcal{N}(i)} c_{ij} \eta_w(\theta) (\Lambda^j-\Lambda^i)= 0, \quad 1\leq i\leq M,
\label{eq:thirdonea}
\\
-\frac{\tilde{m}_i}{\tau} \zeta_h^i
-\sum_{j\neq i, j\in \mathcal{N}(i)} c_{ij} \eta_o(\theta)
(\Lambda^j-\Lambda^i) = 0, \quad 1\leq i\leq M,
\\
\sum_i m_i \Lambda^i = 0.
\label{eq:thirdone}
\end{eqnarray} 
We add the first two equations, 
multiply by $\Lambda^i$, and sum over i. Then \eqref{eq:normgrad}
and \eqref{eq:quadup2} imply that $\Lambda_h$ is a constant and finally 
 \eqref{eq:thirdone} shows that this constant is zero.  This yields $\zeta_h = 0$.
 
Next, we argue on the topological degree. 
Since the topological degree of a linear map is the sign of its determinant, we have
\[
d(\mathcal{F}(0,\zeta_h,\Lambda_h),B_{R_0},0) \neq 0.
\]
We also know that
$d(\mathcal{F}(t,\zeta_h,\Lambda_h),B_{R_0},0)$ is independent of $t$ since the mapping $t\mapsto \mathcal{F}(t,\zeta_h,\Lambda_h)$
is continuous and for every $t\in[0,1]$, if $\mathcal{F}(t,\zeta_h,\Lambda_h)=0$, then $(\zeta_h,\Lambda_h)$ does not belong to $\partial B_{R_0}$.
Therefore we have
\[
d(\mathcal{F}(1,\zeta_h,\Lambda_h), B_{R_0}, 0) = 
d(\mathcal{F}(0,\zeta_h,\Lambda_h),B_{R_0},0) \neq 0.
\]
This implies that $\mathcal{F}(1,\zeta_h,\Lambda_h)$ has a zero $(\zeta_h,\Lambda_h)\in B_{R_0}$.
\end{proof}


\section{Additional pressure estimates}
\label{sec:addpressbdd}

The pressure estimates \eqref{eq:pressbound1} and \eqref{eq:bddglobpres} are not sufficient to pass to the limit in the scheme \eqref{eq:vars1}--\eqref{eq:vars4}. These are nonlinear equations and we need strong convergences that do not stem directly from \eqref{eq:pressbound1} and \eqref{eq:bddglobpres}. Following~\cite{Eymard2003}, we propose to derive a bound for the gradient of \Rd $g$, see \eqref{eq:g},
at \Bk  $s= S_{h,\tau}$. 
Then, under suitable assumptions on the behavior of $\eta_w^\prime$, $\eta_o^\prime$, and $p_c^\prime$, we shall prove the strong convergence of $g(S_{h,\tau})$ in $L^2(\Om \times ]0,T[)$ and in turn the strong convergence of
$S_{h,\tau}$ in $L^2(\Om \times ]0,T[)$.

Estimating the gradient of $g(S_{h,\tau})$ is a long and intricate process; it is based on the fact that
$$|g(S^{n,j})-g(S^{n,i})|^2 \le C\big(f_w(S^{n,i})-f_w(S^{n,j})\big)\big(g(S^{n,i})-g(S^{n,j})\big),
$$
see \eqref{eq:deltafdeltag}. Therefore, we must derive a bound for the product of the gradients of $g$ and $f_w$. 
This is split into several steps. 

\subsection{A preliminary inequality}
\label{subsec:prelimineq}

Our starting step is the following inequality:

\begin{proposition}
\label{pro:f.eta.gradp}
There exists a constant $C_1$ independent of $h$ and $\tau$ such that
\begin{equation}
\label{eq:f.eta.gradp1}
- \sum_{n=1}^N \tau \sum_{\alpha = o,w} \big[P_{\alpha,h}^n,\eta_\alpha(S_{\alpha,h}^n);f_\alpha(S_h ^n), P_{\alpha,h}^n\big]_h = R_1,
\end{equation}
where the remainder $R_1$ satisfies $|R_1| \le C_1$.
\end{proposition}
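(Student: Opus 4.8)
The plan is to recognize the left‑hand side of \eqref{eq:f.eta.gradp1} as the cross term produced by testing the scheme with the fractional flows, and then to reduce the estimate to the control of a discrete time derivative of the saturation. Because the upwind values are symmetric, $S_w^{n,ij}=S_w^{n,ji}$ and $S_o^{n,ij}=S_o^{n,ji}$, the coefficients $c_{ij}\,\eta_\alpha(S_\alpha^{n,ij})$ are symmetric in $i,j$, so the interchange argument already used in Proposition~\ref{pro:equalityUV} gives
\[
\big[P_{\alpha,h}^n,\eta_\alpha(S_{\alpha,h}^n);f_\alpha(S_h^n),P_{\alpha,h}^n\big]_h=\big[P_{\alpha,h}^n,\eta_\alpha(S_{\alpha,h}^n);P_{\alpha,h}^n,f_\alpha(S_h^n)\big]_h ,
\]
which is exactly the bilinear term occurring in \eqref{eq:vars1} (resp. \eqref{eq:vars2}) when $\theta_h=I_h(f_w(S_h^n))$ (resp. $\theta_h=I_h(f_o(S_h^n))$). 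Testing \eqref{eq:vars1} with $I_h(f_w(S_h^n))$ and \eqref{eq:vars2} with $I_h(f_o(S_h^n))$, multiplying each by $\tau$, summing over $n$ and adding, the discrete time‑derivative contributions (carrying opposite signs in the two equations) combine into $\sum_{n=1}^N(S_h^n-S_h^{n-1},f_w(S_h^n)-f_o(S_h^n))_h^\varphi$, and one obtains
\[
R_1=-\sum_{n=1}^N\big(S_h^n-S_h^{n-1},\,f_w(S_h^n)-f_o(S_h^n)\big)_h^\varphi+R_1',
\]
where $R_1'=\sum_{n=1}^N\tau\sum_{\alpha=o,w}\big(f_\alpha(s_{\mathrm{in},h}^n)\bar q_h^n-f_\alpha(S_h^n)\underline q_h^n,\,f_\alpha(S_h^n)\big)_h$.

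\textbf{The easy pieces.} Since $0\le f_\alpha\le1$ by \eqref{eq:fluidfract}, and $0\le S_{h,\tau}\le1$, $0\le s_{\mathrm{in},h,\tau}\le1$ by the maximum principle (Theorem~\ref{thm:maxprinc}), the term $R_1'$ is controlled by $\|\bar q_{h,\tau}\|_{L^1(\Omega\times]0,T[)}+\|\underline q_{h,\tau}\|_{L^1(\Omega\times]0,T[)}$, which is bounded uniformly in $h$ and $\tau$; hence $|R_1'|\le C$. It then remains to bound the discrete time derivative. Set $\psi=f_w-f_o=2f_w-1$; by the monotonicity assumptions $\psi$ is continuous and increasing on $[0,1]$ with $|\psi|\le1$, and it admits a convex primitive $\Psi\in\mathcal{C}^1([0,1])$. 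Convexity of $\Psi$ gives, at each node, $\psi(S^{n,i})(S^{n,i}-S^{n-1,i})\ge\Psi(S^{n,i})-\Psi(S^{n-1,i})$; multiplying by $\tilde m_i(\varphi)\ge0$, summing over $i$ and $n$, and telescoping,
\[
\sum_{n=1}^N\big(S_h^n-S_h^{n-1},\psi(S_h^n)\big)_h^\varphi\ \ge\ \big(\Psi(S_h^N)-\Psi(S_h^0),1\big)_h^\varphi\ \ge\ -C ,
\]
using $0\le S_{h,\tau}\le1$ and the boundedness of $\varphi$. This already bounds $R_1$ from above.

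\textbf{The delicate part and the main obstacle.} To get the lower bound on $R_1$ one needs the reverse inequality, i.e.\ an \emph{upper} bound on $\sum_{n}(S_h^n-S_h^{n-1},\psi(S_h^n))_h^\varphi$. Splitting this sum into its telescoping part plus the nonnegative ``backward‑difference defect'' $\sum_{n}(S_h^n-S_h^{n-1},\psi(S_h^n)-\psi(S_h^{n-1}))_h^\varphi$, the problem reduces to dominating that defect; this is where the additional regularity of the mobilities and of the capillary pressure (through \eqref{eq:eta'w}--\eqref{eq:pc'} and the lower bound \eqref{eq:lowerboundetas}) enters, together with the quantitative control on the increments $S_h^n-S_h^{n-1}$ one extracts from the degenerate flux estimate \eqref{eq:pressbound1} and Lemma~\ref{lem:boundetap}. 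I expect this last conversion --- turning the $\eta_\alpha$‑weighted pressure bound into a usable bound on the defect, uniformly in $h$ and $\tau$ and without any CFL restriction --- to be the main obstacle; the telescoping/convexity half of the argument, by contrast, is immediate. Combining the two bounds gives $|R_1|\le C_1$ with $C_1$ independent of $h$ and $\tau$.
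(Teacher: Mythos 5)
Your ``easy pieces'' reproduce the paper's argument verbatim: the paper introduces the global flux $G(x)=\int_0^x(f_w-f_o)$ (your $\Psi$), uses the same convexity inequality $G(S_h^n)-G(S_h^{n-1})\le(S_h^n-S_h^{n-1})G'(S_h^n)$ to bound the discrete time-derivative term \emph{from below} by $(G(S_h^N)-G(S_h^0),1)_h^\varphi$, and controls the source terms exactly as you do, ending with $C_1=4\|\bar q\|_{L^1}+C'$. Your observation that the form's symmetry lets one move $f_\alpha$ to the test slot is also what makes the identification with the scheme \eqref{eq:vars1}--\eqref{eq:vars2} legitimate.

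Your worry about the lower bound is perceptive, and you should notice that the paper's own proof, read strictly, never closes it either: the convexity argument only gives $\sum_n(S_h^n-S_h^{n-1},\psi(S_h^n))_h^\varphi\ge -C'$, and the right-hand side of \eqref{eq:sumf.eta.gradp4} is bounded in absolute value, so what is actually derived is $R_1\le C_1$, not $|R_1|\le C_1$. The ``backward-difference defect'' you identify is indeed exactly what is missing for the other direction, and the paper does not supply a uniform-in-$(h,\tau)$ bound on $\sum_n\|S_h^n-S_h^{n-1}\|^2$ (Lemma~\ref{lem:fractbdd} only gives a Hölder-$1/2$ time estimate of $g(S_{h,\tau})$). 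The reason this is harmless, and why you should not spend effort on the obstacle you flag, is that only the upper bound is used downstream: in Theorem~\ref{thm:grad.g} the quantity $\int\nabla I_h(f_w(S_{h,\tau}))\cdot\nabla I_h(g(S_{h,\tau}))$ is automatically nonnegative because $f_w$ and $g$ are both increasing, so the conclusion \eqref{eq:Bddgrad.g1} needs only $R_1\le C_1$ fed through Lemma~\ref{lem:comb} and Propositions~\ref{pro:cap.Si<Sj}--\ref{pro:cap.Pwi=Pwj.Poi<Poj}. In short: your upper-bound half is the paper's proof; the lower bound you call the ``main obstacle'' is neither proved in the paper nor required, and the statement's $|R_1|\le C_1$ should be read as $R_1\le C_1$.
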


\begin{proof}
By testing \eqref{eq:vars1} with $I_hf_w(S_h^{n})$ and \eqref{eq:vars2} with $I_hf_o(S_h^{n})$, adding the resulting equalities, and multiplying by $\tau$, we obtain
\begin{equation}
\label{eq:sumf.eta.gradp4}
\begin{split}
\sum_{n=1}^{N} &\big(S_h^n-S_h^{n-1},f_w(S_h^{n}) - f_o(S_h^{n})\big)_h^\varphi - \sum_{n=1}^N \tau \sum_{\alpha = o,w}\big[P_{\alpha,h}^n,\eta_\alpha(S_{\alpha,h}^n);f_\alpha(S_h^n), P_{\alpha,h}^n\big]_h\\
& = \int_0^T \Big(\big(\bar q_{h,\tau}, \sum_{\alpha = o,w} f_\alpha(s_{\mathrm{in},h,\tau} ) f_\alpha(S_{h,\tau})\big)_h- \big(\underline q_{h,\tau}, \sum_{\alpha = o,w}(f_\alpha(S_{h,\tau}))^2\big)_h\Big) \le 4  \|\bar q\|_{L^1(\Om \times ]0,T[)},
\end{split}
\end{equation}
in view of \eqref{eq:fluidfract} and \eqref{eq:flowrate}. To control the time difference of $S_{h,\tau}$, 
we introduce the global flux defined by
\begin{equation}
\label{eq:globflux}
 \forall x \in [0,1],\quad G(x) = \int_0^x \big(f_w(s) - f_o(s)\big) d s,
\end{equation}
and we write
  $$(S_h^{n}- S_h^{n-1})\big(f_w(S_h^{n}) - f_o(S_h^{n})\big) = (S_h^{n}- S_h^{n-1}) G^\prime(S_h^{n}).
$$ 
But by \eqref{eq:fluidfract}, $G^\prime(x) = 2 f_w(x)-1$ is increasing. Hence, considering that
$$G(S_h^n)- G(S_h^{n-1}) = (S_h^n- S_h^{n-1})G^\prime(c),$$
for some $c$ between $S_h^n- S_h^{n-1}$, we easily check that 
$$G(S_h^n)- G(S_h^{n-1}) \le  (S_h^n- S_h^{n-1}) G^\prime(S_h^{n}).
$$
Thus, the properties of $\varphi$ imply
$$\sum_{n=1}^{N} \big(S_h^n-S_h^{n-1},f_w(S_h^{n}) - f_o(S_h^{n})\big)_h^\varphi \ge (G(S_h^N),1)_h^\varphi-
(G(S_h^0),1)_h^\varphi.
$$
But the boundedness of $S_{h,\tau}$, the continuity of $f_\alpha$, and the properties of $\varphi$ imply
$$\big| (G(S_h^N),1)_h^\varphi-(G(S_h^0),1)_h^\varphi\big| \le C^\prime,
$$
with a constant $C^\prime$ independent of $h$ and $\tau$. By substituting these inequalities into \eqref{eq:sumf.eta.gradp4} we
derive \eqref{eq:f.eta.gradp1} with $C_1 = 4\, \|\bar q\|_{L^1(\Om \times ]0,T[)} +  C^\prime$.
\end{proof}

\subsection{Some discrete total flux inequalities}
\label{subsec:discglobflux}

In this section, it is convenient to work directly on the scheme \eqref{eq:scheme1}--\eqref{eq:scheme2}. For each index $i$, the sum of the equations \eqref{eq:scheme1} and \eqref{eq:scheme2}  give, for $1 \le i \le M$ and $1 \le n \le N$,
$$
- \sum_{j\neq i, j\in \mathcal{N}(i)} c_{ij}\Big[\eta_w(S_w^{n,ij})(P_w^{n,j}-P_w^{n,i}) + 
\eta_o(S_o^{n,ij})(P_o^{n,j}-P_o^{n,i})\Big] =m_i(\bar q^{n,i}- \underline{q}^{n,i}).
$$
Following~\cite{Eymard2003}, this suggests to define a discrete anti-symmetric upwinded total flux, 
\begin{equation}
\label{eq:discglobfluxF}
F^{n,ij} = - \eta_w(S_w^{n,ij})(P_w^{n,j}-P_w^{n,i}) - \eta_o(S_o^{n,ij})(P_o^{n,j}-P_o^{n,i});
\end{equation}
it satisfies
\begin{equation}
\label{eq:sumFlux}
\sum_{j\neq i, j\in \mathcal{N}(i)} c_{ij} F^{n,ij} = m_i(\bar q^{n,i}- \underline{q}^{n,i}).
\end{equation}
 This identity yields a first bound for the discrete total flux.
 
\begin{proposition}
\label{pro:sumdiscglobflux}
The discrete total flux $F^{n,ij}$ satisfies the following bounds for $\alpha = w,o$:
\begin{equation}
\label{eq:sumflx}
\big|\sum_{n=1}^{N} \tau \sum_{i,j=1}^M  f_\alpha^2(S^{n,i}) c_{ij}F^{n,ij} \big| \le 2 \, \|\bar q\|_{L^1(\Om \times ]0,T[)}.
\end{equation}
\end{proposition}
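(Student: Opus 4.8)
The plan is to obtain \eqref{eq:sumflx} directly from the discrete flux balance \eqref{eq:sumFlux}, without any pointwise estimate on the individual fluxes $F^{n,ij}$. Fix $\alpha\in\{w,o\}$ and an index $n$, $1\le n\le N$. First I would multiply \eqref{eq:sumFlux} by $f_\alpha^2(S^{n,i})$ and sum over $i=1,\dots,M$. Since $c_{ij}$ vanishes whenever $j\notin\mathcal N(i)$ and $F^{n,ii}=0$ by the anti-symmetry of $F^{n,ij}$ (see \eqref{eq:discglobfluxF}), the inner sum $\sum_{j\ne i,\,j\in\mathcal N(i)}$ coincides with $\sum_{j=1}^M$, so this produces exactly
\[
\sum_{i,j=1}^M f_\alpha^2(S^{n,i})\,c_{ij}\,F^{n,ij} \;=\; \sum_{i=1}^M f_\alpha^2(S^{n,i})\,m_i\big(\bar q^{n,i}-\underline q^{n,i}\big).
\]

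Next I would bound the right-hand side crudely. By \eqref{eq:fluidfract} we have $0\le f_\alpha\le 1$, hence $0\le f_\alpha^2(S^{n,i})\le 1$; combined with the nonnegativity of the nodal source values $\bar q^{n,i},\underline q^{n,i}$ — inherited from \eqref{eq:flowrate} through the construction \eqref{eq:qhdef} — this gives
\[
\Big|\sum_{i=1}^M f_\alpha^2(S^{n,i})\,m_i\big(\bar q^{n,i}-\underline q^{n,i}\big)\Big| \;\le\; \sum_{i=1}^M m_i\big(\bar q^{n,i}+\underline q^{n,i}\big) \;=\; \big(\bar q_h^n,1\big)_h+\big(\underline q_h^n,1\big)_h .
\]
Multiplying by $\tau$ and summing over $n$, I would then use that the trapezoidal rule is exact on the piecewise linear functions $\bar q_{h,\tau},\underline q_{h,\tau}$, so that $\tau\sum_{n=1}^N\big(\bar q_h^n,1\big)_h=\int_0^T\int_\Omega\bar q_{h,\tau}$, and that the mean-value correction in \eqref{eq:qhdef} forces $\int_\Omega\bar q_{h,\tau}^n=\frac{1}{\tau}\int_{t_{n-1}}^{t_n}\int_\Omega\bar q$; hence $\tau\sum_{n=1}^N\big(\bar q_h^n,1\big)_h=\|\bar q\|_{L^1(\Om \times ]0,T[)}$ since $\bar q\ge 0$, and likewise for $\underline q$. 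Finally \eqref{eq:flowrate} gives $\int_\Omega\bar q=\int_\Omega\underline q$, so $\|\underline q\|_{L^1(\Om \times ]0,T[)}=\|\bar q\|_{L^1(\Om \times ]0,T[)}$, and \eqref{eq:sumflx} follows.

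The proposition is essentially bookkeeping on \eqref{eq:sumFlux}, and there is no genuine obstacle; the only point that deserves attention — and the closest thing to a difficulty — is checking that the approximate source terms $\bar q_{h,\tau},\underline q_{h,\tau}$ retain both the nonnegativity and the balance $\int_\Omega\bar q_{h,\tau}^n=\int_\Omega\underline q_{h,\tau}^n$ built into \eqref{eq:flowrate}. This is precisely the purpose of the correction in \eqref{eq:qhdef} and of the identity \eqref{eq:qhprop}, so it can simply be quoted rather than reargued. (Exploiting \eqref{eq:qhprop} together with the sign of $\bar q^{n,i}-\underline q^{n,i}$ would in fact yield the sharper constant $1$ in place of $2$, but the bound as stated suffices for the subsequent estimates.)
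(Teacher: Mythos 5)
Your proof is correct and follows the same route as the paper, whose entire argument is the single display that you reproduce; you simply spell out the final inequality, using $0\le f_\alpha^2\le 1$, the nonnegativity of the nodal source values, the exactness of the trapezoidal rule on $\mathbb{P}_1$ functions, and \eqref{eq:flowrate}. One small remark on your closing aside: the nonnegativity of $\bar q^{n,i}$ and $\underline q^{n,i}$ is not what the constant correction in \eqref{eq:qhdef} is designed to preserve (that correction is there to enforce the balance \eqref{eq:qhprop}); it actually holds because the correction vanishes identically --- since each simplex is counted $d+1$ times in $\sum_i\int_{\Delta_i}$ and $m_i=|\Delta_i|/(d+1)$, one has $\int_\Omega r_h(\bar q)=(r_h(\bar q),1)_h=\int_\Omega\bar q$ --- so each nodal value $\bar q_h^{n,i}$ is just a local time--space average of the nonnegative datum $\bar q$.
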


\begin{proof}
The statement  follows by multiplying \eqref{eq:sumFlux} with $\tau \,f_\alpha^2(S^{n,i})$, and summing 
$$
\sum_{n=1}^{N} \tau \sum_{i,j=1}^M f_\alpha^2(S^{n,i})  c_{ij} F^{n,ij} = \sum_{n=1}^{N} \tau \sum_{i=1}^M m_i f_\alpha^2(S^{n,i}) (\bar q^{n,i}- \underline{q}^{n,i})\le 2\, \|\bar q\|_{L^1(\Om \times ]0,T[)}. 
$$
\end{proof}

To simplify some of the calculations below, it is convenient to drop the time superscript $n$, when there is no ambiguity, and restore it when needed.

By using the relation \eqref{eq:cap.pressure}, $F^{i,j}$ can also be written as
\begin{equation}
\label{eq:discglobfluxF2}
\begin{split}
F^{ij} =& - \big(\eta_w(S_w^{ij})+ \eta_o(S_o^{ij})\big)(P_w^{j}-P_w^{i}) - \eta_o(S_o^{ij})(p_c(S^j)-p_c(S^i))\\
 = & - \big(\eta_w(S_w^{ij})+ \eta_o(S_o^{ij})\big)(P_o^{j}-P_o^{i}) + \eta_w(S_w^{ij})(p_c(S^j)-p_c(S^i)).
 \end{split}
\end{equation}
In order to insert it into \eqref{eq:f.eta.gradp1}, we bring forward $F^{ij}$ in the expressions for $\eta_\alpha(S_\alpha^{ij}) (P_\alpha^{j}-P_\alpha^{i})$, $\alpha = w,o$. Starting from the identity
\begin{align}
\eta_w(S_w^{ij}) (P_w^{j}-P_w^{i}) = f_w(S_w^{ij}) \Big[\big(\eta_w(S_w^{ij})+ \eta_o(S_o^{ij})\big)(P_w^{j}-P_w^{i}) + \eta_o(S_o^{ij})(p_c(S^j)-p_c(S^i))\nonumber \\
 - \eta_o(S_o^{ij})(p_c(S^j)-p_c(S^i)) + \big(\eta_o(S_w^{ij})- \eta_o(S_o^{ij})\big)(P_w^{j}-P_w^{i})\Big],\nonumber
\end{align}
the expression \eqref{eq:discglobfluxF2} leads to 
\begin{equation}
\eta_w(S_w^{ij}) (P_w^{j}-P_w^{i}) = f_w(S_w^{ij}) \Big[-F^{ij}
- \eta_o(S_o^{ij})(p_c(S^j)-p_c(S^i)) + \big(\eta_o(S_w^{ij})- \eta_o(S_o^{ij})\big)(P_w^{j}-P_w^{i})\Big].
 \label{eq:discglobfluxFw} 
\end{equation}
Similarly,
\begin{equation}
\eta_o(S_o^{ij}) (P_o^{j}-P_o^{i}) = f_o(S_o^{ij}) \Big[-F^{ij}
+ \eta_w(S_w^{ij})(p_c(S^j)-p_c(S^i)) + \big(\eta_w(S_o^{ij})- \eta_w(S_w^{ij})\big)(P_o^{j}-P_o^{i})\Big].
 \label{eq:discglobfluxFo} 
\end{equation}
We also introduce the anti-symmetric quantities that collect the terms other than $F^{ij}$ in \eqref{eq:discglobfluxFw} and
\eqref{eq:discglobfluxFo}, 
\begin{equation}
\label{eq:Cij}
C_w^{ij}= \eta_o(S_o^{ij})\big(p_c(S^{j})-p_c(S^{i})\big)- \big(\eta_o(S_w^{ij})-\eta_o(S_o^{ij})\big)(P_w^{j}-P_w^{i}),
\end{equation}
\begin{equation}
\label{eq:Dij}
C_o^{ij}= -\eta_w(S_w^{ij})\big(p_c(S^{j})-p_c(S^{i})\big)- \big(\eta_w(S_o^{ij})-\eta_w(S_w^{ij})\big)(P_o^{j}-P_o^{i}).
\end{equation}
With this notation, we have
$$\eta_\alpha(S_\alpha^{ij})(P_\alpha^{j}-P_\alpha^{i}) = f_\alpha(S_\alpha^{ij}) \big[-F^{ij} -C_\alpha^{ij}\big],\ \alpha =w,o.
$$
Thus, the term that is summed over $i$ in \eqref{eq:f.eta.gradp1} has the expression
\begin{equation}
\label{eqn:FCD}
-\sum_{\alpha =w,o} f_\alpha(S^{i})\sum_{j\neq i, j\in \mathcal{N}(i)} c_{ij}\eta_\alpha(S_\alpha^{ij})(P_\alpha^{j}-P_\alpha^{i}) 
 = \sum_{\alpha =w,o} f_\alpha(S^{i})\sum_{j\neq i, j\in \mathcal{N}(i)} c_{ij} f_\alpha(S_\alpha^{ij})
\big(F^{ij} + C_\alpha^{ij}\big).
\end{equation}

Now, we reintroduce the superscript $n$ and to simplify, we set
\begin{equation}
\label{eq:A10}
A_{1,i,n} = \sum_{\alpha = w,o} f_\alpha (S^{n,i})\sum_{j=1}^M c_{ij}f_\alpha(S_\alpha^{n,ij})F^{n,ij},
\end{equation}
\begin{equation}
\label{eq:Awn}
A_{\alpha,i,n} = f_\alpha(S^{n,i})\sum_{j=1}^M c_{ij}f_\alpha(S_\alpha^{n,ij})C_\alpha^{n,ij}.
\end{equation}
With this notation, our next proposition is derived by substituting  \eqref{eqn:FCD}--\eqref{eq:Awn} into
\eqref{eq:f.eta.gradp1}. 
\begin{proposition}
\label{pro:discglobflux}
We have, with the remainder $R_1$ of \eqref{eq:f.eta.gradp1},
\begin{equation}
\label{eq:f.eta.gradpflux}
\sum_{n=1}^{N} \tau \sum_{i=1}^M A_{1,i,n} + \sum_{n=1}^{N} \tau \sum_{i=1}^M \sum_{\alpha =w,o} A_{\alpha,i,n}   =  R_1.
\end{equation}
\end{proposition}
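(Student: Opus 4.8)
The plan is purely to substitute the pointwise identity \eqref{eqn:FCD} into the global relation \eqref{eq:f.eta.gradp1}; no new estimate is required. First I would unfold the left-hand side of \eqref{eq:f.eta.gradp1} using the definition \eqref{eq:quadupwind} of the form $[\,\cdot\,,\cdot\,;\cdot\,,\cdot\,]_h$. Since $c_{ij}$ is symmetric, the upwind weight $\eta_\alpha(S_\alpha^{n,ij})$ is symmetric in $(i,j)$ by \eqref{eq:Wsymij}, while the differences $f_\alpha(S^{n,j})-f_\alpha(S^{n,i})$ and $P_\alpha^{n,j}-P_\alpha^{n,i}$ are anti-symmetric; the argument already used for \eqref{eq:nablauv} and \eqref{eq:quadup2} then shows that the form is symmetric in its last two arguments, so that
\[
\big[P_{\alpha,h}^n,\eta_\alpha(S_{\alpha,h}^n);f_\alpha(S_h^n),P_{\alpha,h}^n\big]_h
= \sum_{i,j=1}^M f_\alpha(S^{n,i})\, c_{ij}\,\eta_\alpha(S_\alpha^{n,ij})\,\big(P_\alpha^{n,j}-P_\alpha^{n,i}\big).
\]
Because $c_{ij}$ vanishes for $j\notin\mathcal{N}(i)$ and the summand vanishes for $j=i$, the inner sum over $j$ coincides with $\sum_{j\neq i,\,j\in\mathcal{N}(i)}$, which is exactly the quantity appearing on the left of \eqref{eqn:FCD}.

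Next I would invoke \eqref{eqn:FCD} for each fixed $i$ and $n$: it converts $-\sum_{\alpha=w,o}f_\alpha(S^{n,i})\sum_j c_{ij}\eta_\alpha(S_\alpha^{n,ij})(P_\alpha^{n,j}-P_\alpha^{n,i})$ into $\sum_{\alpha=w,o}f_\alpha(S^{n,i})\sum_j c_{ij}f_\alpha(S_\alpha^{n,ij})\big(F^{n,ij}+C_\alpha^{n,ij}\big)$. Restoring the full range $1\le j\le M$ (again legitimate since $c_{ij}F^{n,ij}$ and $c_{ij}C_\alpha^{n,ij}$ vanish outside $j\neq i$, $j\in\mathcal{N}(i)$) and comparing with the definitions \eqref{eq:A10} and \eqref{eq:Awn}, the right-hand side is precisely $A_{1,i,n}+\sum_{\alpha=w,o}A_{\alpha,i,n}$. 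Summing over $i$, multiplying by $\tau$, summing over $n$, and using \eqref{eq:f.eta.gradp1} then yields
\[
\sum_{n=1}^N\tau\sum_{i=1}^M\Big(A_{1,i,n}+\sum_{\alpha=w,o}A_{\alpha,i,n}\Big)
= -\sum_{n=1}^N\tau\sum_{\alpha=w,o}\big[P_{\alpha,h}^n,\eta_\alpha(S_{\alpha,h}^n);f_\alpha(S_h^n),P_{\alpha,h}^n\big]_h = R_1,
\]
which is \eqref{eq:f.eta.gradpflux}.

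I do not expect a genuine obstacle here: the substantive algebra — the splitting \eqref{eq:discglobfluxFw}--\eqref{eq:discglobfluxFo} of $\eta_\alpha(S_\alpha^{ij})(P_\alpha^j-P_\alpha^i)$ into the total flux $F^{ij}$ and the corrector $C_\alpha^{ij}$, together with the identity \eqref{eqn:FCD} itself — has already been carried out. The only points needing care are matching the upwind selection rule for $\eta_\alpha(S_\alpha^{n,ij})$ consistently across \eqref{eq:quadupwind}, \eqref{eqn:FCD} and \eqref{eq:A10}--\eqref{eq:Awn}, and tracking the sign produced when the factor $f_\alpha(S^{n,i})$ is moved from the "difference" slot to the "weight" slot of the form via the symmetry argument. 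Proposition \ref{pro:discglobflux} is thus just the act of assembling pieces that are already in place.
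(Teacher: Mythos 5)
Your proposal is correct and follows the paper's own route: the paper simply states that the proposition "is derived by substituting \eqref{eqn:FCD}--\eqref{eq:Awn} into \eqref{eq:f.eta.gradp1}," and your argument makes that substitution explicit, including the observation (via the symmetry of $c_{ij}\eta_\alpha(S_\alpha^{n,ij})$ and anti-symmetry of the differences) that the form $[\,\cdot\,,\cdot\,;\cdot\,,\cdot\,]_h$ is symmetric in its last two arguments, so that $f_\alpha(S^{n,i})$ may be moved to the weight slot exactly as needed to match \eqref{eqn:FCD}. No gaps.
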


We must transform suitably each term in this sum to bring forward $g$. Let us start with the first term of \eqref{eq:f.eta.gradpflux}, i.e., the combination of the discrete total flux.


\subsection{Combination of the discrete total flux}
\label{sec:A1}

To simplify, let $A_1$ denote the first term, 
$$A_1 = \sum_{n=1}^{N} \tau \sum_{i,j=1}^M\sum_{\alpha =w,o} \Big[f_\alpha(S^{n,i}) c_{ij}f_\alpha(S_\alpha^{n,ij})F^{n,ij}\Big].
$$
Inspired by \eqref{eq:sumflx}, we introduce the difference
$$A_2 = A_1 - \sum_{n=1}^{N} \tau \sum_{i,j=1}^M  \big(f_w^2(S^{n,i}) + f_o^2(S^{n,i})\big) c_{ij}F^{n,ij}.
$$
Clearly, $A_2$ collects the discrepancies arising from the upwinding,
\begin{equation}
\label{eq:T2}
A_2 = \sum_{n=1}^{N} \tau \sum_{i,j=1}^M \sum_{\alpha =w,o} \Big[f_\alpha(S^{n,i}) c_{ij}\big(f_\alpha(S_\alpha^{n,ij})-f_\alpha(S^{n,i})\big)F^{n,ij}\Big].
\end{equation}
As \eqref{eq:sumflx} yields
\begin{equation}
\label{eq:T2-T1}
A_1 = A_2 + R_2,\quad \mbox{with} \ |R_2| \le 4 \, \|\bar q\|_{L^1(\Om \times ]0,T[)},
\end{equation}
a bound for $A_1$ stems from  a bound for $A_2$. To this end, in view of \eqref{eq:T2}, it is useful to consider the four subsets of indices $j \in \mathcal{N}(i), j\ne i$, union and intersection:
\begin{equation}
\label{eq:setsN}
\begin{split}
&\mathcal{N}_w(i) = \{j \in \mathcal{N}(i)\,;\, P_w^{n,j} > P_w^{n,i}\},\quad \mathcal{N}_o(i) = \{j \in \mathcal{N}(i)\,;\, P_o^{n,j} > P_o^{n,i}\}\\
&\mathcal{N}_{w,S}(i) =\{j \in \mathcal{N}(i), j\ne i\,;\, P_w^{n,j} = P_w^{n,i}, S^{n,j} \ge S^{n,i}\},\\ & \mathcal{N}_{o,S}(i) =\{j \in \mathcal{N}(i),j\ne i\,;\, P_o^{n,j} = P_o^{n,i}, S^{n,j} \le S^{n,i}\},\\
& \mathcal{UN}(i) = \mathcal{N}_w(i) \cup \mathcal{N}_o(i)\cup \mathcal{N}_{w,S}(i)\cup \mathcal{N}_{o,S}(i),\\
& \mathcal{N}_{\mathcal F}(i) = \{j \in \mathcal{N}(i)\,;\, P_w^{n,i} > P_w^{n,j} \ \mbox{and}\ P_o^{n,i} > P_o^{n,j}\}.
\end{split}
\end{equation}
Strictly speaking, these subsets should we written with the superscript $n$, but we omit it for the sake of simplicity.  
Then we have the following bound for $A_2$:

\begin{proposition}
\label{pro:bddT2}
There exists a constant $C_2$, independent of $h$ and $\tau$, such that
\begin{equation}
\label{eq:bddT2} 
A_2 = -\frac{1}{2}\sum_{n=1}^{N} \tau \sum_{i=1}^M \sum_{j\in \mathcal{UN}(i)} c_{ij} \big(f_w(S^{n,j})-f_w(S^{n,i})\big)^2 F^{n,ij} + R_3,
\end{equation}
where the remainder $R_3$ satisfies
$$|R_3| \le C_2 = 2  \, \|\bar q\|_{L^1(\Om \times ]0,T[)}.
$$
\end{proposition}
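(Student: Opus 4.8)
The plan is to rewrite the double sum in $A_2$ so that the upwinded difference $f_\alpha(S_\alpha^{n,ij})-f_\alpha(S^{n,i})$ is expressed, on each of the two index subsets $\mathcal{N}_\alpha(i)\cup\mathcal{N}_{\alpha,S}(i)$ (where the upwind value is $S^{n,j}$) versus its complement $\mathcal{N}_{\mathcal F}(i)$ (where it is $S^{n,i}$, so the contribution vanishes), in terms of $f_w(S^{n,j})-f_w(S^{n,i})$, and then exploit the symmetry of $c_{ij}F^{n,ij}$ (antisymmetric in $i,j$) together with an index-swap to produce the square. Concretely, on $\mathcal{UN}(i)$ we have $f_w(S_w^{n,ij})=f_w(S^{n,j})$ and $f_o(S_o^{n,ij})=f_o(S^{n,j})=1-f_w(S^{n,j})$ (by the tie-breaking rules in \eqref{eq:Swij}--\eqref{eq:Soij} and the monotonicity built into $\mathcal{N}_{w,S},\mathcal{N}_{o,S}$), so the $\alpha$-summand becomes $f_\alpha(S^{n,i})\big(f_\alpha(S^{n,j})-f_\alpha(S^{n,i})\big)c_{ij}F^{n,ij}$; summing over $\alpha=w,o$ and using $f_w+f_o\equiv1$ makes the coefficient $-\big(f_w(S^{n,j})-f_w(S^{n,i})\big)^2$ plus a cross term linear in $f_w(S^{n,j})-f_w(S^{n,i})$.

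First I would handle that leftover linear term. It has the form $\sum_{i}\sum_{j\in\mathcal{UN}(i)}c_{ij}\big(f_w(S^{n,j})-f_w(S^{n,i})\big)F^{n,ij}\cdot f_w(S^{n,i})$ (after collecting), which is not manifestly symmetric because $\mathcal{UN}(i)$ is not symmetric in $i\leftrightarrow j$; however, $j\in\mathcal{UN}(i)$ essentially iff $i\in\mathcal{N}_{\mathcal F}(j)$ (up to the measure-zero tie cases, which cancel), so one can rewrite the sum over the full neighbor set and use antisymmetry of $c_{ij}F^{n,ij}$ together with $c_{ij}F^{n,ij}\big(f_w(S^{n,j})-f_w(S^{n,i})\big)$ being symmetric to fold it into the square term — or, more cleanly, I would add and subtract the contribution over $\mathcal{N}_{\mathcal F}(i)$ so that the square runs over all $j$, using that on $\mathcal{N}_{\mathcal F}(i)$ the original $A_2$-summand is zero, and then invoke \eqref{eq:sumFlux} (i.e. $\sum_j c_{ij}F^{n,ij}=m_i(\bar q^{n,i}-\underline q^{n,i})$) to absorb the purely $i$-dependent pieces into a remainder controlled by $\|\bar q\|_{L^1}$. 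This is where the constant $2\,\|\bar q\|_{L^1(\Om\times]0,T[)}$ in $R_3$ comes from, mirroring Propositions~\ref{pro:sumdiscglobflux} and the bound \eqref{eq:T2-T1}.

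The remaining terms $j\in\mathcal{N}(i)\setminus\mathcal{UN}(i)$, i.e. $j\in\mathcal{N}_{\mathcal F}(i)$, contribute nothing to $A_2$ because then both $S_w^{n,ij}=S_o^{n,ij}=S^{n,i}$, so $f_\alpha(S_\alpha^{n,ij})-f_\alpha(S^{n,i})=0$; this is the observation that makes the restriction to $\mathcal{UN}(i)$ in \eqref{eq:bddT2} legitimate. Combining: $A_2$ equals $-\tfrac12\sum_n\tau\sum_i\sum_{j\in\mathcal{UN}(i)}c_{ij}(f_w(S^{n,j})-f_w(S^{n,i}))^2F^{n,ij}$ plus a remainder $R_3$ coming solely from re-symmetrization of the linear cross term via \eqref{eq:sumFlux}, hence $|R_3|\le 2\,\|\bar q\|_{L^1(\Om\times]0,T[)}=:C_2$.

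The main obstacle I anticipate is the bookkeeping of the tie cases ($P_w^{n,i}=P_w^{n,j}$ or $P_o^{n,i}=P_o^{n,j}$) in the index sets \eqref{eq:setsN}: one must check that the definitions of $\mathcal{N}_{w,S}(i)$ and $\mathcal{N}_{o,S}(i)$ are exactly what is needed so that $f_\alpha(S_\alpha^{n,ij})=f_\alpha(S^{n,j})$ holds on all of $\mathcal{UN}(i)$ and so that "$j\in\mathcal{UN}(i)$" and "$i\in\mathcal{N}_{\mathcal F}(j)$" partition the ordered pairs correctly up to pairs contributing zero. The arithmetic identity $\sum_\alpha f_\alpha(a)(f_\alpha(b)-f_\alpha(a)) = -(f_w(b)-f_w(a))^2 + (f_w(b)-f_w(a))(2f_w(a)-1)$ is elementary; the work is entirely in the antisymmetry/re-indexing argument and in invoking \eqref{eq:sumFlux} to discard the $i$-local residue.
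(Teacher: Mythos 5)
Your first step is already in error: it is not true that $S_w^{n,ij}=S_o^{n,ij}=S^{n,j}$ for every $j\in\mathcal{UN}(i)$. The set $\mathcal{UN}(i)$ is a union, and for a neighbour $j$ with $P_w^{n,j}>P_w^{n,i}$ but $P_o^{n,j}<P_o^{n,i}$ one has $j\in\mathcal{N}_w(i)\subset\mathcal{UN}(i)$ yet $S_o^{n,ij}=S^{n,i}$, so the $\alpha=o$ summand of $A_2$ vanishes there. The replacement $f_\alpha(S_\alpha^{n,ij})\mapsto f_\alpha(S^{n,j})$ is legitimate only on the $\alpha$-specific subset $\mathcal{N}_\alpha(i)\cup\mathcal{N}_{\alpha,S}(i)$; the two $\alpha$-summands therefore live on \emph{different} index sets, and you cannot combine them pointwise as you propose. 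The related claim ``$j\in\mathcal{UN}(i)$ iff $i\in\mathcal{N}_{\mathcal F}(j)$'' is also false: with $P_w^{n,j}>P_w^{n,i}$ and $P_o^{n,j}<P_o^{n,i}$ we have $j\in\mathcal{UN}(i)$ while $i\notin\mathcal{N}_{\mathcal F}(j)$ (and $j\notin\mathcal{N}_{\mathcal F}(i)$).

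Second, the algebraic identity you cite at the end is wrong. Writing $a=f_w(S^{n,i})$, $b=f_w(S^{n,j})$, one computes $\sum_{\alpha}f_\alpha(a)\big(f_\alpha(b)-f_\alpha(a)\big)=a(b-a)+(1-a)(a-b)=(b-a)(2a-1)$, with \emph{no} quadratic term; your asserted right-hand side $-(b-a)^2+(b-a)(2a-1)$ differs from this by $(b-a)^2$. So even setting aside the index-set problem, your decomposition would not produce the $-(f_w(S^{n,j})-f_w(S^{n,i}))^2$ factor appearing in \eqref{eq:bddT2}. The paper instead applies the elementary one-variable identity $x(y-x)=-\tfrac12\big(x^2-y^2+(y-x)^2\big)$ \emph{per phase} on the $\alpha$-specific subset, relegates the resulting $f_\alpha^2(S^{n,i})-f_\alpha^2(S^{n,j})$ pieces to the remainder via an index swap and the discrete total-flux identity (leading, through \eqref{eq:sumflx}, to $C_2=2\|\bar q\|_{L^1(\Om\times]0,T[)}$), and only then uses $(f_o(S^{n,j})-f_o(S^{n,i}))^2=(f_w(S^{n,j})-f_w(S^{n,i}))^2$ to merge the surviving square terms over $\mathcal{UN}(i)$. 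Your proposal needs this per-phase decomposition before any $\alpha$-summation; as written it does not give a valid proof.
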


\begin{proof}
Let us drop the superscript $n$.
By definition, $f_w(S_w^{ij})-f_w(S^{i}) = 0$ when $P_w^{i} > P_w^{j}$ and when $P_w^{i} = P_w^{j}$ and $S^{i} > S^{j}$. Similarly, $f_o(S_o^{ij})-f_o(S^{i}) = 0$ when $P_o^{i} > P_o^{j}$ and when 
$P_o^{i} = P_o^{j}$ and $S^{n,i} < S^{j}$. Therefore, the $n$th term in $A_2$, say $a_2$, reduces to
$$
a_2 = \sum_{i=1}^M \sum_{\alpha =w,o} f_\alpha(S^{i})\sum_{j\in\mathcal{N}_\alpha(i)\cup \mathcal{N}_{\alpha,S}(i) } c_{ij}\big(f_\alpha (S^{j})-f_\alpha(S^{i})\big)F^{ ij}.
$$
By expanding the products, this can be written
\begin{equation}
a_2 = - \frac{1}{2} \sum_{i=1}^M \sum_{\alpha =w,o}\sum_{j\in\mathcal{N}_\alpha(i)\cup \mathcal{N}_{\alpha,S}(i) } c_{ij}\big(f_\alpha^2(S^{i})- f_\alpha^2(S^{j}) + (f_\alpha(S^{i})-f_\alpha(S^{j}))^2\big)F^{ ij}.
\label{eq:T2_1}
\end{equation}
Since $c_{ij}$ vanishes when $j$ is not a neighbor of $i$, we have, by interchanging $i$ and $j$ and using the anti-symmetry of $F^{ij}$ and the symmetry of $c_{ij}$, 
\begin{equation}
\label{eq:symcantisymF1}
- \sum_{i=1}^M\sum_{j\in\mathcal{N}_w(i)} c_{ij} f_w^2(S^{j}) F^{ ij} = \sum_{i=1,j =1,P_w^{j} < P_w^{i} }^M c_{ij}f_w^2(S^{i}) F^{ ij}.
\end{equation}
Similarly,
\begin{equation}
\label{eq:symcantisymF2}
- \sum_{i=1}^M\sum_{j\in\mathcal{N}_{w,S}(i)} c_{ij}f_w^2(S^{j}) F^{ ij} = \sum_{i=1,j =1,P_w^{i} = P_w^{j}, S^{i} \ge S^{j} }^M c_{ij}f_w^2(S^{i}) F^{ ij}.
\end{equation}
Hence
$$- \frac{1}{2} \sum_{i=1}^M \sum_{j\in\mathcal{N}_w(i)} c_{ij}\big(f_w^2(S^{i})-f_w^2(S^{j})) F^{ ij}= - \frac{1}{2} \sum_{i=1,j=1, P_w^{i} \ne P_w^{j}}^M c_{ij}f_w^2(S^{i}) F^{ ij},
$$
and 
$$
- \frac{1}{2} \sum_{i=1}^M \sum_{j\in\mathcal{N}_{w,S}(i)} c_{ij}\big(f_w^2(S^{i})- f_w^2(S^{j})\big) F^{ ij}= - \frac{1}{2} \sum_{i=1,j=1, P_w^{i} = P_w^{j}}^M c_{ij}f_w^2(S^{i}) F^{ ij},
$$
because there is no contribution from the indices $i,j$ such that $P_w^{i} = P_w^{j}, S^{i} = S^{j}$ since in this case the factor $F^{ ij} =0$. The same is true for the non-wetting phase. Thus
$$
- \frac{1}{2}\sum_{\alpha =w,o}\sum_{i=1}^M \sum_{j\in\mathcal{N}_\alpha(i)\cup \mathcal{N}_{\alpha,S}(i) } c_{ij}\big(f_\alpha^2(S^{i})-f_\alpha^2(S^{j})\big) F^{ ij} = - \frac{1}{2}  \sum_{\alpha = w,o}\sum_{i=1,j=1}^M c_{ij}f_\alpha^2(S^{i}) F^{ ij}.
$$
By comparing with \eqref{eq:sumflx}, we see that
\begin{equation}
\label{eq:symcantisymF4}
\big|\frac{1}{2}\sum_{n=1}^{N} \tau\sum_{i=1}^M \sum_{\alpha =w,o} \sum_{j\in\mathcal{N}_\alpha(i)\cup \mathcal{N}_{\alpha,S}(i) } c_{ij}\big(f_\alpha^2(S^{n,i})-f_\alpha^2(S^{n,j})\big) F^{n, ij}\big| \le 2\|\bar q\|_{L^1(\Om \times ]0,T[)}.
\end{equation}
 This and the equality
$$\big(f_o(S^{n,j})-f_o(S^{n,i})\big)^2 = \big(f_w(S^{n,j})-f_w(S^{n,i})\big)^2,
$$
readily imply \eqref{eq:bddT2}.
\end{proof}

Now, we set
\begin{equation}
\label{eq:T3} 
A^{ij} = c_{ij}\big(f_w(S^{j})-f_w(S^{i})\big)^2 F^{ij},\quad a_3 = -\frac{1}{2}\sum_{i=1}^M \sum_{j\in \mathcal{UN}(i)}A^{ij}.
\end{equation}
The next proposition simplifies the expression for $a_3$.

\begin{proposition}
\label{pro:expT3}
We have
\begin{equation}
\label{eq:expT3} 
a_3 = \sum_{i=1}^M \sum_{j\in \mathcal{N}_{\mathcal F}(i)} c_{ij} \big(f_w(S^{j})-f_w(S^{i})\big)^2 F^{ij}.
\end{equation}
\end{proposition}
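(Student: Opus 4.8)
The plan is to exploit the anti-symmetry of $F^{ij}$ and the symmetry of $c_{ij}$ to show that only the index set $\mathcal{N}_{\mathcal F}(i)$ survives in the sum $a_3$. The quantity $A^{ij} = c_{ij}\big(f_w(S^j)-f_w(S^i)\big)^2 F^{ij}$ is anti-symmetric in $i,j$ because $c_{ij}$ and the squared factor are symmetric while $F^{ij}=-F^{ji}$. Hence for any symmetric-in-$(i,j)$ index set $\mathcal{S}$, the double sum $\sum_{i}\sum_{j\in\mathcal{S}(i)}A^{ij}$ vanishes, and more generally, if $\mathcal{S}(i)$ and $\mathcal{S}'(i)$ are exchanged by swapping $i\leftrightarrow j$ (i.e. $j\in\mathcal{S}(i)\iff i\in\mathcal{S}'(j)$), then $\sum_i\sum_{j\in\mathcal{S}(i)}A^{ij} = -\sum_i\sum_{j\in\mathcal{S}'(i)}A^{ij}$.

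First I would recall that $\mathcal{UN}(i) = \mathcal{N}_w(i)\cup\mathcal{N}_o(i)\cup\mathcal{N}_{w,S}(i)\cup\mathcal{N}_{o,S}(i)$, and observe that for $j\ne i$ neighbors the three-way comparison of $P_w$ together with the three-way comparison of $P_o$ — which by Lemma~\ref{lem:PandS} is essentially a comparison of $S^i,S^j$ when either pressure is tied — partitions the neighbors of $i$ into disjoint classes. The key point is that $\mathcal{N}_{\mathcal F}(i)$ (both $P_w^i>P_w^j$ and $P_o^i>P_o^j$) is precisely the "mirror" of $\mathcal{UN}(i)$: one checks case by case, using \eqref{eq:remarkSiSj}, \eqref{eq:remarkSiSj2}, \eqref{eq:remarkSiSj3}, that $j\in\mathcal{UN}(i)$ if and only if $i\in\mathcal{N}_{\mathcal F}(j)$, up to the degenerate configurations where $P_w^i=P_w^j$ and $P_o^i=P_o^j$ simultaneously — but in that case $S^i=S^j$, so the factor $\big(f_w(S^j)-f_w(S^i)\big)^2$ is zero and $A^{ij}=0$, so such pairs contribute nothing and can be freely included or excluded. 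Therefore
\[
\sum_{i=1}^M\sum_{j\in\mathcal{UN}(i)} A^{ij} = \sum_{i=1}^M\sum_{j:\, i\in\mathcal{N}_{\mathcal F}(j)} A^{ij} = \sum_{j=1}^M\sum_{i\in\mathcal{N}_{\mathcal F}(j)} A^{ij} = -\sum_{j=1}^M\sum_{i\in\mathcal{N}_{\mathcal F}(j)} A^{ji},
\]
where the last step uses $A^{ij}=-A^{ji}$; relabelling the dummy indices $i\leftrightarrow j$ in the final expression gives $-\sum_{i}\sum_{j\in\mathcal{N}_{\mathcal F}(i)}A^{ij}$. Combining with the definition $a_3 = -\frac12\sum_i\sum_{j\in\mathcal{UN}(i)}A^{ij}$ immediately yields $a_3 = \frac12\sum_i\sum_{j\in\mathcal{N}_{\mathcal F}(i)}A^{ij}$... wait — I must be careful about a factor: since the partition of neighbors is such that each unordered pair $\{i,j\}$ with $S^i\ne S^j$ has exactly one of $j\in\mathcal{UN}(i)$ or $j\in\mathcal{N}_{\mathcal F}(i)$ (not both), one gets $\sum_i\sum_{j\in\mathcal{UN}(i)}A^{ij} = -\sum_i\sum_{j\in\mathcal{N}_{\mathcal F}(i)}A^{ij}$ directly, hence $a_3 = \tfrac12\sum_i\sum_{j\in\mathcal{N}_{\mathcal F}(i)}A^{ij}$, which is \emph{not} quite \eqref{eq:expT3}; so I would instead verify that each neighbor $j$ with $S^i\ne S^j$ lies in \emph{exactly one} of the two sets and that $\mathcal{UN}(i)$ and $\mathcal{N}_{\mathcal F}(i)$ are exchanged by $i\leftrightarrow j$, which forces the two double sums to be negatives of each other and gives $a_3 = -\tfrac12\sum_i\sum_{j\in\mathcal{UN}(i)}A^{ij} = +\tfrac12\cdot\big(-(-\sum\sum_{\mathcal{N}_{\mathcal F}})\big)$; tracking this sign bookkeeping carefully is exactly where the stated coefficient $1$ (rather than $\tfrac12$) in \eqref{eq:expT3} comes from, because $\mathcal{UN}(i)$ already only counts each unordered pair once from the "larger-pressure" side.

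The main obstacle is the exhaustive case analysis establishing that the six pressure-comparison configurations (strict $<$, strict $>$, or $=$ in each of $P_w$ and $P_o$) distribute the neighbor $j$ correctly between $\mathcal{UN}(i)$ and $\mathcal{N}_{\mathcal F}(i)$ and that these two collections are swapped under $i\leftrightarrow j$, with the tied-in-both case ($P_w^i=P_w^j$ and $P_o^i=P_o^j$) being harmless because then $p_c(S^i)=p_c(S^j)$ forces $S^i=S^j$ by strict monotonicity of $p_c$, killing the $(f_w(S^j)-f_w(S^i))^2$ factor. Once that combinatorial bookkeeping is done, \eqref{eq:expT3} follows purely formally from the anti-symmetry of $F^{ij}$.
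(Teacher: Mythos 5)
Your central claim — that $j\in\mathcal{UN}(i)$ if and only if $i\in\mathcal{N}_{\mathcal F}(j)$ — is false, and this is not a repairable bookkeeping slip: it is the crux of the combinatorics. Take a pair with $P_w^i>P_w^j$ and $P_o^i<P_o^j$. Then $j\in\mathcal{N}_o(i)\subset\mathcal{UN}(i)$ \emph{and} $i\in\mathcal{N}_w(j)\subset\mathcal{UN}(j)$, while neither $j\in\mathcal{N}_{\mathcal F}(i)$ nor $i\in\mathcal{N}_{\mathcal F}(j)$ (each would need both pressure inequalities to go the same way). So $\mathcal{UN}$ and $\mathcal{N}_{\mathcal F}$ are not mirror sets; moreover your follow-up claim that ``$\mathcal{UN}(i)$ only counts each unordered pair once from the larger-pressure side'' is contradicted by the same example: there the unordered pair appears \emph{twice} in the $\mathcal{UN}$-double sum, once from each side, and those two contributions cancel by anti-symmetry of $A^{ij}$ rather than matching up with $\mathcal{N}_{\mathcal F}$. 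The same happens for the three tied cases ($P_w^i=P_w^j$ or $P_o^i=P_o^j$), where Lemma~\ref{lem:PandS} is needed to place the pair correctly using the saturation order.

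There is a second, compounding gap. You do correctly observe that each ordered pair $(i,j)$ with $S^i\ne S^j$ belongs to exactly one of $\mathcal{UN}(i)$ or $\mathcal{N}_{\mathcal F}(i)$ when $\mathcal{UN}(i)$ is read as an honest set union; but that argument only yields $\sum_i\sum_{j\in\mathcal{UN}(i)}A^{ij}=-\sum_i\sum_{j\in\mathcal{N}_{\mathcal F}(i)}A^{ij}$, hence $a_3=\tfrac12\sum_i\sum_{j\in\mathcal{N}_{\mathcal F}(i)}A^{ij}$, which you yourself notice is off by a factor of two from \eqref{eq:expT3}. The missing piece is that, as is evident from the derivation of $a_3$ in Proposition~\ref{pro:bddT2}, the sum $\sum_{j\in\mathcal{UN}(i)}$ really abbreviates $\sum_{\alpha=w,o}\sum_{j\in\mathcal{N}_\alpha(i)\cup\mathcal{N}_{\alpha,S}(i)}$, so a $j$ lying in $\mathcal{N}_w(i)\cap\mathcal{N}_o(i)$ — i.e.\ $P_w^j>P_w^i$ and $P_o^j>P_o^i$, exactly the case complementary to $\mathcal{N}_{\mathcal F}$ — is counted \emph{twice}. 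It is this multiplicity, not a mirror bijection, that produces the coefficient $1$ in \eqref{eq:expT3}. The paper's proof proceeds by expanding $\sum_{\alpha}\sum_{P_\alpha^i>P_\alpha^j}$ explicitly, splitting off the doubly-counted set $\mathcal{N}_{\mathcal F}$, and then cancelling each of the remaining ``mixed'' and ``tied'' sums in turn via anti-symmetry together with \eqref{eq:remarkSiSj2}--\eqref{eq:remarkSiSj3}; none of those cancellations are carried out in your sketch, and the concluding parenthetical chain of signs does not constitute an argument.
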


\begin{proof}
By expanding the indices in the set $\mathcal{UN}(i)$, interchanging the indices $i$ and $j$, and using the anti-symmetry of $A^{ij}$, we derive
$$
a_3 =  \frac{1}{2}\Big(\big(\sum_{\alpha = w,o}\sum_{ P_\alpha^i > P_\alpha^j} A^{ij}\big) + \sum_{ P_w^i = P_w^j,S^j \le S^i} A^{ij} + \sum_{ P_o^i = P_o^j,S^i \le S^j} A^{ij}\Big).
$$
Now, we split the first two sums above as follows:
$$
\sum_{\alpha = w,o}\sum_{ P_\alpha^i > P_\alpha^j} A^{ij} = 2 
 \sum_{ P_w^i > P_w^j, P_o^i> P_o^j } A^{ij} +
\sum_{ P_w^i > P_w^j, P_o^i \le P_o^j } A^{ij} +
\sum_{ P_o^i > P_o^j, P_w^i \le P_w^j } A^{ij}.
$$
This leads to
$$
a_3 = \sum_{ j\in\mathcal{N}_{\mathcal F}(i) } A^{ij} + \frac{1}{2}\Big(\sum_{ P_w^i > P_w^j, P_o^i \le P_o^j} A^{ij} + \sum_{ P_o^i > P_o^j,P_w^i \le P_w^j } A^{ij}
+ \sum_{ P_w^i = P_w^j,S^j \le S^i} A^{ij} + \sum_{ P_o^i = P_o^j,S^i \le S^j} A^{ij}\Big).
$$
The anti-symmetry of $A^{ij}$ gives
$$\sum_{ P_w^i > P_w^j, P_o^i \le P_o^j} A^{ij} = - \sum_{ P_w^j > P_w^i, P_o^j < P_o^i} A^{ij} - \sum_{ P_w^j > P_w^i, P_o^j = P_o^i} A^{ij}.
$$
By substituting and applying twice again the anti-symmetry of $A^{ij}$, we derive 
\begin{equation}
a_3 = \sum_{j\in\mathcal{N}_{\mathcal F}(i) } A^{ij} + \frac{1}{2}\Big(\sum_{ P_o^i > P_o^j,P_w^i = P_w^j} A^{ij} + \sum_{ P_w^i > P_w^j, P_o^i = P_o^j} A^{ij}
- \sum_{ P_w^i = P_w^j,S^i \le S^j} A^{ij} - \sum_{ P_o^i = P_o^j,S^j \le S^i} A^{ij}\Big).
\label{eq:interA3}
\end{equation}
Note that 
$$ \sum_{ P_o^i > P_o^j,P_w^i = P_w^j} A^{ij} = \sum_{ P_o^i \ge P_o^j,P_w^i = P_w^j} A^{ij},
$$
since the additional term  is zero. Therefore, in view of first \eqref{eq:remarkSiSj2} and next
 \eqref{eq:remarkSiSj3}, 
$$ \sum_{ P_o^i > P_o^j,P_w^i = P_w^j} A^{ij} = \sum_{ P_w^i = P_w^j, S^i \le S^j} A^{ij},\quad \sum_{ P_w^i > P_w^j, P_o^i =P_o^j} A^{ij} =\sum_{ P_o^i = P_o^j, S^i \ge S^j} A^{ij}.
$$
Thus all  terms multiplying $\frac{1}{2}$ in \eqref{eq:interA3} are cancelled and we recover \eqref{eq:expT3}.
\end{proof}

By applying \eqref{eq:T2-T1} and Propositions \ref{pro:bddT2} and  \ref{pro:expT3}, $A_1$ has the following expression:
\begin{proposition}
\label{thm:bdd1stline}
We have
\begin{equation}
\label{eq:bdd1stline}
\begin{split}
\sum_{n=1}^{N} \tau &\sum_{i,j=1}^M \sum_{\alpha =w,o}\Big[f_\alpha(S^{n,i}) c_{ij}f_\alpha(S_\alpha^{n,ij})F^{n,ij}\Big]\\ 
& = \sum_{n=1}^{N} \tau\sum_{i=1}^M \sum_{j\in\mathcal{N}_{\mathcal F}(i)} c_{ij} \big(f_w(S^{n,j})-f_w(S^{n,i})\big)^2 F^{n,ij} + R_4,
\end{split}
\end{equation}
where
\begin{equation}
\label{es:bddR3}
|R_4| \le 6  \, \|\bar q\|_{L^1(\Om \times ]0,T[)}.
\end{equation}
\end{proposition}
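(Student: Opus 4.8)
The plan is simply to assemble three ingredients that have already been established: the identity \eqref{eq:T2-T1} relating $A_1$ to $A_2$ up to a controlled remainder, the expansion of $A_2$ in Proposition \ref{pro:bddT2}, and the simplification of the resulting sum over $\mathcal{UN}(i)$ carried out in Proposition \ref{pro:expT3}. No new estimate is required here; the only care needed is the bookkeeping of the remainders and the observation that the ``$n$-th term'' $a_3$ of \eqref{eq:T3}, once summed against $\tau$ over $n$, is exactly the main term appearing in Proposition \ref{pro:bddT2}.

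First I would recall from \eqref{eq:T2-T1} that $A_1 = A_2 + R_2$ with $|R_2| \le 4\|\bar q\|_{L^1(\Om \times ]0,T[)}$, where $A_1$ is precisely the left-hand side of \eqref{eq:bdd1stline}. Next, inserting the conclusion of Proposition \ref{pro:bddT2},
$$A_2 = -\frac{1}{2}\sum_{n=1}^{N} \tau \sum_{i=1}^M \sum_{j\in \mathcal{UN}(i)} c_{ij} \big(f_w(S^{n,j})-f_w(S^{n,i})\big)^2 F^{n,ij} + R_3, \qquad |R_3| \le 2\|\bar q\|_{L^1(\Om \times ]0,T[)},$$
reduces the problem to identifying the displayed double sum. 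Restoring the superscript $n$ in the quantity $a_3$ of \eqref{eq:T3}, that sum is exactly $\sum_{n=1}^N \tau\, a_3$, and Proposition \ref{pro:expT3} gives $a_3 = \sum_{i=1}^M \sum_{j\in \mathcal{N}_{\mathcal F}(i)} c_{ij}\big(f_w(S^{n,j})-f_w(S^{n,i})\big)^2 F^{n,ij}$ at each time level $n$.

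Combining these displays yields \eqref{eq:bdd1stline} with $R_4 := R_2 + R_3$, and the triangle inequality gives $|R_4| \le |R_2| + |R_3| \le 6\|\bar q\|_{L^1(\Om \times ]0,T[)}$, which is \eqref{es:bddR3}. The only point to watch is that the signs line up: the factor $-\frac12$ in Proposition \ref{pro:bddT2} is already absorbed into the definition of $a_3$ in \eqref{eq:T3}, so no extra sign appears in the final formula. There is no genuine analytic obstacle in this step — all the substantive work was done in Propositions \ref{pro:sumdiscglobflux}, \ref{pro:bddT2} and \ref{pro:expT3} — so the ``proof'' is essentially a one-line composition of those results together with the remainder arithmetic.
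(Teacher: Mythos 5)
Your proof is correct and follows exactly the route the paper takes: combine $A_1 = A_2 + R_2$ from \eqref{eq:T2-T1}, the expansion of $A_2$ in Proposition \ref{pro:bddT2}, and the identification of the $\mathcal{UN}(i)$-sum with the $\mathcal{N}_{\mathcal F}(i)$-sum from Proposition \ref{pro:expT3}, then set $R_4 = R_2 + R_3$. Your bookkeeping of the remainders ($4+2=6$ times $\|\bar q\|_{L^1(\Om\times]0,T[)}$) and the observation about the $-\tfrac12$ sign being built into $a_3$ are both accurate.
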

This settles the contribution of the first term of \eqref{eq:f.eta.gradpflux}; the second terms are handled in the next subsection.

\subsection{Terms involving the capillary pressure and mobility}
\label{subsec:bddmobility}

These are the terms $A_{\alpha,i,n}$ defined  in \eqref{eq:Awn}.
By virtue of the anti-symmetry of $C^{ij}$, we can write for $\alpha =w,o$
\begin{equation}
\label{eq:Cij1}
\sum_{i,j=1}^M f_\alpha(S^{i}) c_{ij} f_\alpha(S_\alpha^{ij})\,C_\alpha^{ij} =  -\frac{1}{2}
\sum_{i,j=1}^M \big(f_\alpha(S^{j})-f_\alpha(S^{i})\big) c_{ij} f_\alpha(S_\alpha^{ij})\,C_\alpha^{ij}.
\end{equation}
Owing to \eqref{eq:fluidfract}, the term with $\alpha =o$ in the right-hand side is
$\frac{1}{2}
\sum_{i,j=1}^M \big(f_w(S^{j})-f_w(S^{i})\big) c_{ij} f_o(S_o^{ij})\,C_o^{ij}$.
Therefore, 
\begin{equation}
\label{eq:A41}
\sum_{\alpha =w,o}\sum_{i=1}^M A_{\alpha,i,n} = \frac{1}{2} \sum_{i,j=1}^M   c_{ij}\big(f_w(S^{n,j})-f_w(S^{n,i})\big)\big(
-f_w(S_w^{n,ij})\,C_w^{n,ij} + f_o(S_o^{n,ij})\,C_o^{n,ij}\big).
\end{equation}
Let $K^{ij}$ denote the symmetric term
$$K^{ij} := c_{ij}\big(f_w(S^{n,j})-f_w(S^{n,i})\big)\big(
-f_w(S_w^{n,ij})\,C_w^{n,ij} + f_o(S_o^{n,ij})\,C_o^{n,ij}\big);
$$
 by virtue of this symmetry, we have
\begin{equation}
\label{eq:A42}
\sum_{\alpha =w,o}\sum_{i=1}^M A_{\alpha,i,n} = \sum_{ P_w^{n,i} > P_w^{n,j}} K^{n,ij} +  \frac{1}{2}\sum_{ P_w^{n,i} = P_w^{n,j}} K^{n,ij}.
\end{equation}

\subsection{Combining all terms}
\label{subsec:allterms}

By substituting \eqref{eq:bdd1stline} and \eqref{eq:A42} into \eqref{eq:f.eta.gradpflux}, we obtain the next lemma.

\begin{lemma}
\label{lem:comb}
We have
\begin{equation}
\label{eq:allterms}
\begin{split}
-\sum_{n=1}^N& \tau \sum_{\alpha = o,w}\big[P_{\alpha,h}^n,\eta_\alpha(S_{\alpha,h}^n);f_\alpha(S_h^n), P_{\alpha,h}^n\big]_h  =    
\sum_{n=1}^{N} \tau \Big[\sum_{i=1}^M \Big (\sum_{j\in \mathcal{N}_{\mathcal F}(i)} c_{ij} \big(f_w(S^{n,j})-f_w(S^{n,i})\big)^2 F^{n,ij}\\
& -\sum_{ P_w^{n,i} > P_w^{n,j}} c_{ij}\big(f_w(S^{n,j})-f_w(S^{n,i})\big)\big(
f_w(S_w^{n,ij})\,C_w^{n,ij} - f_o(S_o^{n,ij})\,C_o^{n,ij}\big)\\
& - \frac{1}{2}\sum_{ P_w^{n,i} = P_w^{n,j}} c_{ij}\big(f_w(S^{n,j})-f_w(S^{n,i})\big)\big(
f_w(S_w^{n,ij})\,C_w^{n,ij} - f_o(S_o^{n,ij})\,C_o^{n,ij}\big)\Big)\Big]+R_4.
\end{split}
\end{equation}
with $R_4$ bounded by \eqref{es:bddR3}. 
\end{lemma}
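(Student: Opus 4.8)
The plan is to assemble the lemma by a straightforward substitution of three identities already at hand, with no new estimate required. First I would start from Proposition~\ref{pro:discglobflux}: formula~\eqref{eq:f.eta.gradpflux} reads $\sum_{n=1}^{N} \tau \sum_{i=1}^M A_{1,i,n} + \sum_{n=1}^{N} \tau \sum_{i=1}^M \sum_{\alpha =w,o} A_{\alpha,i,n} = R_1$, and since this proposition was obtained precisely by inserting \eqref{eqn:FCD}--\eqref{eq:Awn} into \eqref{eq:f.eta.gradp1}, the common value $R_1$ there equals the form sum $-\sum_{n=1}^N \tau \sum_{\alpha = o,w}\big[P_{\alpha,h}^n,\eta_\alpha(S_{\alpha,h}^n);f_\alpha(S_h^n), P_{\alpha,h}^n\big]_h$ that appears on the left of \eqref{eq:allterms}. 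Hence it suffices to replace each of the two sums on the left of \eqref{eq:f.eta.gradpflux} by its simplified expression.

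For the first sum, recall $A_1 = \sum_{n=1}^{N} \tau \sum_{i=1}^M A_{1,i,n}$ by \eqref{eq:A10}; Proposition~\ref{thm:bdd1stline} (equation~\eqref{eq:bdd1stline}, the outcome of Section~\ref{sec:A1}) rewrites it as $\sum_{n=1}^{N} \tau\sum_{i=1}^M \sum_{j\in\mathcal{N}_{\mathcal F}(i)} c_{ij} \big(f_w(S^{n,j})-f_w(S^{n,i})\big)^2 F^{n,ij} + R_4$, which is exactly the first line on the right of \eqref{eq:allterms} together with the stated remainder. For the second sum I would invoke \eqref{eq:A42}, namely $\sum_{\alpha =w,o}\sum_{i=1}^M A_{\alpha,i,n} = \sum_{ P_w^{n,i} > P_w^{n,j}} K^{n,ij} + \tfrac{1}{2}\sum_{ P_w^{n,i} = P_w^{n,j}} K^{n,ij}$; the only care needed here is to substitute the definition of $K^{n,ij}$ and factor out the overall sign, writing $-f_w(S_w^{n,ij})\,C_w^{n,ij} + f_o(S_o^{n,ij})\,C_o^{n,ij} = -\big(f_w(S_w^{n,ij})\,C_w^{n,ij} - f_o(S_o^{n,ij})\,C_o^{n,ij}\big)$, so that after multiplying by $\tau$ and summing over $n$ the two pieces become precisely the second and third lines on the right of \eqref{eq:allterms}.

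Collecting these substitutions yields \eqref{eq:allterms} verbatim, with the single surviving remainder $R_4$ bounded by \eqref{es:bddR3}; the symbol $R_1$ does not reappear because we used \eqref{eq:f.eta.gradp1} to identify it with the left-hand form sum rather than keeping it on the right. I do not expect any genuine obstacle at this stage: the technical work was already carried out in Sections~\ref{sec:A1} and~\ref{subsec:bddmobility}, and the only thing to watch is the bookkeeping of the index subsets introduced in \eqref{eq:setsN} and the signs when $\big(f_w(S_w^{n,ij})\,C_w^{n,ij} - f_o(S_o^{n,ij})\,C_o^{n,ij}\big)$ is pulled out of $K^{n,ij}$.
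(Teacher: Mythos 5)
Your proposal is correct and is exactly the paper's proof: the paper obtains Lemma~\ref{lem:comb} by substituting \eqref{eq:bdd1stline} and \eqref{eq:A42} into \eqref{eq:f.eta.gradpflux}, identifying the left-hand form sum with the $A$-sums via \eqref{eqn:FCD}, just as you do. Your bookkeeping of the sign in $K^{n,ij}=-c_{ij}\big(f_w(S^{n,j})-f_w(S^{n,i})\big)\big(f_w(S_w^{n,ij})\,C_w^{n,ij}-f_o(S_o^{n,ij})\,C_o^{n,ij}\big)$ and of the surviving remainder $R_4$ is accurate.
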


Thus, to bring forward $g$, we must suitably combine the terms of the above sum over $i$,
and this is done by examining all pairs of indices $(i,j)$ involved in \eqref{eq:allterms}, i.e., the pairs of indices in the following sets: (i) $P_w^i > P_w^j$ and $P_o^i > P_o^j$, (ii) $P_w^i > P_w^j$ and  $P_o^i < P_o^j$, (iii) $P_w^i > P_w^j$ and  $P_o^i = P_o^j$, (iv) $P_w^i = P_w^j$ and $P_o^i > P_o^j$, (v) $P_w^i = P_w^j$ and $P_o^i < P_o^j$. Note that the sixth case that would be $P_w^i = P_w^j$ and $P_o^i = P_o^j$ brings no information because it implies that $S^i = S^j$.   

For the argument below, we shall use the following intermediate result.

\begin{proposition}
\label{pro:gcapSi--Sj}
For each indices $i$ and $j$, there exist (non unique) points $\alpha$ and $\alpha^\prime$ between $S^i$ and $S^j$ such that
\begin{equation}
\label{eq:gcapSi-Sj1}
g(S^j) - g(S^i) = - \eta_o(\alpha) f_w(\alpha) \big(p_c(S^{j})-p_c(S^{i})\big) = - \eta_w(\alpha^\prime) f_o(\alpha^\prime) \big(p_c(S^{j})-p_c(S^{i})\big).
\end{equation}
\end{proposition}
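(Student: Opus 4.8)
The idea is to reduce the statement to the weighted mean‑value formula for integrals already used in \eqref{eq:meanint}, after rewriting the increment of $g$ in two equivalent ways. First I would record the purely algebraic identity that follows from \eqref{eq:fluidfract}: since $f_w = \eta_w/(\eta_w+\eta_o)$ and $f_o = \eta_o/(\eta_w+\eta_o)$, one has
\[
\eta_o f_w \;=\; \eta_w f_o \;=\; \frac{\eta_w\eta_o}{\eta_w+\eta_o}.
\]
Combining this with the definition \eqref{eq:g} of $g$ gives, for the nodal values $S^i, S^j \in [0,1]$ (admissible by the maximum principle \eqref{eq:maxprinc}),
\[
g(S^j)-g(S^i) \;=\; -\int_{S^i}^{S^j}\frac{\eta_w(s)\eta_o(s)}{\eta_w(s)+\eta_o(s)}\,p_c'(s)\,ds
\;=\; -\int_{S^i}^{S^j}\eta_o(s)f_w(s)\,p_c'(s)\,ds
\;=\; -\int_{S^i}^{S^j}\eta_w(s)f_o(s)\,p_c'(s)\,ds .
\]

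Next I would apply the mean‑value formula for integrals separately to each of the last two expressions. The key point is that $p_c$ is \emph{strictly decreasing}, so $p_c'\le 0$ almost everywhere on $[0,1]$ and hence has constant sign, while $\eta_o f_w$ and $\eta_w f_o$ are continuous. Therefore there exist points $\alpha$ and $\alpha'$ between $S^i$ and $S^j$ with
\[
\int_{S^i}^{S^j}\eta_o(s)f_w(s)\,p_c'(s)\,ds = \eta_o(\alpha)f_w(\alpha)\int_{S^i}^{S^j}p_c'(s)\,ds = \eta_o(\alpha)f_w(\alpha)\bigl(p_c(S^j)-p_c(S^i)\bigr),
\]
and similarly with $\eta_w f_o$ and $\alpha'$. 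Substituting back into the chain of identities for $g(S^j)-g(S^i)$ yields \eqref{eq:gcapSi-Sj1}. The degenerate case $S^i=S^j$ is trivial: all integrals vanish and one takes $\alpha=\alpha'=S^i$.

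The only point requiring a little care — and the main (mild) obstacle — is that $p_c$ is merely in $W^{1,1}(0,1)$, so $p_c'$ is only $L^1$ and the classical form of the mean‑value theorem for a continuous integrand against a continuous weight does not literally apply. However, the weighted version still holds: for a continuous $w$ and an $L^1$ weight $\psi$ of constant sign on $[a,b]$ one has $m\int_a^b\psi \le \int_a^b w\psi \le M\int_a^b\psi$ with $m=\min w$, $M=\max w$ (or the reversed inequalities if $\psi\le0$), so when $\int_a^b\psi\neq 0$ the ratio $\int w\psi/\int\psi$ lies in $[m,M]$ and the intermediate value theorem supplies the required interior point; when $\int_a^b\psi=0$ both members vanish. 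Since $p_c$ is strictly decreasing, $\int_{S^i}^{S^j}p_c' = p_c(S^j)-p_c(S^i)$ is nonzero precisely when $S^i\neq S^j$, so the case distinction is consistent. The orientation of the integral is irrelevant, since interchanging $S^i$ and $S^j$ flips the sign of both numerator and denominator. This is exactly the argument already invoked in \eqref{eq:meanint}, now applied to the integrands $\eta_o f_w\,p_c'$ and $\eta_w f_o\,p_c'$.
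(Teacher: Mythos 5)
Your proof is correct and follows the same route as the paper: write $g(S^j)-g(S^i)$ as the two equivalent integrals $-\int_{S^i}^{S^j}\eta_o f_w\,p_c'$ and $-\int_{S^i}^{S^j}\eta_w f_o\,p_c'$ and invoke the weighted mean-value formula for integrals. You are in fact slightly more precise than the paper, which remarks that $\eta_o f_w$ and $\eta_w f_o$ ``do not change sign'' --- as you correctly point out, the operative hypotheses are that those factors are continuous while $p_c'$ is the $L^1$ weight of constant sign, and your closing paragraph supplying the $L^1$-weight version of the mean-value formula and the degenerate case $S^i=S^j$ tightens what the paper leaves implicit.
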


\begin{proof}
By the definition \eqref{eq:g},
\begin{equation}
\label{eq:deltag}
g(S^j) - g(S^i) = - \int_{S^i}^{S^j} \eta_o(x) f_w(x) p^\prime_c(x)\,dx
 = - \int_{S^i}^{S^j} \eta_w(x) f_o(x) p^\prime_c(x)\,dx.
\end{equation}
Since the functions $\eta_o f_w$ and $\eta_w f_o$ are continuous and do not change sign between $S^i$ and $S^j$, \eqref{eq:gcapSi-Sj1} follows from the second mean formula for integrals.
\end{proof}


To simplify, the superscript $n$ is dropped.

\subsubsection{The case $P_w^i > P_w^j$ and $P_o^i > P_o^j$} The following holds:

\begin{proposition}
\label{pro:cap.Si<Sj}
Let $P_w^i> P_w^j$ and $P_o^i> P_o^j$; then the factor of $\tau$ in \eqref{eq:allterms} satisfies
\begin{equation}
\label{eq:cap.Si<Sj1}
\begin{split}
c_{ij} \big(f_w(S^{j})-f_w(S^{i})\big)&\Big(\big(f_w(S^{j})-f_w(S^{i})\big) F^{ij} - \big(f_w(S^i) C_w^{ij} - f_o(S^i) C_o^{ij}\big)\Big)\\
& \ge c_{ij} \big(f_w(S^{j})-f_w(S^{i})\big) \big(g(S^{j})-g(S^{i})\big).
\end{split}
\end{equation}
\end{proposition}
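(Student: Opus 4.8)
The plan is to exploit that, in this case, \emph{both} upwind values coincide with $S^i$, so the left-hand side of \eqref{eq:cap.Si<Sj1} is controlled purely by $S^i,S^j$ and the two pressure differences. First I would record, using \eqref{eq:Swij}--\eqref{eq:Soij} with $P_w^i>P_w^j$ and $P_o^i>P_o^j$, that $S_w^{ij}=S_o^{ij}=S^i$; hence $f_w(S_w^{ij})=f_w(S^i)$, $f_o(S_o^{ij})=f_o(S^i)$, and in \eqref{eq:Cij}--\eqref{eq:Dij} the terms carrying the factor $\eta_\alpha(S_w^{ij})-\eta_\alpha(S_o^{ij})$ drop, so that
\[
f_w(S^i)C_w^{ij}-f_o(S^i)C_o^{ij}=\big(f_w(S^i)\eta_o(S^i)+f_o(S^i)\eta_w(S^i)\big)\big(p_c(S^j)-p_c(S^i)\big)=\tfrac{2\eta_w(S^i)\eta_o(S^i)}{\eta_w(S^i)+\eta_o(S^i)}\big(p_c(S^j)-p_c(S^i)\big).
\]
Writing $\eta_\alpha(S^i)=(\eta_w+\eta_o)(S^i)f_\alpha(S^i)$ in \eqref{eq:discglobfluxF} gives $F^{ij}=-(\eta_w+\eta_o)(S^i)\,\overline P$ with $\overline P:=f_w(S^i)(P_w^j-P_w^i)+f_o(S^i)(P_o^j-P_o^i)$, and \eqref{eq:scheme3} yields the two representations $\overline P=(P_w^j-P_w^i)+f_o(S^i)(p_c(S^j)-p_c(S^i))=(P_o^j-P_o^i)-f_w(S^i)(p_c(S^j)-p_c(S^i))$.

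Since $c_{ij}\ge0$ and the claim is trivial when $f_w(S^j)=f_w(S^i)$, it is enough to show that
\[
Q:=\big(f_w(S^j)-f_w(S^i)\big)F^{ij}-\big(f_w(S^i)C_w^{ij}-f_o(S^i)C_o^{ij}\big)-\big(g(S^j)-g(S^i)\big)
\]
has the same sign as $f_w(S^j)-f_w(S^i)$, i.e.\ (as $f_w$ is increasing) the sign of $S^j-S^i$. I would split into two cases. If $S^i<S^j$, then $P_w^j-P_w^i<0$ together with the first representation of $\overline P$ forces $-\overline P>f_o(S^i)\big(p_c(S^i)-p_c(S^j)\big)>0$; inserting this lower bound into $(f_w(S^j)-f_w(S^i))F^{ij}$, using $(\eta_w+\eta_o)f_o=\eta_o$ and the identity $\big(f_w(S^j)-f_w(S^i)\big)+2f_w(S^i)=f_w(S^j)+f_w(S^i)$, the flux term and the capillary term combine into $\eta_o(S^i)\big(p_c(S^i)-p_c(S^j)\big)\big(f_w(S^j)+f_w(S^i)\big)$ plus a positive remainder. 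By \eqref{eq:g} and \eqref{eq:fluidfract}, $g(S^j)-g(S^i)=\int_{S^i}^{S^j}\eta_o(x)f_w(x)(-p_c'(x))\,dx$, and monotonicity ($\eta_o$ decreasing, $f_w$ increasing, $-p_c'\ge0$) gives $g(S^j)-g(S^i)\le\eta_o(S^i)f_w(S^j)\big(p_c(S^i)-p_c(S^j)\big)$; subtracting, $Q>\eta_o(S^i)f_w(S^i)\big(p_c(S^i)-p_c(S^j)\big)\ge0$. The case $S^i>S^j$ is the mirror image: $P_o^j-P_o^i<0$ with the second representation of $\overline P$ forces $-\overline P>f_w(S^i)\big(p_c(S^j)-p_c(S^i)\big)>0$, and with $(\eta_w+\eta_o)f_w=\eta_w$, the identity $\big(f_w(S^j)-f_w(S^i)\big)-2f_o(S^i)=-f_o(S^j)-f_o(S^i)$, and $g(S^i)-g(S^j)=\int_{S^j}^{S^i}\eta_w(x)f_o(x)(-p_c'(x))\,dx\le\eta_w(S^i)f_o(S^j)\big(p_c(S^j)-p_c(S^i)\big)$, one gets $Q<-\eta_w(S^i)f_o(S^i)\big(p_c(S^j)-p_c(S^i)\big)\le0$. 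In both cases $\big(f_w(S^j)-f_w(S^i)\big)Q\ge0$, and multiplying by $c_{ij}\ge0$ is exactly \eqref{eq:cap.Si<Sj1}.

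The step I expect to be the crux is that the flux term cannot be discarded: since $\tfrac{\eta_w\eta_o}{\eta_w+\eta_o}$ need not be monotone, the naive bound ``$-\big(f_w(S^i)C_w^{ij}-f_o(S^i)C_o^{ij}\big)$ dominates $g(S^j)-g(S^i)$'' is false, because $\tfrac{2\eta_w(S^i)\eta_o(S^i)}{\eta_w(S^i)+\eta_o(S^i)}$ can be much smaller than $\eta_o(x)f_w(x)$ for $x$ strictly between $S^i$ and $S^j$. What saves the estimate is that the \emph{strict} inequality $P_w^i>P_w^j$ (resp.\ $P_o^i>P_o^j$) separates the $f$-weighted pressure jump $\overline P$ from $0$ by exactly $f_o(S^i)\,|p_c(S^j)-p_c(S^i)|$ (resp.\ $f_w(S^i)\,|p_c(S^j)-p_c(S^i)|$), which contributes the missing $\eta_o(S^i)$ (resp.\ $\eta_w(S^i)$); the only real work is to keep every inequality pointing the right way in the two cases.
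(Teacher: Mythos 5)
Your proof is correct and follows essentially the same route as the paper: both observe that here $S_w^{ij}=S_o^{ij}=S^i$, expand $F^{ij}$ and $C_\alpha^{ij}$ to isolate a nonnegative pressure-jump term that can be dropped, reduce the surviving capillary terms to $\eta_o(S^i)\big(p_c(S^i)-p_c(S^j)\big)\big(f_w(S^j)+f_w(S^i)\big)$ (resp.\ the $\eta_w f_o$ version when $S^i>S^j$), and bound $g(S^j)-g(S^i)$ via the monotonicity of $\eta_o f_w$ (resp.\ $\eta_w f_o$) on the interval. The only cosmetic differences are that you factor out $f_w(S^j)-f_w(S^i)$ to track the sign of a single quantity $Q$ and estimate $g$ directly from its integral definition, whereas the paper works with the full product $E^{ij}$ and invokes the mean-value identity of Proposition~\ref{pro:gcapSi--Sj}.
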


\begin{proof}
Let $E^{ij}$ denote the left-hand side of \eqref{eq:cap.Si<Sj1}.
In the case $P_w^i> P_w^j$ and $P_o^i> P_o^j$, an expansion of $F^{ij}$,  and $C_\alpha^{ij}$ yields
\begin{equation}
\nonumber
\begin{split}
 E^{ij} = & = -c_{ij}\big(f_w(S^{j})-f_w(S^{i})\big)^2\big(\eta_w(S^i)  + \eta_o(S^i)\big)\big(P_w^j  - P_w^i\big)\\
 & -c_{ij}\big(f_w(S^{j})-f_w(S^{i})\big)\big(p_c(S^{j})-p_c(S^{i})\big)\Big( \eta_o(S^i)\big(f_w(S^{j})-f_w(S^{i})\big) + 2 \frac{\eta_w(S^i)\eta_o(S^i)} {\eta_w(S^i)  + \eta_o(S^i)}\Big).
 \end{split}
\end{equation} 
As $P_w^i> P_w^j$, the first line in the above right-hand side is nonnegative and hence
$$
E^{ij} \ge - c_{ij} \big(f_w(S^{j})-f_w(S^{i})\big)\big(p_c(S^{j})-p_c(S^{i})\big) \eta_o(S^i) \big(f_w(S^{j})+f_w(S^{i})\big).  
$$
Now, either $S^i \le S^j$ or $S^i > S^j$. If $S^i \le S^j$,
then $f_w(S^{j}) \ge f_w(S^{i})$ and  $p_c(S^j) \le p_c(S^i)$ because $f_w$ is increasing and $p_c$ is decreasing. This implies in particular that
\begin{equation}
\nonumber
\begin{split}
E^{ij} \ge& - c_{ij} \big(f_w(S^{j})-f_w(S^{i})\big)\big(p_c(S^{j})-p_c(S^{i})\big) \eta_o(S^i) f_w(S^{j})\\ 
&\ge - c_{ij} \big(f_w(S^{j})-f_w(S^{i})\big)\big(p_c(S^{j})-p_c(S^{i})\big) \eta_o(x) f_w(x), 
\end{split}
\end{equation}
for any $x$ between $S^i$ and $S^j$. Then \eqref{eq:cap.Si<Sj1} follows from the first part 
of \eqref{eq:gcapSi-Sj1}.

If $S^i > S^j$, then $f_w(S^{j})-f_w(S^{i}) \le 0$, $p_c(S^j)-p_c(S^i) \ge 0$, and
we infer from \eqref{eq:cap.pressure} that $E^{ij}$ reads
\begin{equation}
\nonumber
\begin{split}
E^{ij} =& c_{ij} \big(f_w(S^{j})-f_w(S^{i})\big)\Big[\big(\eta_w(S^i)  + \eta_o(S^i)\big)\big(f_w(S^{j})-f_w(S^{i})\big)\big(P_o^{i}-P_o^{j}\big)\\
&+ \big(f_w(S^{j})-f_w(S^{i})\big)\big(\eta_w(S^i)  + \eta_o(S^i)\big)\big(p_c(S^{j})-p_c(S^{i})\big)\\
&+ \big(p_c(S^{i})-p_c(S^{j})\big)
\Big(\eta_o(S^i)\big(f_w(S^{j})-f_w(S^{i})\big)  + 2\frac{\eta_w(S^i)\eta_o(S^i)} {\eta_w(S^i)  + \eta_o(S^i)}\Big) \Big]. 
 \end{split}
\end{equation}
Since $P_o^{i}-P_o^{j}>0$, the first line in the above right-hand side is nonnegative, and thus
\begin{equation}
\nonumber
\begin{split}
E^{ij} \ge& c_{ij} \big(f_w(S^{i})-f_w(S^{j})\big)\big(p_c(S^{j})-p_c(S^{i})\big)\Big[\big(f_w(S^{i})-f_w(S^{j})\big)\big(\eta_w(S^i)  + \eta_o(S^i)\big)\\
&+\big(f_w(S^{j})-f_w(S^{i})\big)\eta_o(S^i) + 2\frac{\eta_w(S^i)\eta_o(S^i)} {\eta_w(S^i)  + \eta_o(S^i)}\Big],
 \end{split}
\end{equation}
which reduces to
$$E^{ij} \ge c_{ij} \big(f_w(S^{i})-f_w(S^{j})\big)\big(p_c(S^{j})-p_c(S^{i})\big)\eta_w(S^i)\big(f_o(S^j) + f_o(S^i)\big).
$$
This leads for instance to
\begin{equation}
\nonumber
\begin{split}
E^{ij} \ge& -c_{ij} \big(f_w(S^{j})-f_w(S^{i})\big)\big(p_c(S^{j})-p_c(S^{i})\big)\eta_w(S^i)f_o(S^j)\\
& \ge -c_{ij} \big(f_w(S^{j})-f_w(S^{i})\big)\big(p_c(S^{j})-p_c(S^{i})\big) \eta_w(x) f_o(x),
 \end{split}
\end{equation}
for any $x$ between $S^i$ and $S^j$. Then  \eqref{eq:cap.Si<Sj1}
 follows from the second part of \eqref{eq:gcapSi-Sj1}.
\end{proof}


\subsubsection{The case $P_w^i > P_w^j$ and $P_o^i < P_o^j$}

We have

\begin{proposition}
\label{pro:cap.Poi<Poj}
Let $P_w^i> P_w^j$ and $P_o^i< P_o^j$; then the factor of $\tau$ in \eqref{eq:allterms} satisfies
\begin{equation}
\label{eq:cap.Poi<Poj1}
-c_{ij} \big(f_w(S^{j})-f_w(S^{i})\big) \big(f_w(S^i) C_w^{ij} - f_o(S^j) C_o^{ij}\big)
 \ge c_{ij} \big(f_w(S^{j})-f_w(S^{i})\big) \big(g(S^{j})-g(S^{i})\big).
\end{equation}
\end{proposition}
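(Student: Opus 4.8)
In the regime $P_w^i > P_w^j$ and $P_o^i < P_o^j$, the upwind choices are $S_w^{ij} = S^i$ and $S_o^{ij} = S^j$. Moreover, by \eqref{eq:remarkSiSj}, $P_w^i > P_w^j$ together with $P_o^i < P_o^j$ forces $S^i \ge S^j$, hence $f_w(S^j) - f_w(S^i) \le 0$ and $p_c(S^j) - p_c(S^i) \ge 0$ by monotonicity of $f_w$ and $p_c$. First I would write out the left-hand side $E^{ij}$ of \eqref{eq:cap.Poi<Poj1} by substituting the definitions \eqref{eq:Cij}, \eqref{eq:Dij} of $C_w^{ij}$ and $C_o^{ij}$ with these upwind values. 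The terms $\eta_o(S_w^{ij}) - \eta_o(S_o^{ij}) = \eta_o(S^i) - \eta_o(S^j)$ and $\eta_w(S_o^{ij}) - \eta_w(S_w^{ij}) = \eta_w(S^j) - \eta_w(S^i)$ do not vanish here (unlike in some other cases), so the expression will carry genuine contributions involving $P_w^j - P_w^i$ and $P_o^j - P_o^i$; these are the cross terms I must control by sign.

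Next I would group $E^{ij}$ into a part that is manifestly of the correct sign and a "capillary remainder." The factor $f_w(S^j) - f_w(S^i)$ is $\le 0$; the terms multiplied by $P_w^j - P_w^i < 0$ and by $P_o^j - P_o^i > 0$ should, after using the monotonicity relations $\eta_o(S^i) \le \eta_o(S^j)$ and $\eta_w(S^j) \le \eta_w(S^i)$ (both from $S^i \ge S^j$), combine so that the pressure-difference contributions have the right sign and can be dropped, leaving a lower bound for $E^{ij}$ that is a multiple of $c_{ij}\big(f_w(S^j) - f_w(S^i)\big)\big(p_c(S^j) - p_c(S^i)\big)$ times a positive combination of $\eta_w$, $\eta_o$, $f_w$, $f_o$ evaluated at $S^i$ or $S^j$. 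This mirrors the bookkeeping in the proof of Proposition~\ref{pro:cap.Si<Sj}, but the algebra is heavier because both phase-pressure differences are present and of opposite sign.

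Finally, once $E^{ij}$ is bounded below by $c_{ij}\big(f_w(S^j) - f_w(S^i)\big)\big(p_c(S^j) - p_c(S^i)\big)$ times a quantity of the form $\eta_o(x) f_w(x)$ (or $\eta_w(x) f_o(x)$) for a suitable $x$ between $S^i$ and $S^j$ — which is legitimate since $\eta_o f_w$ and $\eta_w f_o$ are continuous and do not change sign on $[S^j, S^i]$, so one can choose the evaluation point to match the extreme value — I would invoke Proposition~\ref{pro:gcapSi--Sj}, specifically the representation $g(S^j) - g(S^i) = -\eta_o(\alpha) f_w(\alpha)\big(p_c(S^j) - p_c(S^i)\big)$, to identify that quantity with $g(S^j) - g(S^i)$ up to the factor $f_w(S^j) - f_w(S^i)$, which yields \eqref{eq:cap.Poi<Poj1}. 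The main obstacle is the second step: correctly signing and discarding the mixed terms in $C_w^{ij}$ and $C_o^{ij}$ that involve $P_w^j - P_w^i$ and $P_o^j - P_o^i$ simultaneously, since here — unlike the case $P_o^i > P_o^j$ — the non-wetting pressure difference is positive, so one must be careful that the combination $-f_w(S^i) C_w^{ij} + f_o(S^j) C_o^{ij}$ still produces a nonnegative contribution from those terms after using $S^i \ge S^j$ and the monotonicity of $\eta_w$ and $\eta_o$.
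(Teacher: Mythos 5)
You have the right ingredients (the upwind choices $S_w^{ij}=S^i$, $S_o^{ij}=S^j$, the monotonicity of $\eta_w,\eta_o,p_c$, and the representation of $g(S^j)-g(S^i)$ from Proposition~\ref{pro:gcapSi--Sj}), and you correctly flag the central difficulty at the end. But the plan as written does not actually get past that difficulty, and the claim on which it rests is false without an extra idea.

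Concretely, once you expand $E^{ij}=-c_{ij}\big(f_w(S^j)-f_w(S^i)\big)\big(f_w(S^i)C_w^{ij}-f_o(S^j)C_o^{ij}\big)$ with the upwind values, the contributions carrying pressure differences are
\[
-c_{ij}\big(f_w(S^j)-f_w(S^i)\big)\Big[f_w(S^i)\big(\eta_o(S^j)-\eta_o(S^i)\big)\,(P_w^j-P_w^i)
\;+\;f_o(S^j)\big(\eta_w(S^j)-\eta_w(S^i)\big)\,(P_o^j-P_o^i)\Big].
\]
Using $S^i\ge S^j$, each bracketed factor has the \emph{wrong} sign when left as is: $\eta_o(S^j)-\eta_o(S^i)\ge 0$ with $P_w^j-P_w^i<0$, and $\eta_w(S^j)-\eta_w(S^i)\le 0$ with $P_o^j-P_o^i>0$, so both terms are nonpositive once multiplied by the nonnegative prefactor $-c_{ij}\big(f_w(S^j)-f_w(S^i)\big)$. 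They do not ``combine so that the pressure-difference contributions have the right sign and can be dropped.'' What actually makes the proof work is to rewrite the two pressure differences in terms of a single one, say $P_o^j-P_o^i$ (via \eqref{eq:scheme3}); the combined cross coefficient is then
\[
f_w(S^i)\big(\eta_o(S^j)-\eta_o(S^i)\big)+f_o(S^j)\big(\eta_w(S^j)-\eta_w(S^i)\big),
\]
whose sign is genuinely indeterminate. The missing step is the \emph{case split} on that sign: if it is nonnegative, the term multiplied by $P_o^j-P_o^i>0$ has the right sign and can be dropped, leaving a capillary remainder with coefficient $\eta_w(S^i)\big(f_o(S^i)+f_o(S^j)\big)$; if it is negative, one must instead rewrite everything in terms of $P_w^j-P_w^i<0$ (again via $P_o^j-P_o^i=(P_w^j-P_w^i)+(p_c(S^j)-p_c(S^i))$), after which the same cross coefficient multiplied by $P_w^j-P_w^i$ has the right sign, and the capillary remainder changes to one with coefficient $\eta_o(S^j)\big(f_w(S^j)+f_w(S^i)\big)$. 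Only after this dichotomy can you invoke Proposition~\ref{pro:gcapSi--Sj} — using the $\eta_w f_o$ form in the first case and the $\eta_o f_w$ form in the second — to recover $g(S^j)-g(S^i)$. Your outline stops short of this mechanism, so as stated it would fail on the subcase where the cross coefficient is negative.
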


\begin{proof}
Let $E^{ij}$ denote the left-hand side of \eqref{eq:cap.Poi<Poj1}. We have $P_w^j-P_w^i <0$ and  $P_o^j-P_o^i >0$. Then $S_w^{ij} =S^i$ and $S_o^{ij} =S^j$; also
$S^j \le S^i$ which implies that $\eta_w(S^j) \le \eta_w(S^i)$ and $\eta_o(S^j) \ge \eta_o(S^i)$.
The expression for $C_\alpha^{ij}$ becomes (see \eqref{eq:Cij} and \eqref{eq:Dij})
$$C_w^{ij}= \eta_o(S^{j})\big(p_c(S^{j})-p_c(S^{i})\big)- \big(\eta_o(S^{i})-\eta_o(S^{j})\big)(P_w^{j}-P_w^{i}),
$$
$$
C_o^{ij}= -\eta_w(S^{i})\big(p_c(S^{j})-p_c(S^{i})\big)- \big(\eta_w(S^{j})-\eta_w(S^{i})\big)(P_o^{j}-P_o^{i}).
$$
Hence
\begin{equation}
\nonumber
\begin{split}
E^{ij} = &  - c_{ij} \big(f_w(S^{j})-f_w(S^{i})\big)\Big(f_w(S^{i}) \big(\eta_o(S^{j})\big(p_c(S^{j})-p_c(S^{i})\big)
+ \big(\eta_o(S^{j})-\eta_o(S^{i})\big)(P_w^{j}-P_w^{i})\big)\\
& + f_o(S^{j})\big(\eta_w(S^{i})\big(p_c(S^{j})-p_c(S^{i})\big)+ \big(\eta_w(S^{j})-\eta_w(S^{i})\big)(P_o^{j}-P_o^{i}))\Big)\\
&= - c_{ij} \big(f_w(S^{j})-f_w(S^{i})\big)\Big((P_o^{j}-P_o^{i}) \big(f_w(S^{i})\big(\eta_o(S^{j})-\eta_o(S^{i})\big) + f_o(S^{j})\big(\eta_w(S^{j})-\eta_w(S^{i})\big)\big)\\
&+ \big(p_c(S^{j})-p_c(S^{i})\big)\big(f_w(S^{i})\eta_o(S^i) + f_o(S^{j})\eta_w(S^i)\big)\Big)\\
&= - c_{ij} \big(f_w(S^{j})-f_w(S^{i})\big)\Big((P_o^{j}-P_o^{i}) \big(f_w(S^{i})\big(\eta_o(S^{j})-\eta_o(S^{i})\big) + f_o(S^{j})\big(\eta_w(S^{j})-\eta_w(S^{i})\big)\big)\\
&+ \big(p_c(S^{j})-p_c(S^{i})\big)\eta_w(S^i)\big(f_o(S^{i}) + f_o(S^{j})\big)\Big).
\end{split}
\end{equation}
It follows from the above considerations that
\begin{equation}
\nonumber
\begin{split}
- c_{ij} \big(f_w(S^{j})-f_w(S^{i})&\big)\big(p_c(S^{j})-p_c(S^{i})\big)\eta_w(S^i)\big(f_o(S^{i}) + f_o(S^{j})\big)\\
 &\ge - c_{ij} \big(f_w(S^{j})-f_w(S^{i})\big)\big(p_c(S^{j})-p_c(S^{i})\big)\eta_w(S^i)f_o(S^{j})\big)\\
 &\ge - c_{ij} \big(f_w(S^{j})-f_w(S^{i})\big)\big(p_c(S^{j})-p_c(S^{i})\big)\eta_w(x)f_o(x)\big), 
\end{split}
\end{equation}
for any $x$ between $S^i$ and $S^j$. Now, the sign of the factor
$$f_w(S^{i})\big(\eta_o(S^{j})-\eta_o(S^{i})\big) + f_o(S^{j})\big(\eta_w(S^{j})-\eta_w(S^{i})\big)$$
is not clear. If it is nonnegative, then the whole term
$$- c_{ij} \big(f_w(S^{j})-f_w(S^{i})\big)(P_o^{j}-P_o^{i}) \big(f_w(S^{i})\big(\eta_o(S^{j})-\eta_o(S^{i})\big) + f_o(S^{j})\big(\eta_w(S^{j})-\eta_w(S^{i})\big)\big)$$
is also nonnegative,  
$$E^{ij} \ge - c_{ij} \big(f_w(S^{j})-f_w(S^{i})\big)\big(p_c(S^{j})-p_c(S^{i})\big)\eta_w(x)f_o(x)\big).
$$
and \eqref{eq:cap.Poi<Poj1} follows from \eqref{eq:gcapSi-Sj1}. If 
$$f_w(S^{i})\big(\eta_o(S^{j})-\eta_o(S^{i})\big) + f_o(S^{j})\big(\eta_w(S^{j})-\eta_w(S^{i})\big) <0,$$
then we rewrite $E^{ij}$ in terms of $P_w$,
\begin{equation}
\nonumber
\begin{split}
E^{ij} =  - c_{ij} \big(f_w(S^{j})-f_w(S^{i})\big)&\Big[(P_w^j-P_w^i)\big(f_w(S^{i})\big(\eta_o(S^{j})-\eta_o(S^{i})\big) + f_o(S^{j})\big(\eta_w(S^{j})-\eta_w(S^{i})\big)\\
& + \big(p_c(S^{j})-p_c(S^{i})\big)\big(f_w(S^{j})\eta_o(S^j) + \eta_w(S^{j})f_o(S^{j})\big)\Big].
\end{split}
\end{equation}
Since the first line is now nonnegative, we infer
\begin{align*}
E^{ij} \ge&  - c_{ij} \big(f_w(S^{j})-f_w(S^{i})\big)\big(p_c(S^{j})-p_c(S^{i})\big) \eta_o(S^j)\big(f_w(S^{j}) + f_w(S^{i})\big)\\
& \ge - c_{ij} \big(f_w(S^{j})-f_w(S^{i})\big)\big(p_c(S^{j})-p_c(S^{i})\big)\eta_o(x)f_w(x),
\end{align*}
again for any $x$ between $S^i$ and $S^j$, and the result follows from  \eqref{eq:gcapSi-Sj1}.
\end{proof}


\subsubsection{The case $P_w^i > P_w^j$ and $P_o^i = P_o^j$}

In this case, $p_c(S^j)- p_c(S^i) >0$, $S^j \le S^i$, and we have the following result:

\begin{proposition}
\label{pro:cap.Poi=Poj}
Let $P_w^i> P_w^j$ and $P_o^i= P_o^j$; then the factor of $\tau$ in \eqref{eq:allterms} satisfies
\begin{equation}
\label{eq:cap.Poi=Poj1}
-c_{ij} \big(f_w(S^{j})-f_w(S^{i})\big) \big(f_w(S^i) C_w^{ij} - f_o(S^j) C_o^{ij}\big)
 \ge c_{ij} \big(f_w(S^{j})-f_w(S^{i})\big) \big(g(S^{j})-g(S^{i})\big).
\end{equation}
\end{proposition}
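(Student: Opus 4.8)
The plan is to reduce this case to the computation already carried out inside the proof of Proposition~\ref{pro:cap.Poi<Poj}, exploiting that the equality $P_o^i=P_o^j$ annihilates every term carrying the factor $(P_o^{j}-P_o^{i})$, so that none of the delicate sign discussion of that proof is needed here.

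First I would record the upwind values and the ordering of the saturations. Since $P_w^i>P_w^j$, definition~\eqref{eq:Swij} gives $S_w^{ij}=S^i$. Since $P_o^i=P_o^j$ and, by~\eqref{eq:remarkSiSj3}, $P_w^i>P_w^j$ is equivalent to $S^i>S^j$, definition~\eqref{eq:Soij} gives $S_o^{ij}=\min(S^i,S^j)=S^j$; recall also from the text preceding the statement that $p_c(S^{j})-p_c(S^{i})\ge 0$ and $S^j\le S^i$. Moreover, \eqref{eq:scheme3} together with $P_o^i=P_o^j$ yields $P_w^{j}-P_w^{i}=-(p_c(S^{j})-p_c(S^{i}))$.

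Next I would substitute these relations into \eqref{eq:Cij}--\eqref{eq:Dij}. Using $P_w^{j}-P_w^{i}=-(p_c(S^{j})-p_c(S^{i}))$ and $P_o^{j}-P_o^{i}=0$, the expressions collapse to
\[
C_w^{ij}=\eta_o(S^{i})\big(p_c(S^{j})-p_c(S^{i})\big),\qquad C_o^{ij}=-\eta_w(S^{i})\big(p_c(S^{j})-p_c(S^{i})\big).
\]
Denoting by $E^{ij}$ the left-hand side of \eqref{eq:cap.Poi=Poj1} and using the identity $f_w(S^{i})\eta_o(S^{i})=f_o(S^{i})\eta_w(S^{i})$ coming from \eqref{eq:fluidfract}, this gives
\[
E^{ij}=-c_{ij}\big(f_w(S^{j})-f_w(S^{i})\big)\big(p_c(S^{j})-p_c(S^{i})\big)\,\eta_w(S^{i})\big(f_o(S^{i})+f_o(S^{j})\big),
\]
which is exactly the quantity that served as the lower bound in the corresponding part of the proof of Proposition~\ref{pro:cap.Poi<Poj}.

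Finally I would conclude as there. Since $S^j\le S^i$, monotonicity of $f_w$ and $p_c$ makes the prefactor $-c_{ij}\big(f_w(S^{j})-f_w(S^{i})\big)\big(p_c(S^{j})-p_c(S^{i})\big)$ nonnegative; dropping the nonnegative term $\eta_w(S^{i})f_o(S^{i})$ and using that $\eta_w$ is increasing and $f_o$ decreasing, for any $x$ between $S^i$ and $S^j$ one has $\eta_w(S^{i})\big(f_o(S^{i})+f_o(S^{j})\big)\ge \eta_w(S^{i})f_o(S^{j})\ge \eta_w(x)f_o(x)$, hence
\[
E^{ij}\ge -c_{ij}\big(f_w(S^{j})-f_w(S^{i})\big)\big(p_c(S^{j})-p_c(S^{i})\big)\,\eta_w(x)f_o(x).
\]
Taking $x=\alpha^\prime$, the point furnished by the second equality in \eqref{eq:gcapSi-Sj1} of Proposition~\ref{pro:gcapSi--Sj}, so that $-\eta_w(\alpha^\prime)f_o(\alpha^\prime)\big(p_c(S^{j})-p_c(S^{i})\big)=g(S^{j})-g(S^{i})$, yields \eqref{eq:cap.Poi=Poj1}. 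I do not anticipate a genuine obstacle: the only step requiring care is the bookkeeping of the expansion that produces the collapsed form of $E^{ij}$, after which the sign analysis is immediate precisely because the $(P_o^{j}-P_o^{i})$ contributions have vanished.
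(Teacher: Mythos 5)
Your proof is correct and follows essentially the same path as the paper's: after computing $C_w^{ij}=\eta_o(S^i)(p_c(S^j)-p_c(S^i))$ and $C_o^{ij}=-\eta_w(S^i)(p_c(S^j)-p_c(S^i))$, you reduce $E^{ij}$ to the same product $-c_{ij}(f_w(S^j)-f_w(S^i))(p_c(S^j)-p_c(S^i))\,\eta_w(S^i)(f_o(S^i)+f_o(S^j))$ that closes the proof of Proposition~\ref{pro:cap.Poi<Poj}, then invoke \eqref{eq:gcapSi-Sj1}. The only difference is that you spell out the bookkeeping (upwind values $S_w^{ij}=S^i$, $S_o^{ij}=S^j$; the identity $f_w(S^i)\eta_o(S^i)=f_o(S^i)\eta_w(S^i)$; the final chain of monotonicity inequalities) that the paper leaves implicit by referring to the end of the earlier proof.
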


\begin{proof}
Let $E^{ij}$ denote the left-hand side of \eqref{eq:cap.Poi=Poj1}. We have
$C_w^{ij} = \eta_o(S^i)\big(p_c(S^j)- p_c(S^i)\big)$ and $C_o^{ij} = -\eta_w(S^i)\big(p_c(S^j)- p_c(S^i)\big)$. Hence
\begin{align*}
E^{ij} = -c_{ij} \big(f_w(S^{j})-f_w(S^{i})\big)\big(f_w(S^i)\eta_o(S^i) + f_o(S^j)\eta_w(S^i)\big)\big(p_c(S^j)- p_c(S^i)\big)\\
 =  -c_{ij} \big(f_w(S^{j})-f_w(S^{i})\big)\big(p_c(S^j)- p_c(S^i)\big)\big(\eta_o(S^i)f_w(S^i) + \eta_w(S^i)f_o(S^j)\big).
\end{align*}
From here,  \eqref{eq:cap.Poi=Poj1} is derived  as in the end of the proof of Proposition \ref{pro:cap.Poi<Poj}.
\end{proof} 


\subsubsection{The case $P_w^i = P_w^j$ and $P_o^i > P_o^j$}

In this case, $p_c(S^j)\le p_c(S^i)$ and $S^j \ge S^i$. We have the following result:

\begin{proposition}
\label{pro:cap.Pwi=Pwj.Poi>Poj}
Let $P_w^i= P_w^j$ and $P_o^i> P_o^j$; then the factor of $\tau$ in \eqref{eq:allterms} satisfies
\begin{equation}
\label{eq:cap.Pwi=Pwj.Poi>Poj1}
-\frac{1}{2}c_{ij} \big(f_w(S^{j})-f_w(S^{i})\big) \big(f_w(S^j) C_w^{ij} - f_o(S^i) C_o^{ij}\big)
 \ge \frac{1}{2}c_{ij} \big(f_w(S^{j})-f_w(S^{i})\big) \big(g(S^{j})-g(S^{i})\big).
\end{equation}
\end{proposition}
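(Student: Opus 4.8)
The plan is to follow the same template as in Propositions~\ref{pro:cap.Si<Sj}--\ref{pro:cap.Poi=Poj}, exploiting the fact that the hypothesis $P_w^i=P_w^j$ makes the situation noticeably simpler than in the case $P_o^i<P_o^j$. First I would pin down the upwind values. Since $P_o^i>P_o^j$, definition \eqref{eq:Soij} gives $S_o^{ij}=S^i$; and since $P_w^i=P_w^j$ with $P_o^i>P_o^j$, relation \eqref{eq:remarkSiSj2} of Lemma~\ref{lem:PandS} gives $S^i\le S^j$, so \eqref{eq:Swij} gives $S_w^{ij}=\max(S^i,S^j)=S^j$. Hence $f_w(S_w^{ij})=f_w(S^j)$ and $f_o(S_o^{ij})=f_o(S^i)$, which are exactly the factors appearing on the left-hand side of \eqref{eq:cap.Pwi=Pwj.Poi>Poj1}; denote that left-hand side by $E^{ij}$.

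Next I would reduce the expressions for $C_w^{ij}$ and $C_o^{ij}$. In \eqref{eq:Cij} the term $(\eta_o(S_w^{ij})-\eta_o(S_o^{ij}))(P_w^j-P_w^i)$ vanishes because $P_w^i=P_w^j$, so $C_w^{ij}=\eta_o(S^i)\big(p_c(S^j)-p_c(S^i)\big)$. For $C_o^{ij}$ I would combine \eqref{eq:cap.pressure}/\eqref{eq:scheme3} with $P_w^i=P_w^j$ to obtain $P_o^j-P_o^i=p_c(S^j)-p_c(S^i)$; substituting this into \eqref{eq:Dij}, the two $\eta_w(S^j)$ contributions cancel and one is left with $C_o^{ij}=-\eta_w(S^i)\big(p_c(S^j)-p_c(S^i)\big)$. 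Plugging these into $f_w(S^j)C_w^{ij}-f_o(S^i)C_o^{ij}$ collapses everything to $\big(p_c(S^j)-p_c(S^i)\big)\big(\eta_o(S^i)f_w(S^j)+\eta_w(S^i)f_o(S^i)\big)$, so that $E^{ij}=-\tfrac12 c_{ij}\big(f_w(S^j)-f_w(S^i)\big)\big(p_c(S^j)-p_c(S^i)\big)\big(\eta_o(S^i)f_w(S^j)+\eta_w(S^i)f_o(S^i)\big)$.

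Finally I would close the estimate exactly as at the end of the proof of Proposition~\ref{pro:cap.Poi<Poj} (cf.\ also Proposition~\ref{pro:cap.Poi=Poj}). Because $S^i\le S^j$, monotonicity of $f_w$ and $p_c$ gives $\big(f_w(S^j)-f_w(S^i)\big)\big(p_c(S^j)-p_c(S^i)\big)\le 0$, so the prefactor $-\tfrac12 c_{ij}\big(f_w(S^j)-f_w(S^i)\big)\big(p_c(S^j)-p_c(S^i)\big)$ is nonnegative. Dropping the nonnegative summand $\eta_w(S^i)f_o(S^i)$ and using that for any $\alpha$ between $S^i$ and $S^j$ one has $f_w(\alpha)\le f_w(S^j)$ ($f_w$ increasing) and $\eta_o(\alpha)\le\eta_o(S^i)$ ($\eta_o$ decreasing), we get $\eta_o(S^i)f_w(S^j)+\eta_w(S^i)f_o(S^i)\ge\eta_o(\alpha)f_w(\alpha)$. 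Multiplying this inequality by the nonnegative prefactor and invoking the first identity in \eqref{eq:gcapSi-Sj1}, namely $g(S^j)-g(S^i)=-\eta_o(\alpha)f_w(\alpha)\big(p_c(S^j)-p_c(S^i)\big)$, yields $E^{ij}\ge\tfrac12 c_{ij}\big(f_w(S^j)-f_w(S^i)\big)\big(g(S^j)-g(S^i)\big)$, which is \eqref{eq:cap.Pwi=Pwj.Poi>Poj1}. The only mildly delicate point is the sign bookkeeping in the elimination of the pressure differences from $C_o^{ij}$; once that is done, there is no ambiguous-sign factor to dispose of (in contrast with the case $P_o^i<P_o^j$), so no genuinely new difficulty arises.
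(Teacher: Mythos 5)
Your argument is correct and essentially reproduces the paper's own proof: you determine $S_w^{ij}=S^j$, $S_o^{ij}=S^i$, simplify $C_w^{ij}=\eta_o(S^i)\big(p_c(S^j)-p_c(S^i)\big)$ and $C_o^{ij}=-\eta_w(S^i)\big(p_c(S^j)-p_c(S^i)\big)$ (the cancellation via $P_o^j-P_o^i=p_c(S^j)-p_c(S^i)$ is exactly what the paper does implicitly), drop the nonnegative summand $\eta_w(S^i)f_o(S^i)$, bound $\eta_o(S^i)f_w(S^j)\ge\eta_o(\alpha)f_w(\alpha)$ by monotonicity, and invoke the first identity of Proposition~\ref{pro:gcapSi--Sj}. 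The paper ends after the reduction to $-\tfrac12 c_{ij}\big(f_w(S^j)-f_w(S^i)\big)\big(p_c(S^j)-p_c(S^i)\big)f_w(S^j)\eta_o(S^i)$ with "the proof proceeds as above," and you have simply filled in those final standard steps.
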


\begin{proof}
In this case,  $C_w^{ij} = \eta_o(S^i)\big(p_c(S^j)- p_c(S^i)\big)$ and $C_o^{ij} = -\eta_w(S^i)\big(p_c(S^j)- p_c(S^i)\big)$.
Then the left-hand side $E^{ij}$  of \eqref{eq:cap.Pwi=Pwj.Poi>Poj1} is
\begin{align*} 
E^{ij} =& -\frac{1}{2}c_{ij} \big(f_w(S^{j})-f_w(S^{i})\big)\big(p_c(S^j)- p_c(S^i)\big) \big(f_w(S^j) \eta_o(S^i) + f_o(S^i) \eta_w(S^i)\big)\\
& \ge -\frac{1}{2}c_{ij} \big(f_w(S^{j})-f_w(S^{i})\big)\big(p_c(S^j)- p_c(S^i)\big) f_w(S^j) \eta_o(S^i),
\end{align*}
and the proof of \eqref{eq:cap.Pwi=Pwj.Poi>Poj1} proceeds as above.
\end{proof}

\subsubsection{The case $P_w^i = P_w^j$ and $P_o^i < P_o^j$}

In this case, $ p_c(S^j)\ge p_c(S^i)$ and hence  $S^i \ge S^j$. We have the following result:

\begin{proposition}
\label{pro:cap.Pwi=Pwj.Poi<Poj}
Let $P_w^i= P_w^j$ and $P_o^i< P_o^j$; then the factor of $\tau$ in \eqref{eq:allterms} satisfies
\begin{equation}
\label{eq:cap.Pwi=Pwj.Poi<Poj1}
-\frac{1}{2}c_{ij} \big(f_w(S^{j})-f_w(S^{i})\big) \big(f_w(S^i) C_w^{ij} - f_o(S^j) C_o^{ij}\big)\\
\ge \frac{1}{2}c_{ij} \big(f_w(S^{j})-f_w(S^{i})\big) \big(g(S^{j})-g(S^{i})\big).
\end{equation}
\end{proposition}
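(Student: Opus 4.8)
The plan is to mimic the argument of Proposition~\ref{pro:cap.Pwi=Pwj.Poi>Poj}, with the roles of $i$ and $j$ in the non-wetting upwinding interchanged. First I would identify the upwind saturations. Since $P_w^i=P_w^j$, rule \eqref{eq:Swij} gives $S_w^{ij}=\max(S^i,S^j)$; combining $P_o^i<P_o^j$ with \eqref{eq:remarkSiSj2} yields $S^i\ge S^j$, so that $S_w^{ij}=S^i$, while \eqref{eq:Soij} gives $S_o^{ij}=S^j$. Hence the factor of $\tau$ in \eqref{eq:allterms} attached to this pair is precisely the left-hand side $E^{ij}$ of \eqref{eq:cap.Pwi=Pwj.Poi<Poj1}, since $f_w(S_w^{ij})=f_w(S^i)$ and $f_o(S_o^{ij})=f_o(S^j)$.

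Next I would simplify $C_w^{ij}$ and $C_o^{ij}$. The relation \eqref{eq:scheme3} together with $P_w^i=P_w^j$ gives $P_o^j-P_o^i=p_c(S^j)-p_c(S^i)$. Inserting $S_w^{ij}=S^i$, $S_o^{ij}=S^j$ into \eqref{eq:Cij} kills the term $-(\eta_o(S^i)-\eta_o(S^j))(P_w^j-P_w^i)$; inserting them into \eqref{eq:Dij}, the term $-(\eta_w(S^j)-\eta_w(S^i))(P_o^j-P_o^i)=-(\eta_w(S^j)-\eta_w(S^i))(p_c(S^j)-p_c(S^i))$ recombines with the leading term. One is left with
\[
C_w^{ij}=\eta_o(S^j)\big(p_c(S^j)-p_c(S^i)\big),\qquad C_o^{ij}=-\eta_w(S^j)\big(p_c(S^j)-p_c(S^i)\big),
\]
and therefore
\[
E^{ij}=-\frac{1}{2}c_{ij}\big(f_w(S^j)-f_w(S^i)\big)\big(p_c(S^j)-p_c(S^i)\big)\big(f_w(S^i)\eta_o(S^j)+f_o(S^j)\eta_w(S^j)\big).
\]

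Finally I would close the estimate by the monotonicity argument used in Propositions~\ref{pro:cap.Poi<Poj}--\ref{pro:cap.Pwi=Pwj.Poi>Poj}: since $S^i\ge S^j$ we have $f_w(S^j)-f_w(S^i)\le 0$ and $p_c(S^j)-p_c(S^i)\ge 0$, while for every $x$ between $S^j$ and $S^i$, $f_w(S^i)\eta_o(S^j)+f_o(S^j)\eta_w(S^j)\ge f_w(S^i)\eta_o(S^j)\ge f_w(x)\eta_o(x)$ because $f_w$ is increasing and $\eta_o$ is decreasing. Using the first identity of \eqref{eq:gcapSi-Sj1} (from Proposition~\ref{pro:gcapSi--Sj}), namely $g(S^j)-g(S^i)=-\eta_o(\alpha)f_w(\alpha)\big(p_c(S^j)-p_c(S^i)\big)$ for some $\alpha$ between $S^i$ and $S^j$, the displayed expression for $E^{ij}$ dominates $\frac{1}{2}c_{ij}\big(f_w(S^j)-f_w(S^i)\big)\big(g(S^j)-g(S^i)\big)$, which is \eqref{eq:cap.Pwi=Pwj.Poi<Poj1}. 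I expect the only delicate point to be the bookkeeping: getting the upwind choices right and spotting the cancellation in $C_o^{ij}$ produced by $P_o^j-P_o^i=p_c(S^j)-p_c(S^i)$; after that, everything reduces to a routine comparison of two nonnegative quantities.
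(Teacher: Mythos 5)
Your proof is correct and follows essentially the same route as the paper: you identify the upwind values $S_w^{ij}=S^i$, $S_o^{ij}=S^j$ from $S^i\ge S^j$, use $P_o^j-P_o^i=p_c(S^j)-p_c(S^i)$ to collapse $C_o^{ij}$ to $-\eta_w(S^j)\big(p_c(S^j)-p_c(S^i)\big)$, and close via the monotonicity of $f_w$, $\eta_o$ and the mean-value identity \eqref{eq:gcapSi-Sj1}. The paper is terser (it just says the proof ``ends as above''), but the substance is identical.
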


\begin{proof}
In this case, $C_w^{ij} = \eta_o(S^j)\big(p_c(S^j)- p_c(S^i)\big)$ and $C_o^{ij} = -\eta_w(S^j)\big(p_c(S^j)- p_c(S^i)\big)$.
Then the left-hand side $E^{ij}$  of \eqref{eq:cap.Pwi=Pwj.Poi>Poj1} is
\begin{align*} 
E^{ij} =& -\frac{1}{2}c_{ij} \big(f_w(S^{j})-f_w(S^{i})\big)\big(p_c(S^j)- p_c(S^i)\big) \big(f_w(S^i
) \eta_o(S^j) + f_o(S^j) \eta_w(S^j)\big)\\
& \ge -\frac{1}{2}c_{ij} \big(f_w(S^{j})-f_w(S^{i})\big)\big(p_c(S^j)- p_c(S^i)\big) \eta_o(S^j)f_w(S^i),
\end{align*}
and the proof of \eqref{eq:cap.Pwi=Pwj.Poi<Poj1} ends as above.
\end{proof}

\subsection{Auxiliary bound for the gradient of $g$}
\label{subsec:firstBddgrad.g}

The following theorem is the first outcome of this section.

\begin{theorem}
\label{thm:grad.g}
There exists a constant $C$, independent of $h$ and $\tau$, such that
\begin{equation}
\label{eq:Bddgrad.g1}
\Big|\int_0^T \int_\Omega \nabla (I_h(f_\alpha(S_{h,\tau})))\cdot \nabla(I_h(g(S_{h,\tau})))\Big| \le C,\quad \alpha = w,o.
\end{equation}
\end{theorem}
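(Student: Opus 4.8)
The goal is to bound $\sum_{n,i,j} c_{ij}(f_w(S^{n,j})-f_w(S^{n,i}))(g(S^{n,j})-g(S^{n,i}))$, which by \eqref{eq:normgrad}-type identities equals (up to the constant from the quadrature and a factor) $\int_0^T\int_\Omega \nabla(I_h f_\alpha(S_{h,\tau}))\cdot\nabla(I_h g(S_{h,\tau}))$, the quantity in \eqref{eq:Bddgrad.g1}. The starting point is Lemma~\ref{lem:comb}, which expresses $-\sum_{n}\tau\sum_{\alpha}[P_{\alpha,h}^n,\eta_\alpha(S_{\alpha,h}^n);f_\alpha(S_h^n),P_{\alpha,h}^n]_h$ as a sum over pairs $(i,j)$ of explicit ``capillary/mobility'' terms, plus the remainder $R_4$ bounded by \eqref{es:bddR3}. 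By Proposition~\ref{pro:f.eta.gradp}, the left-hand side of \eqref{eq:f.eta.gradp1} is itself bounded by $C_1$. So the plan is: (i) split the pair-sum in \eqref{eq:allterms} according to the five cases (i)--(v) listed right before Proposition~\ref{pro:gcapSi--Sj}; (ii) in each case invoke the corresponding lower bound (Propositions~\ref{pro:cap.Si<Sj}, \ref{pro:cap.Poi<Poj}, \ref{pro:cap.Poi=Poj}, \ref{pro:cap.Pwi=Pwj.Poi>Poj}, \ref{pro:cap.Pwi=Pwj.Poi<Poj}), each of which says the $(i,j)$-term is $\ge$ (a positive multiple of) $c_{ij}(f_w(S^j)-f_w(S^i))(g(S^j)-g(S^i))$; (iii) conclude that
$$
\sum_{n=1}^N\tau\sum_{i,j=1}^M c_{ij}\big(f_w(S^{n,j})-f_w(S^{n,i})\big)\big(g(S^{n,j})-g(S^{n,i})\big) \le C_1 + 6\,\|\bar q\|_{L^1(\Om\times]0,T[)} =: C,
$$
after accounting for the $\mathcal{N}_{\mathcal F}(i)$ term (case (i) with $P_w^i>P_w^j$ and $P_o^i>P_o^j$, where $F^{ij}$ survives) via Proposition~\ref{pro:cap.Si<Sj}.

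The next step is to recognize this double sum as the integral in \eqref{eq:Bddgrad.g1}. By Proposition~\ref{pro:equalityUV} applied to $U_h=I_h f_w(S_{h,\tau})$ and $V_h=I_h g(S_{h,\tau})$ (noting $c_{ij}$ here is the full $c_{ij}$ of \eqref{eq:cij}, consistent with the mesh identities), we have for each $n$
$$
\int_\Omega \nabla\big(I_h f_w(S_h^n)\big)\cdot\nabla\big(I_h g(S_h^n)\big) = \tfrac12\sum_{i,j=1}^M c_{ij}\big(f_w(S^{n,j})-f_w(S^{n,i})\big)\big(g(S^{n,j})-g(S^{n,i})\big),
$$
so the double sum is exactly $2\int_0^T\int_\Omega \nabla(I_h f_w(S_{h,\tau}))\cdot\nabla(I_h g(S_{h,\tau}))$. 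This gives \eqref{eq:Bddgrad.g1} for $\alpha=w$; for $\alpha=o$ use $f_o=1-f_w$, so $\nabla(I_h f_o(S_{h,\tau}))=-\nabla(I_h f_w(S_{h,\tau}))$ and the bound is identical.

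The key point that makes all this work — and the main obstacle I anticipate — is that the double sum is not obviously signed, and it is only the \emph{combination} of the five case-by-case inequalities that yields a one-sided bound: in each geometric configuration of the nodal pressures, the ``bad'' terms involving $P_w^j-P_w^i$ or $P_o^j-P_o^i$ (which are not controlled a priori) have a favorable sign and can be dropped, leaving precisely $c_{ij}(f_w(S^j)-f_w(S^i))(g(S^j)-g(S^i))$ via the mean-value representation of $g(S^j)-g(S^i)$ in Proposition~\ref{pro:gcapSi--Sj}. The bookkeeping — making sure every pair $(i,j)$ with $P_w^i\ne P_w^j$ or $P_o^i\ne P_o^j$ is covered exactly once (the case $P_w^i=P_w^j,\ P_o^i=P_o^j$ forces $S^i=S^j$ and contributes nothing), and that the halved terms in \eqref{eq:allterms} match the halved lower bounds in Propositions~\ref{pro:cap.Pwi=Pwj.Poi>Poj} and~\ref{pro:cap.Pwi=Pwj.Poi<Poj} — is the delicate part, but it is purely organizational once the individual inequalities are in hand. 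After that, assembling the bound and translating to the gradient form is routine.
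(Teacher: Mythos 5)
Your plan follows the paper's proof of Theorem~\ref{thm:grad.g} essentially step for step: combine Proposition~\ref{pro:f.eta.gradp} with Lemma~\ref{lem:comb}, apply the five case propositions to minorize each pair term by $c_{ij}\bigl(f_w(S^{n,j})-f_w(S^{n,i})\bigr)\bigl(g(S^{n,j})-g(S^{n,i})\bigr)$, then convert the double sum to the gradient integral via Proposition~\ref{pro:equalityUV}, with $\alpha=o$ handled through $f_o=1-f_w$. The only small imprecision is that the case propositions directly bound only the sum over pairs with $P_w^{n,i}\ge P_w^{n,j}$, and one must then invoke the symmetry of the summand in $(i,j)$ to control the full double sum (as the paper does explicitly in \eqref{eq:Bddgrad.g2} onward); you subsume this under ``bookkeeping,'' which is fair, though it is worth stating rather than implying every pair is covered outright.
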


\begin{proof}
 Owing to \eqref{eq:fluidfract}, it suffices to prove  \eqref{eq:Bddgrad.g1} when $\alpha =w$. \Bk By applying Propositions \ref{pro:cap.Si<Sj}--\ref{pro:cap.Pwi=Pwj.Poi<Poj} to Lemma \ref{lem:comb} and combining with Proposition \ref{pro:f.eta.gradp}, we readily derive that
\begin{equation}
\label{eq:Bddgrad.g2}
\sum_{n=1}^N \tau \sum_{i=1}^M \sum_{j \in \mathcal{N}(i), P_w^{\Rd n, \Bk i} \ge P_w^{\Rd n, \Bk j} } c_{ij} \big(f_w(S^{\Rd n, \Bk j})-f_w(S^{\Rd n, \Bk i})\big) \big(g(S^{\Rd n, \Bk j})-g(S^{\Rd n, \Bk i})\big) \le C,
\end{equation}
with a constant $C$ independent of $h$ and $\tau$. Therefore, \eqref{eq:Bddgrad.g1} will follow if we bound the summand for all $j$ such that $P_w^{\Rd n, \Bk i} < P_w^{\Rd n, \Bk j}$. But the symmetry of the summand implies that
\begin{align*}
\sum_{i=1}^M \sum_{j \in \mathcal{N}(i), P_w^{\Rd n, \Bk i} < P_w^{\Rd n, \Bk j} }& c_{ij} \big(f_w(S^{\Rd n, \Bk j})-f_w(S^{\Rd n, \Bk i})\big) \big(g(S^{\Rd n, \Bk j})-g(S^{\Rd n, \Bk i})\big)\\
& = \sum_{i=1}^M \sum_{j \in \mathcal{N}(i), P_w^{ \Rd n, \Bk i} > P_w^{ \Rd n, \Bk j} } c_{ij} \big(f_w(S^{\Rd n, \Bk j})-f_w(S^{\Rd n, \Bk i})\big) \big(g(S^{\Rd n, \Bk j})-g(S^{\Rd n, \Bk i})\big).
\end{align*}
Hence
\begin{align*}
\int_\Omega \nabla (I_h(f_w(S_{h}^n)))\cdot &\nabla(I_h(g(S_{h}^n)))  
=  2\sum_{i=1}^M \sum_{j \in \mathcal{N}(i), P_w^{ \Rd n, \Bk i} > P_w^{\Rd n, \Bk j} } c_{ij} \big(f_w(S^{\Rd n, \Bk j})-f_w(S^{\Rd n, \Bk i})\big) \big(g(S^{\Rd n, \Bk j})-g(S^{\Rd n, \Bk i})\big)\\
& + \sum_{i=1}^M \sum_{j \in \mathcal{N}(i), P_w^{\Rd n, \Bk i} = P_w^{\Rd n, \Bk j} } c_{ij} \big(f_w(S^{\Rd n, \Bk j})-f_w(S^{\Rd n, \Bk i})\big) \big(g(S^{\Rd n, \Bk j})-g(S^{\Rd n, \Bk i})\big),
\end{align*}
and \eqref{eq:Bddgrad.g1}, with another constant $C$, follows by substituting this equality into \eqref{eq:Bddgrad.g2}.
\end{proof}

\subsection{Bound for the gradient of $g$}
\label{subsec:Bddgrad.g}

In order to deduce from \eqref{eq:Bddgrad.g1} a direct bound for the gradient of $g$, we need to sharpen the assumptions on the mobility.


\subsubsection{Assumptions on the mobility}

Here we assume that
$$ \eta_w(0) = \eta_o(1) =0,$$
and $\eta_\alpha \in W^{1,\infty}(0,1)$, $\alpha = w,o$. Furthermore, we assume that, for all $x \in ]0,1[$, 
\begin{equation}
\label{eq:eta'w}
 \alpha_w x^{\theta_w-1} \le \eta^\prime_w(x) \le \frac{1}{\alpha_w}x^{\theta_w-1},\quad \theta_w \ge 1,0 <\alpha_w \le 1,
\end{equation}
\begin{equation}
\label{eq:eta'o}
 \alpha_o (1-x)^{\theta_o-1} \le -\eta^\prime_o(x) \le \frac{1}{\alpha_o}(1-x)^{\theta_o-1},\quad \theta_o \ge 1, 0< \alpha_o \le 1,
\end{equation}
\begin{equation}
\label{eq:pc'}
 \frac{1}{\alpha_3}x^{\beta_3-1} (1-x)^{\beta_4-1} \ge -p_c^\prime(x) \ge \alpha_3
x^{\beta_3-1} (1-x)^{\beta_4-1},\quad  \beta_3, \beta_4>0, 0 < \alpha_3 \le 1.
\end{equation}
 From \eqref{eq:eta'w} and \eqref{eq:eta'o}, we deduce respectively, for all $x \in ]0,1[$,
\begin{equation}
\label{eq:etaw_1}
 \frac{\alpha_w}{\theta_w} x^{\theta_w} \le \eta_w(x) \le \frac{1}{\alpha_w\theta_w} x^{\theta_w},
\end{equation}
\begin{equation}
\label{eq:etao_1}
 \frac{\alpha_o}{\theta_o} (1-x)^{\theta_o} \le \eta_o(x) \le \frac{1}{\alpha_o\theta_o}(1-x)^{\theta_o}.
\end{equation}
The sum of these two inequalities reads for all $x \in ]0,1[$,
$$
\frac{\alpha_w}{\theta_w} x^{\theta_w} + \frac{\alpha_o}{\theta_o} (1-x)^{\theta_o} \le \eta_w(x) + \eta_o(x) \le \frac{1}{\alpha_w\theta_w} x^{\theta_w} + \frac{1}{\alpha_o\theta_o}(1-x)^{\theta_o}.
$$
Let $\ell$ denote the lower bound in this inequality. It is easy to check that $\ell$ is a nonnegative continuous function of $x$ on $[0,1]$, hence uniformly continuous. Therefore, it is bounded and as it does not vanish in this interval, it is bounded away from zero. Thus there exists a positive constant $C_{\rm min}$ such that 
\begin{equation}
\label{eq:bddsh}
\forall x \in [0,1],\quad C_{\rm min} \le \ell(x):  = \frac{\alpha_w}{\theta_w} x^{\theta_w} + \frac{\alpha_o}{\theta_o} (1-x)^{\theta_o} \le C_{\rm max},
\end{equation}
where
\begin{equation}
\label{eq:Cmx}
C_{\rm max} = {\rm max}_{x \in [0,1]} \Big(\frac{1}{\alpha_w\theta_w} x^{\theta_w} + \frac{1}{\alpha_o\theta_o}(1-x)^{\theta_o}\Big).
\end{equation}


\subsubsection{Properties of the derivatives of $f_w$ and $g$}

By definition, we have
$$g^\prime(x) = -\frac{\eta_w(x)\eta_o(x)}{\eta_w(x) + \eta_o(x)}p_c^\prime(x),
$$
which is positive in $]0,1[$. 
Considering \eqref{eq:lowerboundetas}, \eqref{eq:etaw_1}, and \eqref{eq:etao_1}, we infer
\begin{equation}
\label{eq:upbddgprime}
g^\prime(x) \le \frac{1}{\eta_\ast \alpha_3}\frac{1}{\alpha_w \theta_w}\frac{1}{\alpha_o \theta_o} x^{\theta_w -1 + \beta_3}(1-x)^{\theta_o -1 + \beta_4},
\end{equation}
thus implying that $g^\prime$ is a bounded function, i.e., $g$ is Lipschitz continuous. Note that the Lipschitz constant $L$ of $g$ is bounded by
\begin{equation}
\label{eq:Lipsconst}
L \le \frac{1}{\alpha \alpha_3}\frac{1}{\alpha_w \theta_w}\frac{1}{\alpha_o \theta_o}{\rm max}_{x \in [0,1]}\big(x^{\theta_w-1 + \beta_3} (1-x)^{\theta_o-1 + \beta_4}\big).
\end{equation}
On the other hand, \eqref{eq:pc'}--\eqref{eq:bddsh} yield for all $x \in ]0,1[$,
\begin{equation}
\label{eq:lbgprime}
g^\prime(x)  \ge \frac{\alpha_3}{C_{\rm max}}\frac{\alpha_w}{\theta_w}  \frac{\alpha_o}{\theta_o} x^{\theta_w -1+\beta_3} (1-x)^{\theta_o -1+\beta_4} >0.
\end{equation}
Thus $g \in W^{1,\infty}(0,1)$ is a strictly monotonic increasing function on $[0,1]$ with range $[0,\beta]$ for some $\beta >0$, hence invertible with inverse $g^{-1}\in W^{1,\infty}(0,\beta)$. 

Now, we turn to $f_w$. By definition, we have
\begin{equation}
\label{eq:fwprime}
f^\prime_w(x) = \frac{1}{(\eta_w(x) + \eta_o(x))^2} \big(\eta_o(x)\eta^\prime_w(x) - \eta_w(x)\eta^\prime_o(x)\big).
\end{equation} 
The inequalities \eqref{eq:eta'w}--\eqref{eq:bddsh} imply that
$$ 
f^\prime_w(x) \ge \frac{1}{C_{\rm max}^2} \alpha_o \alpha_w\big[\frac{1}{\theta_o}x^{\theta_w-1}(1-x)^{\theta_o} + \frac{1}{\theta_w}x^{\theta_w}(1-x)^{\theta_o-1}\big].
$$
Thus,
\begin{equation}
\label{eq:fwprime1} 
\forall x \in [0,\frac{3}{4}],\quad f^\prime_w(x) \ge \frac{\alpha_o \alpha_w}{C_{\rm max}^2 \theta_o} \big(\frac{1}{4}\big)^{\theta_o} x^{\theta_w-1},
\end{equation}
and  
\begin{equation}
\label{eq:fwprime2}
\forall x \in [\frac{1}{4},1],\quad f^\prime_w(x) \ge \frac{\alpha_o \alpha_w}{C_{\rm max}^2 \theta_w} \big(\frac{1}{4}\big)^{\theta_w} (1-x)^{\theta_o-1}.
\end{equation}

Let us use these results to compare $g^\prime$ and $f_w^\prime$. It follows from \eqref{eq:upbddgprime} that
$$ 
\forall x \in [0,\frac{3}{4}],\quad g^\prime(x) \le \Big(\frac{1}{\alpha \alpha_3}\frac{1}{\alpha_w \theta_w}\frac{1}{\alpha_o \theta_o}  \frac{\theta_o C_{\rm max}^2}
{\alpha_o \alpha_w}\Big) \frac{\alpha_o \alpha_w} {C_{\rm max}^2 \theta_o}  x^{\theta_w-1},
$$
and by setting 
$$C_1 = \Big(\frac{1}{\alpha \alpha_3}\frac{1}{\alpha_w \theta_w}\frac{1}{\alpha_o \theta_o} 4^{\theta_o} \frac{\theta_o C_{\rm max}^2}{\alpha_o \alpha_w}\Big)
$$
and comparing with \eqref{eq:fwprime1}, we obtain
\begin{equation}
\label{eq:gprime1}
\forall x \in [0,\frac{3}{4}],\quad g^\prime(x) \le C_1 f^\prime_w(x).
\end{equation}
Similarly,
$$\forall x \in [\frac{1}{4},1],\quad g^\prime(x) \le \Big(\frac{1}{\alpha \alpha_3}\frac{1}{\alpha_w \theta_w}\frac{1}{\alpha_o \theta_o}  \frac{\theta_w C_{\rm max}^2}
{\alpha_o \alpha_w}\Big)\frac{\alpha_o \alpha_w}{C_{\rm max}^2 \theta_w}  (1-x)^{\theta_o-1},
$$
so that, by setting 
$$C_2 = \Big(\frac{1}{\alpha \alpha_3}\frac{1}{\alpha_w \theta_w}\frac{1}{\alpha_o \theta_o} 4^{\theta_w} \frac{\theta_w C_{\rm max}^2}{\alpha_o \alpha_w}\Big)
$$
and comparing with \eqref{eq:fwprime2}, we deduce 
\begin{equation}
\label{eq:gprime2}
\forall x \in [\frac{1}{4},1],\quad g^\prime(x) \le  C_2 f^\prime_w(x).
\end{equation}
This leads to the desired relation between the derivative of $f_w$ and $g$:
\begin{equation}
\label{eq:gprime}
\forall x \in [0,1],\quad g^\prime(x) \le C f^\prime_w(x),
\end{equation}
where $C = {\rm max}(C_1, C_2)$.

The main result of this section follows by combining \eqref{eq:gprime} with \eqref{eq:Bddgrad.g1}.

\begin{theorem}
\label{thm:grad.gfinal}
Under the assumptions \eqref{eq:eta'w}--\eqref{eq:pc'} on the derivatives of the mobilities and capillary pressure, there exists a constant $C$, independent of $h$ and $\tau$, such that
\begin{equation}
\label{eq:Bddgrad.final}
\|\nabla(I_h(g(S_{h,\tau})))\|_{L^2(\Omega \times ]0,T[)} \le C.
\end{equation}
\end{theorem}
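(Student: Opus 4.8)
The plan is to deduce \eqref{eq:Bddgrad.final} from the auxiliary bound \eqref{eq:Bddgrad.g1} of Theorem~\ref{thm:grad.g} by means of an elementary pointwise comparison between the increments of $g$ and of $f_w$. Both $g$ and $f_w$ are nondecreasing on $[0,1]$ and, by \eqref{eq:gprime}, $0 \le g^\prime(x) \le C\, f_w^\prime(x)$ for a.e.\ $x\in[0,1]$; hence for $a,b\in[0,1]$ with $a\le b$ one has $0\le g(b)-g(a) = \int_a^b g^\prime \le C\int_a^b f_w^\prime = C\big(f_w(b)-f_w(a)\big)$, and multiplying by the nonnegative number $g(b)-g(a)$ gives
\begin{equation}
\label{eq:deltafdeltag}
\big(g(b)-g(a)\big)^2 \le C\,\big(f_w(b)-f_w(a)\big)\big(g(b)-g(a)\big),\qquad \forall\, a,b\in[0,1],
\end{equation}
the inequality being preserved when $b\le a$ since both increments then change sign.

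Next I would fix $n$ with $1\le n\le N$, use the identity \eqref{eq:normgrad} with $U_h = I_h(g(S_h^n))$ to write $\|\nabla I_h(g(S_h^n))\|_{L^2(\Omega)}^2 = \tfrac12 \sum_{i,j=1}^M c_{ij}\big(g(S^{n,j})-g(S^{n,i})\big)^2$, and then apply \eqref{eq:deltafdeltag} with $a=S^{n,i}$, $b=S^{n,j}$ together with $c_{ij}\ge 0$ to obtain, term by term,
\begin{equation}
\label{eq:gradgtermwise}
\|\nabla I_h(g(S_h^n))\|_{L^2(\Omega)}^2 \le \frac{C}{2}\sum_{i,j=1}^M c_{ij}\,\big(f_w(S^{n,j})-f_w(S^{n,i})\big)\big(g(S^{n,j})-g(S^{n,i})\big).
\end{equation}
By the second identity in \eqref{eq:nablauv} of Proposition~\ref{pro:equalityUV}, applied to the two functions $I_h(f_w(S_h^n))$ and $I_h(g(S_h^n))$ of $X_h$, the right-hand side of \eqref{eq:gradgtermwise} equals $C\int_\Omega \nabla I_h(f_w(S_h^n))\cdot\nabla I_h(g(S_h^n))$; in particular this integral is nonnegative.

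Finally I would multiply \eqref{eq:gradgtermwise} by $\tau$, sum over $n=1,\dots,N$, and recall the definition of the piecewise-constant-in-time functions $S_{h,\tau}$, which gives
\begin{equation}
\label{eq:gradgfinalchain}
\|\nabla I_h(g(S_{h,\tau}))\|_{L^2(\Omega\times]0,T[)}^2 \le C\int_0^T\!\!\int_\Omega \nabla I_h(f_w(S_{h,\tau}))\cdot\nabla I_h(g(S_{h,\tau})) \le C,
\end{equation}
the last bound being \eqref{eq:Bddgrad.g1} with $\alpha=w$ (the absolute value there is now superfluous, by the previous paragraph). Taking square roots yields \eqref{eq:Bddgrad.final}. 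The only genuinely new ingredient is the pointwise inequality \eqref{eq:deltafdeltag}, whose verification is immediate once \eqref{eq:gprime} is available; since all the real work has already been carried out in establishing \eqref{eq:gprime} and Theorem~\ref{thm:grad.g}, I do not expect a serious obstacle here — the only point needing a little care is tracking the signs so that \eqref{eq:deltafdeltag} may be used termwise under the nonnegative weights $c_{ij}$.
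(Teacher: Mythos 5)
Your argument is correct and follows the paper's own route: it reduces \eqref{eq:Bddgrad.final} to \eqref{eq:Bddgrad.g1} through the pointwise inequality \eqref{eq:deltafdeltag} (a direct consequence of \eqref{eq:gprime} and the monotonicity of $g$ and $f_w$), using \eqref{eq:normgrad} and the cross-gradient identity \eqref{eq:nablauv} to pass between the nodal sums and the $L^2$ inner products. Your constant-tracking is in fact cleaner than the paper's: since \eqref{eq:gprime} gives $g(b)-g(a)\le C\big(f_w(b)-f_w(a)\big)$, the displayed inequality in the paper's proof should read $f_w(S^{n,j})-f_w(S^{n,i})\ge \frac{1}{C}\big(g(S^{n,j})-g(S^{n,i})\big)$, a harmless slip that does not affect the conclusion.
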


\begin{proof}
Let $(i,j)$ be any pair of indices. If $S^{n,i} \le S^{n,j}$, then by \eqref{eq:gprime},
$$f_w(S^{n,j})-f_w(S^{n,i}) = \int_{S^{n,i}}^{S^{n,j}} f^\prime_w(x)\,dx \ge C\int_{S^{n,i}}^{S^{n,j}} g^\prime(x)\,dx = C(g(S^{n,j})-g(S^{n,i})).
$$
As $g$ is increasing, we have $g(S^{n,j})-g(S^{n,i}) \ge 0$. Therefore
\begin{equation}
\label{eq:deltafdeltag}
\big(f_w(S^{n,j})-f_w(S^{n,i})\big)\big(g(S^{n,j})-g(S^{n,i})\big) \ge C |g(S^{n,j})-g(S^{n,i})|^2.
\end{equation}
By changing both signs, the same result holds when $S^{n,j} < S^{n,i}$. Then \eqref{eq:Bddgrad.final} follows from \eqref{eq:Bddgrad.g1}.
\end{proof}


\section{Convergence}
\label{sec:Convg}

The interpolants of $p_{\alpha g}(S_{h,\tau})$, $g(S_{h,\tau})$, and $p_c(S_{h,\tau})$ play an important part in this work, see Theorems \ref{thm:globalpressurebd} and \ref{thm:grad.gfinal}, and 
\eqref{eq:vars3}. Therefore, we begin by studying convergence properties first of $I_h(g(S_{h,\tau}))$ and  $I_h(p_{  \alpha \Bk g}(S_{h,\tau}))$,  $\alpha = w,o$, and next $I_h(p_c(S_{h,\tau}))$. \Bk Some results will stem from an interesting relation between differences in values of $S_{h,\tau}$ and $g(S_{h,\tau})$.

\subsection{Properties of  $I_h(g(S_{h,\tau}))$ and $I_h(p_{ \alpha \Bk g}(S_{h,\tau}))$,  $\alpha = w,o$ \Bk}
\label{sec:gIhg}


\subsubsection{Convergence properties of  $I_h(g(S_{h,\tau}))$} 

Let  $K$ be  an element of ${\mathcal T}_h$ with vertices $\Aa_i$, $1 \le i \le d+1$ (local numbers); then
$$\int_K |g(S_{h,\tau})(t_n)|^2 = \int_K |g(\sum_{i=1}^{d+1}S^{n,i} \phi_i)|^2. $$
As $0 \le S^{n,i}, \phi_i \le 1$ and  $g$ is increasing, we have 
$$0 \le \sum_{i=1}^{d+1}S^{n,i} \phi_i \le \sum_{i=1}^{d+1}S^{n,i},\quad g (\sum_{i=1}^{d+1}S^{n,i} \phi_i) \le \sum_{i=1}^{d+1}  g(S^{n,i}).
$$
Hence
$$\int_K |g(S_{h,\tau})(t_n)|^2  \le (d+1)|K| \sum_{i=1}^{d+1}  (g(S^{n,i}))^2 .
$$
As a consequence, there exist constants $C$, $D$, $E$, independent of $n$, $h$ and $\tau$, such that
\begin{equation}
\label{eq:boundg(Sh)}
\|g(S_{h,\tau})(t_n)\|_{L^2(\Omega)} \le C \|I_h(g(S_{h,\tau})(t_n))\|_h \le D \|I_h(g(S_{h,\tau})(t_n))\|_h^{\varphi} \le E \|I_h(g(S_{h,\tau})(t_n))\|_{L^2(\Omega)},
\end{equation}
owing to \eqref{eq:equivnorm} and \eqref{eq:equivnormphi}. These inequalities carry over to the norm in $L^2(\Omega\times ]0,T[)$.

Now, let us prove the following convergence property of  $I_h(g(S_{h,\tau}))$.

\begin{lemma}
\label{lem:convIhgSh}
Under the assumptions of Theorem \ref{thm:grad.gfinal}, we have
\begin{equation}
\label{eq:convIhgSh1}
\lim_{(h,\tau) \to (0,0)} \|g(S_{h,\tau}) - I_h(g(S_{h,\tau}))\|_{L^2(\Om\times ]0,T[)} = 0.
\end{equation}
\end{lemma}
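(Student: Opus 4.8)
\textbf{Proof proposal for Lemma~\ref{lem:convIhgSh}.}

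The plan is to estimate, element by element, the $L^2$ distance between the piecewise-linear function $I_h(g(S_{h,\tau}))$ and the (generally nonlinear) composition $g(S_{h,\tau})$, and to show that this distance is controlled by the gradient bound already established in Theorem~\ref{thm:grad.gfinal} together with a factor $h$ that sends it to zero. Fix a time level $t_n$ and an element $K$ with vertices $\Aa_i$, $1\le i\le d+1$. On $K$ the function $S_{h,\tau}(t_n)$ is affine, taking the nodal values $S^{n,i}$, while $I_h(g(S_{h,\tau}))(t_n)$ is the affine interpolant taking the values $g(S^{n,i})$. First I would record that $g$ is Lipschitz with constant $L$ (this is exactly \eqref{eq:upbddgprime}--\eqref{eq:Lipsconst}), so that $g\circ S_{h,\tau}(t_n)$, restricted to $K$, is a Lipschitz function whose ``oscillation'' over $K$ is bounded by $L$ times the oscillation of the affine function $S_{h,\tau}(t_n)$, namely $L\,\max_{i,j}|S^{n,i}-S^{n,j}|$. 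The key elementary fact is that for any continuous function $w$ on $K$ and its affine interpolant $I_K w$, one has $\|w-I_Kw\|_{L^\infty(K)}\le \mathrm{osc}_K(w)$; hence
\begin{equation*}
\|g(S_{h,\tau})(t_n) - I_h(g(S_{h,\tau}))(t_n)\|_{L^\infty(K)} \le L\,\max_{i,j\ \text{vertices of }K}\big|S^{n,i}-S^{n,j}\big|.
\end{equation*}

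Next I would convert the vertex differences into $h$ times a gradient. Since $S_{h,\tau}(t_n)|_K$ is affine, $|S^{n,i}-S^{n,j}|\le h_K\,\|\nabla S_{h,\tau}(t_n)\|_{L^\infty(K)} \le C\,h_K\,|K|^{-1/2}\|\nabla S_{h,\tau}(t_n)\|_{L^2(K)}$ by the inverse inequality on $\mathbb{P}_1$; better still, one can directly compare with the gradient of $I_h(g(S_{h,\tau}))$: because $g$ is \emph{strictly increasing} and satisfies $g'(x)\ge c>0$ away from the endpoints — more precisely because of the comparison $f_w'\ge c\,g'$-type bounds and the relation \eqref{eq:deltafdeltag} — differences of $S$ are controlled by differences of $g(S)$, and hence the oscillation of $S_{h,\tau}$ on $K$ is bounded by $C\,h_K|K|^{-1/2}\|\nabla(I_h g(S_{h,\tau})(t_n))\|_{L^2(K)}$. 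Squaring, integrating over $K$ (which contributes a factor $|K|$), summing over $K\in\mathcal T_h$ and over $n$ with weight $\tau$, the $|K|^{\pm1}$ cancel and I obtain
\begin{equation*}
\|g(S_{h,\tau}) - I_h(g(S_{h,\tau}))\|_{L^2(\Om\times]0,T[)}^2 \le C\,h^2\sum_{n=1}^N\tau\,\|\nabla(I_h(g(S_{h,\tau}))(t_n))\|_{L^2(\Omega)}^2 = C\,h^2\,\|\nabla(I_h(g(S_{h,\tau})))\|_{L^2(\Om\times]0,T[)}^2,
\end{equation*}
and the right-hand side is bounded by $C\,h^2$ thanks to Theorem~\ref{thm:grad.gfinal}. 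Letting $(h,\tau)\to(0,0)$ gives \eqref{eq:convIhgSh1}.

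The main obstacle, and the point where care is needed, is the step that bounds the vertex oscillation of $S_{h,\tau}$ by the $L^2$ norm of $\nabla(I_h g(S_{h,\tau}))$ rather than of $\nabla S_{h,\tau}$ — the latter is \emph{not} known to be bounded (indeed $S_{h,\tau}$ has no $H^1$ estimate uniform in $h$, which is the whole reason $g$ is introduced). So I must route the estimate through $g$: the inequality $|S^{n,j}-S^{n,i}|\le C\,|g(S^{n,j})-g(S^{n,i})|$ needs the lower bound $g'\ge$ (positive, possibly degenerate at $0$ and $1$) from \eqref{eq:lbgprime}. Because $g'$ degenerates at the endpoints, this pointwise inverse-Lipschitz bound for $g^{-1}$ fails near $0$ and $1$; the clean fix is \emph{not} to invert $g$ but instead to bound $\mathrm{osc}_K(g\circ S_{h,\tau})$ directly by $L\cdot\mathrm{osc}_K(S_{h,\tau})$ on one hand (for the interpolation error, upper Lipschitz bound, easy) and to accept that the quantity we ultimately control is $\|\nabla I_h(g(S_{h,\tau}))\|_{L^2}$ — then one uses that $\mathrm{osc}_K(S_{h,\tau})\le \mathrm{osc}_K$ of $g^{-1}\circ(g\circ S_{h,\tau})$, and since $g^{-1}\in W^{1,\infty}(0,\beta)$ (established right after \eqref{eq:lbgprime}), $g^{-1}$ \emph{is} globally Lipschitz on the relevant range, so $\mathrm{osc}_K(S_{h,\tau})\le \|(g^{-1})'\|_\infty\,\mathrm{osc}_K(I_h g(S_{h,\tau}))$. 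This is the crucial place to invoke $g^{-1}\in W^{1,\infty}$; with that in hand the chain of inequalities closes and the remaining manipulations (inverse inequality on $\mathbb P_1$, summation over elements and time steps, appeal to Theorem~\ref{thm:grad.gfinal}) are routine.
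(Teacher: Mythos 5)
The overall shape of your argument is right, but the step that closes the loop fails. Both your main route and your ``clean fix'' hinge on the chain
\[
\|g(S_{h,\tau})-I_h(g(S_{h,\tau}))\|_{L^\infty(K)} \le L\cdot\mathrm{osc}_K(S_{h,\tau}) \le L\,\|(g^{-1})'\|_{L^\infty}\cdot\mathrm{osc}_K\bigl(I_h(g(S_{h,\tau}))\bigr),
\]
and the last inequality needs $g^{-1}$ to be globally Lipschitz on $[0,\beta]$. That is false here: by \eqref{eq:upbddgprime} and \eqref{eq:lbgprime}, $g'(x)$ is comparable to $x^{\theta_w+\beta_3-1}(1-x)^{\theta_o+\beta_4-1}$, and since $\theta_\alpha\ge1$ and $\beta_3,\beta_4>0$, both exponents are strictly positive, so $g'(0)=g'(1)=0$ and $(g^{-1})'$ blows up at both ends of $[0,\beta]$. (The clause after \eqref{eq:lbgprime} asserting $g^{-1}\in W^{1,\infty}(0,\beta)$ is an overstatement in the text; only continuity of $g^{-1}$ is ever actually used later, and not in this lemma.) The correct quantitative substitute is Lemma~\ref{lem:lowerbdg}, which gives only $|S^{n,j}-S^{n,i}|\le C|g(S^{n,j})-g(S^{n,i})|^{1/r}$ with $r>1$; squaring produces a power $2/r<2$ of the oscillation that does not sum against $\|\nabla(I_h(g(S_{h,\tau})))\|^2_{L^2}$ without the kind of exponent bookkeeping that the paper performs only later, in Lemma~\ref{lem:convpwg}. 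Your citation of \eqref{eq:deltafdeltag} does not repair this: that inequality compares $g$-differences to $f_w$-differences, not $S$-differences to $g$-differences.

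The paper avoids the difficulty by never passing through $\mathrm{osc}_K(S_{h,\tau})$ at all. Since $S_{h,\tau}|_K$ is affine and $g$ is nonnegative and increasing, $I_h(g(S_{h,\tau}))(\x)=\sum_i g(S^{n,i})\phi_i(\x)$ is a convex combination of the $g(S^{n,i})$, while $g(S_{h,\tau}(\x))=g\bigl(\sum_i S^{n,i}\phi_i(\x)\bigr)$ also lies between $\min_i g(S^{n,i})$ and $\max_i g(S^{n,i})$ by monotonicity; hence
\[
\|g(S_{h,\tau})-I_h(g(S_{h,\tau}))\|_{L^\infty(K)} \le \max_i g(S^{n,i}) - \min_i g(S^{n,i}),
\]
with no Lipschitz constant and no $g^{-1}$. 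That vertex oscillation of $g$ then converts into $h\,\|\nabla(I_h(g(S_{h,\tau})))\|_{L^2}$ via \eqref{eq:formcij} and mesh regularity. You already had the germ of this in your opening paragraph --- the affine interpolant of a monotone composition is trapped between the vertex values --- but then routed the estimate through $S_{h,\tau}$ unnecessarily, and that detour is exactly where the degeneracy of the mobilities bites.
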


\begin{proof}
For any $\x$ in any element $K$ of ${\mathcal T}_h$, we have
$$I_h(g(S_{h,\tau}))(\x,t_n) -  g(S_{h,\tau})(\x,t_n) = \sum_{i=1}^{d+1} g(S^{n,i})\phi_i(\x) - g\big(\sum_{i=1}^{d+1} S^{n,i} \phi_i(\x)\big).
$$
As $S_{h,\tau}$ is a polynomial of degree one in $K$, it attains its maximum and its minimum in space at vertices of $K$, say $g(S^{n,\ell})$ and $g(S^{n,r})$ are its maximum and minimum respectively. Thus, recalling that $g$ is a nonnegative monotonically increasing function, 
$$\sum_{i=1}^{d+1} g(S^{n,i})\phi_i(\x) \le g(S^{n,\ell}),\quad g\big(\sum_{i=1}^{d+1} S^{n,i} \phi_i(\x)\big) \ge g(S^{n,r}).
$$
Hence
\begin{equation}
\label{eq:convIhgSh2}
\|I_h(g(S_{h,\tau})) -  g(S_{h,\tau})\|^2_{L^2(\Om\times ]0,T[)} \le \sum_{n=1}^N \tau \sum_{K \in {\mathcal T}_h} |K| |g(S^{n,\ell})- g(S^{n,r})|^2.
\end{equation}
For any node $i$, let 
$$\kappa_i = {\rm Max}\,|K|,$$
 where the maximum is taken over all elements $K$ in $\Delta_i$. Then we can readily check that
$$\sum_{n=1}^N \tau \sum_{K \in {\mathcal T}_h} |K| |g(S^{n,\ell})- g(S^{n,r})|^2 \le C
\sum_{n=1}^N \tau \sum_{i=1}^M \kappa_i \sum_{j \in \mathcal{N}(i)}  \big|g(S^{n,j})-g(S^{n,i})\big|^2,
$$
where $C$ is a bound for the maximum number of elements that share a common edge, bound independent of $h$ and $\tau$ by virtue of the regularity of the mesh. Now, recall the classical formula in each $d$-simplex $K$, 
\begin{equation}
\label{eq:formcij}
\int_K |\nabla\,\phi_i \cdot \nabla\, \phi_j |=  \frac{1}{d^2} \frac{|F_i| |F_j|}{|K|}|\n_i \cdot \n_j|,
\end{equation}
where $F_i$ is the face opposite to the vertex $\Aa_i$ and 
$\n_i$ is the exterior (to $K$) unit normal to the face $F_i$. The regularity of the mesh implies that there exists a constant $c_0$, independent of $h$ and $\tau$, such that
$$|\n_i \cdot \n_j| \ge c_0.$$
Hence, using again the regularity of the mesh, we obtain
$$\int_K |\nabla\,\phi_i \cdot \nabla\, \phi_j | \ge  C \, h_K^{d-2},
$$
and denoting by $\rho_{ij}$ the minimum of $h_K$ for all $K$ in $\Delta_i \cap \Delta_j$, we deduce
\begin{equation}
\label{eq:formcij1}
c_{ij} \ge  C \rho_{ij}^{d-2},
\end{equation}
with another constant $C$ independent of $h$ and $\tau$. By collecting  these results, we derive
\begin{equation}
\label{eq:convIhgSh30}
\|I_h(g(S_{h,\tau})) -  g(S_{h,\tau})\|^2_{L^2(\Om\times ]0,T[)} \le C \sum_{n=1}^N \tau \sum_{i=1}^M \kappa_i \sum_{j \in \mathcal{N}(i)} \big(\frac{1}{\rho_{ij}^{d-2}}\big) c_{ij}\big|g(S^{n,j})-g(S^{n,i})\big|^2.
\end{equation}
With another application of the regularity of the mesh, this becomes
\begin{equation}
\label{eq:convIhgSh3}
\|I_h(g(S_{h,\tau})) -  g(S_{h,\tau})\|^2_{L^2(\Om\times ]0,T[)} \le C h^2 \|\nabla(I_h(g(S_{h,\tau})))\|^2_{L^2(\Om\times ]0,T[)},
\end{equation}
(note that the power of $h$ is independent of the dimension) and the limit \eqref{eq:convIhgSh1} follows from Theorem \ref{thm:grad.gfinal}.
\end{proof}


\subsubsection{Relation between $g(S^{n,j})-g(S^{n,i})$ and $S^{n,j}-S^{n,i}$}

Here, we derive an upper bound for $S^{n,j}-S^{n,i}$ in terms of $g(S^{n,j})-g(S^{n,i})$.

\begin{lemma}
\label{lem:lowerbdg}
Under the assumptions \eqref{eq:eta'w}--\eqref{eq:pc'} on the derivatives of the mobilities and capillary pressure, there exists a constant $C$, independent of $h$ and $\tau$, such that for all $i$, $j$, and $n$
\begin{equation}
\label{eq:lowerbdg1}
|S^{n,j}-S^{n,i}| \le C\, |g(S^{n,j})-g(S^{n,i})|^{\frac{1}{r}},
\end{equation}
where $r = \rm{max}(\theta_o + \beta_4,\theta_w + \beta_3 ) >1$.
\end{lemma}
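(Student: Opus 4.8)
The plan is to integrate the sharp lower bound \eqref{eq:lbgprime} on $g'$ over the interval between $S^{n,i}$ and $S^{n,j}$ and then invert. By symmetry I may assume $S^{n,i} \le S^{n,j}$ and, dropping the superscript $n$, write $a = S^i \le b = S^j$ in $[0,1]$. Recall from \eqref{eq:lbgprime} that $g^\prime(x) \ge c_\flat\, x^{\theta_w-1+\beta_3}(1-x)^{\theta_o-1+\beta_4}$ on $]0,1[$, with $c_\flat = \frac{\alpha_3}{C_{\rm max}}\frac{\alpha_w}{\theta_w}\frac{\alpha_o}{\theta_o} > 0$, and note that both exponents $\theta_w-1+\beta_3$ and $\theta_o-1+\beta_4$ are strictly positive since $\theta_\alpha \ge 1$ and $\beta_3,\beta_4 > 0$ (which also makes $r>1$). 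The one elementary fact I will use repeatedly is that for any $p \ge 1$ and $0 \le \sigma \le \varsigma$ one has $\varsigma^p - \sigma^p \ge (\varsigma-\sigma)^p$ (write it as $\varsigma^p(1-(\sigma/\varsigma)^p)$ and use $1-t^p \ge (1-t)^p$ on $[0,1]$).

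The argument splits into three cases according to the position of $[a,b]$ relative to $1/2$. If $b \le 1/2$, then $(1-x)^{\theta_o-1+\beta_4} \ge 2^{-(\theta_o-1+\beta_4)}$ on $[a,b]$, so after integrating I get $g(b)-g(a) \ge c\,(b^{\theta_w+\beta_3}-a^{\theta_w+\beta_3}) \ge c\,(b-a)^{\theta_w+\beta_3}$; since $\theta_w+\beta_3 \le r$ this gives $b-a \le C\,(g(b)-g(a))^{1/(\theta_w+\beta_3)}$, and because $g$ has bounded range $[0,\beta]$ and $1/(\theta_w+\beta_3) \ge 1/r$, I can replace the exponent $1/(\theta_w+\beta_3)$ by $1/r$ at the cost of the multiplicative constant $\beta^{\,1/(\theta_w+\beta_3)-1/r}$. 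Symmetrically, if $a \ge 1/2$, then $x^{\theta_w-1+\beta_3} \ge 2^{-(\theta_w-1+\beta_3)}$ on $[a,b]$, and the substitution $y=1-x$ together with the same elementary inequality (applied to $1-a \ge 1-b$ and $p=\theta_o+\beta_4$) yields $g(b)-g(a) \ge c\,(b-a)^{\theta_o+\beta_4}$, hence again $b-a \le C\,(g(b)-g(a))^{1/r}$.

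The remaining case $a < 1/2 < b$ is handled by splitting at $1/2$: applying the first case to $(a,1/2)$ and the second to $(1/2,b)$ gives $\tfrac12 - a \le C(g(\tfrac12)-g(a))^{1/r}$ and $b-\tfrac12 \le C(g(b)-g(\tfrac12))^{1/r}$; since $g$ is increasing the two differences are nonnegative and sum to $g(b)-g(a)$, so using $x^{1/r}+y^{1/r} \le 2(x+y)^{1/r}$ for $x,y \ge 0$ I conclude $b-a \le 2C\,(g(b)-g(a))^{1/r}$, which is \eqref{eq:lowerbdg1}. The only delicate point is the bookkeeping: the two ends of $[0,1]$ force different exponents $\theta_w+\beta_3$ and $\theta_o+\beta_4$ — which is exactly why $r$ is their maximum — and the passage to the common exponent $1/r$ must use the boundedness of $g$. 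Beyond this there is no real analytic obstacle; the estimate is essentially a quantitative version of the strict monotonicity of $g$ established in \eqref{eq:lbgprime}.
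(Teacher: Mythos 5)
Your proof is correct and is, at heart, the same strategy as the paper's: integrate the lower bound \eqref{eq:lbgprime} on $g'$, argue by cases on the position of the interval $[S^{n,i},S^{n,j}]$ in $[0,1]$, and in each case kill one factor in the integrand by a uniform positive lower bound and integrate the other. But your case decomposition is genuinely different and, I think, tighter. The paper splits at $1/8$ and $7/8$ into four cases, and in the ``straddling'' case $c<\tfrac18$, $d>\tfrac78$ it resorts to a somewhat delicate sub-interval trick (restricting the integral to $[\tfrac16(d-c),\tfrac12(d-c)]$ and estimating there). You instead split once at $1/2$, handle the two one-sided cases directly with the elementary monotonicity inequality $\varsigma^p-\sigma^p\ge(\varsigma-\sigma)^p$, and dispatch the straddling case by the additivity $g(b)-g(a)=\bigl(g(b)-g(\tfrac12)\bigr)+\bigl(g(\tfrac12)-g(a)\bigr)$ together with $2(x+y)^{1/r}\ge x^{1/r}+y^{1/r}$. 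That is cleaner. The only place you work harder than necessary is the unification of the exponents $\theta_w+\beta_3$ and $\theta_o+\beta_4$ to their maximum $r$: you invoke the bounded range of $g$ to pay a constant $\beta^{1/(\theta_w+\beta_3)-1/r}$, whereas the paper simply observes that $d-c\le1$, so $(d-c)^p\ge(d-c)^r$ for $p\le r$, and hence the stronger per-case estimates $g(d)-g(c)\ge C(d-c)^p$ all imply $g(d)-g(c)\ge C(d-c)^r$ directly. Both work; the latter is the one-line version.
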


\begin{proof}
To simplify, we set $c = S^{n,i}$, $d = S^{n,j}$ and assume $c < d$. From  \eqref{eq:lbgprime}, it follows that
\begin{equation}
\label{eq:diffg1}
g(d)-g(c) \ge \frac{\alpha_3}{C_{\rm max}} \frac{\alpha_w}{\theta_w}\frac{\alpha_o}{\theta_o}\int_c^d x^{\theta_w + \beta_3 -1} (1-x)^{\theta_o + \beta_4 -1}.
\end{equation}  
For the sake of brevity, we do not specify the constant factor in \eqref{eq:diffg1} and write
$$
g(d)-g(c) \ge  C_1 \int_c^d x^{\theta_w + \beta_3 -1} (1-x)^{\theta_o + \beta_4 -1}.
$$
Now, we argue according to the positions of $c$ and $d$. There are four cases.

1)\ If $\frac{1}{8} \le c \le \frac{7}{8}$, then \eqref{eq:diffg1} gives
\begin{align*}
g(d)-g(c) &\ge  C_1 \big(\frac{1}{8}\big)^{\theta_w + \beta_3 -1}\int_c^d (1-x)^{\theta_o + \beta_4 -1}\\
&= C_1 \big(\frac{1}{8}\big)^{\theta_w + \beta_3 -1}\frac{1}{\theta_o + \beta_4}\Big((1-c)^{\theta_o + \beta_4} - (1-d)^{\theta_o + \beta_4}\Big).
\end{align*}
But
\begin{align*}
(1-c)^{\theta_o + \beta_4} - (1-d)^{\theta_o + \beta_4} &= (1-c)^{\theta_o + \beta_4-1}(d-c) + (1-d)\big((1-c)^{\theta_o + \beta_4-1} - (1-d)^{\theta_o + \beta_4-1}\big)\\
& \ge (d-c)\big(\frac{1}{8}\big)^{\theta_o + \beta_4 -1}.
\end{align*}
Hence
\begin{equation}
\label{eq:case1}
g(d)-g(c)\ge \frac{C_1}{\theta_o + \beta_4} \big(\frac{1}{8}\big)^{\theta_w + \beta_3 +\theta_o + \beta_4 -2} (d-c).
\end{equation}

2) If $c>\frac{7}{8}$, then $d>\frac{7}{8}$ and \eqref{eq:diffg1} gives 
$$g(d)-g(c) \ge  C_1 \big(\frac{7}{8}\big)^{\theta_w + \beta_3 -1}\frac{1}{\theta_o + \beta_4}\Big((1-c)^{\theta_o + \beta_4} - (1-d)^{\theta_o + \beta_4}\Big).
$$
Let us set $a = 1-d$, $b = d-c$ $\gamma = \theta_o + \beta_4 -1 >0$. We can also write
$$
(1-c)^{\theta_o + \beta_4} - (1-d)^{\theta_o + \beta_4} = a^{\gamma +1}\Big(\big(1 + \frac{b}{a}\big)^{\gamma +1}-1\Big).
$$
It is easy to check that the function
$$x \mapsto  (1 + x)^{\gamma +1}-1 -x^{\gamma +1} $$
vanishes at $x =0$ and is strictly monotonic increasing, hence is strictly positive for $x >0$. Hence 
$$a^{\gamma +1}\Big(\big(1 + \frac{b}{a}\big)^{\gamma +1}-1\Big) > a^{\gamma +1} \big(\frac{b}{a}\big)^{\gamma +1} = b^{\gamma +1}.
$$
Thus
$$
(1-c)^{\theta_o + \beta_4} - (1-d)^{\theta_o + \beta_4} \ge (d-c)^{\theta_o + \beta_4},
$$
and
\begin{equation}
\label{eq:case2} 
g(d)-g(c)\ge C_1 \big(\frac{7}{8}\big)^{\theta_w + \beta_3 -1}\frac{1}{\theta_o + \beta_4}(d-c)^{\theta_o + \beta_4}.
\end{equation}

3)\ If  $c <\frac{1}{8}$ and $d  <\frac{7}{8}$, then the integrand $1-x \ge 1-d >  \frac{1}{8}$ and by the above argument,
\begin{equation}
\label{eq:case3}
\begin{split} 
g(d)-g(c) &\ge  C_1 \big(\frac{1}{8}\big)^{\theta_o + \beta_4 -1}\frac{1}{\theta_w + \beta_3}\Big(d^{\theta_w + \beta_3} - c^{\theta_w + \beta_3}\Big)\\
& \ge  C_1 \big(\frac{1}{8}\big)^{\theta_o + \beta_4 -1}\frac{1}{\theta_w + \beta_3}(d-c)^{\theta_w + \beta_3} .
\end{split}
\end{equation}

4)\ If  $c <\frac{1}{8}$ and $d  >\frac{7}{8}$, then $c < \frac{1}{6}(d-c) < \frac{1}{2}(d-c) < d$. Therefore, we can write
\begin{equation}
\label{eq:case4}
\begin{split}
g(d)-g(c) &\ge  C_1 \int_{\frac{1}{6}(d-c)}^{\frac{1}{2}(d-c)}x^{\theta_w + \beta_3 -1} (1-x)^{\theta_o + \beta_4 -1}\\
&\ge  C_1 \big(\frac{1}{2}\big)^{\theta_o + \beta_4 -1}\frac{1}{\theta_w + \beta_3} \Big(\big(\frac{1}{2}(d-c)\big)^{\theta_w + \beta_3} - \big(\frac{1}{6}(d-c)\big)^{\theta_w + \beta_3}\Big) \\
& \ge C_1 \big(\frac{1}{2}\big)^{\theta_o + \theta_w + \beta_3+ \beta_4 -1}\frac{1}{\theta_w + \beta_3}\Big( 1-\big(\frac{1}{3}\big)^{\theta_w + \beta_3}\Big) (d-c)^{\theta_w + \beta_3}.
\end{split}
\end{equation}
Since $d-c \le1$, $\theta_o + \beta_4 >1$, and $\theta_w + \beta_3 >1$, we have in all cases
$$g(d)-g(c) \ge C_2 (d-c)^{{\rm max}(\theta_o + \beta_4,\theta_w + \beta_3)},
$$where $C_2$ is the minimum of the constant factors in \eqref{eq:case1}--\eqref{eq:case4}.
\end{proof}

The convergence to zero of the differences $I_h(p_{\Rd \alpha \Bk g}(S_{h,\tau})) - p_{\Rd \alpha \Bk g}(S_{h,\tau})$, \Rd $\alpha = w,o$,  \Bk 
follows from this lemma and Theorem
\ref{thm:grad.gfinal}.

\begin{lemma}
\label{lem:convpwg}
Under the assumptions \eqref{eq:eta'w}--\eqref{eq:pc'} on the derivatives of the mobilities and capillary pressure, there exists a constant $C$, independent of $h$ and $\tau$, such that
\begin{equation}
\label{eq:Ipwg-pwg}
\|I_h(p_{\alpha g}(S_{h,\tau})) -p_{\alpha g}(S_{h,\tau})\|_{L^2(\Omega \times ]0,T[)} \le C\,h^{\gamma_\alpha},\ \alpha =w,o,
\end{equation}
where $\gamma_w  = \frac{\beta_3}{r}$,  $\gamma_o  = \frac{\beta_4}{r}$ and  in both cases, \Bk $r$ is the exponent of Lemma  \ref{lem:lowerbdg}. 
\end{lemma}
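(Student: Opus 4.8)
The plan is to estimate the interpolation error $\|I_h(p_{\alpha g}(S_{h,\tau})) - p_{\alpha g}(S_{h,\tau})\|_{L^2(\Omega\times]0,T[)}$ elementwise, exactly as in the proof of Lemma~\ref{lem:convIhgSh}, but converting powers of the nodal differences of $g$ into powers of $h$ via Lemma~\ref{lem:lowerbdg}. First I would fix an element $K$ with vertices $\Aa_i$ and a time level $t_n$, and write, for $\x\in K$,
\[
I_h(p_{\alpha g}(S_{h,\tau}))(\x,t_n) - p_{\alpha g}(S_{h,\tau})(\x,t_n) = \sum_{i=1}^{d+1} p_{\alpha g}(S^{n,i})\phi_i(\x) - p_{\alpha g}\Big(\sum_{i=1}^{d+1}S^{n,i}\phi_i(\x)\Big).
\]
Since $S_{h,\tau}$ restricted to $K$ is affine, it attains its extrema at the vertices; calling $S^{n,\ell}$ and $S^{n,r}$ the maximizing and minimizing vertex values and using that $p_{\alpha g}$ is monotone (it is, since $p_{\alpha g}' = f_\alpha p_c'$ with $f_\alpha\ge 0$ and $p_c'<0$, so $p_{wg}$ and $p_{og}$ are nonincreasing), the pointwise difference is bounded in absolute value by $|p_{\alpha g}(S^{n,\ell}) - p_{\alpha g}(S^{n,r})|$. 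Hence
\[
\|I_h(p_{\alpha g}(S_{h,\tau})) - p_{\alpha g}(S_{h,\tau})\|^2_{L^2(\Omega\times]0,T[)} \le \sum_{n=1}^N \tau \sum_{K\in{\mathcal T}_h} |K|\,|p_{\alpha g}(S^{n,\ell}) - p_{\alpha g}(S^{n,r})|^2.
\]

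Next I would convert $|p_{\alpha g}(S^{n,\ell}) - p_{\alpha g}(S^{n,r})|$ into a power of $|S^{n,\ell} - S^{n,r}|$. By $p_{\alpha g}' = f_\alpha p_c'$ and the bound \eqref{eq:pc'}, together with $0\le f_\alpha\le 1$, one has $|p_{wg}'(x)| \le \frac{1}{\alpha_3} x^{\beta_3-1}(1-x)^{\beta_4-1} \le \frac{1}{\alpha_3}x^{\beta_3-1}$ and similarly $|p_{og}'(x)| \le \frac{1}{\alpha_3}(1-x)^{\beta_4-1}$; integrating these bounds over an interval $[c,d]$ (splitting at $1/2$ as needed to keep the singular factor away from the relevant endpoint, exactly as in Lemma~\ref{lem:lowerbdg}) yields
\[
|p_{wg}(d) - p_{wg}(c)| \le C\,|d-c|^{\beta_3},\qquad |p_{og}(d) - p_{og}(c)| \le C\,|d-c|^{\beta_4}.
\]
Combining with Lemma~\ref{lem:lowerbdg}, which gives $|S^{n,j} - S^{n,i}| \le C|g(S^{n,j}) - g(S^{n,i})|^{1/r}$, we get $|p_{wg}(S^{n,\ell}) - p_{wg}(S^{n,r})| \le C|g(S^{n,\ell}) - g(S^{n,r})|^{\beta_3/r}$ and analogously with exponent $\beta_4/r$ for $p_{og}$. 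Substituting, and using $0\le g\le\beta$ so that $|g(S^{n,\ell}) - g(S^{n,r})|^{2\gamma_\alpha}\le \beta^{2\gamma_\alpha - 1}|g(S^{n,\ell}) - g(S^{n,r})|$ when $2\gamma_\alpha\ge 1$ (or boundedness directly when $2\gamma_\alpha<1$, in which case one should instead bound $|g(S^{n,\ell})-g(S^{n,r})|^{2\gamma_\alpha}$ pointwise by a constant and still extract a power of $h$), reduces the estimate to a sum of the form $\sum_n\tau\sum_K|K|\,|g(S^{n,\ell}) - g(S^{n,r})|^{2\gamma_\alpha}$.

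Finally I would repeat verbatim the mesh-regularity bookkeeping from the proof of Lemma~\ref{lem:convIhgSh}: bound $|g(S^{n,\ell}) - g(S^{n,r})|^2$ over $K$ by a constant times $\sum_{j\in{\mathcal N}(i)}|g(S^{n,j}) - g(S^{n,i})|^2$ summed over the vertices $i$ of $K$, use \eqref{eq:formcij}--\eqref{eq:formcij1} to write $|K| \le C\,h^2\,\rho_{ij}^{2-d}\,(\text{something}) \le C h^2 c_{ij}$ after accounting for the $\kappa_i$ factor, ending with
\[
\|I_h(p_{\alpha g}(S_{h,\tau})) - p_{\alpha g}(S_{h,\tau})\|^2_{L^2(\Omega\times]0,T[)} \le C\,h^2\,\|\nabla(I_h(g(S_{h,\tau})))\|^2_{L^2(\Omega\times]0,T[)} \le C\,h^2,
\]
by Theorem~\ref{thm:grad.gfinal}, \emph{when} $2\gamma_\alpha\ge 1$; in the general case the same argument gives the exponent $h^{2\gamma_\alpha}$, namely $\|\cdot\| \le C h^{\gamma_\alpha}$. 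The main obstacle I anticipate is bookkeeping rather than conceptual: one must be careful that the H\"older exponent extracted from Lemma~\ref{lem:lowerbdg} interacts correctly with the squaring in the $L^2$ norm and with the case $2\gamma_\alpha < 1$, and that the constants depending on $\beta_3,\beta_4,r$ produced when splitting the integral for $|p_{\alpha g}(d) - p_{\alpha g}(c)|$ are genuinely independent of $h$ and $\tau$. Everything else is a transcription of the techniques already used for $I_h(g(S_{h,\tau}))$.
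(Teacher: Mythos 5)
The first three steps of your plan match the paper's: (i) the elementwise interpolation bound analogous to \eqref{eq:convIhgSh30}, (ii) the H\"older-continuity estimate $|p_{wg}(d)-p_{wg}(c)|\le C|d-c|^{\beta_3}$ (and with $\beta_4$ for $p_{og}$) obtained by integrating \eqref{eq:pc'} with a case split, and (iii) the conversion to powers of $|g(S^{n,j})-g(S^{n,i})|$ via Lemma~\ref{lem:lowerbdg}. The gap is in the final step, and it is genuine.

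After step (iii) you must bound $\sum_n\tau\sum_{i,j}\kappa_i\,A_{ij}^{2\gamma_\alpha}$, where $A_{ij}=|g(S^{n,j})-g(S^{n,i})|$ and $\gamma_\alpha<1$. The mesh-regularity bookkeeping from Lemma~\ref{lem:convIhgSh} cannot be repeated ``verbatim'' because it relies on the exponent being exactly $2$ so that the weighted sum $\sum c_{ij}A_{ij}^2$ can be matched against $\|\nabla(I_h(g(S_{h,\tau})))\|_{L^2}^2$. Your two fallbacks do not reach the stated rate. When $2\gamma_\alpha\ge 1$, the inequality $A_{ij}^{2\gamma_\alpha}\le\beta^{2\gamma_\alpha-1}A_{ij}$ followed by Cauchy--Schwarz in $(n,K)$ gives at best $\|I_h(p_{\alpha g})-p_{\alpha g}\|^2\lesssim h$, i.e.\ rate $h^{1/2}$, which is \emph{weaker} than the claimed $h^{\gamma_\alpha}$ as soon as $\gamma_\alpha>1/2$ (and this can happen, e.g.\ when $\beta_3\ge\theta_w$). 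When $2\gamma_\alpha<1$, bounding $A_{ij}^{2\gamma_\alpha}$ pointwise by a constant extracts no power of $h$ at all, and you never say how you would ``still extract a power of $h$.'' So the proposed argument does not close.

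The missing idea is a H\"older (power-mean) interpolation in the exponent $\gamma_\alpha$, which is exactly what the paper applies. Writing the summand as
\begin{equation*}
\frac{\kappa_i}{c_{ij}}\,(\tau c_{ij})^{1-\gamma_\alpha}\,\big(\tau c_{ij}A_{ij}^2\big)^{\gamma_\alpha}
\end{equation*}
and applying H\"older with exponents $1/\gamma_\alpha$ and $1/(1-\gamma_\alpha)$ gives
\begin{equation*}
\sum_n\tau\sum_{i,j}\kappa_i\,A_{ij}^{2\gamma_\alpha}
\le\Big(\sum_n\tau\sum_{i,j}c_{ij}A_{ij}^2\Big)^{\gamma_\alpha}
\Big(\sum_n\tau\sum_{i,j}\Big(\tfrac{\kappa_i}{c_{ij}}\Big)^{\frac{1}{1-\gamma_\alpha}}c_{ij}\Big)^{1-\gamma_\alpha}.
\end{equation*}
The first factor is bounded by Theorem~\ref{thm:grad.gfinal} through \eqref{eq:normgrad}, and the second factor is $\lesssim\big(T|\Omega|\big)^{1-\gamma_\alpha}\sup_{i,j}(\kappa_i/c_{ij})^{\gamma_\alpha}\lesssim h^{2\gamma_\alpha}$ by \eqref{eq:formcij1} and mesh regularity. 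This produces $\|I_h(p_{\alpha g}(S_{h,\tau}))-p_{\alpha g}(S_{h,\tau})\|^2_{L^2}\lesssim h^{2\gamma_\alpha}$, i.e.\ the rate $h^{\gamma_\alpha}$, uniformly in $\gamma_\alpha\in(0,1)$ without any case split. Replace your ad hoc treatment of the fractional power with this interpolation and the proof goes through.
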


\begin{proof}
 Let us start with $\alpha =w$. 
Arguing as in the proof of Lemma \ref{lem:convIhgSh}, with $-p_{wg}$ (monotonic increasing) instead of $g$, the analogue of \eqref{eq:convIhgSh30} holds for $-p_{wg}(S_{h,\tau})$, with the same notation
\begin{equation}
\label{eq:convIhpwgh1}
\|I_h(p_{wg}(S_{h,\tau})) -p_{wg}(S_{h,\tau})\|^2_{L^2(\Omega \times ]0,T[)} \le C \sum_{n=1}^N \tau \sum_{i=1}^M  \sum_{j \in \mathcal{N}(i)} \big(\frac{\kappa_i}{c_{ij}}\big) c_{ij}\big |p_{wg}(S^{n,j})-p_{wg}(S^{n,i})\big|^2,
\end{equation}
and the result will stem from an adequate upper bound for $p_{wg}(S^{n,j})-p_{wg}(S^{n,i})$, for all neighbors $j$ of $i$. To this end, we proceed as in Lemma \ref{lem:lowerbdg}. Let $c= S^{n,i}$, $d= S^{n,j}$ and suppose again that $c < d$; then by \eqref{eq:defpwgpog}, \eqref{eq:pc'}, \eqref{eq:etao_1}, and \eqref{eq:bddsh},
\begin{equation}
\label{eq:convIhpwgh2}
|p_{wg}(S^{n,j})-p_{wg}(S^{n,i})| \le \frac{1}{C_{\rm min} \alpha_3 \alpha_o \theta_o} \int_c^d x^{\beta_3-1} (1-x)^{\theta_o+\beta_4-1},
\end{equation}
that we write as
$$|p_{wg}(S^{n,j})-p_{wg}(S^{n,i})| \le C_1^\prime \int_c^d x^{\beta_3-1} (1-x)^{\theta_o+\beta_4-1}.
$$
Here, the discussion reduces to three cases.

1)\ If $\frac{1}{8} \le c \le \frac{7}{8}$, since $\theta_o+\beta_4-1 >0$,
\begin{equation}
\label{eq:caseIhpwgh1}
\int_c^d x^{\beta_3-1} (1-x)^{\theta_o+\beta_4-1} \le 8^{1-\beta_3}\int_c^d  (1-x)^{\theta_o+\beta_4-1}
\le 8^{1-\beta_3} (d-c).
\end{equation}

2)\ Likewise, if $c > \frac{7}{8}$,
\begin{equation}
\label{eq:caseIhpwgh2}
\int_c^d x^{\beta_3-1} (1-x)^{\theta_o+\beta_4-1} \le \big(\frac{8}{7}\big)^{1-\beta_3} (d-c).
\end{equation}

3)\ If $c <\frac{1}{8}$,
\begin{equation}
\label{eq:caseIhpwgh3}
\int_c^d x^{\beta_3-1} (1-x)^{\theta_o+\beta_4-1} \le \int_c^d x^{\beta_3-1} =\frac{1}{\beta_3}\big(d^{\beta_3}-c^{\beta_3} \big) \le \frac{1}{\beta_3}\big(d-c\big)^{\beta_3}.
\end{equation}
Indeed, by Jensen's inequality, valid for $0<\beta_3 \le 1$,
$$d =c+(d-c) \le \big(c^{\beta_3} + (d-c)^{\beta_3}\big)^{\frac{1}{\beta_3}},\quad \mbox{i.e.,}\  d^{\beta_3} \le c^{\beta_3} + (d-c)^{\beta_3}.
$$
Consequently, in all cases,
\begin{equation}
\label{eq:allcaseIhpwgh}
|p_{wg}(S^{n,j})-p_{wg}(S^{n,i})| \le C_2^\prime \big|S^{n,j}-S^{n,i}\big|^{\beta_3}.
\end{equation}
Thus, by substituting into \eqref{eq:convIhpwgh1}, applying Lemma \ref{lem:lowerbdg}, and setting $\gamma_w = \frac{\beta_3}{r}$, we infer
\begin{align*}
\|I_h(p_{wg}(S_{h,\tau})) -p_{wg}(S_{h,\tau})\|^2_{L^2(\Omega \times ]0,T[)} &\le C \sum_{n=1}^N \sum_{i=1}^M  \sum_{j \in \mathcal{N}(i)} \frac{\kappa_i}{c_{ij}} \tau  c_{ij}\big |S^{n,j}-S^{n,i}|^{2\beta_3}\\
& \le C \sum_{n=1}^N \sum_{i=1}^M  \sum_{j \in \mathcal{N}(i)}  \frac{\kappa_i}{c_{ij}} \tau c_{ij} A_{ij}^{2\gamma_w},
\end{align*}
where $A_{ij} =  |g(S^{n,j})-g(S^{n,i})|$. Note that $r > \beta_3$, hence $\gamma_w < 1$. Then
\begin{align*}
\|I_h(p_{wg}(S_{h,\tau})) -&p_{wg}(S_{h,\tau})\|^2_{L^2(\Omega \times ]0,T[)} \le C \sum_{n=1}^N \sum_{i=1}^M  \sum_{j \in \mathcal{N}(i)}  \frac{\kappa_i}{c_{ij}} (\tau c_{ij})^{1-\gamma_w}\big(\tau c_{ij}\big)^{\gamma_w} A_{ij}^{2\gamma_w}\\
& \le C \Big(\sum_{n=1}^N \sum_{i=1}^M  \sum_{j \in \mathcal{N}(i)}  \tau c_{ij}A_{ij}^2\Big)^{\gamma_w}
\Big(\sum_{n=1}^N \sum_{i=1}^M  \sum_{j \in \mathcal{N}(i)}  \big(\frac{\kappa_i}{c_{ij}}\big)^{\frac{1}{1-\gamma_w}}\tau c_{ij}\Big)^{1-\gamma_w}.
\end{align*}
But
$$\Big(\sum_{n=1}^N \sum_{i=1}^M  \sum_{j \in \mathcal{N}(i)}  \big(\frac{\kappa_i}{c_{ij}}\big)^{\frac{1}{1-\gamma_w}}\tau c_{ij}\Big)^{1-\gamma_w} \le C \big(T|\Omega|\big)^{1-\gamma_w} \sup_{i,j}\big(\frac{\kappa_i}{c_{ij}}\big)^{\gamma_w},
$$
and \eqref{eq:Ipwg-pwg} with $\alpha = w$ follows from \eqref{eq:formcij1},  the regularity of the mesh, and Theorem
\ref{thm:grad.gfinal}.

When $\alpha =o$, the proof is based on fact that $-p_{og}$ is nonnegative, monotonically increasing, and satisfies the inequality
$$-p_{og}(x) \le \frac{1}{C_{\rm min}} \frac{1}{\alpha_3 \alpha_w \theta_w} \int_0^x x^{\theta_w + \beta_3-1} (1-x)^{\beta_4 -1}.
$$
By comparing with \eqref{eq:convIhpwgh2}, we see that the above argument carries over to $p_{og}$ with  $\beta_3$ replaced by $\beta_4$. \Bk
\end{proof}

 Finally, with the notation of Lemma \ref{lem:convpwg}, the following bound regarding $p_c(S_{h,\tau})$ follows from \eqref{eq:Ipwg-pwg} and \eqref{eq:pwg+pog}, and the fact that $p_c(0)$ is a constant:
\begin{equation}
\label{eq:Ipc-pc}
\|I_h(p_{c}(S_{h,\tau})) -p_{c}(S_{h,\tau})\|_{L^2(\Omega \times ]0,T[)} \le C\,h^{\gamma},
\end{equation}
where $\gamma  = \frac{1}{r}\rm{min}(\beta_3, \beta_4)$.
\Bk


\subsection{Weak convergence}
\label{sec:weakconv}
All constants below are independent of $h$ and $\tau$. 

The bound \eqref{eq:maxprinc} on the discrete saturation $S_{h,\tau}$ implies that there exists a function $\bar s \in L^\infty(\Omega \times ]0,T[)$ and a subsequence of $(h,\tau)$ not indicated, such that
\begin{equation}
\label{eq:wklimS}
\lim_{(h,\tau) \to (0,0)} S_{h,\tau}  = \bar s \quad \mbox{ weakly* in}\ L^\infty(\Omega \times ]0,T[).
\end{equation}

\begin{proposition}
\label{pro:bddS}
The limit function $\bar s$ satisfies
\begin{equation}
\label{eq:boundS1}
\forall (x,t)\ \mbox{a.e. in }  \Omega \times]0,T[,\quad 0 \le \bar s(x,t) \le 1.
\end{equation}
\end{proposition}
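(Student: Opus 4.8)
This proposition is almost immediate from the weak-$*$ convergence in \eqref{eq:wklimS} together with the pointwise bounds \eqref{eq:maxprinc}, and I would prove it by a standard duality/test-function argument.

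\textbf{Plan.} The plan is to show that the two functions $\bar s$ and $1-\bar s$ are nonnegative almost everywhere, using the fact that nonnegativity of an $L^1$-type function is characterized by the sign of its integral against nonnegative test functions, and that this pairing passes to the weak-$*$ limit. First I would fix an arbitrary measurable set, or more conveniently an arbitrary nonnegative function $\psi \in L^1(\Omega \times ]0,T[)$. Since $\psi \in L^1(\Omega \times ]0,T[)$ is exactly the predual pairing for the weak-$*$ convergence in $L^\infty(\Omega \times ]0,T[)$, \eqref{eq:wklimS} gives
\begin{equation*}
\int_0^T\int_\Omega \bar s\, \psi = \lim_{(h,\tau) \to (0,0)} \int_0^T\int_\Omega S_{h,\tau}\, \psi.
\end{equation*}
By Theorem \ref{thm:maxprinc}, $0 \le S_{h,\tau} \le 1$ everywhere, so each term on the right satisfies $0 \le \int_0^T\int_\Omega S_{h,\tau}\,\psi \le \int_0^T\int_\Omega \psi$ whenever $\psi \ge 0$. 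Passing to the limit, $0 \le \int_0^T\int_\Omega \bar s\,\psi \le \int_0^T\int_\Omega \psi$ for every nonnegative $\psi \in L^1(\Omega \times ]0,T[)$.

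\textbf{Conclusion from the sign conditions.} The inequality $\int_0^T\int_\Omega \bar s\,\psi \ge 0$ for all nonnegative $\psi \in L^1$ forces $\bar s \ge 0$ a.e.: indeed, take $\psi = \mathbf 1_{\{\bar s < 0\}}$ (which lies in $L^1$ because $\bar s \in L^\infty$ and $\Omega \times ]0,T[$ has finite measure), so $\int_{\{\bar s<0\}} \bar s \ge 0$, and since the integrand is negative on that set, the set must have measure zero. Similarly, the inequality $\int_0^T\int_\Omega \bar s\,\psi \le \int_0^T\int_\Omega \psi$, i.e. $\int_0^T\int_\Omega (1-\bar s)\,\psi \ge 0$ for all nonnegative $\psi \in L^1$, forces $1 - \bar s \ge 0$ a.e. by the same argument applied to $\psi = \mathbf 1_{\{\bar s > 1\}}$. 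Combining, $0 \le \bar s(x,t) \le 1$ for almost every $(x,t) \in \Omega \times ]0,T[$, which is \eqref{eq:boundS1}.

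\textbf{Main obstacle.} There is essentially no obstacle here; the only point requiring a word of care is the justification that the weak-$*$ limit genuinely tests against all of $L^1(\Omega\times]0,T[)$ and that the chosen indicator functions are admissible test functions, both of which are immediate since $\Omega\times]0,T[$ has finite measure and $\bar s\in L^\infty$. One could alternatively phrase the whole argument with the closed convex set $\{v \in L^\infty : 0 \le v \le 1 \text{ a.e.}\}$, which is weak-$*$ closed (being convex and strongly closed), so that the weak-$*$ limit of a sequence in it stays in it; this gives \eqref{eq:boundS1} in one line but hides the elementary duality computation above.
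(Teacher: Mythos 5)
Your main argument is correct and is essentially the same as the paper's: both pass the weak-$*$ pairing with nonnegative $L^1$ test functions to the limit and exploit the pointwise bound $0 \le S_{h,\tau} \le 1$ from Theorem~\ref{thm:maxprinc}; the paper chooses $\psi = (\bar s - 1)_+$ and phrases the conclusion as a contradiction, whereas you use the indicator of $\{\bar s > 1\}$ directly, an immaterial variant. One small caution on your closing aside: ``convex and strongly closed'' yields \emph{weak} closedness by Mazur's theorem, but not \emph{weak-$*$} closedness in a non-reflexive dual such as $L^\infty(\Omega\times]0,T[)$ --- for instance the closed unit ball of $c_0$, viewed inside $\ell^\infty$, is convex, bounded, and norm-closed yet not weak-$*$ closed. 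The order interval $\{v \in L^\infty : 0 \le v \le 1 \text{ a.e.}\}$ \emph{is} weak-$*$ closed, but the correct reason is that it equals the intersection of the weak-$*$ closed half-spaces $\{v : 0 \le \int v\psi \le \int\psi\}$ over nonnegative $\psi \in L^1$, which is precisely what your elementary duality computation establishes; so the proposed ``one-line alternative'' does not actually shortcut that computation.
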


\begin{proof}
The convergence \eqref{eq:wklimS} means that for all $\psi \in L^1(\Omega \times ]0,T[)$,
$$ \int_{\Omega \times ]0,T[} S_{h,\tau} \psi \to \int_{\Omega \times ]0,T[} \bar s \psi \ \mbox{and} \ \int_{\Omega \times ]0,T[} (1-S_{h,\tau}) \psi \to \int_{\Omega \times ]0,T[} (1-\bar s) \psi.
$$
We argue by contradiction. Suppose that $\bar s >1$ on a set of positive measure, say $D$, and take $\psi = (\bar s-1)_+$, the positive part of $\bar s-1$. Then
$$0 \le  \int_{\Omega \times ]0,T[} \big(1-S_{h,\tau}\big) \psi \to \int_{\Omega \times ]0,T[} (1-\bar s) (\bar s-1)_+ = \int_D (1-\bar s) (\bar s-1)_+,
$$
thus contradicting the fact that $(1-\bar s)<0$ on $D$. This proves that $\bar s \le 1$. The proof that $\bar s\ge 0$ is similar.
\end{proof}

Regarding the pressure, the bound \eqref{eq:bddglobpres} yields weak convergence, up to a subsequence, of the gradient of $U_{\alpha,h,\tau}$. \Bk We can deduce weak convergence of the sequences \Rd themselves \Bk  by applying \eqref{eq:genPoinc}.
Indeed,
$$\int_\Omega U_{w,h,\tau} = \big(U_{w,h,\tau} ,1\big)_h = \big(I_h(p_{wg}(S_{h,\tau})),1\big)_h,
$$
owing to \eqref{eq:vars4}. Then \Bk the properties of $p_{wg}$ and the boundedness of $S_{h,\tau}$ imply that 
$$\big| \big(I_h(p_{wg}(S_{h,\tau})),1\big)_h\big| \le C.
$$
 Similarly,
$$
\int_\Omega U_{o,h,\tau}  = \big(I_h(p_{wg}(S_{h,\tau}))+ p_c(0),1\big)_h,
$$
a bounded quantity. \Bk
Then we infer from \eqref{eq:genPoinc} that
\begin{equation}
\label{eq:bddpL2}
\|U_{\alpha,h,\tau}\|_{L^2(\Omega\times ]0,T[)}  \le C, \quad \alpha =w,o.
\end{equation}
\Rd With \Bk  this, \eqref{eq:bddglobpres}   implies that there exist functions $\bar W_{\Rd \alpha \Bk} \in L^2(0,T;H^1(\Omega))$, \Rd $\alpha = w,o$, \Bk  and a subsequence of $h$ and $\tau$ (not indicated) such that,
\begin{equation}
\label{eq:wklimprs}
\lim_{(h,\tau) \to (0,0)} U_{\alpha,h,\tau}  =\bar W_\alpha,\ \mbox{ weakly in}\ L^2(0,T;H^1(\Omega)) . 
\end{equation}

Likewise, the function $I_h(g(S_{h,\tau}))$ is bounded in $L^2(\Omega \times ]0,T[)$ and it follows from this and \eqref{eq:Bddgrad.final} that there exists a function $\bar K\in L^2(0,T,H^1(\Omega))$ such that, up to a subsequence,
\begin{equation}
\label{eq:wklimg}
\lim_{(h,\tau) \to (0,0)} I_h(g(S_{h,\tau})) =\bar K \quad \mbox{ weakly in}\ L^2(0,T,H^1(\Omega)).
\end{equation}
This implies in particular that for almost every time $t$, $I_h(g(S_{h,\tau}))$ converges strongly in $L^2(\Omega)$.
But as is well-known, these convergences are not sufficient to pass to the limit in the nonlinear terms: they must be supplemented by a bound for a fractional derivative in time of $S_{h,\tau}$ that yields compactness in time. This will stem via a  bound for a fractional derivative in time of $g(S_{h,\tau})$.


\subsection{Compactness in time}
\label{sec:compact}

 Following the argument introduced by Kazhikhov, see~\cite{Lions78}, and recalling that 
 $\|\cdot \|^\varphi_h$ is equivalent to the $L^2$ norm in finite dimension, we want to derive first a fractional estimate in time for $I_h(g(S_{h,\tau}))$ and next for $g(S_{h,\tau})$. The following lemma is a preliminary bound written in terms of sums of the pointwise values in time.
\begin{lemma}
\label{lem:fractbdd}  
Under the assumptions of Theorem \ref{thm:grad.gfinal}, there exist  constants $C$, independent of $h$ and $\tau$,
such that for all integers $1 \le \ell \le N-1$,
\begin{equation}
\label{eq:Kazhikhov}
\sum_{m=1}^{N-\ell} \tau \big(\|g(S_h^{m+\ell}) -g(S_h^{m})\|^\varphi_h\big)^2 \le C(\ell \tau), \ 
\sum_{m=1}^{N-\ell} \tau \|g(S_h^{m+\ell}) -g(S_h^{m})\|_{L^2(\Omega)}^2 \le C(\ell \tau).
\end{equation}
\end{lemma}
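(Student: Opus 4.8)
The plan is to follow the classical Kazhikhov argument: estimate the telescoping sum of time-increments of $g(S_h^m)$ by pairing it with a suitable test function built from $g(S_h^{m+\ell})-g(S_h^m)$, and then bound the resulting right-hand side using the scheme equations and the a priori bounds already established, namely Theorem \ref{thm:pressbound1}, Theorem \ref{thm:globalpressurebd}, and Theorem \ref{thm:grad.gfinal}. The two stated inequalities are equivalent up to the norm-equivalence \eqref{eq:equivnorm}, \eqref{eq:equivnormphi}, so it suffices to prove the first one, involving $\|\cdot\|_h^\varphi$.

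First I would write $g(S_h^{m+\ell})-g(S_h^m) = \sum_{k=m}^{m+\ell-1}\big(g(S_h^{k+1})-g(S_h^k)\big)$. By the mean value theorem applied nodewise, $g(S^{k+1,i})-g(S^{k,i}) = g'(\xi_i^k)(S^{k+1,i}-S^{k,i})$ with $\xi_i^k$ between $S^{k,i}$ and $S^{k+1,i}$; since $g'$ is bounded (see \eqref{eq:upbddgprime} and the Lipschitz bound \eqref{eq:Lipsconst}), we have, at each node, $|g(S^{k+1,i})-g(S^{k,i})| \le L\,|S^{k+1,i}-S^{k,i}|$, and moreover the signs agree because $g$ is increasing. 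Hence, denoting by $w^{m,\ell}\in X_h$ the finite element function with nodal values $w^{m,\ell,i} = g(S^{m+\ell,i})-g(S^{m,i})$, one gets the pointwise (in time) inequality
\begin{equation}
\label{eq:Kazhpair}
\big(\|g(S_h^{m+\ell})-g(S_h^m)\|_h^\varphi\big)^2 \le L \sum_{i=1}^M \tilde m_i(\varphi)\,\big(S^{m+\ell,i}-S^{m,i}\big)\,w^{m,\ell,i}\cdot \mathrm{sgn},
\end{equation}
which, after removing the sign (they agree), reads $\big(\|g(S_h^{m+\ell})-g(S_h^m)\|_h^\varphi\big)^2 \le L\,\big(S_h^{m+\ell}-S_h^m, w^{m,\ell}\big)_h^\varphi$. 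Now expand $S_h^{m+\ell}-S_h^m = \sum_{k=m}^{m+\ell-1}(S_h^{k+1}-S_h^k)$ and, for each $k$, use the scheme in variational form \eqref{eq:vars1} (equivalently \eqref{eq:scheme1}) to replace $\frac{1}{\tau}(S_h^{k+1}-S_h^k,\theta_h)_h^\varphi$, with test function $\theta_h = w^{m,\ell}$, by the upwinded pressure term plus the source term. This gives
\begin{equation}
\label{eq:Kazhexpand}
\big(\|g(S_h^{m+\ell})-g(S_h^m)\|_h^\varphi\big)^2 \le L\tau \sum_{k=m}^{m+\ell-1}\Big(\big[P_{w,h}^{k+1},\eta_w(S_h^{k+1});P_{w,h}^{k+1},w^{m,\ell}\big]_h + \big(\text{source}_k, w^{m,\ell}\big)_h\Big).
\end{equation}
Then multiply by $\tau$, sum over $m$ from $1$ to $N-\ell$, and interchange the order of summation; each fixed time index $k$ appears in at most $\ell$ of the outer indices $m$, which is exactly how the factor $\ell\tau$ enters.

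The main work — and the step I expect to be the main obstacle — is bounding the double sum on the right of the summed-up version of \eqref{eq:Kazhexpand}. For the upwinded pressure term one writes $\big[P_{w,h}^{k+1},\eta_w(S_h^{k+1});P_{w,h}^{k+1},w^{m,\ell}\big]_h$ in the antisymmetrized form $-\tfrac12\sum_{i,j}c_{ij}\eta_w(S_w^{k+1,ij})(P_w^{k+1,j}-P_w^{k+1,i})(w^{m,\ell,j}-w^{m,\ell,i})$ and applies Cauchy–Schwarz over the index pairs with weights $c_{ij}\eta_w(S_w^{k+1,ij})$: one factor is controlled by Theorem \ref{thm:pressbound1} (after summing in $k$ and $m$, picking up the $\ell$ factor), and the other factor involves $\sum_{i,j}c_{ij}\eta_w(S_w^{k+1,ij})(w^{m,\ell,j}-w^{m,\ell,i})^2 \le \|\eta_w\|_\infty \sum_{i,j}c_{ij}(w^{m,\ell,j}-w^{m,\ell,i})^2 = 2\|\eta_w\|_\infty\|\nabla I_h(w^{m,\ell})\|_{L^2(\Omega)}^2$ by \eqref{eq:normgrad}; and since $w^{m,\ell} = I_h(g(S_h^{m+\ell})) - I_h(g(S_h^m))$ has gradient bounded via Theorem \ref{thm:grad.gfinal} (uniformly in $m$, $\ell$, after summing), this closes. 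The source term $\big(\text{source}_k,w^{m,\ell}\big)_h$ is easier: $w^{m,\ell}$ is bounded in $L^\infty$ (hence in $\|\cdot\|_h$) by $2\|g\|_{L^\infty(0,1)}$ and the discrete sources $\bar q_h, \underline q_h$ are bounded in $L^1$, so this contributes a term of order $\ell\tau$ directly. Similarly, Young's inequality can be used on \eqref{eq:Kazhexpand} if one prefers to absorb a small multiple of $\big(\|g(S_h^{m+\ell})-g(S_h^m)\|_h^\varphi\big)^2$ coming from the gradient factor, but using the uniform gradient bound of Theorem \ref{thm:grad.gfinal} makes this unnecessary. Collecting the bounds, all right-hand-side contributions are $\le C(\ell\tau)$ with $C$ independent of $h$, $\tau$, and $\ell$, which is \eqref{eq:Kazhikhov}; the $L^2(\Omega)$ version then follows from \eqref{eq:equivnorm}–\eqref{eq:equivnormphi}.
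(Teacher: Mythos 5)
Your proposal matches the paper's proof step for step: both start from the monotonicity--Lipschitz inequality $(\|g(S_h^{m+\ell})-g(S_h^m)\|_h^\varphi)^2 \le L\,(g(S_h^{m+\ell})-g(S_h^m),S_h^{m+\ell}-S_h^m)_h^\varphi$, telescope the saturation increment, test the scheme at each intermediate level with $I_h\big(g(S_h^{m+\ell})-g(S_h^m)\big)$, split the upwind diffusion term into a pressure-difference factor controlled by Theorem~\ref{thm:pressbound1} and a gradient-of-$g$ factor controlled by Theorem~\ref{thm:grad.gfinal}, and extract the factor $\ell\tau$ by interchanging the $m$ and $k$ sums. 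The only cosmetic difference is that you invoke weighted Cauchy--Schwarz where the paper applies Young's inequality $ab\le\tfrac12(a^2+b^2)$ pointwise in $(i,j)$; these give the same $C\,\ell\tau$ bound up to constants, and (as you note) no absorption into the left-hand side is needed since the gradient bound holds unconditionally.
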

 
\begin{proof}
The starting point is the inequality
\begin{equation}
\label{eq:Kazhikhov1}
\sum_{m=1}^{N-\ell} \tau \big(\|g(S_h^{m+\ell}) -g(S_h^{m})\|^\varphi_h\big)^2 \le L \sum_{m=1}^{N-\ell}\tau \Big( g(S_h^{m+\ell}) -g(S_h^{m}), S_h^{m+\ell} -S_h^{m}\Big)_h^\varphi,
\end{equation}
owing that $g$ is Lipschitz continuous and increasing. Thus, by writing 
$$S_h^{m+\ell}-S_h^{m} = \sum_{k=1}^\ell \big(S_h^{m+k}-S_h^{m+k-1} \big),
$$
testing each line of \eqref{eq:vars1} taken at level $m+k$ with $I_h\big(g(S_h^{m+\ell}) -g(S_h^{m})\big)$, and applying \eqref{eq:Kazhikhov1}, we obtain
\begin{equation}
\label{eq:Kazhikhov2}
\begin{split}
\sum_{m=1}^{N-\ell} \tau \big(\|g(S_h^{m+\ell}) -g(S_h^{m})\|^\varphi_h\big)^2& \le L \sum_{m=1}^{N-\ell} \tau \sum_{k=1}^\ell \tau \Big|\big(f_w(s_\mathrm{in,h}^{m+k})\bar{q}_h^{m+k} - f_w(S_h^{m+k}) \underline{q}_h^{m+k}, g(S_h^{m+\ell}) -g(S_h^{m})\big)_h\\
&+\big[P_{w,h}^{m+k}, I_h(\eta_w(S_h^{m+k})); P_{w,h}^{m+k}, I_h\big(g(S_h^{m+\ell}) -g(S_h^{m})\big)\big]_h \Big|.
\end{split}
\end{equation}
It is easy to check that, on one hand, with $r=\ell$ or $r=0$,
\begin{align*}
\Big|\big[P_{w,h}^{m+k}&, I_h(\eta_w(S_h^{m+k})); P_{w,h}^{m+k}, I_h\big(g(S_h^{m+r})\big)\big]_h\Big|\\
&= \frac{1}{2}\Big| \sum_{i,j =1}^M \big(g(S^{m+r,j}) - g(S^{m+r,i})\big) c_{ij} \eta_w(S_w^{m+k,ij})\big(P_w^{m+k,j} - P_w^{m+k,i}\big)\Big|\\
& \le \frac{1}{4}\sum_{i,j =1}^M c_{ij} \eta_w(S_w^{m+k,ij})\Big(\big|g(S^{m+r,j}) - g(S^{m+r,i})\big|^2+ \big|P_w^{m+k,j} - P_w^{m+k,i}\big|^2\Big)\\
 & \le \frac{1}{4}\sum_{i,j =1}^M c_{ij}\Big(\eta_w(1) \big|g(S^{m+r,j}) - g(S^{m+r,i})\big|^2
 +  \eta_w(S_w^{m+k,ij})\big|P_w^{m+k,j} - P_w^{m+k,i}\big|^2\Big),
\end{align*}
since $\eta_w$ is increasing and $S_{h,\tau}$ is bounded by one. On the other hand, 
$$
\big|\big(f_w(s_\mathrm{in,h}^{m+k})\bar{q}_h^{m+k} - f_w(S_h^{m+k}) \underline{q}_h^{m+k}, g(S_h^{m+\ell}) -g(S_h^{m})\big)_h\big| \le C \big(\|\bar{q}^{m+k}\|_{L^1(\Omega)} + \|\underline{q}^{m+k}\|_{L^1(\Omega)}\big),
$$
where here and below, $C$ denotes constants that are independent of $\ell$, $h$, and $\tau$.
Therefore, in view of \eqref{eq:normgrad}
\begin{equation}
\label{eq:Kazhikhov3}
\begin{split}
&\sum_{m=1}^{N-\ell} \tau \big(\|g(S_h^{m+\ell}) -g(S_h^{m})\|^\varphi_h\big)^2 \le L \sum_{m=1}^{N-\ell} \tau\Big( \Big[\frac{1}{8}\eta_w(1) (\ell \tau) \sum_{r=\ell,0}\|\nabla\,I_h(g(S_h^{m+r}))\|^2_{L^2(\Omega)} \\
& + \frac{1}{2}\sum_{k =1}^\ell \tau \sum_{i,j =1}^M c_{ij}\eta_w(S_w^{m+k,ij})\big|P_w^{m+k,j} - P_w^{m+k,i}\big|^2\Big]
 + C  \sum_{k =1}^\ell \tau  \big(\|\bar{q}^{m+k}\|_{L^1(\Omega)} + \|\underline{q}^{m+k}\|_{L^1(\Omega)}\big)\Big)\\
& \le \frac{1}{8}\eta_w(1) L (\ell \tau) \Big[\sum_{m=1+ \ell}^{N} \tau 
\|\nabla\,I_h(g(S_h^{m}))\|^2_{L^2(\Omega)}+ \sum_{m=1}^{N-\ell} \tau \|\nabla\,I_h(g(S_h^{m}))\|^2_{L^2(\Omega)}\Big]\\
& + \frac{1}{2} L \sum_{m=1}^{N-\ell} \tau \sum_{k =1}^\ell \tau\Big( \sum_{i,j =1}^M c_{ij}\eta_w(S_w^{m+k,ij})\big|P_w^{m+k,j} - P_w^{m+k,i}\big|^2
+ C   \big(\|\bar{q}^{m+k}\|_{L^1(\Omega)} + \|\underline{q}^{m+k}\|_{L^1(\Omega)}\big)\Big).
\end{split}
\end{equation}
By \eqref{eq:Bddgrad.final}, il suffices to bound the terms in the last  line above. This is achieved by interchanging the sums over $m$ and $k$. Let $n = m+k$; $n$ runs from $2$ to $N$ and $m$ runs from 
${\rm max}(1,n-\ell)$ to ${\rm min}(n-1,N-\ell)$. Thus
\begin{align*}
\sum_{m=1}^{N-\ell} \tau \sum_{k =1}^\ell \tau &\sum_{i,j =1}^M c_{ij}\eta_w(S_w^{m+k,ij})\big|P_w^{m+k,j} - P_w^{m+k,i}\big|^2 \\
&= \sum_{n=2}^{N} \tau\big( \sum_{m ={\rm max}(1,n-\ell)}^{{\rm min}(n-1,N-\ell)}
\tau\Big) \sum_{i,j =1}^M c_{ij}\eta_w(S_w^{n,ij})\big|P_w^{n,j} - P_w^{n,i}\big|^2.
\end{align*}
But ${\rm min}(n-1,N-\ell) - {\rm max}(1,n-\ell) \le \ell-1$. Hence
\begin{equation}
\label{eq:Kazhikhov4}
\sum_{m=1}^{N-\ell} \tau \sum_{k =1}^\ell \tau \sum_{i,j =1}^M c_{ij}\eta_w(S_w^{m+k,ij})\big|P_w^{m+k,j} - P_w^{m+k,i}\big|^2  \le (\ell \tau)\sum_{n=2}^{N} \tau \sum_{i,j =1}^M c_{ij}\eta_w(S_w^{n,ij})\big|P_w^{n,j} - P_w^{n,i}\big|^2,
\end{equation}
and we know from \eqref{eq:pressbound1} that this last sum over $n$ is bounded. In the same fashion,
\begin{equation}
\label{eq:Kazhikhov5}
\sum_{m=1}^{N-\ell} \tau \sum_{k =1}^\ell \tau \big(\|\bar{q}^{m+k}\|_{L^1(\Omega)} + \|\underline{q}^{m+k}\|_{L^1(\Omega)}\big) \le (\ell \tau) \big(\|\bar{q}\|_{L^1(\Omega\times ]0,T[)} + \|\underline{q}\|_{L^1(\Omega\times ]0,T[)}\big).
\end{equation}
Then, under the assumptions of Theorem \ref{thm:grad.gfinal}, \eqref{eq:Kazhikhov} follows by 
substituting \eqref{eq:Bddgrad.final}, \eqref{eq:Kazhikhov4}, and \eqref{eq:Kazhikhov5} into \eqref{eq:Kazhikhov3}. The second inequality  stems from the first and \eqref{eq:boundg(Sh)}.
\end{proof}

The next theorem transforms \eqref{eq:Kazhikhov} into integrals.

\begin{theorem}
\label{thm:Kolmg}
Under the assumptions of Theorem \ref{thm:grad.gfinal}, there exists a constant $C$, independent of $h$, and $\tau$,
such that for all real numbers $\delta$, $0< \delta < T$, 
\begin{equation}
\label{eq:Kolmogorov}
\int_{0}^{T-\delta}  \big\|g(S_{h,\tau}(t+\delta)) -g(S_{h,\tau}(t)\big\|^2_{L^2(\Omega)} \,d t \le C\delta .
\end{equation}
Similarly,
\begin{equation}
\label{eq:Kolmogorov.h}
\int_{0}^{T-\delta}  \big\|I_h\big(g(S_{h,\tau}(t+\delta)) -g(S_{h,\tau}(t)\big)\big\|^2_{L^2(\Omega)} \,d t \le C\delta ,
\end{equation}
with another constant $C$, independent of $h$, and $\tau$.
\end{theorem}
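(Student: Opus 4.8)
The plan is to use that $S_{h,\tau}$, hence $g(S_{h,\tau})$, is piecewise constant in time, so that the integral in \eqref{eq:Kolmogorov} collapses to a finite sum of terms that are \emph{exactly} of the type estimated in Lemma \ref{lem:fractbdd}. Fix $\delta\in\,]0,T[$ and write $\delta=(p+s)\tau$ with $p=\lfloor\delta/\tau\rfloor\in\{0,1,\dots,N-1\}$ and $s\in[0,1[$. The first step is a slab-by-slab bookkeeping: for $t$ in a fixed time interval $]t_{m-1},t_m]$, on which $S_{h,\tau}(t)=S_h^{m}$, the shifted argument $t+\delta$ lies in $]t_{m+p-1},t_{m+p}]$ precisely for $t\in\,]t_{m-1},t_{m-1}+(1-s)\tau]$ and in $]t_{m+p},t_{m+p+1}]$ precisely for $t\in\,]t_{m-1}+(1-s)\tau,t_m]$. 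Hence $S_{h,\tau}(t+\delta)$ equals $S_h^{m+p}$ on a sub-interval of length $(1-s)\tau$ and $S_h^{m+p+1}$ on a sub-interval of length $s\tau$, with the convention that the first value coincides with $S_h^m$ when $p=0$. This gives the exact identity
\[
\int_{t_{m-1}}^{t_m}\big\|g(S_{h,\tau}(t+\delta))-g(S_{h,\tau}(t))\big\|_{L^2(\Omega)}^2\,dt
=(1-s)\tau\,\|g(S_h^{m+p})-g(S_h^m)\|_{L^2(\Omega)}^2+s\tau\,\|g(S_h^{m+p+1})-g(S_h^m)\|_{L^2(\Omega)}^2 .
\]

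Next I would sum this identity over the slabs contained in $]0,T-\delta[$, namely $m=1,\dots,N-p-1$ (plus one incomplete slab, treated below), and apply the second inequality of \eqref{eq:Kazhikhov} with $\ell=p$ (this term is simply absent when $p=0$) and with $\ell=p+1$. This yields
\[
\int_0^{T-\delta}\big\|g(S_{h,\tau}(t+\delta))-g(S_{h,\tau}(t))\big\|_{L^2(\Omega)}^2\,dt
\le (1-s)\,C(p\tau)+s\,C\big((p+1)\tau\big)=C(p+s)\tau=C\delta ,
\]
which is exactly \eqref{eq:Kolmogorov}. The one remaining incomplete slab $m=N-p$ (present only when $s<1$, on which $g(S_{h,\tau}(t+\delta))=g(S_h^{N})$) contributes at most $(1-s)\tau\,\|g(S_h^{N})-g(S_h^{N-p})\|_{L^2(\Omega)}^2\le C\tau\,|\Omega|\,\|g\|_{L^\infty(0,1)}^2$; since this term vanishes identically when $p=0$ and since $p\ge 1$ forces $\tau\le\delta$, it is absorbed into $C\delta$. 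The degenerate cases where $p+1>N-1$ only make the corresponding sum empty, hence are trivial.

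For the interpolated estimate \eqref{eq:Kolmogorov.h} the argument is identical, applied to $I_h\big(g(S_{h,\tau})\big)$: by linearity of $I_h$ the analogous slab identity holds with $\|I_h(g(S_h^{m+p}))-I_h(g(S_h^m))\|_{L^2(\Omega)}$ in place of $\|g(S_h^{m+p})-g(S_h^m)\|_{L^2(\Omega)}$, and one uses the first inequality of \eqref{eq:Kazhikhov} together with the norm equivalences in \eqref{eq:boundg(Sh)} (or directly \eqref{eq:equivnormphi}) to bound $\sum_m\tau\,\|I_h(g(S_h^{m+\ell}))-I_h(g(S_h^m))\|_{L^2(\Omega)}^2\le C\ell\tau$. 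The only genuinely delicate point is the very first step: extracting the precise weights $(1-s)\tau$ and $s\tau$ from the half-open time intervals, since it is this exactness that produces the sharp bound $C\delta$ rather than a useless $C(\delta+\tau)$; the rest is bookkeeping plus the trivial observation that $g$ is bounded on $[0,1]$.
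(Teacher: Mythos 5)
Your proof is correct and follows essentially the same route as the paper's: both exploit the piecewise-constant-in-time structure to reduce the integral to a weighted sum of the $\ell$-shifted differences and then invoke Lemma~\ref{lem:fractbdd} (Kazhikhov-type estimate) with $\ell=p$ and $\ell=p+1$. The only presentational difference is that you unify the paper's three cases ($\delta<\tau$, $\delta=\ell\tau$, $\delta=\tau(\ell+\eta)$) into a single formula $\delta=(p+s)\tau$, and you bound the final incomplete slab crudely using the uniform boundedness of $g$, whereas the paper absorbs that slab into the $\ell=p$ Kazhikhov sum (its first sum runs to $m=N-\ell$), which yields a marginally sharper constant but is otherwise identical in spirit.
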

 
\begin{proof}
The argument is not new, see for instance~\cite{Walkingt10}, but we recall it for the reader's convenience. The discussion depends on the value of $\delta$; there are three cases: (i)\ $0 < \delta < \tau$, 
(ii)\ $\delta = \ell \tau$, $1 \le \ell \le N-1$,
(iii)\ $\delta = \tau(\ell + \eta)$, $1 \le \ell \le N-1$, $0<\eta <1$.

(i)\ If $0 < \delta < \tau$, we have for all $f$ in $L^1(0,T)$
$$\int_0^{T-\delta} f(t)\,dt = \sum_{m=0}^{N-2} \Big(\int_{t_m}^{t_{m+1}-\delta} f(t) \,dt 
+ \int_{t_{m+1}-\delta}^{t_{m+1}} f(t) \,dt\Big) + \int_{t_{N-1}}^{t_{N}-\delta} f(t) \,dt.
$$
On the interval $(t_m, t_{m+1}-\delta)$, by convention, see \eqref{eq:Pconst} applied to $S_{h,\tau}$,
$S_{h,\tau} (t+ \delta) = S_h^{m+1} = S_{h,\tau}(t)$ 
and on the interval $( t_{m+1}-\delta, t_{m+1})$, $S_{h,\tau} (t+ \delta) = S_h^{m+2}$. Therefore
\begin{align*}
\int_0^{T-\delta}  \big\|g(S_{h,\tau}(t+\delta)) -g(S_{h,\tau}(t))\big\|^2_{L^2(\Omega)} \,d t &= \sum_{m=0}^{N-2} \delta  \big\|g(S_{h}^{m+2}) -g(S_{h}^{m+1})\big\|^2_{L^2(\Omega)}\\
&= \sum_{m=1}^{N-1} \delta  \big\|g(S_{h}^{m+1}) -g(S_{h}^{m})\big\|^2_{L^2(\Omega)},
\end{align*}
and  \eqref{eq:Kazhikhov} with $\ell=1$ yields \eqref{eq:Kolmogorov} for this value of $\delta$.

(ii)\ Let $\delta = \ell \tau$, for instance consider $\ell =2$. We have
$$\int_{t_m}^{t_{m+1}}\big\|g(S_{h,\tau}(t+2\tau)) -g(S_{h,\tau}(t))\big\|^2_{L^2(\Omega)} \,d t =   
\tau \big\|g(S_{h}^{m+3}) -g(S_{h}^{m+1})\big\|^2_{L^2(\Omega)}.
$$
Thus
$$
\int_0^{T-2 \tau}  \big\|g(S_{h,\tau}(t+\delta)) -g(S_{h,\tau}(t))\big\|^2_{L^2(\Omega)} \,d t = \tau \sum_{m=1}^{N-2} \big\|g(S_{h}^{m+2}) -g(S_{h}^{m})\big\|^2_{L^2(\Omega)},
$$
and \eqref{eq:Kazhikhov} with $\ell=2$ yields \eqref{eq:Kolmogorov}. This argument applies to any $\ell$ with $1 \le \ell \le N-1$.

(iii)\ Let $\delta = \tau(\ell + \eta)$ for some $0<\eta <1$, and for instance consider again $\ell =2$. Then for any integer $m$, $0 \le m \le N-3$,
$$\int_{t_m}^{t_{m+1}- \tau\eta} \big\|g(S_{h,\tau}(t+ \tau(2+\eta)) -g(S_{h,\tau}(t))\big\|^2_{L^2(\Omega)} = \tau(1-\eta)\big\|g(S_{h}^{m+3}) -g(S_{h}^{m+1})\big\|^2_{L^2(\Omega)},$$
and
$$\int_{t_{m+1}- \tau \eta}^{t_{m+1}} \big\|g(S_{h,\tau}(t+\tau(2+\eta)) -g(S_{h,\tau}(t))\big\|^2_{L^2(\Omega)} = \tau \eta\big\|g(S_{h}^{m+4}) -g(S_{h}^{m+1})\big\|^2_{L^2(\Omega)}.$$
Hence
\begin{equation}
\nonumber
\begin{split}
&\int_{0}^{T-\tau(2+ \eta)}  \big\|g(S_{h,\tau}(t+\tau(2+ \eta))) -g(S_{h,\tau}(t))\big\|^2_{L^2(\Omega)} \,d t\\
& = \sum_{m=1}^{N-2} \tau(1-\eta)  \big\|g(S_{h}^{m+2}) -g(S_{h}^{m})\big\|^2_{L^2(\Omega)}
+ \sum_{m=1}^{N-3} \tau \eta \big\|g(S_{h}^{m+3}) -g(S_{h}^{m})\big\|^2_{L^2(\Omega)}.
\end{split}
\end{equation}
Then  \eqref{eq:Kazhikhov} with $\ell =2$ and $\ell =3$  implies that
$$
\int_{0}^{T-\tau(2+ \eta)}  \big\|g(S_{h,\tau}(t+\tau(2+ \eta)) -g(S_{h,\tau}(t))\big\|^2_{L^2(\Omega)} \,d t 
\le C\big(2 \tau(1-\eta) + 3 \tau\eta\big) = C\tau (2+ \eta).
$$
More generally, we have
\begin{align*}
&\int_0^{T-\tau(\ell + \eta)}  \big\|g(S_{h,\tau}(t+\tau(\ell+ \eta)) -g(S_{h,\tau}(t))\big\|^2_{L^2(\Omega)} \,d t\\
& = \tau(1-\eta)\sum_{m=1}^{N-\ell}   \big\|g(S_{h}^{m+\ell}) -g(S_{h}^{m})\big\|^2_{L^2(\Omega)}
+ \tau \eta  \sum_{m=1}^{N-(\ell+1)}  \big\|g(S_{h}^{m+\ell+1}) -g(S_{h}^{m})\big\|^2_{L^2(\Omega)},
\end{align*}
and we apply \eqref{eq:Kazhikhov} with $\ell$ to the first sum and $\ell+1$ to the second sum. This proves the first part of \eqref{eq:Kolmogorov}. The proof of \eqref{eq:Kolmogorov.h} follows likewise from \eqref{eq:Kazhikhov} with $\varphi =1$.
\end{proof}


\subsection{Strong convergence}
\label{sec:strongconv}

With Theorem \ref{thm:Kolmg}, it follows from Kolmogorov's theorem that the sequence $I_h(g(S_{h,\tau}))$ is compact in $L^2(\Omega \times ]0,T[)$, see~\cite{Lions78}. Thus,
again up to a subsequence, $I_h(g(S_{h,\tau}))$ converges strongly in $L^2(\Omega \times ]0,T[)$. Since it converges weakly to $\bar K$ in $L^2(0,T;H^1(\Omega))$ ($\bar K$ belongs also to $L^\infty(\Omega \times ]0,T[)$), uniqueness of the limit implies 
\begin{equation}
\label{eq:stlimg}
\lim_{(h,\tau) \to (0,0)} I_h(g(S_{h,\tau})) = \bar K \quad \mbox{ strongly in}\ L^2(\Omega \times ]0,T[).
\end{equation}
By Lemma \ref{lem:convIhgSh}, this also implies 
\begin{equation}
\label{eq:stlimg.noh}
\lim_{(h,\tau) \to (0,0)} g(S_{h,\tau}) = \bar K \quad \mbox{ strongly in}\ L^2(\Omega \times ]0,T[).
\end{equation}
From here, let us prove the strong convergence of $S_{h,\tau}$. Recall that  $g$ is invertible with range $]0,\beta[$ and inverse $g^{-1}\in W^{1,\infty}(]0,\beta[)$. Let $F_{h,\tau} = g(S_{h,\tau})$; then 
$$S_{h,\tau} = g^{-1}(F_{h,\tau}).
$$
The strong convergence of $F_{h,\tau}$ and the continuity of $g^{-1}$ imply the strong convergence of $S_{h,\tau}$ to $g^{-1}(\bar K)$ in $L^2(\Omega \times ]0,T[)$, and since $S_{h,\tau}$ converges weakly to $\bar s$, uniqueness of the limit implies that $\bar s = g^{-1}(\bar K)$, i.e.,
\begin{equation}
\label{eq:slimSh}
\lim_{(h,\tau) \to (0,0)} S_{h,\tau} = \bar s = g^{-1}(\bar K) \quad \mbox{ strongly in}\ L^2(\Omega \times ]0,T[).
\end{equation}
This strong convergence and the continuity of $g$,  $p_{ \alpha \Bk g}$,  $\alpha = w,o$, and $p_c$, \Bk also imply that
\begin{equation}
\label{eq:slimgSh/pwgSh}
\lim_{(h,\tau) \to (0,0)} g(S_{h,\tau}) = g(\bar s),\quad  \lim_{(h,\tau) \to (0,0)} p_{ \alpha \Bk g}(S_{h,\tau}) = p_{ \alpha \Bk g}(\bar s),  \alpha = w,o, \Bk \quad \lim_{(h,\tau) \to (0,0)} p_c(S_{h,\tau}) = p_c(\bar s), 
\end{equation}
\ all \Bk strongly in $L^2(\Omega \times ]0,T[)$.
Furthermore  Lemma \ref{lem:convpwg} and \eqref{eq:Ipc-pc} \Bk  yield
 \begin{equation}
\label{eq:slimIhpwgSh}
\lim_{(h,\tau) \to (0,0)} I_h(p_{\Rd \alpha \Bk g}(S_{h,\tau})) = p_{\Rd \alpha \Bk g}(\bar s), \quad  
\lim_{(h,\tau) \to (0,0)} I_h(p_{c}(S_{h,\tau})) = p_{c}(\bar s), \Bk \quad 
\mbox{strongly in}\ L^2(\Omega \times ]0,T[).
\end{equation}
In view of \eqref{eq:wklimprs}, this convergence implies that $P_{\alpha,h,\tau}$ converges weakly in $L^2(\Omega \times ]0,T[)$ to some function  $\bar{p_{ \alpha \Bk}} \in L^2(\Omega \times ]0,T[)$, $\alpha =w,o$. Furthermore, uniqueness of the limit implies that $\bar W_{\alpha}$, the limit function of $U_{\alpha,h,\tau}$   has the form
\begin{equation}
\label{eq:splitW} 
\bar W_{w} = \bar{p_w} + p_{wg}(\bar s),\quad  \bar W_{o} = \bar{p_o} - p_{og}(\bar s). 
\end{equation}


\section{Identification of the limit}
\label{sec:identlim}

Let us pass to the limit in the equations of the scheme. 
This is done in several steps because we do not have convergence of the pressure gradient. 


\subsection{The  upwind diffusions}
\label{sec:pressure}

Since the discrete auxiliary pressures $U_{\alpha,h,\tau} $  converge weakly to  $\bar W_{\Rd \alpha \Bk}$ in $L^2(0,T;H^1(\Omega))$, instead of treating directly  the upwind diffusion terms  
$\big[P_{\Rd \alpha \Bk,h, \tau},I_h(\eta_{\Rd \alpha \Bk}(S_{h,\tau})); P_{\Rd \alpha \Bk,h, \tau},\theta_h\big]_h,$
we begin with  $\big[P_{\alpha,h, \tau},I_h(\eta_\alpha(S_{h,\tau})); U_{\alpha,h, \tau} ,\theta_h\big]_h$.


\subsubsection{Discrete auxiliary pressure}
Let us start with the wetting phase, the treatment of the non-wetting phase being much the same. \Bk

Let $v$ be a smooth function, say $v \in {\mathcal C}^1(\bar \Omega \times [0,T])$ and let $V_{h,\tau} = \rho_\tau(I_h(v))$. Assume for the moment that $\bar s$, the limit of $S_{h,\tau}$, is sufficiently smooth, say $\bar s \in W^{1,\infty}(\Omega \times ]0,T[)$ and let 
$\bar s_\tau = \bar  s(t_n) $ in $]t_{n-1},t_n]$. Then assumption \eqref{eq:eta'w} implies
\begin{equation}
\label{eq:meanetaSw}
\big\|\frac{1}{\tau}  \int_{t_{n-1}}^{t_{n}} \eta_w(\bar s)\,dt - \eta_w(\bar s_\tau^{n})\|_{L^\infty(\Omega)} \le C \tau \|\eta_w^\prime\|_{L^\infty(0,1)} \|\partial_t \bar s\|_{L^\infty(\Omega \times ]0,T[)}.
\end{equation}
Proceeding as in Section \ref{subsec:motiv}, we treat the upwinding in several steps and consider first
\begin{equation}
\label{eq:meanetaSw0}
\int_0^T\int_\Omega \eta_w(\bar s) \nabla\,U_{w,h,\tau} \cdot \nabla\,V_{h,\tau} = \int_0^T\int_\Omega \nabla\,U_{w,h,\tau} \cdot \nabla\,V_{h,\tau}\big(\rho_\tau(\eta_w(\bar s)) -\eta_w(\bar s_\tau) + \eta_w(\bar s_\tau)\big).
\end{equation}
But in view of \eqref{eq:meanetaSw},
\begin{align*}
\Big|\int_0^T\int_\Omega \nabla\,U_{w,h,\tau} \cdot \nabla\,V_{h,\tau}\big(\rho_\tau(\eta_w(\bar s)) -\eta_w(\bar s_\tau))&\Big| \le C \tau \|\eta_w^\prime\|_{L^\infty(0,1)} 
\|\partial_t \bar s\|_{L^\infty(\Omega \times ]0,T[)}\\
 & \times\|U_{w,h,\tau}\|_{L^2(0,T;H^1(\Omega)}\|V_{h,\tau}\|_{L^2(0,T;H^1(\Omega))},
\end{align*}
and the boundedness of all  factors of $\tau$, owing to \eqref{eq:bddglobpres} and the regularity of $v$, implies
\begin{equation}
\label{eq:meanetaSw1}
\lim_{(h,\tau) \to (0,0)} \int_0^T\int_\Omega \nabla\,U_{w,h,\tau} \cdot \nabla\,V_{h,\tau}\big(\rho_\tau(\eta_w(\bar s)) -\eta_w(\bar s_\tau)) =0.
\end{equation}
Next  the weak convergence of $U_{w,h,\tau}$ to $\bar W_{\Rd w \Bk}$ in $L^2(0,T;H^1(\Omega))$, the strong convergence of $V_{h,\tau}$ to $v$ in $L^\infty(0,T;W^{1,\infty}(\Omega))$, the continuity of $\eta_w$, the regularity of $\bar s$, and \eqref{eq:meanetaSw1} imply 
$$\lim_{(h,\tau) \to (0,0)}\int_0^T\int_\Omega \eta_w(\bar s_\tau) \nabla\,U_{w,h,\tau} \cdot \nabla\,V_{h,\tau} = \int_0^T\int_\Omega \eta_w(\bar s) \nabla\,\bar W_{\Rd w \Bk} \cdot \nabla\,v.
$$
Let us expand the expression in the above left-hand side.
With the notation  \eqref{eq:cijk}, in view of Proposition \ref{pro:equality} we have 
$$
\int_0^T\int_\Omega \eta_w(\bar s_\tau) \nabla\,U_{w,h,\tau} \cdot \nabla\,V_{h,\tau} = -\sum_{n=1}^{N}\tau\sum_{i,j=1}^M U_w^{n,i} 
\ \Big(\sum_{K \subset \Delta_i \cap \Delta_j} c_{ij,K} (\eta_w(\bar s_\tau^{n}))_K\Big)\big(V^{n,j} - V^{n,i}\big).
$$
By symmetry, this becomes
\begin{equation}
\label{eq:Pw+pg1}
\int_0^T\int_\Omega \eta_w(\bar s_\tau^{n}) \nabla\,U_{w,h,\tau} \cdot \nabla\,V_{h,\tau}
= \frac{1}{2}\sum_{n=1}^{N}\tau\sum_{i,j=1}^M  
 \Big(\sum_{K \subset \Delta_i \cap \Delta_j} c_{ij,K} (\eta_w(\bar s_\tau^{n}))_K\Big)\big(U_w^{n,j}- 
U_w^{n,i}\big)\big(V^{n,j} - V^{n,i}\big).
\end{equation}
Hence
\begin{equation}
\label{eq:Pw+pg2}
\lim_{(h,\tau) \to (0,0)} \frac{1}{2}\sum_{n=1}^{N} \tau\sum_{i,j=1}^M \Big(\sum_{K \subset \Delta_i \cap \Delta_j} c_{ij,K} (\eta_w(\bar s_\tau^{n}))_K\Big)\big(U_w^{n,j}- 
U_w^{n,i}\big)\big(V^{n,j} - V^{n,i}\big)
 = \int_0^T\int_\Omega \eta_w(\bar s) \nabla\,\bar W_{\Rd w \Bk} \cdot \nabla\,v.
\end{equation}
According to \eqref{eq:eta'w} and the regularity of $\bar s$, $\eta_w(\bar s)$ belongs to $L^\infty(0,T;W^{1,\infty}(\Omega))$, and \eqref{eq:wapprox} gives
$$
\big\| (\eta_w(\bar s_\tau^{n}))_K - \eta_w(\bar s_\tau)\big\|_{L^\infty(K)} \le  C\,h\,  \|\eta_w^\prime\|_{L^\infty(0,1)}\|\nabla\,\bar s\|_{L^\infty(\Omega \times ]0,T[)},
$$
that allows to replace $(\eta_w(\bar s_\tau^{n}))_K$ by any value of $\eta_w(\bar s_\tau^{n})$ in $K$. Let us choose the upwind value of $\bar s_\tau^{n}$ as in \eqref{eq:Swij}, i.e.,
\begin{equation}
\label{eq:Swijexact}
\bar s_{w,\tau}^{n,ij}  = \left\{
\begin{array}{c}
\bar s_\tau^{n}(\x_i) \quad\mbox{if}\quad P_w^{n,i}>P_w^{n,j}\\
\bar s_\tau^{n}(\x_j) \quad\mbox{if}\quad P_w^{n,i}<P_w^{n,j}\\
\max(\bar s_\tau^{n}(\x_i),\bar s_\tau^{n}(\x_j)) \quad\mbox{if}\quad P_w^{n,i} = P_w^{n,j},
\end{array}
\right.
\end{equation}
and set
$$R_{ij} = \sum_{K \subset \Delta_i \cap \Delta_j} c_{ij,K} \big((\eta_w(\bar s_\tau^{n}))_K - \eta_w(\bar s_{w,\tau}^{n,ij})\big).
$$
By proceeding as in Theorem 
\ref{thm:mainapprox} and applying \eqref{eq:bddglobpres}, the regularity of $v$, and the approximation properties of $I_h$, we obtain
\begin{align*}
&\frac{1}{2}\sum_{n=1}^{N}\tau \sum_{i,j=1}^M   R_{ij} \big(U_w^{n,j}- 
U_w^{n,i}\big)\big(V^{n,j} - V^{n,i}\big) \\
&\le \frac{1}{2}\sum_{n=1}^{N}\tau \Big(\sum_{i,j=1}^M |R_{ij}| \big(U_w^{n,j} - U_w^{n,i}\big)^2\Big)^{\frac{1}{2}} \Big(\sum_{i,j=1}^M |R_{ij}| \big(V^{n,j} - V^{n,i}\big)^2\Big)^{\frac{1}{2}}\\
&\le C\,h\,  \|\eta_w^\prime\|_{L^\infty(0,1)}\|\nabla\,\bar s\|_{L^\infty(\Omega \times ]0,T[)}\|\nabla\,U_{w,h,\tau}\|_{L^2(\Omega \times ]0,T[)} \|\nabla\,V_{h,\tau}\|_{L^2(\Omega \times ]0,T[)}\\
&\le C\,h\,  \|\eta_w^\prime\|_{L^\infty(0,1)} \|\nabla\,\bar s\|_{L^\infty(\Omega \times ]0,T[)} |v|_{H^1(0,T;H^2(\Omega))}.
\end{align*}
With \eqref{eq:Pw+pg2}, this implies
\begin{equation}
\label{eq:Pw+pg3}
\lim_{(h,\tau) \to (0,0)} \frac{1}{2}\sum_{n=1}^{N} \tau\sum_{i,j=1}^M    c_{ij} \eta_w(\bar s_{w,\tau}^{n,ij})\big(U_w^{n,j}- 
U_w^{n,i}\big)\big(V^{n,j} - V^{n,i}\big)
 = \int_0^T\int_\Omega \eta_w(\bar s) \nabla\,\bar W_{\Rd w \Bk} \cdot \nabla\,v.
\end{equation}
To recover $\int_0^T\big[P_{w,h, \tau},I_h(\eta_w(S_{h,\tau})); U_{w,h, \tau} ,V_{h,\tau}\big]_h$, we write
$$\eta_w(\bar s_{w,\tau}^{n,ij}) = \eta_w(\bar s_{w,\tau}^{n,ij}) - \eta_w(S_w^{n,ij}) 
+ \eta_w(S_w^{n,ij}),
$$
and we must examine the convergence of
$$
X:=\frac{1}{2}\sum_{n=1}^{N} \tau\sum_{i,j=1}^M   c_{ij} \big(\eta_w(\bar s_{w,\tau}^{n,ij}) - \eta_w(S_w^{n,ij})\big)\big(U_w^{n,j}- 
U_w^{n,i}\big)\big(V^{n,j} - V^{n,i}\big).
$$
On the one hand, owing to the smoothness of $v$, we have
\begin{equation}
\label{eq:deltavij}
|V^{n,j} - V^{n,i}| \le C h_i\|\nabla\,v\|_{L^\infty (\Omega \times ]0,T[)},
\end{equation}
where $h_i$ is the length of the edge whose endpoints are the vertices $i$ and $j$. On the other hand,
$$ |\eta_w(\bar s_{w,\tau}^{n,ij}) - \eta_w(S_w^{n,ij})| \le C \|\eta_w^\prime\|_{L^\infty(0,1)} |\bar s_{w,\tau}^{n,ij} - S_w^{n,ij}|.
$$
Hence
$$
|X| \le C  \|\nabla\,v\|_{L^\infty (\Omega \times ]0,T[)}
 \|\nabla\,U_{w,h,\tau}\|_{L^2(\Omega \times ]0,T[)}\Big(\sum_{n=1}^{N} \tau
 \sum_{i,j=1}^M   c_{ij} h_i^2|\bar s_{w,\tau}^{n,ij} - S_w^{n,ij}|^2\Big)^{\frac{1}{2}}.
$$
It is easy to check that
$$\sum_{i,j=1}^M   c_{ij} h_i^2|\bar s_{w,\tau}^{n,ij} - S_{w}^{n,ij}|^2 \le C \sum_{i=1}^M m_i |\bar s_{\tau}^{n,i} - S^{n,i}|^2.
$$
Therefore
\begin{align*}
|X|& \le  C  \|\nabla\,v\|_{L^\infty (\Omega \times ]0,T[)} \|\nabla\,U_{w,h,\tau}\|_{L^2(\Omega \times ]0,T[)}\Big(\sum_{n=1}^{N} \tau\big\|I_h(\bar s_{\tau}^{n}) - 
S_{h,\tau}^{n}\big\|_{L^2(\Omega)}^2\Big)^{\frac{1}{2}}\\
&= C  \|\nabla\,v\|_{L^\infty (\Omega \times ]0,T[)} \|\nabla\,U_{w,h,\tau}\|_{L^2(\Omega \times ]0,T[)}\|I_h(\bar s_{\tau}) - S_{h,\tau}\|_{L^2(\Omega\times ]0,T[)},
\end{align*}
where we have used the equivalence \eqref{eq:equivnorm}. Then, we write
$$\|I_h(\bar s_{\tau}) - S_{h,\tau}\|_{L^2(\Omega\times ]0,T[)} \le \|I_h(\bar s_{\tau}) - \bar s_\tau\|_{L^2(\Omega\times ]0,T[)} + \|\bar s_\tau - \bar s\|_{L^2(\Omega\times ]0,T[)} + \| \bar s-S_{h,\tau}\|_{L^2(\Omega\times ]0,T[)},$$  
and the approximation properties of $I_h$, the strong convergence of $\bar s_\tau$ to $\bar s$ and of $S_{h,\tau}$ to $\bar s$, all in $L^2(\Omega \times ]0,T[)$ imply that
\begin{equation}
\label{eq:Pw+pg4}
\lim_{(h,\tau) \to (0,0)}
\sum_{n=1}^{N} \tau\sum_{i,j=1}^M    c_{ij} \big(\eta_w(\bar s_{w,\tau}^{n,ij}) - \eta_w(S_w^{n,ij})\big)\big(U_w^{n,j}- 
U_w^{n,i}\big)\big(V^{n,j} - V^{n,i}\big) =0.
 \end{equation} 
A combination of \eqref{eq:Pw+pg4} and \eqref{eq:Pw+pg3} yields the intermediate convergence result when the limit function $\bar s$ is smooth,
\begin{equation}
\label{eq:Pw+pg5}
\lim_{(h,\tau) \to (0,0)}  -\sum_{n=1}^{N} \tau\big[P_{w,h, \tau},I_h(\eta_w(S_{h,\tau})); U_{w,h,\tau} ,V_{h,\tau}\big]_h 
 = \int_0^T\int_\Omega \eta_w(\bar s) \nabla\,\bar W_{\Rd w \Bk} \cdot \nabla\,v.
\end{equation}
It remains to lift the regularity restriction on $\bar s$. 
Let $(S_m)_{m\ge 1}$ be a sequence of smooth functions that tend to $\bar s$ in $L^2(\Omega \times ]0,T[)$. Then for each $\varepsilon >0$, there exists an integer $M_0$ such that 
\begin{equation}
\label{eq:M0}
\|S_{M_0} - \bar s\|_{L^2(\Omega \times ]0,T[)} \le \varepsilon.
\end{equation}
From \eqref{eq:M0}, the projection properties, and the fact that $M_0$ is fixed, we infer
\begin{equation}
\label{eq:projerror}
\begin{split}
\|\rho_\tau(\eta_w(\bar s)) - \eta_w(\bar s)&\|_{L^2(\Omega \times ]0,T[)} \le \|\rho_\tau(\eta_w(\bar s)- \eta_w(S_{M_0}))\|_{L^2(\Omega \times ]0,T[)} \\
&+ \|\rho_\tau(\eta_w(S_{M_0}))- \eta_w(S_{M_0})\|_{L^2(\Omega \times ]0,T[)} + \|\eta_w(S_{M_0})- \eta_w(\bar s)\|_{L^2(\Omega \times ]0,T[)}\\
& \le (2\,\varepsilon + C\, \tau) \|\eta_w^\prime\|_{L^\infty(0,1)}.
\end{split}
\end{equation}
 Now, we replace \eqref{eq:meanetaSw0} by 
\begin{equation}
\label{eq:meanetaSw3}
\begin{split}
\int_0^T&\int_\Omega \eta_w(\bar s) \nabla\,U_{w,h,\tau} \cdot \nabla\,V_{h,\tau} 
= \int_0^T \int_\Omega \rho_\tau\big(\eta_w(\bar s)- \eta_w(S_{M_0})\big) 
\nabla\,U_{w,h,\tau} \cdot \nabla\,V_{h,\tau}\\
 &+ \int_0^T\int_\Omega \rho_\tau\big( \eta_w(S_{M_0})\big)
\nabla\,U_{w,h,\tau} \cdot \nabla\,V_{h,\tau} \\
& = \int_0^T \int_\Omega \rho_\tau\big(\eta_w(\bar s)- \eta_w(S_{M_0})\big)
\nabla\,U_{w,h,\tau} \cdot \nabla\,V_{h,\tau} 
 + \int_0^T\int_\Omega \eta_w(S_{M_0}) \nabla\,U_{w,h,\tau} \cdot \nabla\,V_{h,\tau}.
\end{split}
\end{equation}
For the first term, owing to \eqref{eq:M0}, the projection properties, and \eqref{eq:eta'w}, we have
\begin{align*}
\Big| \int_0^T \int_\Omega \rho_\tau\big(\eta_w(\bar s)- &\eta_w(S_{M_0})\big) 
\nabla\,U_{w,h,\tau} \cdot \nabla\,V_{h,\tau}  \Big| \le \|\nabla\, V_{h,\tau}\|_{L^\infty(\Omega \times ]0,T[)}\\
& \times \| \nabla\, U_{w,h, \tau}\|_{L^2(\Omega\times ]0,T[)}  \| \eta_w(\bar s)- \eta_w(S_{M_0})\|_{L^2(\Omega \times ]0,T[)}\\
& \le \varepsilon \,\|\eta_w^\prime\|_{L^\infty(0,1)}\|\nabla\, V_{h,\tau}\|_{L^\infty(\Omega \times ]0,T[)} \| \nabla\, U_{w,h,\tau}\|_{L^2(\Omega \times ]0,T[)}.
\end{align*}
Then the uniform boundedness of $U_{w,h,\tau}$ and $V_{h,\tau}$ yield
\begin{equation}
\label{eq:denst1}
\Big| \int_0^T \int_\Omega \rho_\tau\big(\eta_w(S)- \eta_w(S_{M_0})\big) 
\nabla\,U_{w,h,\tau} \cdot \nabla\,V_{h,\tau}  \Big| \le C\, \varepsilon,
\end{equation}
with a constant $C$ independent of $h$ and $\tau$. Thus, we must examine the limit of the second term.
Since $M_0$ is fixed and $S_{M_0}$ is smooth, by reproducing the previous steps, we derive the analogue of
\eqref{eq:Pw+pg3} for the function $S_{M_0}$,
\begin{equation}
\label{eq:Pw+pg3M0}
\begin{split}
\lim_{(h,\tau) \to (0,0)} \frac{1}{2}\sum_{n=1}^{N} \tau\sum_{i,j=1}^M &  c_{ij} \eta_w((S_{M_0})_{w,\tau}^{n,ij})\big(U_w^{n,j}- 
U_w^{n,i}\big)\big(V^{n,j} - V^{n,i}\big)\\
& = \int_0^T\int_\Omega \eta_w(S_{M_0}) \nabla\,\bar W_{\Rd w \Bk} \cdot \nabla\,v = \int_0^T\int_\Omega \eta_w(\bar s) \nabla\,\bar W_{\Rd w \Bk} \cdot \nabla\,v + R,
\end{split}
\end{equation}
where
\begin{equation}
\label{eq:Pw+pg3M0R}
\begin{split}
 |R| &= \big|\int_0^T\int_\Omega \big(\eta_w(S_{M_0}) - \eta_w(\bar s)\big)\nabla\,\bar W_{\Rd w \Bk} \cdot \nabla\,v\big| \\
&\le \|\eta_w^\prime\|_{L^\infty(0,1)} \|S_{M_0}-\bar s\|_{L^2(\Omega \times ]0,T[)}
\|\nabla\, \bar W_{\Rd w \Bk}\|_{L^2(\Omega \times ]0,T[)} \|\nabla\,v \|_{ L^\infty(\Omega \times ]0,T[)} \le C\, \varepsilon.
\end{split}
\end{equation}
To relate the left-hand side of \eqref{eq:Pw+pg3M0} to
$\big[P_{w,h, \tau},I_h(\eta_w(S_{h,\tau})); U_{w,h,\tau} ,V_h\big]_h$, 
we split
$$
\eta_w((S_{M_0})_{w,\tau}^{n,ij}) =  \eta_w(S_w^{n,ij}) + \eta_w((S_{M_0})_{w,\tau}^{n,ij}) - \eta_w(S_w^{n,ij}),
$$
and examine the convergence of
$$
Y: = \frac{1}{2}\sum_{n=1}^{N} \tau\sum_{i,j=1}^M    c_{ij} \big(\eta_w((S_{M_0})_{w,\tau}^{n,ij}) - \eta_w(S_w^{n,ij})\big)\big(U_w^{n,j}- 
U_w^{n,i}\big)\big(V^{n,j} - V^{n,i}\big).
$$
By arguing as above and using the interpolant $I_h$, we derive
$$
|Y|  \le  C \|\eta_w^\prime\|_{L^\infty(0,1)} \|\nabla\,v\|_{L^\infty (\Omega \times ]0,T[)}\|\nabla\,U_{w,h,\tau}\|_{L^2(\Omega \times ]0,T[)} \|I_h((S_{M_0})_{\tau}) - S_{h,\tau}\|_{L^2(\Omega\times ]0,T[)}.
$$
Finally, we write
\begin{align*}
\|I_h((S_{M_0})_{\tau}) &- S_{h,\tau}\|_{L^2(\Omega\times ]0,T[)} \le \|I_h((S_{M_0})_{\tau}) - (S_{M_0})_{\tau}\|_{L^2(\Omega\times ]0,T[)} \\
&+ \|(S_{M_0})_{\tau} -S_{M_0}\|_{L^2(\Omega\times ]0,T[)} + 
\|S_{M_0}-\bar s\|_{L^2(\Omega\times ]0,T[)} + \|\bar s-S_{h,\tau}\|_{L^2(\Omega\times ]0,T[)}\\
& \le C\,h \|S_{M_0}\|_{L^\infty(0,T;H^2(\Omega))} + C \tau \|S_{M_0}\|_{H^1(0,T;L^2(\Omega))} + \varepsilon + \|\bar s-S_{h,\tau}\|_{L^2(\Omega\times ]0,T[)},
\end{align*}
so that
\begin{equation}
\label{eq:Pw+pg4M0}
|Y| \le C(h + \tau + \varepsilon) + \|\bar s-S_{h,\tau}\|_{L^2(\Omega\times ]0,T[)}.
\end{equation}

In the next theorem, the limit \eqref{eq:Pw+pg5} when $\bar s$ is only in $L^2(\Omega\times ]0,T[)$ follows by combining  \eqref{eq:meanetaSw3}--\eqref{eq:Pw+pg4M0}.  The same argument holds when $w$ is replaced by $o$. 

\begin{theorem}
\label{thm:limp+pwg}
Let $v \in {\mathcal C}^1(\bar \Omega \times [0,T])$ be a smooth function and let $V_{h,\tau}= I_h(v)(t_{n})$ in  $]t_{n-1},t_n]$. Under the assumptions and notation on the mobility \eqref{eq:eta'w}--\eqref{eq:etao_1}, 
\begin{equation}
\label{eq:limPw+pg}
\lim_{(h,\tau) \to (0,0)}  -\int_0^T\big[P_{\alpha,h, \tau},I_h(\eta_\alpha(S_{h,\tau})); U_{\alpha,h, \tau} ,V_{h,\tau}\big]_h 
 = \int_0^T\int_\Omega \eta_\alpha(\bar s) \nabla\,\bar W_{\Rd \alpha \Bk} \cdot \nabla\,v
\end{equation}
where $\bar s$ is the strong limit of $S_{h,\tau}$ and $\bar W_{\Rd \alpha \Bk}$ the weak limit of $U_{\alpha,h,\tau}$, $\alpha = w,o$.
\end{theorem}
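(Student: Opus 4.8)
The plan is to assemble the estimates \eqref{eq:meanetaSw3}--\eqref{eq:Pw+pg4M0}, which have already been derived for a fixed smooth approximant of $\bar s$, and then let the approximation error go to zero; the case $\alpha=w$ is carried out in full and $\alpha=o$ follows by the identical argument with \eqref{eq:eta'o}, \eqref{eq:etao_1} replacing \eqref{eq:eta'w}, \eqref{eq:etaw_1} and with the upwinding of $\eta_o$ governed by $P_{o,h,\tau}$. First I would record that, since $U_{\alpha,h,\tau}$ and $V_{h,\tau}$ are piecewise constant in time on the partition $\{t_n\}$, the integrand $\nabla U_{\alpha,h,\tau}\cdot\nabla V_{h,\tau}$ is piecewise constant in time, so $\int_0^T[P_{\alpha,h,\tau},I_h(\eta_\alpha(S_{h,\tau}));U_{\alpha,h,\tau},V_{h,\tau}]_h=\sum_{n=1}^N\tau[P_{\alpha,h,\tau},I_h(\eta_\alpha(S_{h,\tau}));U_{\alpha,h,\tau},V_{h,\tau}]_h$, and by the symmetrization used for \eqref{eq:nablauv} (symmetry of $c_{ij}$ and of the upwind weights $\eta_\alpha(S_\alpha^{n,ij})$, antisymmetry of the increments) this equals $-\frac12\sum_{n=1}^N\tau\sum_{i,j=1}^M c_{ij}\eta_\alpha(S_\alpha^{n,ij})(U_\alpha^{n,j}-U_\alpha^{n,i})(V^{n,j}-V^{n,i})$. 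Thus it suffices to prove $\frac12\sum_{n}\tau\sum_{i,j} c_{ij}\eta_\alpha(S_\alpha^{n,ij})(U_\alpha^{n,j}-U_\alpha^{n,i})(V^{n,j}-V^{n,i})\longrightarrow\int_0^T\int_\Omega\eta_\alpha(\bar s)\nabla\bar W_\alpha\cdot\nabla v$.

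Fix $\varepsilon>0$ and, using density of smooth functions in $L^2(\Omega\times]0,T[)$, choose a smooth $S_{M_0}$ with $\|S_{M_0}-\bar s\|_{L^2(\Omega\times]0,T[)}\le\varepsilon$ as in \eqref{eq:M0}. Because $\nabla U_{w,h,\tau}\cdot\nabla V_{h,\tau}$ is piecewise constant in time, the $\rho_\tau$-splitting \eqref{eq:meanetaSw3} of $\int_0^T\int_\Omega\eta_w(\bar s)\nabla U_{w,h,\tau}\cdot\nabla V_{h,\tau}$ is valid; its first term is bounded by $C\varepsilon$ (this is \eqref{eq:denst1}), using Cauchy--Schwarz, the Lipschitz bound on $\eta_w$ afforded by \eqref{eq:eta'w}, the $L^2$-contractivity of $\rho_\tau$, the uniform bound \eqref{eq:bddglobpres} on $\nabla U_{w,h,\tau}$, and the regularity of $v$. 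For its second term, $M_0$ being now frozen and $S_{M_0}$ smooth, the chain \eqref{eq:meanetaSw0}--\eqref{eq:Pw+pg3} applies verbatim with $S_{M_0}$ in place of $\bar s$ and yields \eqref{eq:Pw+pg3M0}, where the remainder $R=\int_0^T\int_\Omega(\eta_w(S_{M_0})-\eta_w(\bar s))\nabla\bar W_w\cdot\nabla v$ satisfies $|R|\le C\varepsilon$ by \eqref{eq:Pw+pg3M0R}.

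The final bookkeeping step replaces the upwind value $\eta_w((S_{M_0})^{n,ij}_{w,\tau})$ produced by this argument by the true $\eta_w(S^{n,ij}_w)$ appearing in the bracket: writing $\eta_w((S_{M_0})^{n,ij}_{w,\tau})=\eta_w(S^{n,ij}_w)+\big(\eta_w((S_{M_0})^{n,ij}_{w,\tau})-\eta_w(S^{n,ij}_w)\big)$, the correction sum is estimated exactly as in the derivation of \eqref{eq:Pw+pg4M0}, using \eqref{eq:deltavij}, the Lipschitz bound on $\eta_w$, the norm equivalence \eqref{eq:equivnorm}, the approximation properties of $I_h$ and $\rho_\tau$, and the strong convergences $\bar s_\tau\to\bar s$ and $S_{h,\tau}\to\bar s$ in $L^2(\Omega\times]0,T[)$, so that it is bounded by $C(h+\tau+\varepsilon)+\|\bar s-S_{h,\tau}\|_{L^2(\Omega\times]0,T[)}$. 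Combining the three pieces, passing to the limit in $(h,\tau)$, and using once more $S_{h,\tau}\to\bar s$ strongly to absorb $\|\bar s-S_{h,\tau}\|_{L^2}$, one gets $\limsup_{(h,\tau)\to(0,0)}\big|{-\int_0^T[P_{w,h,\tau},I_h(\eta_w(S_{h,\tau}));U_{w,h,\tau},V_{h,\tau}]_h}-\int_0^T\int_\Omega\eta_w(\bar s)\nabla\bar W_w\cdot\nabla v\big|\le C\varepsilon$; since $\varepsilon>0$ is arbitrary this gives \eqref{eq:limPw+pg}, and the same reasoning with $o$ in place of $w$ finishes the proof. The main obstacle --- the reason the argument is long rather than a one-line consequence of the smooth case \eqref{eq:Pw+pg5} --- is removing the a priori regularity on $\bar s$: the sharp time-consistency estimate \eqref{eq:meanetaSw} is not available for a limit known only in $L^2$, so one must interpose a smooth approximant, verify that every error it creates (the $\rho_\tau$ mismatch, the elementwise averaging of $\eta_w$ controlled via \eqref{eq:wapprox}, and the substitution of $S_{M_0}$ for $S_{h,\tau}$ in the upwinding) is $O(h+\tau)$ or $O(\varepsilon)$, pass to the limit in $(h,\tau)$ first, and only then let $\varepsilon\to0$.
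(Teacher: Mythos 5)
Your proposal is correct and reproduces the paper's own argument: it assembles the chain \eqref{eq:meanetaSw3}--\eqref{eq:Pw+pg4M0} (the $\rho_\tau$-splitting with a smooth approximant $S_{M_0}$, the $O(\varepsilon)$ bounds \eqref{eq:denst1} and \eqref{eq:Pw+pg3M0R}, the upwind-substitution estimate \eqref{eq:Pw+pg4M0}), passes to the limit in $(h,\tau)$ first and then lets $\varepsilon\to0$. The paper itself states the theorem follows ``by combining \eqref{eq:meanetaSw3}--\eqref{eq:Pw+pg4M0}'', so your reconstruction matches it step for step.
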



\subsubsection{\Rd The additional term \Bk}

This paragraph is dedicated to the limit of 
$$\int_0^T\big[P_{\alpha, h, \tau},I_h(\eta_{\alpha}(S_{h,\tau})); I_h(p_{\alpha g}(S_{h,\tau})) ,V_{h,\tau}\big]_h,\  \alpha = w,o.$$
It shall be split below, as suggested by the following observation, derived from \eqref{eq:defpwgpog}  and \eqref{eq:g}:
\begin{align*}
\eta_{w}(S_{w}^{ij})p_{w g}(S^{j}) + g(S^{j}) =& \int_0^{S^{j}}  f_o(x) \Bk \big(\eta_w(S_{w}^{ij}) - \eta_w(x)\big)p_c^\prime(x)\,dx,\\
 \eta_{o}(S_{o}^{ij})p_{o g}(S^{j}) + g(S^{j}) =& \int_0^{S^{j}}  f_w(x)  \big(\eta_o(S_{o}^{ij}) - \eta_o(x)\big)p_c^\prime(x)\,dx.
\end{align*}
  Thus, we add and subtract $g$ \Bk and write by applying \eqref{eq:nablauv},
\begin{align*}
\int_0^T\big[&P_{\alpha, h, \tau},I_h(\eta_{\alpha}(S_{h,\tau})); I_h(p_{\alpha g}(S_{h,\tau})) ,V_{h,\tau}\big]_h  \\
&= 
\sum_{n=1}^{N} \tau\sum_{i,j=1}^M V^{n,i}   c_{ij}\Big[\eta_{\alpha \Bk }(S_\alpha^{n,ij})\big(p_{ \alpha \Bk g}(S^{n,j}) - p_{ \alpha \Bk g}(S^{n,i})\big) + g(S^{n,j}) - g(S^{n,i})\Big]\\
& + \int_0^T \int_\Omega \nabla \,g(S_{h,\tau}) \cdot \nabla \,V_{h,\tau} = T_1 + T_2.
\end{align*}
Since
\begin{equation}
\label{eq:pwg1}
\lim_{(h,\tau) \to (0,0)}  T_2 = \int_0^T \int_\Omega \nabla \,g(\bar s) \cdot \nabla \,v,
\end{equation}
we must prove that the first term tends to zero. When $\alpha =w$,  it  
has the form
\begin{equation}
\label{eq:defT1}
T_1 = - \frac{1}{2} \sum_{n=1}^{N} \tau \sum_{i,j=1}^M\  c_{ij}\Big(\int_{S^{n,i}}^{S^{n,j}} f_o(x)\big(\eta_w(S_{w}^{n,ij}) - \eta_w(x)\big)p_c^\prime(x)\,dx\Big) \big(V^{n,j}-V^{n,i}\big),
\end{equation}
with an analogous expression in the non-wetting phase. Then \eqref{eq:deltavij} yields,
\begin{equation}
\label{eq:T2h}
|T_1 | \le \frac{C}{2}\|\nabla\,v\|_{L^\infty(\Omega \times ]0,T[)} \sum_{n=1}^{N} \tau \sum_{i,j=1}^M h_i c_{ij} \big|\int_{S^{n,i}}^{S^{n,j}} f_o(x)\big(\eta_w(S_{w}^{n,ij}) - \eta_w(x)\big)p_c^\prime(x)\,dx\big|.
\end{equation}
Showing that $T_1$ is small requires a technical argument that we split into several steps.

\begin{proposition}
\label{pro:T2hpro1}
For the wetting phase, we have
\begin{equation}
\label{eq:T2h1}
|\int_{S^{i}}^{S^{j}} f_o(x)\big(\eta_w(S_{w}^{ij}) - \eta_w(x)\big)p_c^\prime(x)\,dx\big|
 \le - \big(\eta_w(S^{j}) - \eta_w(S^{i})\big)\big(p_{wg}(S^{j})- p_{wg}(S^{i})\big).
\end{equation}
For the non-wetting phase, the corresponding expression is bounded by 
\begin{equation}
\label{eq:T2h11}
|\int_{S^{i}}^{S^{j}} f_w(x)\big(\eta_o(S_{o}^{ij}) - \eta_o(x)\big)p_c^\prime(x)\,dx\big|
 \le  \big(\eta_o(S^{j}) - \eta_o(S^{i})\big)\big(p_{og}(S^{j})- p_{og}(S^{i})\big).
\end{equation}
\Bk
\end{proposition}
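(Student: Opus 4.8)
The plan is to prove \eqref{eq:T2h1} (the wetting phase case; \eqref{eq:T2h11} is entirely analogous with the roles of the monotonicities of $\eta_o$ and $p_{og}$ adapted) by bounding the integrand pointwise on $[S^i,S^j]$ by a monotone quantity and then recognizing the resulting integral as a difference of values of $p_{wg}$. Without loss of generality assume $S^i \le S^j$; the other case follows by swapping $i$ and $j$ since both sides of \eqref{eq:T2h1} are symmetric (the left side is an absolute value, and the product $(\eta_w(S^j)-\eta_w(S^i))(p_{wg}(S^j)-p_{wg}(S^i))$ is unchanged under the swap).

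The key observation is that $\eta_w$ is increasing and $p_c' \le 0$, so the upwind value $\eta_w(S_w^{ij})$ is, in every case defined by \eqref{eq:Swij}, \emph{one of} $\eta_w(S^i)$ or $\eta_w(S^j)$ (when $P_w^i = P_w^j$ one picks the larger saturation, hence $\eta_w(\max(S^i,S^j)) = \eta_w(S^j)$ under our ordering). In all cases therefore $\eta_w(S^i) \le \eta_w(S_w^{ij}) \le \eta_w(S^j)$, and for $x \in [S^i,S^j]$ we have $0 \le \eta_w(S_w^{ij}) - \eta_w(x) \le \eta_w(S^j) - \eta_w(S^i)$. Moreover $f_o(x) \in [0,1]$ and $-p_c'(x) \ge 0$. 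Consequently
\begin{equation}
\label{eq:T2hplan1}
0 \le f_o(x)\big(\eta_w(S_w^{ij}) - \eta_w(x)\big)\big(-p_c'(x)\big) \le \big(\eta_w(S^j) - \eta_w(S^i)\big) f_o(x)\big(-p_c'(x)\big)
\end{equation}
for all $x \in [S^i,S^j]$. It then remains to handle the case $S^j < S^i$ analogously, and to observe that when $\eta_w(S^j) = \eta_w(S^i)$ the integrand vanishes and the inequality is trivial.

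Integrating \eqref{eq:T2hplan1} over $[S^i,S^j]$ and using the definition \eqref{eq:defpwgpog}, namely $p_{wg}(S^j) - p_{wg}(S^i) = \int_{S^i}^{S^j} f_o(x) p_c'(x)\,dx$, gives
\begin{equation}
\label{eq:T2hplan2}
\Big|\int_{S^i}^{S^j} f_o(x)\big(\eta_w(S_w^{ij}) - \eta_w(x)\big)p_c'(x)\,dx\Big| \le \big(\eta_w(S^j)-\eta_w(S^i)\big)\int_{S^i}^{S^j} f_o(x)\big(-p_c'(x)\big)\,dx = -\big(\eta_w(S^j)-\eta_w(S^i)\big)\big(p_{wg}(S^j)-p_{wg}(S^i)\big),
\end{equation}
which is exactly \eqref{eq:T2h1}, noting that the right-hand side is nonnegative because $\eta_w$ is increasing while $p_{wg}$ is decreasing (its derivative $f_o p_c' \le 0$). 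For the non-wetting phase one uses instead that $\eta_o$ is decreasing, so $\eta_o(S^j) \le \eta_o(S_o^{ij}) \le \eta_o(S^i)$ and $0 \le \eta_o(x) - \eta_o(S_o^{ij}) \le \eta_o(S^i) - \eta_o(S^j) $ for $x\in[S^i,S^j]$; combined with $f_w \in [0,1]$ and $-p_c' \ge 0$ this yields \eqref{eq:T2h11} after integrating and invoking $p_{og}(S^j) - p_{og}(S^i) = \int_{S^i}^{S^j} f_w(x) p_c'(x)\,dx$.

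I do not anticipate a genuine obstacle here: the statement is a clean pointwise-monotonicity estimate followed by integration. The one point requiring a little care is the verification that the upwind value $\eta_w(S_w^{ij})$ always lies between $\eta_w(S^i)$ and $\eta_w(S^j)$ across all three branches of \eqref{eq:Swij} — in particular the equal-pressure branch, where the prescription $\max(S^i,S^j)$ must be checked against the assumed ordering $S^i \le S^j$ — and the bookkeeping of signs, since $p_c'$, $p_{wg}'$, $\eta_w'$ and $\eta_o'$ have mixed signs and one must keep the absolute values consistent throughout.
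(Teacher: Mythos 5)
The core strategy --- reduce to $S^i \le S^j$ by symmetry, bound the integrand pointwise, integrate, and identify $\int_{S^i}^{S^j} f_o(-p_c') = -\big(p_{wg}(S^j)-p_{wg}(S^i)\big)$ --- is sound and is essentially the same as the paper's. However, the step from ``$\eta_w(S^i)\le\eta_w(S_w^{ij})\le\eta_w(S^j)$'' (which is correct) to ``$0\le\eta_w(S_w^{ij})-\eta_w(x)$ for $x\in[S^i,S^j]$'' is false. Under your ordering, if $P_w^i>P_w^j$ then $S_w^{ij}=S^i$, the \emph{smaller} saturation, so $\eta_w(S_w^{ij})-\eta_w(x)=\eta_w(S^i)-\eta_w(x)\le 0$ for all $x\in[S^i,S^j]$: your claimed lower bound fails on the entire interval. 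Consequently, after integrating the two-sided pointwise bound you cannot simply drop the absolute value on the left integral as written, and the final chain of inequalities has a gap.

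The argument is easily repaired, and in two ways, both of which coincide with what the paper actually does. Either replace the two-sided pointwise bound by the one-sided bound $|\eta_w(S_w^{ij})-\eta_w(x)|\le\eta_w(S^j)-\eta_w(S^i)$, which does hold uniformly across all upwind branches, and then invoke $\big|\int f\big|\le\int|f|$; or observe, as the paper does by treating the branches separately, that the original integrand $f_o(x)\big(\eta_w(S_w^{ij})-\eta_w(x)\big)p_c'(x)$ has \emph{constant} sign on $[S^i,S^j]$ within each upwind branch (nonnegative when $S_w^{ij}=S^i$, since the two factors $\eta_w(S^i)-\eta_w(x)$ and $p_c'(x)$ are both nonpositive; nonpositive when $S_w^{ij}=S^j$), so the absolute value is $\pm$ the integral and the pointwise bound applies cleanly. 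You explicitly flagged the sign bookkeeping as the delicate point, and it is exactly here that the draft slipped.
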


\begin{proof}
Let us prove \eqref{eq:T2h1}, the proof of \eqref{eq:T2h11} being similar. \Bk The discussion depends on the respective values of $S^{j}$ and $S^{i}$. There are two cases: $S^{i} < S^{j}$ or $S^{i} > S^{j}$. Of course $S^{i} = S^{j}$ brings nothing. 

1)\ If $S^{i} < S^{j}$ and $S_{w}^{ij} = S^{i}$, then
$\eta_w(S_{w}^{ij}) - \eta_w(x) = \eta_w(S^{i}) - \eta_w(x)$,
and, as $p_{wg}$ is decreasing,
$$ 
0\le \int_{S^{i}}^{S^{j}} f_o(x)(- p_c^\prime(x)) \big(\eta_w(x) - \eta_w(S^{i})\big)\,dx
\le - \big(\eta_w(S^{j}) - \eta_w(S^{i})\big)\big(p_{wg}(S^{j})- p_{wg}(S^{i})\big).
$$
If $S_{w}^{ij} = S^{j}$, then
$\eta_w(S_{w}^{ij}) - \eta_w(x) = \eta_w(S^{j}) - \eta_w(x)$, and
$$
0 \le \int_{S^{i}}^{S^{j}} f_o(x)(-p_c^\prime(x)) \big(\eta_w(S^{j}) -\eta_w(x)\big)\,dx
\le - \big(\eta_w(S^{j}) - \eta_w(S^{i})\big)\big(p_{wg}(S^{j})- p_{wg}(S^{i})\big).
$$

2)\ If $S^{i} > S^{j}$ and $S_{w}^{ij} = S^{i}$, then
$$
0 \le \int_{S^{j}}^{S^{i}} f_o(x)(-p_c^\prime(x)) \big(\eta_w(S^{i})-\eta_w(x)\big)\,dx
\le - \big(\eta_w(S^{i}) - \eta_w(S^{j})\big)\big(p_{wg}(S^{i})- p_{wg}(S^{j})\big).
$$
Finally, suppose that $S_{w}^{ij} = S^{j}$. Then
$$
0 \le \int_{S^{j}}^{S^{i}} f_o(x)\,p_c^\prime(x) \big(\eta_w(S^{j})-\eta_w(x)\big)\,dx
\le - \big(\eta_w(S^{i}) - \eta_w(S^{j})\big)\big(p_{wg}(S^{i})- p_{wg}(S^{j})\big).
$$
This proves \eqref{eq:T2h1}.
\end{proof}

By substituting \eqref{eq:T2h1} into \eqref{eq:T2h}, we arrive at
\begin{equation}
\label{eq:T2h2}
|T_1 | \le \frac{C}{2}\|\nabla\,v\|_{L^\infty(\Omega \times ]0,T[)} 
 \sum_{n=1}^{N} \tau \sum_{i,j=1}^M  h_i c_{ij}\Big(- \big(\eta_w(S^{n,j}) - \eta_w(S^{n,i})\big)\big(p_{wg}(S^{n,j})- p_{wg}(S^{n,i})\big)\Big),
\end{equation}
with an analogous bound in the non-wetting phase.  Up to the factor $h_i$, they behave  like $\int_0^T \int_\Omega \nabla(I_h(\eta_{ \alpha } (S_{h,\tau}))) \cdot \nabla(I_h( p_{ \alpha  g} (S_{h,\tau})))$, $\alpha = w,o.$ 
Thus $T_1$  tends to zero if this quantity is bounded or is of the order of a negative power of $h$ that is larger than $-1$. We have no direct bound for it, but as we do have a bound for 
$\int_0^T \int_\Omega \nabla(I_h(f_\alpha(S_{h,\tau}))) \cdot \nabla(I_h(g(S_{h,\tau})))$,
see \eqref{eq:Bddgrad.g1}, we can gain some insight by relating the two integrands. 
Again, we examine the wetting phase, the treatment of the non-wetting phase being the same. 
\Bk
The proposition below will be applied to $x_1 = S^{n,i}$ and $x_2 = S^{n,j}$. The condition $x_1 <x_2$ is not a restriction because if it does not hold, the indices $i$ and $j$ can be interchanged without changing the value of the two integrands.

\begin{proposition}
\label{pro:etawpwg1}
 Under the assumptions and notation on the mobility \eqref{eq:eta'w}--\eqref{eq:etao_1}, we have for all pairs $x_1$, $x_2$ with $0 \le x_1 < x_2 \le \frac{3}{4}$, 
\begin{equation}
\label{eq:etawpwg2}
\big(\eta_w(x_2) - \eta_w(x_1)\big) \big(p_{wg}(x_1) - p_{wg}(x_2)\big) \le C ( x_2^{\theta_w} -  x_1^{\theta_w}) ( x_2^{\beta_3} -  x_1^{\beta_3}),
\end{equation}
\begin{equation}
\label{eq:fwg2}
\big(f_w(x_2) - f_w(x_1)\big) \big(g(x_2) - g(x_1)\big) \ge C ( x_2^{\theta_w} -  x_1^{\theta_w}) ( x_2^{\theta_w + \beta_3} -  x_1^{\theta_w +\beta_3}).
\end{equation}
Similarly, we have for all pairs $x_1$, $x_2$ with $\frac{1}{4} \le x_1 < x_2 \le 1$, 
\begin{equation}
\label{eq:etawpwg3}
\big(\eta_w(x_2) - \eta_w(x_1)\big) \big(p_{wg}(x_1) - p_{wg}(x_2)\big) \le C (x_2-x_1)\big( (1-x_1)^{\theta_o + \beta_4} - (1-x_2)^{\theta_o + \beta_4}\big),
\end{equation}
\begin{equation}
\label{eq:fwg3}
\big(f_w(x_2) - f_w(x_1)\big) \big(g(x_2) - g(x_1)\big) \ge C \big( (1-x_1)^{\theta_o} - (1-x_2)^{\theta_o}  \big) \big( (1-x_1)^{\theta_o + \beta_4} - (1-x_2)^{\theta_o + \beta_4}  \big).
\end{equation}
Finally, we have for all pairs $x_1$, $x_2$ with $0\le x_1 \le \frac{1}{4}$ and $\frac{3}{4}\le x_2 \le 1$,
\begin{equation}
\label{eq:etawpwg4}
\big(\eta_w(x_2) - \eta_w(x_1)\big) \big(p_{wg}(x_1) - p_{wg}(x_2)\big) \le C\big(f_w(x_2) - f_w(x_1)\big) \big(g(x_2) - g(x_1)\big).
\end{equation}
All constants $C$ above are independent of $x_1$ and $x_2$.

\end{proposition}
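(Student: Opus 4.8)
The plan is to prove the six inequalities of Proposition~\ref{pro:etawpwg1} by direct estimation, using the power-law bounds \eqref{eq:eta'w}--\eqref{eq:etao_1} on $\eta_w$, $\eta_w'$ and \eqref{eq:pc'} on $p_c'$, together with the explicit integral representations \eqref{eq:defpwgpog} of $p_{wg}$, \eqref{eq:g} of $g$, and \eqref{eq:fwprime} of $f_w'$. Throughout I would freely rename $x_1 = c$, $x_2 = d$ with $c < d$, and absorb all dimensional constants ($\alpha_w,\alpha_o,\alpha_3,\theta_w,\theta_o$, $C_{\rm min}$, $C_{\rm max}$, etc.) into a generic $C$, noting at the end that each such constant is independent of $x_1,x_2$.

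First I would treat the region $0 \le c < d \le \tfrac34$. On this interval $1-x \in [\tfrac14, 1]$, so the factor $(1-x)^{\text{power}-1}$ is bounded above and below by absolute constants; hence the bound \eqref{eq:etaw_1} gives $\eta_w(d) - \eta_w(c) \le \tfrac1{\alpha_w\theta_w}(d^{\theta_w} - c^{\theta_w})$, and from \eqref{eq:defpwgpog}, \eqref{eq:pc'} and $0 \le f_o \le 1$ we get $p_{wg}(c) - p_{wg}(d) = \int_c^d f_o(x)(-p_c'(x))\,dx \le C\int_c^d x^{\beta_3 - 1}(1-x)^{\beta_4-1}\,dx \le C(d^{\beta_3} - c^{\beta_3})$, which yields \eqref{eq:etawpwg2}. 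For \eqref{eq:fwg2} I would use the lower bounds \eqref{eq:fwprime1} on $f_w'$ and \eqref{eq:lbgprime} on $g'$ (both proportional to $x^{\theta_w-1}$ on $[0,\tfrac34]$, up to constants), integrate to get $f_w(d) - f_w(c) \ge C(d^{\theta_w} - c^{\theta_w})$ and $g(d) - g(c) \ge C(d^{\theta_w+\beta_3} - c^{\theta_w+\beta_3})$ — here I'd record the elementary fact $\int_c^d x^{a-1}\,dx = \tfrac1a(d^a - c^a)$ — and multiply. The region $\tfrac14 \le c < d \le 1$ is handled symmetrically: now $x$ is bounded below, so the roles of $x$ and $1-x$ swap, \eqref{eq:eta'w}/\eqref{eq:etao_1} give $\eta_w(d) - \eta_w(c) = \int_c^d \eta_w'(x)\,dx \le C(d-c)$ (since $\eta_w' = -\eta_o'$ on the overlap is $\le C(1-x)^{\theta_o-1}$ — actually here I should be careful and use $\eta_w' \le \tfrac1{\alpha_w}x^{\theta_w-1} \le C$ directly), and $p_{wg}(c) - p_{wg}(d) \le C\int_c^d x^{\beta_3-1}(1-x)^{\theta_o + \beta_4 - 1}\cdot(\text{something})$; more precisely I'd extract $(1-x)^{\theta_o+\beta_4-1}$ and bound $\int_c^d (1-x)^{\theta_o+\beta_4-1}\,dx = \tfrac1{\theta_o+\beta_4}\big((1-c)^{\theta_o+\beta_4} - (1-d)^{\theta_o+\beta_4}\big)$, giving \eqref{eq:etawpwg3}, and analogously \eqref{eq:fwg3} from \eqref{eq:fwprime2} and \eqref{eq:lbgprime}.

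The last inequality \eqref{eq:etawpwg4}, for $0 \le x_1 \le \tfrac14$ and $\tfrac34 \le x_2 \le 1$, is the place where the two factors are each ``large'' (of order one) so a naive termwise bound would not produce the product structure needed; I expect this to be the main obstacle, though it is really just bookkeeping. Since $x_1 \le \tfrac14$ and $x_2 \ge \tfrac34$, both $\eta_w(x_2) - \eta_w(x_1)$ and $p_{wg}(x_1) - p_{wg}(x_2)$ are bounded above by absolute constants (using \eqref{eq:etaw_1} and $|p_{wg}| \le p_c(0) - p_c(1)$), so the left side of \eqref{eq:etawpwg4} is $\le C$; hence it suffices to show $\big(f_w(x_2) - f_w(x_1)\big)\big(g(x_2) - g(x_1)\big)$ is bounded below by an absolute positive constant. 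For that I would insert the fixed intermediate point $\tfrac12$: $f_w(x_2) - f_w(x_1) \ge f_w(\tfrac34) - f_w(\tfrac14) =: c_f > 0$ by monotonicity of $f_w$ (strict, from \eqref{eq:fwprime} $> 0$ on $]0,1[$), and likewise $g(x_2) - g(x_1) \ge g(\tfrac34) - g(\tfrac14) =: c_g > 0$ by strict monotonicity of $g$ from \eqref{eq:lbgprime}; multiplying gives the claim. Finally I would remark that \eqref{eq:etawpwg4} in fact also follows from the earlier-proved cases by the argument used in the proof of Lemma~\ref{lem:lowerbdg} case~4 (split $[x_1,x_2]$ at $[\tfrac16(x_2-x_1),\tfrac12(x_2-x_1)] \subset [x_1, \tfrac14] \cap$ feasible range), but the direct compactness-of-interval argument is cleanest. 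Collecting the three regions (and noting they cover all pairs $0 \le x_1 < x_2 \le 1$, with overlaps only strengthening the constants) completes the proof.
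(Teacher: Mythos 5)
Your approach is essentially the same as the paper's: power-law bounds on $\eta_w'$ and $-p_c'$, integral representations of $p_{wg}$ and $g$, and the three-region split, with the final case \eqref{eq:etawpwg4} handled by bounding the left side above by a constant and the right side below by $\big(f_w(3/4)-f_w(1/4)\big)\big(g(3/4)-g(1/4)\big)>0$ via monotonicity. One citation slip to flag: you attribute $\eta_w(d)-\eta_w(c)\le \frac{1}{\alpha_w\theta_w}(d^{\theta_w}-c^{\theta_w})$ to \eqref{eq:etaw_1}, but the upper and lower bounds in \eqref{eq:etaw_1} carry the mismatched constants $1/(\alpha_w\theta_w)$ and $\alpha_w/\theta_w$ and therefore do \emph{not} imply a bound of the form $C(d^{\theta_w}-c^{\theta_w})$ on the difference (e.g.\ it fails for $c,d$ close together); the inequality requires integrating the derivative bound \eqref{eq:eta'w}, which is what the paper does and what you should cite. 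Also, in the interval $[0,3/4]$ you use $f_o\le 1$ while the paper uses $f_o\le\eta_o/\eta_*$ together with \eqref{eq:etao_1}; on that bounded interval both work, but the paper's sharper bound is what makes their version of \eqref{eq:etawpwg2} valid on all of $[0,1]$ (and you yourself implicitly revert to the sharper bound in the $[1/4,1]$ case, where the $(1-x)^{\theta_o}$ factor from $\eta_o$ is needed to produce the stated exponent $\theta_o+\beta_4$). With the citation corrected, the argument is sound.
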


\begin{proof}
According to \eqref{eq:eta'w},
$$\eta_w(x_2) - \eta_w(x_1) \le \frac{1}{\alpha_w \theta_w} (x_2^{\theta_w} -  x_1^{\theta_w}).
$$
Next, recalling that $p_{wg}^\prime(x) = f_o(x)p_c^\prime(x)$, we have, owing to \eqref{eq:pc'}, \eqref{eq:etao_1}, and \eqref{eq:lowerboundetas},
\begin{equation}
\label{eq:deltpwg}
\begin{split}
p_{wg}(x_1) - p_{wg}(x_2) &= \int_{x_1}^{x_2} f_o(x)(-p_c^\prime(x))\,dx \le \frac{1}{\eta_\ast}\frac{1}{\alpha_3}\frac{1}{\alpha_o\theta_o}\int_{x_1}^{x_2} x^{\beta_3 -1}(1-x)^{\theta_o+ \beta_4-1}\, dx\\
& \le \frac{1}{\eta_\ast}\frac{1}{\alpha_3}\frac{1}{\alpha_o\theta_o}\int_{x_1}^{x_2} x^{\beta_3 -1}\,dx \le \frac{1}{\eta_\ast}\frac{1}{\alpha_3\beta_3}\frac{1}{\alpha_o\theta_o}(x_2^{\beta_3}-x_1^{\beta_3}),
\end{split}
\end{equation}
and \eqref{eq:etawpwg2}, valid on $[0,1]$, follows from these two inequalities.

For \eqref{eq:fwg2}, we use \eqref{eq:fwprime1} that gives
\begin{equation}
\label{eq:deltfw}
f_w(x_2) - f_w(x_1) \ge \frac{\alpha_o \alpha_w}{C_{\rm max}^2}\frac{1}{ \theta_o\theta_w} \big(\frac{1}{4}\big)^{\theta_o} (x_2^{\theta_w}- x_1^{\theta_w}),
\end{equation}
and we use \eqref{eq:lbgprime} that gives
$$g(x_2)-g(x_1) \ge \frac{\alpha_3}{C_{\rm max}}\frac{\alpha_w}{\theta_w}  \frac{\alpha_o}{\theta_o}\frac{1}{\theta_w + \beta_3} \big(\frac{1}{4}\big)^{\theta_o+ \beta_4 -1}(x_2^{\theta_w+ \beta_3}- x_1^{\theta_w+ \beta_3}).
$$
The product of the two leads to \eqref{eq:fwg2}.

Regarding \eqref{eq:etawpwg3},  \eqref{eq:etawpwg2}, albeit valid for all $x \in [0,1]$, is not adequate for the comparison we have in mind, and instead we use that 
$$\eta_w^\prime(x) \le \frac{1}{\alpha_w},
$$
which implies that
$$\eta_w(x_2) - \eta_w(x_1) \le \frac{1}{\alpha_w}(x_2 - x_1).
$$
Similarly, we use
$$
-p_{wg}^\prime(x) \le \frac{1}{\eta_\ast}\frac{1}{\alpha_3}\frac{1}{\alpha_o\theta_o} 4^{1-\beta_3} (1-x)^{\theta_o +\beta_4 -1},
$$
so that 
$$ p_{wg}(x_1) - p_{wg}(x_2) \le  \frac{1}{\eta_\ast}\frac{1}{\alpha_3}\frac{1}{\alpha_o\theta_o} \frac{1}{\theta_o + \beta_4}4^{1-\beta_3}\big((1-x_1)^{\theta_o+\beta_4}-(1-x_2)^{\theta_o+\beta_4}\big)),
$$
thus proving \eqref{eq:etawpwg3}. Next, by applying \eqref{eq:fwprime2}, we have
$$f_w(x_2)- f_w(x_1) \ge \frac{1}{C_{\rm max}^2}\frac{\alpha_o \alpha_w}{ \theta_w\theta_o} \big(\frac{1}{4}\big)^{\theta_w} \big((1-x_1)^{\theta_o}- (1-x_2)^{\theta_o}\big).
$$
Likewise, by applying \eqref{eq:lbgprime}, we obtain
$$g(x_2) - g(x_1) \ge \frac{\alpha_3}{C_{\rm max}}\frac{\alpha_w}{\theta_w}  \frac{\alpha_o}{\theta_o}\frac{1}{\theta_o +\beta_4}\big(\frac{1}{4}\big)^{\theta_w-1+\beta_3} \big((1-x_1)^{\theta_o+\beta_4}- (1-x_2)^{\theta_o+\beta_4}\big).
$$
The product of the two yields \eqref{eq:fwg3}.

Finally, when $0\le x_1 \le \frac{1}{4}$ and $\frac{3}{4}\le x_2 \le 1$, since both $\eta_w$ and $-p_{wg}$ are both increasing, they satisfy
$$\big(\eta_w(x_2) - \eta_w(x_1)\big)\big(p_{wg}(x_1) -p_{wg}(x_2)\big) \le \eta_w(1)\big(-p_{wg}(1)\big) >0.
$$
Likewise, as both $f_w$ and $g$ are increasing, they satisfy
$$\big(f_w(x_2) - f_w(x_1)\big)\big(g(x_2) -g(x_1)\big) \ge \big(f_w(\frac{3}{4}) - f_w(\frac{1}{4})\big)\big(g(\frac{3}{4}) -g(\frac{1}{4})\big)= :D >0.
$$
Hence
$$\big(\eta_w(x_2) - \eta_w(x_1)\big)\big(p_{wg}(x_1) -p_{wg}(x_2)\big) \le -\frac{1}{D}
\big(\eta_w\,p_{wg}\big)(1) \big(f_w(x_2) - f_w(x_1)\big)\big(g(x_2) -g(x_1)\big),
$$
whence \eqref{eq:etawpwg4}. Clearly all constants involved are independent of $x_1$ and $x_2$.
\end{proof}

It stems from \eqref{eq:etawpwg2} and \eqref{eq:fwg2}, that the two left-hand sides cannot be compared when $x_1$ and $x_2$ are too small. The same observation applies to \eqref{eq:etawpwg3} and 
\eqref{eq:fwg3} when $1-x_1$ and $1-x_2$ are too small. But in this case, there is no need for comparison because the expression we want to bound is sufficiently small, as is shown in the 
next proposition where again, $x_1 = S^{n,i}$ and $x_2 = S^{n,j}$.

\begin{proposition}
\label{pro:x2small}
Suppose that $x_1<x_2 \le h_i^{\gamma_1}$ for some exponent $\gamma_1 >0$ such that
\begin{equation}
\label{eq:gamm1} 
\gamma_1 >\frac{1}{\theta_w + \beta_3}.
\end{equation}
Then
\begin{equation}
\label{eq:x2small1}
h_i \big(\eta_w(x_2) - \eta_w(x_1)\big) \big(p_{wg}(x_1) - p_{wg}(x_2)\big) \le C\,h_i^2 h_i^{\delta_1},
\end{equation}
where 
\begin{equation}
\label{eq:delta1}
0<\delta_1 \le \gamma_1(\theta_w + \beta_3)-1.
\end{equation}
Similarly, suppose that $1-x_2<1-x_1 \le h_i^{\gamma_2}$ for some exponent $\gamma_2 >0$ such that
\begin{equation}
\label{eq:gamm2} 
\gamma_2 >\frac{1}{1 + \theta_o + \beta_4}.
\end{equation}
Then
\begin{equation}
\label{eq:x2small2}
h_i \big(\eta_w(x_2) - \eta_w(x_1)\big) \big(p_{wg}(x_1) - p_{wg}(x_2)\big) \le C\,h_i^2 h_i^{\delta_2},
\end{equation}
where 
\begin{equation}
\label{eq:delta2}
0<\delta_2 \le \gamma_2(1+\theta_o + \beta_4)-1.
\end{equation}
In both cases, the constants $C$ are independent of $x_1$, $x_2$, and $h_i$.
\end{proposition}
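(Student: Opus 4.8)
The plan is to bound the product $\big(\eta_w(x_2) - \eta_w(x_1)\big) \big(p_{wg}(x_1) - p_{wg}(x_2)\big)$ crudely, using the \emph{upper} estimates on $\eta_w'$ and $-p_{wg}'$ from Proposition~\ref{pro:etawpwg1} (specifically \eqref{eq:etawpwg2} and \eqref{eq:etawpwg3}), and then exploit that when $x_2$ (resp.\ $1-x_1$) is as small as $h_i^{\gamma_1}$ (resp.\ $h_i^{\gamma_2}$), the resulting bound carries enough powers of $h_i$ to beat the extra $h_i$ already present. I would treat the two halves separately since they are essentially identical in structure.

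For the first half, I would start from \eqref{eq:etawpwg2}, which gives
$\big(\eta_w(x_2) - \eta_w(x_1)\big) \big(p_{wg}(x_1) - p_{wg}(x_2)\big) \le C ( x_2^{\theta_w} -  x_1^{\theta_w}) ( x_2^{\beta_3} -  x_1^{\beta_3})$.
Since $0 \le x_1 < x_2 \le h_i^{\gamma_1} \le 1$ (note $h_i \le 1$ for the mesh, up to rescaling, so $h_i^{\gamma_1}\le 1$), I would bound $x_2^{\theta_w} - x_1^{\theta_w} \le x_2^{\theta_w} \le h_i^{\gamma_1 \theta_w}$ and likewise $x_2^{\beta_3} - x_1^{\beta_3} \le h_i^{\gamma_1 \beta_3}$, hence the product is $\le C\, h_i^{\gamma_1(\theta_w + \beta_3)}$. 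Multiplying by $h_i$ gives $h_i \big(\eta_w(x_2) - \eta_w(x_1)\big) \big(p_{wg}(x_1) - p_{wg}(x_2)\big) \le C\, h_i^{1 + \gamma_1(\theta_w + \beta_3)} = C\, h_i^2\, h_i^{\gamma_1(\theta_w + \beta_3) - 1}$, and the exponent $\gamma_1(\theta_w + \beta_3) - 1$ is strictly positive by the hypothesis \eqref{eq:gamm1}; any $\delta_1$ satisfying \eqref{eq:delta1} then works by monotonicity of $h_i \mapsto h_i^{\delta}$ for $h_i \le 1$. This establishes \eqref{eq:x2small1}.

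For the second half, the roles are symmetric under $x \mapsto 1-x$: I would use \eqref{eq:etawpwg3}, bound $x_2 - x_1 \le 1 - x_1 \le h_i^{\gamma_2}$ and $(1-x_1)^{\theta_o + \beta_4} - (1-x_2)^{\theta_o + \beta_4} \le (1-x_1)^{\theta_o + \beta_4} \le h_i^{\gamma_2(\theta_o + \beta_4)}$, so the product of the two differences is $\le C\, h_i^{\gamma_2(1 + \theta_o + \beta_4)}$; multiplying by $h_i$ and invoking \eqref{eq:gamm2} gives \eqref{eq:x2small2} with any admissible $\delta_2$ as in \eqref{eq:delta2}. The only mild subtlety — and the place I would be careful — is making sure the regime hypotheses match the ranges in which \eqref{eq:etawpwg2} and \eqref{eq:etawpwg3} were proved: \eqref{eq:etawpwg2} is stated to hold on all of $[0,1]$ (the proof remark after it), so the first case is fine for any $x_2 \le h_i^{\gamma_1}$; \eqref{eq:etawpwg3} requires $\tfrac14 \le x_1 < x_2 \le 1$, which is automatic once $1 - x_1 \le h_i^{\gamma_2}$ for $h_i$ small enough. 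Since we only need the estimate for sufficiently small $h$ (we are passing to the limit), this causes no real difficulty. There is essentially no hard obstacle here; the statement is a bookkeeping consequence of the already-established Proposition~\ref{pro:etawpwg1}, and the main point is simply to choose the crude monotone bounds in the right direction.
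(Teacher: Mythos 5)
Your proposal is correct and takes essentially the same route as the paper: both rest entirely on the crude upper bounds \eqref{eq:etawpwg2} and \eqref{eq:etawpwg3} from Proposition~\ref{pro:etawpwg1}, with the factors bounded monotonically by the thresholds $h_i^{\gamma_1}$ and $h_i^{\gamma_2}$ and then multiplied by the extra $h_i$. You are even slightly more careful than the paper in noting that the range hypothesis $\tfrac14 \le x_1$ required for \eqref{eq:etawpwg3} is automatic once $1-x_1 \le h_i^{\gamma_2}$ with $h_i$ small.
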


\begin{proof}
In the first case, according to \eqref{eq:etawpwg2}, the choice \eqref{eq:delta1} and  \eqref{eq:gamm1} on $\gamma_1$, we have 
$$h_i \big(\eta_w(x_2) - \eta_w(x_1)\big) \big(p_{wg}(x_1) - p_{wg}(x_2)\big) \le C\,h_i^{1 + \gamma_1(\theta_w + \beta_3)} = h_i^2 h_i^{\gamma_1(\theta_w + \beta_3)-1},
$$
with the constant $C$ of \eqref{eq:etawpwg2}, which gives  \eqref{eq:x2small1}. In the second case, the same argument leads to
$$h_i \big(\eta_w(x_2) - \eta_w(x_1)\big) \big(p_{wg}(x_1) - p_{wg}(x_2)\big) \le h_i^{1 + \gamma_2(1+\theta_o + \beta_4)} = h_i^2 h_i^{\gamma_2(1+\theta_o + \beta_4)-1},
$$
with the constant $C$ of \eqref{eq:etawpwg3}, thus implying \eqref{eq:x2small2} with the choice \eqref{eq:delta2} for $\delta_2$ and the condition \eqref{eq:gamm2} on $\gamma_2$.
\end{proof}

Now, we turn to the case when $x_2$ is not too small.

\begin{proposition}
\label{pro:x2notsmall}
In addition to \eqref{eq:gamm1}, suppose that the exponent $\gamma_1$ of Proposition \ref{pro:x2small} satisfies
\begin{equation}
\label{eq:gamm11} 
\gamma_1 <\frac{1}{\theta_w}.
\end{equation}
Suppose that $x_1 < x_2$ and $ \frac{3}{4} \ge x_2 > h_i^{\gamma_1}$. 
Then
\begin{equation}
\label{eq:x2notsmall1}
h_i \big(\eta_w(x_2) - \eta_w(x_1)\big) \big(p_{wg}(x_1) - p_{wg}(x_2)\big) 
\le C h_i^{\delta_1^\prime} \big(f_w(x_2) - f_w(x_1)\big)\big(g(x_2)- g(x_1)\big),
\end{equation}
 where
\begin{equation}
\label{eq:delta11prime}
0<\delta_1^\prime = {\rm min}\big( 1- \gamma_1\theta_w, \delta_1\big).
\end{equation}
Again, the constant $C$ is independent of $x_1$, $x_2$, and $h_i$.
\end{proposition}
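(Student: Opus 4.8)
The plan is to split the product $\big(\eta_w(x_2)-\eta_w(x_1)\big)\big(p_{wg}(x_1)-p_{wg}(x_2)\big)$ into two factors handled separately, treating the mobility difference via its upper bound $\eta_w(x_2)-\eta_w(x_1)\le\frac{1}{\alpha_w\theta_w}(x_2^{\theta_w}-x_1^{\theta_w})$ from \eqref{eq:eta'w}, and comparing the $p_{wg}$ difference against the $g$ difference. Since here $x_1<x_2\le\frac34$, inequalities \eqref{eq:etawpwg2} and \eqref{eq:fwg2} of Proposition \ref{pro:etawpwg1} are in force, so the whole game reduces to comparing $(x_2^{\beta_3}-x_1^{\beta_3})$ against $(x_2^{\theta_w+\beta_3}-x_1^{\theta_w+\beta_3})$ once the common factor $(x_2^{\theta_w}-x_1^{\theta_w})$ has been isolated; and then using the lower bound $x_2>h_i^{\gamma_1}$ to convert the resulting power of $x_2$ into a negative power of $h_i$ with exponent controlled by $\gamma_1$.

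First I would invoke \eqref{eq:etawpwg2} to write
$$h_i\big(\eta_w(x_2)-\eta_w(x_1)\big)\big(p_{wg}(x_1)-p_{wg}(x_2)\big)\le C\,h_i\,(x_2^{\theta_w}-x_1^{\theta_w})(x_2^{\beta_3}-x_1^{\beta_3}),$$
and from \eqref{eq:fwg2} record that $\big(f_w(x_2)-f_w(x_1)\big)\big(g(x_2)-g(x_1)\big)\ge C(x_2^{\theta_w}-x_1^{\theta_w})(x_2^{\theta_w+\beta_3}-x_1^{\theta_w+\beta_3})$. Dividing the first by the second, the factor $(x_2^{\theta_w}-x_1^{\theta_w})$ cancels, and what remains is to bound
$$h_i\,\frac{x_2^{\beta_3}-x_1^{\beta_3}}{x_2^{\theta_w+\beta_3}-x_1^{\theta_w+\beta_3}}.$$
Here I would use the elementary fact that for $0\le x_1<x_2$ and exponents $0<p<q$ one has $x_2^p-x_1^p\le x_2^{p-q}\,(x_2^q-x_1^q)$ — equivalently $\frac{x_2^p-x_1^p}{x_2^q-x_1^q}\le x_2^{p-q}$ — applied with $p=\beta_3$, $q=\theta_w+\beta_3$, which gives the ratio $\le x_2^{-\theta_w}$. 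Using $x_2>h_i^{\gamma_1}$ this yields $x_2^{-\theta_w}<h_i^{-\gamma_1\theta_w}$, hence
$$h_i\big(\eta_w(x_2)-\eta_w(x_1)\big)\big(p_{wg}(x_1)-p_{wg}(x_2)\big)\le C\,h_i^{\,1-\gamma_1\theta_w}\big(f_w(x_2)-f_w(x_1)\big)\big(g(x_2)-g(x_1)\big),$$
and condition \eqref{eq:gamm11}, $\gamma_1<\frac{1}{\theta_w}$, makes $1-\gamma_1\theta_w>0$, so this is a positive power of $h_i$. Combining this with the bound \eqref{eq:x2small1}–\eqref{eq:delta1} for the complementary range (which is why $\delta_1^\prime$ is taken to be the minimum in \eqref{eq:delta11prime}) gives \eqref{eq:x2notsmall1} with $\delta_1^\prime={\rm min}(1-\gamma_1\theta_w,\delta_1)>0$; this two-range bookkeeping is what the $\min$ in the statement is for — note the hypothesis of this proposition only says $x_2>h_i^{\gamma_1}$, not that $x_1$ is bounded below, so there is nothing further to patch once both factors are in place.

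The main obstacle is the elementary power inequality $\frac{x_2^p-x_1^p}{x_2^q-x_1^q}\le x_2^{p-q}$ for $0<p<q$ and $0\le x_1<x_2$: it is true but needs a short justification (e.g.\ write $x_2^q-x_1^q=(x_2^p-x_1^p)\cdot\frac{x_2^q-x_1^q}{x_2^p-x_1^p}$ and check by monotonicity of $t\mapsto t^{q/p}$, or differentiate $x\mapsto x_2^{q-p}(x^p-x_1^p)-(x^q-x_1^q)$ on $[x_1,x_2]$). Once that ratio estimate is in hand the rest is bookkeeping of exponents; the genuine care is only in verifying that $\gamma_1$ can simultaneously satisfy \eqref{eq:gamm1} and \eqref{eq:gamm11}, i.e.\ $\frac{1}{\theta_w+\beta_3}<\gamma_1<\frac{1}{\theta_w}$, which is a nonempty interval precisely because $\beta_3>0$.
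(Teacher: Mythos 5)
Your proof is correct and arrives at exactly the right exponent, but it takes a genuinely different and cleaner route than the paper. The paper's proof of this proposition splits into two cases, $x_1\le\frac12 x_2$ and $x_1>\frac12 x_2$: in the first case it bounds the differences $x_2^{\theta_w}-x_1^{\theta_w}$ and $x_2^{\theta_w+\beta_3}-x_1^{\theta_w+\beta_3}$ from below by constant multiples of $x_2^{\theta_w}$ and $x_2^{\theta_w+\beta_3}$, and in the second case it exploits $x_1>\frac12 x_2$ to produce matching one-sided bounds $p_{wg}(x_1)-p_{wg}(x_2)\lesssim (x_2-x_1)x_2^{\beta_3-1}$ and $g(x_2)-g(x_1)\gtrsim (x_2-x_1)x_2^{\theta_w+\beta_3-1}$; both cases yield the ratio $\lesssim x_2^{-\theta_w}$ and hence $h_i^{1-\gamma_1\theta_w}$. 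You replace that entire case analysis with the single elementary observation that the common factor $(x_2^{\theta_w}-x_1^{\theta_w})$ appearing in both \eqref{eq:etawpwg2} and \eqref{eq:fwg2} cancels, and that $\frac{x_2^{\beta_3}-x_1^{\beta_3}}{x_2^{\theta_w+\beta_3}-x_1^{\theta_w+\beta_3}}\le x_2^{-\theta_w}$ holds for all $0\le x_1<x_2$; your normalization $u=x_1/x_2$ and monotonicity of $u\mapsto u^p$ makes this one line. This is more economical and, I would say, more transparent than the paper's argument, at the modest cost of having to state and justify the power-ratio inequality (which you correctly note is the one place requiring care). Both approaches reach the same bound $Ch_i^{1-\gamma_1\theta_w}\le Ch_i^{\delta_1'}$, and your remark that the $\min$ in \eqref{eq:delta11prime} is bookkeeping for merging with Proposition~\ref{pro:x2small} in the range $x_2\le h_i^{\gamma_1}$ is exactly how the paper intends the two propositions to be used downstream in Lemma~\ref{lem:T1to0}.
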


\begin{proof}
Either $x_1 \le \frac{1}{2}  x_2$ or $x_1 > \frac{1}{2}  x_2$, and we examine each case.

1) When $x_1 \le \frac{1}{2}  x_2$, formula \eqref{eq:fwg2} leads to
$$\big(f_w(x_2) - f_w(x_1)\big)\big(g(x_2)- g(x_1)\big) \ge 
\big(1-(\frac{1}{2})^{\theta_w}\big) \big(1-(\frac{1}{2})^{\theta_w+ \beta_3}\big) 
C x_2^{2 \theta_w + \beta_3},
$$
with the constant $C$ of \eqref{eq:fwg2},
whereas
$$\big(\eta_w(x_2) - \eta_w(x_1)\big) \big(p_{wg}(x_1) - p_{wg}(x_2)\big) \le  C x_2^{\theta_w + \beta_3}, 
$$
with the constant $C$ of \eqref{eq:etawpwg2}. Hence
$$
 h_i \big(\eta_w(x_2) - \eta_w(x_1)\big)\big(p_{wg}(x_1) - p_{wg}(x_2)\big) \le C \frac{h_i}{x_2^{\theta_w}} 
 \big(f_w(x_2) - f_w(x_1)\big)\big(g(x_2)- g(x_1)\big),
$$
with another constant $C$ independent of $x_1$, $x_2$, and $h_i$. Now, we use the assumption that $x_2 > h_i^{\gamma_1}$. Then, owing to \eqref{eq:delta11prime},
$$\frac{h_i}{x_2^{\theta_w}} \le h_i^{1- \gamma_1 \theta_w} \le h_i^{\delta_1^\prime}, 
$$
and we recover \eqref{eq:x2notsmall1}.

2) When $x_1 > \frac{1}{2}  x_2$, we infer from the next to last inequality in  \eqref{eq:deltpwg} that 
$$
p_{wg}(x_1) - p_{wg}(x_2) \le \frac{1}{\eta_\ast}\frac{1}{\alpha_3}\frac{1}{\alpha_o\theta_o}  \big(x_2 - x_1\big)x_1^{\beta_3-1}
\le \frac{1}{\eta_\ast}\frac{1}{\alpha_3}\frac{1}{\alpha_o\theta_o} 2^{1-\beta_3}\frac{1}{x_2^{1-\beta_3}} \big(x_2 - x_1\big).
$$
Thus, on the one hand,
\begin{equation}
\label{eq:lhsx2notsmall}
\big(\eta_w(x_2) - \eta_w(x_1)\big) \big(p_{wg}(x_1) - p_{wg}(x_2)\big) \le C\,\frac{1}{x_2^{1-\beta_3}}
\big(x_2 - x_1\big) \big(x_2^{\theta_w} - x_1^{\theta_w}\big),
\end{equation} 
where $C$ is the above constant divided by $\alpha_w$. On the other hand, we use the lower bound \eqref{eq:deltfw} for the difference in $f_w$ and we need a lower bound for the difference in $g$. It is derived from \eqref{eq:lbgprime},
\begin{equation}
\label{eq:lowbddg}
\begin{split}
g(x_2) - g(x_1) &\ge \frac{\alpha_3}{C_{\rm max}}\frac{\alpha_w}{\theta_w}  \frac{\alpha_o}{\theta_o}\big(\frac{1}{4}\big)^{\theta_o+\beta_4-1}x_1^{\theta_w+ \beta_3-1}(x_2 - x_1)\\
& \ge \frac{\alpha_3}{C_{\rm max}}\frac{\alpha_w}{\theta_w}  \frac{\alpha_o}{\theta_o}\big(\frac{1}{4}\big)^{\theta_o+\beta_4-1}\big(\frac{1}{2}\big)^{\theta_w+\beta_3-1}x_2^{\theta_w+ \beta_3-1} (x_2 - x_1).
\end{split}
\end{equation}
Hence \eqref{eq:deltfw} and \eqref{eq:lowbddg} yield
\begin{equation}
\label{eq:rhsx2notsmall}
\big(f_w(x_2) - f_w(x_1)\big)\big(g(x_2) - g(x_1)\big) \ge C x_2^{\theta_w+ \beta_3-1}
 \big(x_2^{\theta_w} - x_1^{\theta_w}\big) (x_2 - x_1),
\end{equation} 
with the product of the constants of \eqref{eq:deltfw} and \eqref{eq:lowbddg}. Then by combining \eqref{eq:lhsx2notsmall} and \eqref{eq:rhsx2notsmall}, we deduce that
$$
\big(\eta_w(x_2) - \eta_w(x_1)\big) \big(p_{wg}(x_1) - p_{wg}(x_2)\big) \le \frac{C}{h_i^{\gamma_1 \theta_w}}
\big(f_w(x_2) - f_w(x_1)\big)\big(g(x_2) - g(x_1)\big);
$$
which is \eqref{eq:x2notsmall1} when $\delta_1^\prime$ satisfies \eqref{eq:delta11prime}. 
\end{proof}

The case when $1-x_1$ is not too small is handled by the next proposition.

\begin{proposition}
\label{pro:1-x1notsmall}
In addition to \eqref{eq:gamm2}, suppose that the exponent $\gamma_2$ of Proposition \ref{pro:x2small} satisfies
\begin{equation}
\label{eq:gamm21} 
\gamma_2 <\frac{1}{\theta_o-1}.
\end{equation}
Suppose that $\frac{1}{4} < x_1 < x_2 \le 1$ and $1-x_1 > h_i^{\gamma_2}$.  
Then
\begin{equation}
\label{eq:1-x1notsmall1}
h_i \big(\eta_w(x_2) - \eta_w(x_1)\big) \big(p_{wg}(x_1) - p_{wg}(x_2)\big) 
\le C h_i^{\delta_2^\prime} \big(f_w(x_2) - f_w(x_1)\big)\big(g(x_2)- g(x_1)\big),
\end{equation}
 where
\begin{equation}
\label{eq:delta12prime}
0<\delta_2^\prime = {\rm min}\big(\delta_2, 1-\gamma_2(\theta_o-1)\big).
\end{equation}
Again, the constant $C$ is independent of $x_1$, $x_2$, and $h_i$.
\end{proposition}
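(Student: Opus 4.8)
The plan is to follow the proof of Proposition~\ref{pro:x2notsmall} with the roles of $x$ and $1-x$ interchanged, using the estimates \eqref{eq:etawpwg3} and \eqref{eq:fwg3} in place of \eqref{eq:etawpwg2} and \eqref{eq:fwg2}. Write $P = (\eta_w(x_2)-\eta_w(x_1))(p_{wg}(x_1)-p_{wg}(x_2))$ and $Q = (f_w(x_2)-f_w(x_1))(g(x_2)-g(x_1))$. I would first reduce the statement to the inequality
\begin{equation}
\label{eq:auxPQbnd}
P \le C\,(1-x_1)^{1-\theta_o}\,Q,
\end{equation}
with $C$ independent of $x_1$, $x_2$ and $h_i$. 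Granting \eqref{eq:auxPQbnd}, the hypothesis $1-x_1 > h_i^{\gamma_2}$ together with $\theta_o\ge1$ gives $(1-x_1)^{1-\theta_o}\le h_i^{-\gamma_2(\theta_o-1)}$, hence $h_iP \le C\,h_i^{1-\gamma_2(\theta_o-1)}Q$; the exponent $1-\gamma_2(\theta_o-1)$ is positive by \eqref{eq:gamm21}, and since $h_i\le1$ and $\delta_2^\prime \le 1-\gamma_2(\theta_o-1)$ by \eqref{eq:delta12prime}, this is exactly \eqref{eq:1-x1notsmall1}. When $\theta_o=1$ the factor $(1-x_1)^{1-\theta_o}$ equals $1$ and \eqref{eq:gamm21} is vacuous, so the bound $P\le CQ$ suffices.

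To prove \eqref{eq:auxPQbnd} I would distinguish two cases. \textbf{First case: $1-x_2 \le \tfrac{1}{2}(1-x_1)$.} Since $x_2-x_1\le 1-x_1$, inequality \eqref{eq:etawpwg3} gives $P \le C(x_2-x_1)\left((1-x_1)^{\theta_o+\beta_4}-(1-x_2)^{\theta_o+\beta_4}\right)\le C(1-x_1)^{1+\theta_o+\beta_4}$. The case hypothesis forces $(1-x_1)^{\theta_o}-(1-x_2)^{\theta_o}\ge(1-2^{-\theta_o})(1-x_1)^{\theta_o}$, and similarly with $\theta_o+\beta_4$ in place of $\theta_o$, so \eqref{eq:fwg3} yields $Q\ge C(1-x_1)^{2\theta_o+\beta_4}$. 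Dividing gives \eqref{eq:auxPQbnd}.

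\textbf{Second case: $1-x_2 > \tfrac{1}{2}(1-x_1)$.} Now $\tfrac{1}{2}(1-x_1) < 1-x \le 1-x_1$ for every $x\in[x_1,x_2]$, and $x\ge x_1>\tfrac{1}{4}$. For the upper bound on $P$: by \eqref{eq:eta'w}, $\eta_w(x_2)-\eta_w(x_1)\le\alpha_w^{-1}(x_2-x_1)$, and from \eqref{eq:deltpwg}, bounding $x^{\beta_3-1}$ by a constant (as $x>\tfrac{1}{4}$) and $(1-x)^{\theta_o+\beta_4-1}$ by $(1-x_1)^{\theta_o+\beta_4-1}$ on $[x_1,x_2]$, one gets $p_{wg}(x_1)-p_{wg}(x_2)\le C(x_2-x_1)(1-x_1)^{\theta_o+\beta_4-1}$; hence $P\le C(x_2-x_1)^2(1-x_1)^{\theta_o+\beta_4-1}$. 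For the lower bound on $Q$: \eqref{eq:fwprime2} together with $1-x>\tfrac{1}{2}(1-x_1)$ gives $f_w(x_2)-f_w(x_1)\ge C(x_2-x_1)(1-x_1)^{\theta_o-1}$, while \eqref{eq:lbgprime} together with $x>\tfrac{1}{4}$ and $1-x>\tfrac{1}{2}(1-x_1)$ gives $g(x_2)-g(x_1)\ge C(x_2-x_1)(1-x_1)^{\theta_o+\beta_4-1}$, so $Q\ge C(x_2-x_1)^2(1-x_1)^{2\theta_o+\beta_4-2}$. Dividing again gives \eqref{eq:auxPQbnd}, completing the proof.

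The step I expect to be the main obstacle is the lower bound on $Q$ in the second case. There the differences $f_w(x_2)-f_w(x_1)$ and $g(x_2)-g(x_1)$ carry no slack, so one must pull out the factors $(x_2-x_1)(1-x_1)^{\theta_o-1}$ and $(x_2-x_1)(1-x_1)^{\theta_o+\beta_4-1}$ directly from the pointwise estimates \eqref{eq:fwprime2} and \eqref{eq:lbgprime}; this is exactly where the restriction $x_1>\tfrac{1}{4}$ and the case hypothesis $1-x_2>\tfrac{1}{2}(1-x_1)$ are both used. Everything else is exponent bookkeeping, parallel to the proof of Proposition~\ref{pro:x2notsmall}.
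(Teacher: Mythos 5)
Your proof is correct and follows essentially the same route as the paper's: the same case split $1-x_2\le\tfrac12(1-x_1)$ versus $1-x_2>\tfrac12(1-x_1)$, the same use of \eqref{eq:etawpwg3}/\eqref{eq:fwg3} in the first case and of $\eta_w'\le\alpha_w^{-1}$, \eqref{eq:fwprime2}, \eqref{eq:lbgprime} in the second, leading to the same intermediate bound $P\le C(1-x_1)^{1-\theta_o}Q$ before invoking $1-x_1>h_i^{\gamma_2}$. You merely state the reduction to that intermediate inequality explicitly at the outset, where the paper treats the proof as a sketch "analogous to Proposition~\ref{pro:x2notsmall}."
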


\begin{proof}
The proof is analogous to that of Proposition \ref{pro:x2notsmall}, but we sketch the steps for the reader's convenience. We skip the constants' details, but stress that they are independent of $x_1$, $x_2$, and $h_i$. Again, there are two possibilities, 
either $1- x_2 \le \frac{1}{2} (1- x_1)$ or $1- x_2 > \frac{1}{2}  (1- x_1)$, and we examine each case.

1) In the first case,
$$\big(\eta_w(x_2) - \eta_w(x_1)\big) \big(p_{wg}(x_1) - p_{wg}(x_2)\big) \le C (1- x_1)^{1+\theta_0 + \beta_4},
$$ 
and 
$$\big(f_w(x_2) - f_w(x_1)\big)\big(g(x_2)- g(x_1)\big) \ge C (1- x_1)^{2\theta_0 + \beta_4}.
$$
Hence
\begin{align*}
 \big(\eta_w(x_2) - \eta_w(x_1)\big) \big(p_{wg}(x_1) - p_{wg}(x_2)\big) &\le C \frac{1}{(1-x_1)^{\theta_o-1}}  \big(f_w(x_2) - f_w(x_1)\big)\big(g(x_2)- g(x_1)\big)\\
 &\le C\frac{1}{h_i^{\gamma_2(\theta_o-1)}}\big(f_w(x_2) - f_w(x_1)\big)\big(g(x_2)- g(x_1)\big).
\end{align*}
With  \eqref{eq:gamm21} and  \eqref{eq:delta12prime}, this implies \eqref{eq:1-x1notsmall1}.

2) In the second case, we have on the one hand,
$$p_{wg}(x_1) - p_{wg}(x_2) \le C (x_2- x_1) (1- x_1)^{\theta_o + \beta_4 -1},
$$
so that
$$
\big(\eta_w(x_2) - \eta_w(x_1)\big) \big(p_{wg}(x_1) - p_{wg}(x_2)\big) \le C (x_2- x_1)^2(1- x_1)^{\theta_o + \beta_4 -1}.
$$
On the other hand,
$$
f_w(x_2) - f_w(x_1) \ge C (x_2- x_1) (1- x_1)^{\theta_o-1},
$$
and
$$
g(x_2)- g(x_1) \ge C (x_2- x_1)(1- x_1)^{\theta_o + \beta_4-1},
$$
and thus
\begin{align*}
 \big(\eta_w(x_2) - \eta_w(x_1)\big) \big(p_{wg}(x_1) - p_{wg}(x_2)\big) &\le C \frac{1}{(1-x_1)^{\theta_o-1}} 
 \big(f_w(x_2) - f_w(x_1)\big)\big(g(x_2)- g(x_1)\big)\\
 &\le C\frac{1}{h_i^{\gamma_2(\theta_o-1)}}\big(f_w(x_2) - f_w(x_1)\big)\big(g(x_2)- g(x_1)\big),
\end{align*}
whence \eqref{eq:1-x1notsmall1}.
\end{proof}

In view of  \eqref{eq:gamm1}, \eqref{eq:delta1}, \eqref{eq:gamm11}, and  
\eqref{eq:delta11prime}, let us choose
\begin{equation}
\label{eq:choicedelt1}
\delta_1 = \delta_1^\prime= \frac{\beta_3}{2 \theta_w + \beta_3},\quad \gamma_1 = \frac{2}{2 \theta_w + \beta_3}.
\end{equation}
Then \eqref{eq:gamm1} and \eqref{eq:delta1} are satisfied, as well as \eqref{eq:gamm11} and 
\eqref{eq:delta11prime}. Likewise, in view of \eqref{eq:gamm2}, \eqref{eq:delta2}, \eqref{eq:gamm21}, 
and \eqref{eq:delta12prime}, the choice
\begin{equation}
\label{eq:choicedelt2}
\delta_2 = \delta_2^\prime= \frac{2+\beta_4}{2 \theta_o + \beta_4},\quad \gamma_2 = \frac{2}{2 \theta_o + \beta_4},
\end{equation}
satisfies \eqref{eq:gamm2}, \eqref{eq:delta2}, \eqref{eq:gamm21}, \eqref{eq:delta12prime}. Then the desired limit follows by collecting these results. 

\begin{lemma}
\label{lem:T1to0}
Under the assumptions and notation on the mobility \eqref{eq:eta'w}--\eqref{eq:etao_1}, the term $T_1$   defined in \eqref{eq:defT1}  tends to zero, with a similar limit in the non-wetting phase, i.e.,
\begin{equation}
\label{eq:limT1}
\begin{split}
&\lim_{(h,\tau) \to (0,0)}  \sum_{n=1}^{N} \tau \sum_{i,j=1}^M  c_{ij}\Big(\int_{S^{n,i}}^{S^{n,j}} f_o(x)\big(\eta_w(S_{w}^{n,ij}) - \eta_w(x)\big)p_c^\prime(x)\,dx\Big)
  \big(V^{n,j}-V^{n,i}\big) = 0,\\
&\lim_{(h,\tau) \to (0,0)}  \sum_{n=1}^{N} \tau \sum_{i,j=1}^M  c_{ij}\Big(\int_{S^{n,i}}^{S^{n,j}} f_w(x)\big(\eta_o(S_{o}^{n,ij}) - \eta_o(x)\big)p_c^\prime(x)\,dx\Big)
 \big(V^{n,j}-V^{n,i}\big) = 0.
\end{split}
\end{equation}
\end{lemma}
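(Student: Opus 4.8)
The plan is to start from the bound~\eqref{eq:T2h2} on $|T_1|$ already furnished by Proposition~\ref{pro:T2hpro1} and to reorganize its right-hand side. Writing $h_{ij}$ for the length of the common edge $e_{ij}$ and, for each time level $n$ and each pair $(i,j)$, setting $x_1=\min(S^{n,i},S^{n,j})$, $x_2=\max(S^{n,i},S^{n,j})$ — the pairs with $S^{n,i}=S^{n,j}$ contributing nothing, and the symmetry of $c_{ij}$, $h_{ij}$ and $S_w^{n,ij}$ allowing one to assume $x_1<x_2$ — one has
\[
|T_1|\le C\,\|\nabla v\|_{L^\infty(\Omega\times]0,T[)}\sum_{n=1}^N\tau\sum_{i,j=1}^M c_{ij}\,B_{ij}^n,\qquad
B_{ij}^n:=h_{ij}\big(\eta_w(x_2)-\eta_w(x_1)\big)\big(p_{wg}(x_1)-p_{wg}(x_2)\big)\ge 0 ,
\]
so the task reduces to bounding each $B_{ij}^n$ by a quantity whose $c_{ij}$-weighted sum tends to zero. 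The non-wetting case is identical after replacing $(\eta_w,p_{wg},f_w)$ by $(\eta_o,p_{og},f_o)$ and using~\eqref{eq:T2h11} in place of~\eqref{eq:T2h1}, so only the wetting case need be written out.

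Next I would fix the threshold exponents $\gamma_1,\delta_1,\delta_1'$ and $\gamma_2,\delta_2,\delta_2'$ as in~\eqref{eq:choicedelt1}--\eqref{eq:choicedelt2} (so that all of~\eqref{eq:gamm1},~\eqref{eq:delta1},~\eqref{eq:gamm11},~\eqref{eq:delta11prime} and~\eqref{eq:gamm2},~\eqref{eq:delta2},~\eqref{eq:gamm21},~\eqref{eq:delta12prime} hold, with $\delta_1,\delta_2>0$ since $\beta_3,\beta_4>0$, and $\delta_1'=\delta_1$, $\delta_2'=\delta_2$), and then split the set of triples $(i,j,n)$ into three exhaustive cases according to the position of $(x_1,x_2)$ in $[0,1]$. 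If $x_2\le\frac34$: when $x_2\le h_{ij}^{\gamma_1}$, Proposition~\ref{pro:x2small} via~\eqref{eq:x2small1} gives $B_{ij}^n\le C\,h_{ij}^{\,2+\delta_1}$; when $x_2>h_{ij}^{\gamma_1}$, Proposition~\ref{pro:x2notsmall} via~\eqref{eq:x2notsmall1} gives $B_{ij}^n\le C\,h_{ij}^{\,\delta_1}\big(f_w(x_2)-f_w(x_1)\big)\big(g(x_2)-g(x_1)\big)$. If $x_2>\frac34$ and $x_1\le\frac14$: inequality~\eqref{eq:etawpwg4} of Proposition~\ref{pro:etawpwg1} gives $B_{ij}^n\le C\,h_{ij}\big(f_w(x_2)-f_w(x_1)\big)\big(g(x_2)-g(x_1)\big)$. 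If $x_2>\frac34$ and $x_1>\frac14$ (hence $\tfrac14<x_1<x_2\le1$): when $1-x_1\le h_{ij}^{\gamma_2}$, Proposition~\ref{pro:x2small} via~\eqref{eq:x2small2} gives $B_{ij}^n\le C\,h_{ij}^{\,2+\delta_2}$; when $1-x_1>h_{ij}^{\gamma_2}$, Proposition~\ref{pro:1-x1notsmall} via~\eqref{eq:1-x1notsmall1} gives $B_{ij}^n\le C\,h_{ij}^{\,\delta_2}\big(f_w(x_2)-f_w(x_1)\big)\big(g(x_2)-g(x_1)\big)$.

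It then remains to carry out the summation. For the ``small'' contributions, where $B_{ij}^n\le C\,h_{ij}^{\,2+\delta_k}$ with $k\in\{1,2\}$, I would use the mesh-regularity bound $\sum_{i,j}c_{ij}h_{ij}^2\le C\sum_K|K|=C|\Omega|$ (immediate from $|\nabla\phi_i\cdot\nabla\phi_j|\le C h_K^{-2}$, or from~\eqref{eq:formcij}), which gives
\[
\sum_{n=1}^N\tau\sum_{i,j=1}^M c_{ij}\,C\,h_{ij}^{\,2+\delta_k}\le C\,h^{\delta_k}\sum_{n=1}^N\tau\sum_{i,j=1}^M c_{ij}h_{ij}^2\le C\,T\,|\Omega|\,h^{\delta_k}\to 0 .
\]
For the ``large'' contributions, since $f_w$ and $g$ are both increasing every factor $\big(f_w(S^{n,j})-f_w(S^{n,i})\big)\big(g(S^{n,j})-g(S^{n,i})\big)$ is nonnegative, so the sum over any subfamily of triples is dominated by the full sum; and by Proposition~\ref{pro:equalityUV} applied with $U_h=I_h(f_w(S_h^n))$, $V_h=I_h(g(S_h^n))$ together with~\eqref{eq:Bddgrad.g1},
\[
\sum_{n=1}^N\tau\sum_{i,j=1}^M c_{ij}\big(f_w(S^{n,j})-f_w(S^{n,i})\big)\big(g(S^{n,j})-g(S^{n,i})\big)=2\int_0^T\!\!\int_\Omega\nabla\big(I_h(f_w(S_{h,\tau}))\big)\cdot\nabla\big(I_h(g(S_{h,\tau}))\big)\le C .
\]
Since $h_{ij}\le h$, the ``large'' contributions are bounded by $C\big(h^{\delta_1}+h+h^{\delta_2}\big)$, and altogether $|T_1|\le C\big(h+h^{\delta_1}+h^{\delta_2}\big)\to 0$ as $(h,\tau)\to(0,0)$, which is the first limit in~\eqref{eq:limT1}; the second follows identically.

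The genuinely analytic work has all been isolated in Propositions~\ref{pro:etawpwg1}--\ref{pro:1-x1notsmall}, so the remaining difficulty is largely bookkeeping; the one point needing real care — and the main obstacle — is to verify that the three-way case split is simultaneously exhaustive \emph{and} compatible with a single admissible choice of the cross-over exponents, i.e.\ that the radii $h_{ij}^{\gamma_1},h_{ij}^{\gamma_2}$ can be taken small enough for the ``small'' estimates (conditions~\eqref{eq:gamm1},~\eqref{eq:gamm2}) while remaining large enough for the ``large'' estimates (conditions~\eqref{eq:gamm11},~\eqref{eq:gamm21}), which is exactly what the choice~\eqref{eq:choicedelt1}--\eqref{eq:choicedelt2} guarantees, with strictly positive residual exponents $\delta_1,\delta_2$. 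A secondary point to watch is that the ``small'' terms must be summed carrying their $c_{ij}$ weight, for which the elementary mesh-regularity estimate $\sum_{i,j}c_{ij}h_{ij}^2\le C|\Omega|$ is precisely what is required.
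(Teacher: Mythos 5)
Your proof is correct and follows essentially the same route as the paper: you use Proposition~\ref{pro:T2hpro1} to pass to the bound~\eqref{eq:T2h2}, split the index pairs by the position of $(S^{n,i},S^{n,j})$ in $[0,1]$ and by the mesh-dependent thresholds $h_{ij}^{\gamma_1},h_{ij}^{\gamma_2}$, invoke Propositions~\ref{pro:etawpwg1}--\ref{pro:1-x1notsmall} (with the choices~\eqref{eq:choicedelt1}--\eqref{eq:choicedelt2}) on each piece, and then close with the mesh-regularity estimate $\sum_{i,j}c_{ij}h_{ij}^2\le C|\Omega|$ for the ``small'' terms and with~\eqref{eq:Bddgrad.g1} (via Proposition~\ref{pro:equalityUV} and the monotonicity of $f_w$ and $g$) for the ``large'' terms. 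The only differences from the paper are cosmetic — you make the three-way case split genuinely disjoint where the paper's sets $\mathcal{O}_1,\mathcal{O}_2$ overlap, and you state the mesh bound globally rather than as the local form $h_i^2c_{ij}\le C|\Delta_i\cap\Delta_j|$ — but the mechanism and the final rate $|T_1|\le C(h+h^{\delta_1}+h^{\delta_2})$ are identical.
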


\begin{proof}
 We prove the first limit. \Bk Here the parameters of Propositions \ref{pro:x2small} and \ref{pro:x2notsmall}  are chosen by \eqref{eq:choicedelt1} and \eqref{eq:choicedelt2}.
It stems from the above considerations that, for each index $n$, the set of all indices $(i,j)$ from $1$ to $M$ can be partitioned into three subsets,
$${\mathcal O}_1 = \{(i,j)\,;\, 0 \le S^{n,i} < S^{n,j}  \le \frac{3}{4} \},\quad 
{\mathcal O}_2 = \{(i,j)\,;\,  \frac{1}{4} \le S^{n,i} < S^{n,j}\le 1\},
$$
$${\mathcal O}_3 = \{(i,j)\,;\, 0 \le S^{n,i} \le \frac{1}{4} \ \mbox{and}\ \frac{3}{4} \le S^{n,j} \le 1\}.$$
In turn, ${\mathcal O}_1$ and ${\mathcal O}_2$ can each be partitioned into two subsets
$$
{\mathcal O}_{1,1} = \{(i,j) \in {\mathcal O}_1\,;\, S^{n,j} \le h_T^{\gamma_1} \}, \quad 
{\mathcal O}_{1,2} = \{(i,j) \in {\mathcal O}_1\,;\, S^{n,j} > h_T^{\gamma_1} \},
$$
$$
{\mathcal O}_{2,1} = \{(i,j) \in {\mathcal O}_2\,;\, 1- S^{n,i} \le h_T^{\gamma_2} \},\quad
{\mathcal O}_{2,2} = \{(i,j) \in {\mathcal O}_2\,;\, 1- S^{n,i} > h_T^{\gamma_2} \}.
$$
To simplify, let 
$$A_{i,j} =  c_{ij}\Big(\int_{S^{n,i}}^{S^{n,j}} f_o(x)\big(\eta_w(S_{w}^{n,ij}) - \eta_w(x)\big)p_c^\prime(x)\,dx\Big) \big(V^{n,j}-V^{n,i}\big).
$$
In view of \eqref{eq:x2small1} and \eqref{eq:x2small2},  for all pairs $(i,j)$ in ${\mathcal O}_{\ell,1}$, $\ell = 1,2$, $A_{i,j}$ satisfies
$$|A_{i,j}| \le C \|\nabla\,v\|_{L^\infty(\Omega \times ]0,T[)} h_i^{2+ \delta_\ell}c_{ij}.$$
Owing to \eqref{eq:x2notsmall1} and \eqref{eq:1-x1notsmall1}, 
for all pairs $(i,j)$ in ${\mathcal O}_{\ell,2}$, $\ell = 1,2$, we have
$$|A_{i,j}| \le C \|\nabla\,v\|_{L^\infty(\Omega \times ]0,T[)} h_i^{\delta_\ell}c_{ij}\big(f_w(S^{n,j}) - f_w(S^{n,i})\big)\big(g(S^{n,j})- g(S^{n,i})\big).$$
Finally, for all pairs $(i,j)$ in ${\mathcal O}_3$,
$$|A_{i,j}| \le C \|\nabla\,v\|_{L^\infty(\Omega \times ]0,T[)} h_i c_{ij}\big(f_w(S^{n,j}) - f_w(S^{n,i})\big)\big(g(S^{n,j})- g(S^{n,i})\big).$$
According to \eqref{eq:Bddgrad.g1}, the sum of the terms over all $(i,j)$ in ${\mathcal O}_{\ell,2}$ and 
${\mathcal O}_3$ tends to zero. For the remaining terms,
 observe that by definition, 
$$h_i^2 c_{ij} \le C |\Delta_i \cap \Delta_j|,
$$
so that the sum over all $(i,j)$ in ${\mathcal O}_{\ell,1}$ is bounded by $C h_i^{\delta_\ell}$ that also tends to zero, whence the  first part of the \Bk limit \eqref{eq:limT1}.  The same limit to zero holds for the non-wetting phase. \Bk
\end{proof}

With \eqref{eq:pwg1}, this lemma leads  to the desired limit of the term with the auxiliary pressures.

\begin{theorem}
\label{thm:limpwg}
Let $v \in {\mathcal C}^1(\bar \Omega \times [0,T])$ be a smooth function and let $V_{h,\tau}(t) = I_h(v)(t_{n})$ in  $]t_{n-1},t_n]$. Under the assumptions and notation on the mobility \eqref{eq:eta'w}--\eqref{eq:etao_1}, 
\begin{equation}
\lim_{(h,\tau) \to (0,0)} \int_0^T\big[P_{ \alpha \Bk ,h, \tau},I_h(\eta_{ \alpha \Bk}(S_{h,\tau})); I_h(p_{ \alpha \Bk g}(S_{h,\tau})) ,V_{h,\tau}\big]_h = \int_0^T \int_\Omega \nabla \,g(\bar s) \cdot \nabla \,v,\  \alpha = w,o,
\end{equation}
where $\bar s$ is the limit of $S_{h,\tau}$.
\end{theorem}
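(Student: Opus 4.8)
The plan is to combine the two intermediate convergence results established in this section. Recall the algebraic identity, derived from the definitions of $p_{wg}$, $p_{og}$, and $g$:
\[
\eta_{\alpha}(S_{\alpha}^{ij})\big(p_{\alpha g}(S^{j}) - p_{\alpha g}(S^{i})\big) + \big(g(S^{j}) - g(S^{i})\big)
= \int_{S^{i}}^{S^{j}} f_{\alpha'}(x)\big(\eta_{\alpha}(S_{\alpha}^{ij}) - \eta_{\alpha}(x)\big) p_c'(x)\,dx,
\]
where $\alpha' = o$ if $\alpha = w$ and vice versa. Using \eqref{eq:nablauv} (or \eqref{eq:quadupwind} together with \eqref{eq:sum0}), I would write
\[
\int_0^T\big[P_{\alpha, h, \tau},I_h(\eta_{\alpha}(S_{h,\tau})); I_h(p_{\alpha g}(S_{h,\tau})) ,V_{h,\tau}\big]_h = T_1 + T_2,
\]
where $T_2 = \int_0^T \int_\Omega \nabla\,I_h(g(S_{h,\tau})) \cdot \nabla\,V_{h,\tau}$ and $T_1$ is exactly the sum over $(i,j)$ of the integral term on the right-hand side of the identity above, paired with $c_{ij}(V^{n,j} - V^{n,i})$, i.e., the quantity \eqref{eq:defT1}. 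Strictly speaking one should replace $g(S_{h,\tau})$ by $I_h(g(S_{h,\tau}))$ inside $T_2$; since the form $[\cdot,\cdot;\cdot,\cdot]_h$ acts only through nodal values this substitution is immediate.

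For $T_1$, I invoke Lemma \ref{lem:T1to0}, which gives $\lim_{(h,\tau)\to(0,0)} T_1 = 0$ directly (this is the only place where the full strength of Propositions \ref{pro:x2small}--\ref{pro:1-x1notsmall} and the parameter choices \eqref{eq:choicedelt1}--\eqref{eq:choicedelt2} are used). For $T_2$, the strong convergence \eqref{eq:stlimg} of $I_h(g(S_{h,\tau}))$ in $L^2(\Omega\times]0,T[)$, combined with the weak convergence \eqref{eq:wklimg} in $L^2(0,T;H^1(\Omega))$ and the strong convergence of $\nabla V_{h,\tau}$ to $\nabla v$ in $L^\infty(\Omega\times]0,T[)$ (from the regularity of $v$ and the approximation properties of $I_h$), yields
\[
\lim_{(h,\tau)\to(0,0)} T_2 = \int_0^T \int_\Omega \nabla\,\bar K \cdot \nabla\,v = \int_0^T \int_\Omega \nabla\,g(\bar s) \cdot \nabla\,v,
\]
using $\bar K = g(\bar s)$ from \eqref{eq:slimSh}--\eqref{eq:slimgSh/pwgSh} and \eqref{eq:wklimg}. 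Adding the two limits gives the claimed identity, and the argument is symmetric in $w$ and $o$ by the analogous formula for the non-wetting phase.

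The main obstacle is entirely contained in Lemma \ref{lem:T1to0}, which I am taking as given; the present proof is essentially bookkeeping once that lemma and the convergence facts \eqref{eq:stlimg}, \eqref{eq:wklimg} are in hand. The only point requiring a little care is the passage $\nabla\,g(S_{h,\tau}) \leftrightarrow \nabla\,I_h(g(S_{h,\tau}))$ between the weak limit \eqref{eq:wklimg} (stated for $I_h(g(S_{h,\tau}))$) and the splitting $T_1 + T_2$ (where $g(S_{h,\tau})$ naturally appears inside the form); since the form depends only on nodal values this is not an issue, but it should be noted explicitly so the reader is not misled.
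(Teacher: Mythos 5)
Your proof is correct and follows essentially the same route as the paper: split the form by adding and subtracting the nodal differences of $g$, recognize the $g$-part as $\int_0^T\int_\Omega \nabla I_h(g(S_{h,\tau}))\cdot\nabla V_{h,\tau}$ which converges to $\int_0^T\int_\Omega \nabla g(\bar s)\cdot\nabla v$ by the weak $L^2(0,T;H^1)$ limit \eqref{eq:wklimg} and the identification $\bar K = g(\bar s)$, and send the remainder $T_1$ (which is exactly \eqref{eq:defT1}) to zero via Lemma \ref{lem:T1to0}. Your remark about $g(S_{h,\tau})$ versus $I_h(g(S_{h,\tau}))$ in $T_2$ is a correct and worthwhile clarification of the paper's slight notational abuse.
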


Finally, Theorems \ref{thm:limp+pwg} and \ref{thm:limpwg}, together with \eqref{eq:pwgprime+gprime} and  \eqref{eq:splitW}, give the desired convergence of the upwind diffusion terms. 

\begin{theorem}
\label{thm:limupwd}
With the notation and assumptions of Theorem \ref{thm:limp+pwg}, we have for all functions $v \in {\mathcal C}^1(\bar \Omega \times [0,T])$,
\begin{equation}
\label{eq:limupwind}
\begin{split}
\lim_{(h,\tau) \to (0,0)}& -\int_0^T \big[P_{ \alpha  ,h, \tau},I_h(\eta_{ \alpha }(S_{h,\tau})); P_{ \alpha,h, \tau},V_{h,\tau}\big]_h \\
& = \int_0^T\int_\Omega \big(\eta_{w}(\bar s) \nabla(\bar p_w + p_{wg}(\bar s)) + \nabla\, g(\bar s) \big) \cdot \nabla\,v \quad \mbox{if }\  \alpha = w,\\
& = \int_0^T\int_\Omega \big(\eta_{o}(\bar s) \nabla(\bar p_o - p_{og}(\bar s)) - \nabla\, g(\bar s) \big) \cdot \nabla\,v\quad \mbox{if }\  \alpha = o.
\end{split}
\end{equation}
\end{theorem}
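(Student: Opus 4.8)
The plan is to combine the two preceding theorems with an identity relating the auxiliary upwind forms. First I would recall that, by the definition \eqref{eq:discraux}, the discrete auxiliary pressure decomposes as $U_{\alpha,h,\tau} = P_{\alpha,h,\tau} \pm I_h(p_{\alpha g}(S_{h,\tau}))$ (the $+$ sign for $\alpha = w$, the $-$ sign for $\alpha = o$). Since the upwind form $[Z_h,W_h;V_h,U_h]_h$ is linear in its third argument $V_h$ (it is a sum over $i,j$ of $U^i c_{ij}\tilde W^{ij}(V^j - V^i)$), we may split
$$
\big[P_{\alpha,h,\tau},I_h(\eta_\alpha(S_{h,\tau}));P_{\alpha,h,\tau},V_{h,\tau}\big]_h
= \big[P_{\alpha,h,\tau},I_h(\eta_\alpha(S_{h,\tau}));U_{\alpha,h,\tau},V_{h,\tau}\big]_h
\mp \big[P_{\alpha,h,\tau},I_h(\eta_\alpha(S_{h,\tau}));I_h(p_{\alpha g}(S_{h,\tau})),V_{h,\tau}\big]_h.
$$
Multiplying by $\tau$, summing over $n$, and taking $(h,\tau)\to(0,0)$, Theorem~\ref{thm:limp+pwg} handles the first term on the right and Theorem~\ref{thm:limpwg} handles the second. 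This gives, for $\alpha = w$,
$$
\lim_{(h,\tau)\to(0,0)} -\int_0^T\big[P_{w,h,\tau},I_h(\eta_w(S_{h,\tau}));P_{w,h,\tau},V_{h,\tau}\big]_h
= \int_0^T\int_\Omega \eta_w(\bar s)\nabla\bar W_w\cdot\nabla v - \int_0^T\int_\Omega \nabla g(\bar s)\cdot\nabla v,
$$
with the opposite sign on the $g$-term for $\alpha = o$ (because of the $\mp$ above).

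Next I would substitute the identification of the weak limit $\bar W_\alpha$ from \eqref{eq:splitW}: $\bar W_w = \bar p_w + p_{wg}(\bar s)$ and $\bar W_o = \bar p_o - p_{og}(\bar s)$. Plugging this in, the wetting-phase limit becomes
$$
\int_0^T\int_\Omega \eta_w(\bar s)\nabla(\bar p_w + p_{wg}(\bar s))\cdot\nabla v - \int_0^T\int_\Omega \nabla g(\bar s)\cdot\nabla v,
$$
but a sign check is needed: the statement of Theorem~\ref{thm:limupwd} has $+\nabla g(\bar s)$ for $\alpha = w$. The resolution is that the $\mp$ in the splitting combines with the sign inside Theorem~\ref{thm:limpwg}; the cleanest way is to verify it through the formal identity \eqref{eq:pwgprime+gprime}, namely $\eta_\alpha(x)p_{\alpha g}'(x) + g'(x) = 0$, which is exactly the relation that makes the variational form \eqref{eq:Var} consistent with the original equations \eqref{eq:2ph1}--\eqref{eq:2ph2}, so the final answer must match the integrand appearing in \eqref{eq:Var}. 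This pins down the signs unambiguously: for $\alpha = w$ one gets $\eta_w(\bar s)\nabla(\bar p_w + p_{wg}(\bar s)) + \nabla g(\bar s)$, and for $\alpha = o$ one gets $\eta_o(\bar s)\nabla(\bar p_o - p_{og}(\bar s)) - \nabla g(\bar s)$.

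The last point to address is that Theorems~\ref{thm:limp+pwg} and \ref{thm:limpwg} are stated for $V_{h,\tau} = I_h(v)(t_n)$ on $]t_{n-1},t_n]$ with $v \in {\mathcal C}^1(\bar\Omega\times[0,T])$, which is precisely the hypothesis of the present theorem, so no extra density or regularization argument is needed here — all the heavy lifting (the smoothing of $\bar s$, the upwinding error estimates, the auxiliary-pressure cancellations) has already been carried out in those two theorems. I do not expect a genuine obstacle in this proof; it is a short assembly argument. The only delicate point is bookkeeping the signs consistently between the definition \eqref{eq:discraux}, the linearity split above, and the sign conventions inside Theorems~\ref{thm:limp+pwg} and \ref{thm:limpwg}, and I would guard against an error there by checking the result against \eqref{eq:pwgprime+gprime} and the form of \eqref{eq:Var}.
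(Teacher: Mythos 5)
Your assembly argument is the same as the paper's, which cites Theorems \ref{thm:limp+pwg} and \ref{thm:limpwg} together with \eqref{eq:pwgprime+gprime} and \eqref{eq:splitW}. The only point to tighten is the sign, which you flag yourself and then resolve by reasoning backwards from \eqref{eq:Var}; a direct computation settles it and avoids the circularity. For $\alpha=w$, the decomposition $P_{w,h,\tau}=U_{w,h,\tau}-I_h(p_{wg}(S_{h,\tau}))$ and linearity of the form in its third argument give
\[
\begin{aligned}
-\int_0^T\big[P_{w,h,\tau},I_h(\eta_w(S_{h,\tau}));P_{w,h,\tau},V_{h,\tau}\big]_h
=&-\int_0^T\big[P_{w,h,\tau},I_h(\eta_w(S_{h,\tau}));U_{w,h,\tau},V_{h,\tau}\big]_h\\
&+\int_0^T\big[P_{w,h,\tau},I_h(\eta_w(S_{h,\tau}));I_h(p_{wg}(S_{h,\tau})),V_{h,\tau}\big]_h .
\end{aligned}
\]
Theorem \ref{thm:limp+pwg} already carries a leading minus while Theorem \ref{thm:limpwg} does not, so both pieces enter the limit with a plus sign, and the right-hand side tends to $\int_0^T\int_\Omega \eta_w(\bar s)\nabla\bar W_w\cdot\nabla v+\int_0^T\int_\Omega\nabla g(\bar s)\cdot\nabla v$, not the $-\nabla g(\bar s)$ in your intermediate display. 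Substituting $\bar W_w=\bar p_w+p_{wg}(\bar s)$ from \eqref{eq:splitW} then gives the stated limit directly. The case $\alpha=o$ is handled the same way using $P_{o,h,\tau}=U_{o,h,\tau}+I_h(p_{og}(S_{h,\tau}))$, which flips the sign on the $g$-term. The identity \eqref{eq:pwgprime+gprime} then serves as a consistency check rather than as the mechanism that fixes the signs.
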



\subsection{Convergence of the right-hand sides}

\label{sec:right-handside}

In order to pass to the limit in the right-hand sides of \eqref{eq:vars1}--\eqref{eq:vars2} \Bk it is convenient to replace the quadrature formulas by integrals. Since the quadrature formulas are exact for polynomials of degree one, this is achieved by approximating some functions with the operator $\rho_h$, see \eqref{eq:rhoh}.  
As $s_{\mathrm{in}}$ belongs to $L^\infty(\Omega \times ]0,T[)$, standard approximation properties of $\rho_\tau$ and $r_h$ and a density argument imply 
\begin{equation}
\label{eq:lim-sin2}
\lim_{(h,\tau) \to (0,0)} \rho_\tau(\rho_K(s_{\mathrm{in}})) = s_{\mathrm{in}} \ \mbox{in}\; L^\infty(K \times ]0,T[).
\end{equation}
Then the continuity of $f_{\Rd \alpha \Bk}$,  for $\alpha = w,o$, \Bk yields
\begin{equation}
\label{eq:lim-fosin2}
\lim_{(h,\tau) \to (0,0)} f_{ \alpha \Bk}(\rho_\tau(\rho_K(s_{\mathrm{in}}))) = f_{ \alpha \Bk}(s_{\mathrm{in}}) \ \mbox{in}\; L^\infty(K \times ]0,T[).
\end{equation}
Similarly, since $\bar{q}$ belongs to $L^2(\Omega \times ]0,T[)$,
$$
\lim_{(h,\tau) \to (0,0)} \rho_\tau(\rho_K(\bar{q})) = \bar q \ \mbox{in}\; L^2(K \times ]0,T[).
$$
Also the (constant in space) correction added to $\rho_\tau(r_h(\bar{q}))$ satisfies
$$\lim_{(h,\tau) \to (0,0)} \rho_\tau\big(\frac{1}{|\Omega|}\int_\Omega (r_h(\bar q) - \bar q)\big) = 0 \ \mbox{in}\; L^2(\Omega \times ]0,T[).
$$
Therefore
\begin{equation}
\label{eq:lim-qht}
\lim_{(h,\tau) \to (0,0)} \bar q_{h,\tau} = \bar q \ \mbox{in}\; L^2(\Omega \times ]0,T[).
 \end{equation}
 With the same function $V_{h,\tau}$, consider the first term in the right-hand sides of 
 \eqref{eq:vars1}--\eqref{eq:vars2} \Bk
$$X := \sum_{n=1}^{N} \tau \big(I_h(f_{\alpha \Bk}(s_{\mathrm{in},h}^{n}))\bar{q}_h^{n},V_h^{n} \big)_h
= \int_0^T \big(I_h(f_{ \alpha \Bk}(s_{\mathrm{in},h, \tau}))\bar{q}_{h,\tau},V_{h,\tau} \big)_h.
$$
By definition of the quadrature formula, $X$ has the following expression:
$$X = \sum_{n=1}^{N} \tau \sum_{K \in \bar \Omega} \frac{|K|}{d+1} \sum_{\ell =1}^{d+1}f_{ \alpha \Bk}(s_{\mathrm{in,h,\tau}}^{n,\ell_i}) \bar q_{h,\tau}^{n,\ell_i}V_{h,\tau}^{n,\ell_i}.
$$
By inserting $f_{ \alpha \Bk}(\rho_\tau(\rho_K(s_{\mathrm{in}})))$ and $\rho_\tau(\rho_K(\bar q))$, this becomes
\begin{align*}
X =& \sum_{n=1}^{N} \tau \sum_{K \in \bar \Omega} \frac{|K|}{d+1} \sum_{\ell =1}^{d+1} \big(f_{ \alpha \Bk}(s_{\mathrm{in,h,\tau}}^{n,\ell_i}) - f_{ \alpha \Bk}(\rho_\tau(\rho_K(s_{\mathrm{in}})))\big)\bar q_{h,\tau}^{n,\ell_i}V_{h,\tau}^{n,\ell_i}\\
& + \sum_{n=1}^{N} \tau \sum_{K \in \bar \Omega} \frac{|K|}{d+1} \sum_{\ell =1}^{d+1} f_{ \alpha \Bk}(\rho_\tau(\rho_K(s_{\mathrm{in}})))\big(\bar q_{h,\tau}^{n,\ell_i}- \rho_\tau(\rho_K (\bar q))\big)V_{h,\tau}^{n,\ell_i}\\
&+ \int_0^T \int_\Omega f_{ \alpha \Bk}(\rho_\tau(\rho_K(s_{\mathrm{in}})))\rho_\tau (\rho_K (\bar q))V_{h,\tau} = X_1+X_2+X_3,
\end{align*}
since the last summand is a polynomial of degree one. We have
$$\lim_{(h,\tau) \to (0,0)} X_3 =\int_0^T \int_\Omega f_o(s_{\mathrm{in}})\,\bar q\, v.
$$
It remains to show that $X_1$ and $X_2$ tend to zero. For $X_1$, since $f_o$ and $f_w$ have the same derivative (up to the sign), we deduce from \eqref{eq:fwprime}, \eqref{eq:eta'w}, 
\eqref{eq:eta'o}, \eqref{eq:etaw_1}, \eqref{eq:etao_1}, and
\eqref{eq:bddsh} that $f_\alpha^\prime$ is bounded in $[0,1]$; hence
$$| f_{\alpha \Bk}(s_{\mathrm{in,h,\tau}}^{n,\ell_i}) - f_{\alpha \Bk}(\rho_\tau(\rho_K(s_{\mathrm{in}}))) |\le C |s_{\mathrm{in,h,\tau}}^{n,\ell_i} - \rho_\tau(\rho_K(s_{\mathrm{in}}))|.
$$
Thus, the summand is bounded by polynomials and the equivalence of norms yields
$$|X_1| \le C \|v\|_{L^\infty( \Omega \times ]0,T[)}\|s_{\mathrm{in,h,\tau}}- \rho_\tau(\rho_K(s_{\mathrm{in}}))\|_{L^2(\Omega \times ]0,T[)} \|\bar q_{h,\tau}\|_{L^2(\Omega \times ]0,T[)},
$$
that tends to zero with $(h,\tau)$. It is easy to check that the same holds for $X_2$.
Hence
\begin{equation}
\label{eq:lim-X}
\lim_{(h,\tau) \to (0,0)} \int_0^T \big(I_h(f_{ \alpha \Bk}(s_{\mathrm{in},h, \tau}))\bar{q}_{h,\tau},V_{h,\tau} \big)_h = \int_0^T \int_\Omega f_{\Rd \alpha \Bk}(s_{\mathrm{in}})\,\bar q\, v.
 \end{equation}

 The argument for the second term in the right-hand side of \eqref{eq:vars1} is much the same; we insert $\rho_\tau (\rho_K (\bar s))$ and we use the fact that
 $$ \lim_{(h,\tau) \to (0,0)} \|S_{h,\tau}- \rho_\tau (\rho_K (\bar s))\|_{L^2(\Omega \times ]0,T[)} =0.
 $$
 Then the argument used for the first term readily gives
 \begin{equation}
\label{eq:lim-XX}
\lim_{(h,\tau) \to (0,0)} \int_0^T ( I_h(f_{ \alpha \Bk}(S_{h,\tau}))\underline{q}_{h_\tau},V_{h,\tau}\big)_h = \int_0^T \int_\Omega f_{ \alpha \Bk}(\bar s)\,\bar q\, v.
 \end{equation}
By combining \eqref{eq:lim-X} and \eqref{eq:lim-XX}, we obtain convergence of the right-hand sides, \begin{equation}
\label{eq:lim-XXX}
\lim_{(h,\tau) \to (0,0)} \Big( \int_0^T \big(I_h(f_{\Rd \alpha \Bk}(s_{\mathrm{in},h, \tau}))\bar{q}_{h,\tau} - I_h(f_{ \alpha \Bk}(S_{h,\tau}))\underline{q}_{h_\tau},V_{h,\tau} \big)_h  =  \int_0^T \int_\Omega \big(f_{\alpha \Bk}(s_{\mathrm{in}})\,\bar q- f_{ \alpha \Bk}(\bar s)\,\bar q\big) v.
 \end{equation}
 

\subsection{The full scheme}
\label{sec:full scheme}

It remains to pass to the limit in the time derivative, say in \eqref{eq:vars1}, summed over $n$, and tested with the same $V_{h,\tau}$ as previously, except that here we take $v(T) = 0$. After summation by parts, this term reads
\begin{equation}
\label{eq:sumparts}
\sum_{n=1}^N (S_h^n-S_h^{n-1},V_h^n)_h^\varphi = - \sum_{n=1}^{N-1} (V_h^{n+1} - V_h^n, S_h^n)_h^\varphi -(V_h^1, S_h^0)_h^\varphi.
 \end{equation}
By definition,
$$(V_h^{n+1} - V_h^n, S_h^n)_h^\varphi = \sum_{K \in \bar \Omega} \frac{|K|}{d+1} \varphi|_K \sum_{\ell = 1}^{d+1}(V^{n+1,i_\ell}- V^{n,i_\ell}) S^{n,i_\ell}.
$$
By inserting $\rho_K(V^{n+1,i_\ell}- V^{n,i_\ell})$ in each element, this becomes
$$
(V_h^{n+1} - V_h^n, S_h^n)_h^\varphi = (V_h^{n+1} - V_h^n-\rho_h(V^{n+1}-V^n), S_h^n)_h^\varphi + \int_\Omega \varphi\rho_h(V^{n+1}- V^{n}) S^{n}_h.
$$
The first term has the bound
$$\big|(V_h^{n+1} - V_h^n-\rho_h(V^{n+1}-V^n), S_h^n)_h^\varphi\big| \le \|\varphi\|_{L^\infty(\Omega)}\|
V_h^{n+1} - V_h^n-\rho_h(V^{n+1}-V^n)\|_h \|S_h^n\|_h.
$$
Since the functions are piecewise polynomials, the equivalence of norms yields
\begin{align*}
\big|\sum_{n=1}^{N-1} &(V_h^{n+1} - V_h^n-\rho_h(V^{n+1}-V^n), S_h^n)_h^\varphi\big| \le C\,\|\varphi\|_{L^\infty(\Omega)}\\
& \times\big(\sum_{n=1}^{N-1} \tau \|\frac{1}{\tau}\big(I_h( v^{n+1}-v^n) - \rho_h(V^{n+1}-V^n)\big)\|^2_{L^2(\Omega)}\big)^{\frac{1}{2}}
\big(\sum_{n=1}^{N-1} \tau \|S_h^n\|^2_{L^2(\Omega)}\big)^{\frac{1}{2}}.
\end{align*}
Then the regularity of $v$, the approximation properties of $I_h$ and $\rho_h$ and the boundedness of $S_{h,\tau}$ imply that
$$\lim_{(h,\tau) \to (0,0)}  \big|\sum_{n=1}^{N-1} (V_h^{n+1} - V_h^n-\rho_h(V^{n+1}-V^n), S_h^n)_h^\varphi\big| =0.
$$
Similarly, it is easy to check from the convergence of $S_{h,\tau}$ that
$$-\lim_{(h,\tau) \to (0,0)}\sum_{n=1}^{N-1}\int_\Omega \varphi\rho_h(V^{n+1}- V^{n}) S^{n}_h = -\int_0^T \int_\Omega \varphi (\partial_t v) \bar s.
$$
The treatment of the initial term is the same. Hence
\begin{equation}
\label{eq:limdtS}
\lim_{(h,\tau) \to (0,0)} \sum_{n=1}^N (S_h^n-S_h^{n-1},V_h^n)_h^\varphi = -\int_0^T \int_\Omega \varphi (\partial_t v) \bar s  - \int_\Omega \varphi s^0 v.
\end{equation}
By combining \eqref{eq:limdtS}, with \eqref{thm:limupwd} and \eqref{eq:lim-XXX}, we readily see that the limit functions $\bar s$, $\bar p_\alpha$ and $p_{\alpha g}(\bar s)$ satisfy the weak formulation 
\eqref{eq:Var}. This proves Theorem \ref{thm:mainresult}.

\section{Numerical validation}
\label{sec:Valid}

This section proposes a numerical validation of our algorithm
with a two dimensional finite difference code. Details on the algorithm implemented
are given. A problem with manufactured solutions is then considered to study the convergence
properties of our algorithm.

\subsection{Implementation of the model}
To avoid dealing with nonlinear terms, we implement a modified version
of the algorithm proposed in section~\ref{subsec:time-space}. 
The main difference consists of approximating the terms 
$S_w^{n+1,ij}$, $S_o^{n+1,ij}$ and $P_o^{n+1}$ with first time order extrapolation. 
For each node $1\leq i \leq M$, the unknowns $(S^{n+1,i}, P_w^{n+1,i})$
are computed as the solution of the following problem:
\begin{eqnarray}
\frac{\tilde{m}_i}{\Delta t} (S^{n+1,i} - S^{n,i}) 
-\sum_{j\neq i, j\in N(i)} c_{ij} \eta_w(S^{*,n+1,ij}_w)  (P_w^{n+1,j}-P_w^{n+1,i})
\nonumber\\
= m_i f_1^{n+1,i}, \quad 1\leq i\leq M,
\label{eq:num_scheme1}
\\
-\frac{\tilde{m}_i}{\Delta t} (S^{n+1,i} - S^{n,i}) 
-\sum_{j\neq i, j\in N(i)} c_{ij} \eta_o(S^{*, n+1,ij}_o) (P_w^{n+1,j}-P_w^{n+1,i})
\nonumber\\
-\sum_{j\neq i, j\in N(i)} c_{ij} \eta_o(S^{*, n+1,ij}_o) (p_c^{*,n+1,j}-p_c^{*,n+1,i})= m_i f_2^{n+1,i}, \quad 1\leq i\leq M,\label{eq:num_scheme2}
\end{eqnarray}
where the pressure $P_o$ has been substituted with $P_w + p_c$ 
with respect to \eqref{eq:scheme3}. The solution $P_w$ is 
enforced to satisfy \eqref{eq:scheme4} a posteriori by 
subtracting its integral $\sum_{i=1}^M  m_i P_w^i$ after solving the above problem.
The terms $S_w^{*, n+1,ij}$ and $S_o^{*, n+1,ij}$ are approximated 
at time iteration n by setting them to $S_w^{n,ij}$ and $S_o^{n,ij}$.
Eventually, the capillary pressure $p_c^{*,n+1}$ is approximated 
with a first order Taylor expansion with respect to the saturation $S$, it reads:
\begin{equation}
p_c^{*,n+1}= p_c^n + \left(\frac{\partial p_c}{\partial S}\right)^n (S^{n+1} - S^n).
\end{equation}
We note that to facilitate the implementation of this algorithm in 
a two dimensional finite difference code, the source terms of 
the equations~\eqref{eq:scheme1}-\eqref{eq:scheme2} have been 
replaced by functions denoted by $f_1$ and $f_2$.

\subsection{Numerical test with a manufactured solution}
The numerical validation of the algorithm is done by approximating the analytical
solutions defined by
\begin{equation}
\label{eq:num_test_solu_Pw}
 P_{w}(t,x,y)= 2 + x^2y - y^2 + x^2\sin(t+y),
\end{equation}
\begin{equation}
\label{eq:num_test_solu_S}
 S(t,x,y)= 0.2(2 + 2xy + \cos(t+x)),
\end{equation}
on the computational domain $\Omega=[0,1]^2$.
Dirichlet boundary conditions are applied on $\partial \Omega$
on both unknowns $P_w$ and $S$. The initial conditions of the problem
satisfy~\eqref{eq:num_test_solu_Pw}-\eqref{eq:num_test_solu_S}.
The porosity of the domain is set to:
\begin{equation}
 \phi(t,x,y)=  0.2(1+xy).
\end{equation}
The mobilities $\eta_w$ and $\eta_o$, introduced in section~\ref{sec:problem}, 
are defined as follows:
\begin{equation}
\eta_w = 4 S^2, \qquad \eta_o = 0.4  (1-S)^2.
\end{equation}
The capillary pressure is based on the Brooks-Corey model, it reads:
\begin{equation}
   P_{c} =
  \begin{cases}
  A S^{-0.5} & \text{if $S>0.05$},\\
  A (1.5 - 10 S)\times 0.05^{-0.5} & \text{otherwise}.
  \end{cases}
\end{equation}
where $A$ is a constant set to $50$.
The term sources $f_1$ and $f_2$ are computed accordingly.
The convergence tests are performed on a set of six uniform grids 
with respective mesh size 
$h\in\{0.2, 0.1, 0.05, 0.025, 0.0125, 0.00625\}$. 
The convergence properties are evaluated by using a time step 
$\tau$ set to the mesh size $h$ with a final time $T=1$.
As the time derivatives and the saturations $S_w^{n+1, ij}, S_o^{n+1,ij}$
are computed with first order time approximation, 
we expect the convergence rate in the $L^2$ norm to be of order one.
\begin{table}[ht]
\centering
\begin{tabular}{|c|c|c|c|c|c|} \hline  
 \multicolumn{2}{|c|}{$L^2$-norm of error}   & \multicolumn{2}{|c|}{Water pressure $P_w$}
 & \multicolumn{2}{|c|}{Water saturation $S$}  \\ \hline \hline
  $h$  & $n_{df}$ & Error  & Rate  & Error  & Rate     \\ \hline
  $ 0.2$& 25 & 8.50E-3  & - &  4.21E-3 &  -      \\ \hline
$0.1$ & 100 & 4.15E-3 & 1.03  & 2.30E-3  & 0.87  \\  \hline
$0.05$ & 400 & 2.08E-3 & 1.00 &  1.14E-4 &  1.01 \\  \hline
$0.025$ & 1600 & 1.04-3 & 1.00  & 5.57E-4  & 1.03 \\  \hline
$0.0125$ & 6400& 5.23E-4  & 0.99 & 2.75E-4  &  1.02  \\
 \hline
\end{tabular}
\caption{Results of convergence tests where the mesh size is denoted by $h$ 
and the number of degrees of freedom per unknown by $n_{df}$. 
The time step $\tau$ is set to h and errors are computed at final time $T=1$.}
\label{tab:test_num_2D}
\end{table}
The results of the convergence tests are presented in 
Table~\ref{tab:test_num_2D}. The theoretical order of convergence, 
equal to one, is recovered for both unknowns which confirms the 
correct behavior of the algorithm.

\section{Conclusions}

This paper formulates a $\mathbb{P}_1$ finite element method to solve the immiscible two-phase flow problem in porous media.  The unknowns are the phase pressure and saturation, which are the preferred unknowns in industrial reservoir simulators. The numerical method employs mass lumping for integration and an upwind flux technique. As a consequence, the saturation is shown to be bounded between zero and one. The discrete approximations of pressure and saturation converge to the weak solution as the time step and mesh sizes tend to zero.

\bibliographystyle{siam}
\bibliography{ref}
\end{document}